\numberwithin{equation}{section}
\def\C{{\mathbb C}}
\def\F{{\mathbb F}}
\def\Q{{\mathbb Q}}
\def\R{{\mathbb R}}
\def\Z{{\mathbb Z}}
\newtheorem{theorem}{Theorem}[section]
\newtheorem{lemma}[theorem]{Lemma}
\newtheorem{proposition}[theorem]{Proposition}
\newtheorem{corollary}[theorem]{Corollary}
\newtheorem{definition and lemma}[theorem]{Definition and Lemma}
\theoremstyle{definition}
\newtheorem{definition}[theorem]{Definition}
\newtheorem{example}[theorem]{Example}
\theoremstyle{remark}
\newtheorem{remark}[theorem]{Remark}
\numberwithin{equation}{section}
\date{}
\begin{document}
\author{WonTae Hwang}

\title{On a classification of the automorphism groups of polarized abelian surfaces over finite fields}

\maketitle

\begin{abstract}
We give a classification of maximal elements of the set of finite groups that can be realized as the full automorphism groups of polarized abelian surfaces over finite fields.
\end{abstract}

\section{Introduction}
\qquad Let $k$ be a field, and let $X$ be an abelian surface over $k.$ It is well-known (see \cite[\S19]{8}) that $X$ is either simple or isogenous to a product of two elliptic curves over $k$. We denote the endomorphism ring of $X$ over $k$ by $\textrm{End}_k(X)$. It is a free $\Z$-module of rank $\leq 16.$ We also let $\textrm{End}^0_k(X)=\textrm{End}_k(X) \otimes_{\Z} \Q.$ This $\Q$-algebra $\textrm{End}_k^0(X)$ is called the endomorphism algebra of $X$ over $k.$ Then $\textrm{End}_k^0(X)$ is a finite dimensional semisimple algebra over $\Q$ with $\textrm{dim}_{\Q} \textrm{End}_k^0(X) \leq 16.$ Moreover, if $X$ is simple, then $\textrm{End}_k^0(X)$ is a division algebra over $\Q$. Now, it is a well-known fact that $\textrm{End}_k (X)$ is a $\Z$-order in $\textrm{End}_k^0(X).$ The group $\textrm{Aut}_k(X)$ of the automorphisms of $X$ over $k$ is not finite, in general. But if we fix a polarization $\mathcal{L}$ on $X$, then the group $\textrm{Aut}_k(X,\mathcal{L})$ of the automorphisms of the polarized abelian surface $(X,\mathcal{L})$ is always finite. The goal of this paper is to classify all finite groups that can be the automorphism group $\textrm{Aut}_k(X,\mathcal{L})$ of a polarized abelian surface $(X,\mathcal{L})$ over a finite field $k,$ which are maximal in the sense of Definition \ref{def 20} below. \\

Along this line, for the case when $k=\C,$ Fujiki \cite{4} gave a complete list of all pairs $(X,G)$ where $X$ is a complex torus of dimension $2$, and $G$ is a finite subgroup of $\textrm{Aut}(X)$, up to conjugacy. Also, Birkenhake and Lange~\cite{2} provided a classification of a finite subgroup $G$ of $\textrm{Aut}(X)$ that is maximal in the isogeny class of $X$ where $X$ is an abelian surface over $k=\C$. On the other hand, for the case when $\textrm{char}~k = p>0$, it seems that not much is known. \\

Our main result is summarized in the following:
\begin{theorem}\label{main theorem}
  The possibilities for maximal automorphism groups of a polarized abelian surface $(X, \mathcal{L})$ over a finite field are given by Tables 10, 11, 12, and 13 in the case of $X$ being simple, a product of two non-isogenous elliptic curves, a power of an ordinary elliptic curve, and a power of a supersingular elliptic curve, respectively.
\end{theorem}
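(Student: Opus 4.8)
The plan is to organize the proof as a classification according to the four isogeny types, reducing in each case to a problem about finite subgroups of a unitary group attached to an algebra with positive involution. First I would record the basic mechanism. If $\dagger$ denotes the Rosati involution on $\textrm{End}_k^0(X)$ determined by $\mathcal{L}$, then an automorphism of the pair $(X,\mathcal{L})$ is precisely a unit $u \in \textrm{End}_k(X)$ satisfying $u^\dagger u = 1$. Since $\dagger$ is a positive involution, the associated unitary group is compact at the archimedean place, so its intersection with the unit group of the order $\textrm{End}_k(X)$ is finite; this recovers the finiteness asserted in the introduction. Thus $\textrm{Aut}_k(X,\mathcal{L})$ is a finite subgroup of the unitary group $U(\textrm{End}_k^0(X),\dagger)$ contained in the units of the order, and the theorem amounts to determining the maximal such finite subgroups, in the sense of Definition \ref{def 20}, as $\mathcal{L}$ and $X$ vary within each isogeny class.

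Second, I would invoke Honda--Tate theory together with Albert's classification of positive involutions to pin down, in each of the four cases, the algebra-with-involution $(\textrm{End}_k^0(X),\dagger)$ that can occur: a quartic CM field or a quaternion division algebra in the simple case; a product of two distinct imaginary quadratic fields, or of definite quaternion algebras, in the non-isogenous-product case; the matrix algebra $M_2(K)$ with $K$ imaginary quadratic for a power of an ordinary elliptic curve; and $M_2(D)$ with $D$ the definite quaternion algebra ramified at $p$ and $\infty$ for a power of a supersingular elliptic curve. The characteristic-$p$ constraints enter precisely here, since Honda--Tate restricts which Weil numbers, and hence which fields and quaternion algebras, can arise over a finite field; this is what will distinguish the four tables from the complex lists of Fujiki and Birkenhake--Lange.

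Third, case by case I would compute the maximal finite unitary subgroups. In the simple and non-isogenous-product cases the unitary group is either commutative or a product of such, so the finite subgroups are governed by the roots of unity $\mu(\textrm{End}_k^0(X))$ available in the relevant fields, further cut out by the positivity of $\dagger$; here the analysis is short and the tables follow by listing which cyclotomic or dihedral-type groups are compatible with an abelian surface in the given characteristic. The substantial work, and the main obstacle, lies in the two power cases, where $U(M_2(K),\dagger)$ and $U(M_2(D),\dagger)$ are genuine two-dimensional unitary groups and their maximal finite subgroups form a much richer list, related to the finite subgroups of $U(2)$ and, in the quaternionic setting, to automorphism groups of quaternionic Hermitian lattices. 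For each candidate group I must exhibit a polarized abelian surface over some finite field realizing it, verify maximality in the sense of Definition \ref{def 20}, and check that the realization respects the ordinary versus supersingular dichotomy imposed by $p$. Reconciling these lattice-automorphism computations with the constraints coming from Honda--Tate is where the bulk of the technical effort will be concentrated.
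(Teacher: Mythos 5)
Your overall skeleton---splitting into the four isogeny types, using Honda--Tate plus Albert to pin down $(\textrm{End}_k^0(X),\dagger)$, classifying maximal finite subgroups, then realizing each one---matches the paper, and your first two paragraphs (Rosati-unitarity, positivity giving finiteness, the inventory of algebras) are sound. The genuine gap is in your third paragraph, and it is mathematical rather than expository. You claim that in the simple and non-isogenous-product cases ``the unitary group is either commutative or a product of such, so the finite subgroups are governed by the roots of unity.'' This is false. When $q$ is not a square, the Weil number $\pi=\sqrt{q}$ gives a simple abelian surface whose endomorphism algebra is the definite quaternion division algebra over $\Q(\sqrt{p})$ ramified exactly at the two infinite places (Remark \ref{Qpi Real}); this is Albert Type III, the Rosati involution is the canonical involution, and the unitary group is the reduced-norm-one group, which is not commutative. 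Its maximal finite subgroups produce entries $\sharp 7$--$\sharp 11$ of Table 10 ($\textrm{Dic}_{12}$, $\mathfrak{T}^*$, $\textrm{Dic}_{24}$, $\mathfrak{O}^*$, $\mathfrak{I}^*$)---half the table---and these are neither cyclotomic nor dihedral. Classifying them is exactly where the paper invests Section \ref{findiv} (Amitsur's classification of finite subgroups of division rings, culminating in Corollary \ref{cor 19}) together with Lemmas \ref{lem 21}, \ref{lem 22}, and \ref{lem 23}, which show that $Q_8$, $\textrm{Dic}_{16}$, $\textrm{Dic}_{20}$ force $\textrm{End}_k^0(X)$ to be the quaternion algebra $(-1,-1)$ over a specific real quadratic field and hence lie in strictly larger finite unit groups, so fail maximality; one must also rule out quaternion algebras with imaginary quadratic center (the first half of Lemma \ref{lem 21}, which uses Honda--Tate constraints from the literature). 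The same omission affects Table 11: your list of algebras for a product of non-isogenous curves misses the mixed case $\Q(\sqrt{-d})\oplus D_{p,\infty}$ and the case $D_{p,\infty}\oplus D_{p,\infty}$, which supply all the non-abelian rows such as $\Z/2\Z\times\textrm{Dic}_{12}$ and $\mathfrak{T}^*\times\mathfrak{T}^*$. As written, your plan recovers only the cyclic rows of Table 10 and the products of cyclic groups in Table 11.

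A second, structural gap: your realization step (``exhibit a polarized abelian surface realizing it, verify maximality'') names the goal but not the mechanism, and the two tools the paper uses are precisely what make Definition \ref{def 20} tractable. First, Waterhouse's theorem (Proposition \ref{endposs}) that every maximal order in $\textrm{End}_k^0(X)$ occurs as the endomorphism ring of a variety in the isogeny class; this is how one manufactures a surface whose unit group contains the candidate $G$. Second, the averaging construction $\mathcal{L}^{\prime}=\bigotimes_{g\in G} g^{*}\mathcal{L}$, which converts any finite subgroup $G$ of the unit group into a subgroup of $\textrm{Aut}_k(X^{\prime},\mathcal{L}^{\prime})$ for an honest polarization. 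The second point is what ties your fixed-involution unitary picture to the notion of maximality in the theorem, where both $X^{\prime}$ and $\mathcal{L}$ vary over the isogeny class: without it, being a maximal finite subgroup of one group $U(\textrm{End}_k^0(X),\dagger)$ neither implies nor is implied by maximality in the sense of Definition \ref{def 20}. Finally, in the supersingular power case the answer depends on $p$ in a way your sketch does not anticipate: groups such as $D_4$, $D_6$, $\mathfrak{T}^*$, $\textrm{Dic}_{12}$ are not maximal in $GL_2(D_{p,\infty})$ for $p=2,3,5$, yet enter Table 13 because they become maximal for suitable other primes (the paper uses $p=7,11,13,241$); this prime-by-prime analysis of which quadratic fields embed into $D_{p,\infty}$, carried out in Section \ref{quat mat rep}, has no counterpart in your plan.
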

For more details, see Theorems \ref{thm old 24}, \ref{prodnonisoellip}, \ref{powordelli}, and \ref{pow of supell} below. \\

This paper is organized as follows: In Section~\ref{prelim}, we introduce several facts which are related to our desired classification. Explicitly, we will recall some facts about endomorphism algebras of abelian varieties ($\S$\ref{end alg av}), the theorem of Tate ($\S$\ref{thm Tate sec}), Honda-Tate theory ($\S$\ref{thm Honda}), a result of Waterhouse ($\S$\ref{thm waterhouse}), and maximal orders over a Dedekind domain ($\S$\ref{max ord dede}). In Section~\ref{aut gps ell}, we give a classification of the automorphism groups of elliptic curves over finite fields. In Section~\ref{quat mat rep}, we record some useful results about quaternionic matrix groups based on a paper of Nebe \cite{10}. In Section~\ref{findiv}, we find all the finite groups that can be embedded in certain division algebras using a result of Amitsur \cite{1}. In Section~\ref{main}, we finally obtain the desired classification using the facts that were introduced in the previous sections. \\

In the sequel, let $q=p^a$ for some prime number $p$ and an integer $a \geq 1,$ unless otherwise stated.

\section{Preliminaries}\label{prelim}
\qquad In this section, we recall some of the facts in the general theory of abelian varieties over a field and maximal orders over a Dedekind domain. Our main references are \cite{3}, \cite{8}, and \cite{11}.

\subsection{Endomorphism algebras of abelian varieties}\label{end alg av}
\qquad In this section, we give some basic facts about the endomorphism algebra of an abelian variety over a field, and recall Albert's classification regarding the endomorphism algebra of a simple abelian variety. Throughout this section, let $k$ be a field. \\

Let $X$ be an abelian variety over $k.$ Then the set $\textrm{End}_k (X)$ of endomorphisms of $X$ over $k$ has a natural ring structure. If we want to work with $\Q$-algebras, then we define $\textrm{End}_k^0(X):=\textrm{End}_k (X) \otimes_{\Z} \Q.$ The $\Q$-algebra $\textrm{End}_k^0 (X)$ is called the \emph{endomorphism algebra} of $X.$ \\

If $X$ is a simple abelian variety over $k,$ then $\textrm{End}_k^0(X)$ is a division algebra over $\Q.$ If $X$ is any abelian variety over $k,$ then it is well-known that there exist simple abelian varieties $Y_1,\cdots,Y_n$ over $k,$ no two of which are $k$-isogenous, and positive integers $m_1,\cdots,m_n$ such that $X$ is $k$-isogenous to $Y_1^{m_1} \times \cdots \times Y_n^{m_n}.$ In this situation, we have
\begin{equation*}
\textrm{End}_k^0 (X) \cong M_{m_1}(D_1) \times \cdots \times M_{m_n}(D_n)
\end{equation*}
where $D_i :=\textrm{End}_k^0 (X_i)$ for each $i.$ Moreover, it can be shown that $\textrm{End}_k^0 (X)$ is a finite dimensional semisimple $\Q$-algebra of dimension at most $4 \cdot (\textrm{dim}~X)^2.$ \\

We conclude this section by recalling Albert's classification. Let $X$ be a simple abelian variety over $k$, and we choose a polarization $\lambda : X \rightarrow \widehat{X}$. Using the polarization $\lambda,$ we can define an involution, called the \emph{Rosati involution}, $^{\vee}$ on $\textrm{End}_k^0(X).$ (For a more detailed discussion about the Rosati involution, see \cite[\S20]{8}.) In this way, to the pair $(X,\lambda)$ we associate the pair $(D, ^{\vee})$ with $D=\textrm{End}_k^0(X)$ and $^{\vee}$, the Rosati involution on $D$. We know that $D$ is a simple division algebra over $\Q$ of finite dimension and that $^{\vee}$ is a positive involution. Let $K$ be the center of $D$ so that $D$ is a central simple $K$-algebra, and let $K_0 = \{ x \in K~|~x^{\vee} = x \}$ be the subfield of symmetric elements in $K.$ We know that either $K_0 = K$, in which case, $^{\vee}$ is said to be of the first kind, or that $K$ is a quadratic extension of $K_0$, in which case, $^{\vee}$ is said to be of the second kind. By a theorem of Albert, the pair $(D,^{\vee})$ is of one of the following four types: \\

(i) Type I: $K_0 = K=D$ is a totally real field and $^{\vee} = \textrm{id}_D.$ \\

(ii) Type II: $K_0 = K$ is a totally real field, and $D$ is a quaternion algebra over $K$ with $D\otimes_{K,\sigma} \R \cong M_2(\R)$ for every embedding $\sigma : K \hookrightarrow \R.$ Now, let $a \mapsto \textrm{Trd}_{D/K}(a)-a$ be the canonical involution on $D.$ Then there is an element $b \in D$ such that $b^2 \in K$ is totally negative, and such that $a^{\vee}=b (\textrm{Trd}_{D/K}(a)-a) b^{-1}$ for all $a \in D.$ We have an isomorphism $D \otimes_{\Q}\R \cong \prod_{\sigma : K \hookrightarrow \R} M_2(\R)$ such that the involution $^{\vee}$ on $D \otimes_{\Q} \R$ corresponds to the involution $(A_1, \cdots, A_e) \mapsto (A_1^t , \cdots, A_e^t)$ where $e=[K:\Q].$ \\

(iii) Type III: $K_0 = K$ is a totally real field, and $D$ is a quaternion algebra over $K$ with $D\otimes_{K,\sigma} \R \cong \mathbb{H}$ for every embedding $\sigma : K \hookrightarrow \R$ (where $\mathbb{H}$ is the Hamiltonian quaternion algebra over $\R$). Now, let $a \mapsto \textrm{Trd}_{D/K}(a)-a$ be the canonical involution on $D.$ Then $^{\vee}$ is equal to the canonical involution on $D.$ We have an isomorphism $D \otimes_{\Q}\R \cong \prod_{\sigma : K \hookrightarrow \R} \mathbb{H}$ such that the involution $^{\vee}$ on $D \otimes_{\Q} \R$ corresponds to the involution $(\alpha_1, \cdots, \alpha_e) \mapsto (\overline{\alpha_1} , \cdots, \overline{\alpha_e})$ where $e=[K:\Q].$ \\

(iv) Type IV: $K_0 $ is a totally real field, $K$ is a totally imaginary quadratic field extension of $K_0$. Write $a \mapsto \overline{a}$ for the unique non-trivial automorphism of $K$ over $K_0$; this automorphism is usually referred to as complex conjugation. If $\nu$ is a finite place of $K$, write $\overline{\nu}$ for its complex conjugate. The algebra $D$ is a central simple algebra over $K$ such that: (a) If $\nu$ is a finite place of $K$ with $\nu = \overline{\nu},$ then $\textrm{inv}_{\nu}(D)=0$; (b) For any place $\nu$ of $K$, we have $\textrm{inv}_{\nu}(D)+\textrm{inv}_{\overline{\nu}}(D)=0$ in $\Q/\Z.$ If $d$ is the degree of $D$ as a central simple $K$-algebra, then we have an isomorphism $D \otimes_{\Q}\R \cong \prod_{\sigma : K_0 \hookrightarrow \R} M_d(\C)$ such that the involution $^{\vee}$ on $D \otimes_{\Q} \R$ corresponds to the involution $(A_1, \cdots, A_{e_0}) \mapsto (\overline{A_1}^t , \cdots, \overline{A_{e_0}}^t)$ where $e_0=[K_0:\Q].$ \\

Keeping the notations as above, we let
\begin{equation*}
  e_0= [K_0 :\Q],~~~e=[K:\Q],~~~\textrm{and}~~~d=[D:K]^{\frac{1}{2}}.
\end{equation*}

As our last preliminary fact of this section, we impose some numerical restrictions on those values $e_0, e,$ and $m$ in the next table, following \cite[\S21]{8}.
\begin{center}
  \begin{tabular}{|c|c|c|}
\hline
$$ & $\textrm{char}(k)=0$ & $\textrm{char}(k)=p>0$ \\
\hline
$\textrm{Type I}$ & $e|g$ & $e|g$  \\
\hline
$\textrm{Type II}$ & $2e|g$  & $2e|g$ \\
\hline
$\textrm{Type III}$ & $2e|g$ & $e|g$ \\
\hline
$\textrm{Type IV}$ & $e_0 d^2 |g$ &$ e_0 d |g $ \\
\hline
\end{tabular}
\vskip 4pt
\textnormal{Table 1}
\end{center}

\subsection{The theorem of Tate}\label{thm Tate sec}
\qquad In this section, we recall an important theorem of Tate, and give some interesting consequences of it. \\

Let $k$ be a field and let $l$ be a prime number with $l \ne \textrm{char}(k)$. If $X$ is an abelian variety of dimension $g$ over $k,$ then we can introduce the Tate $l$-module $T_l X$ and the corresponding $\Q_l$-vector space $V_l X :=T_l X \otimes_{\Z_l} \Q_l.$ It is well-known that $T_l X$ is a free $\Z_l$-module of rank $2g$ and $V_l X$ is a $2g$-dimensional $\Q_l$-vector space. In \cite{13}, Tate showed the following important result:

\begin{theorem}\label{thm Tate}
Let $k$ be a finite field, $\overline{k}$ an algebraic closure of $k,$ and let $\Gamma = \textrm{Gal}(\overline{k}/k).$ If $l$ is a prime number with $l \ne \textrm{char}(k),$ then we have: \\
(a) For any abelian variety $X$ over $k,$ the representation
\begin{equation*}
 \rho_l =\rho_{l,X} : \Gamma \rightarrow \textrm{GL}(V_l X)
\end{equation*}
is semisimple. \\
(b) For any two abelian varieties $X$ and $Y$ over $k,$ the map
\begin{equation*}
 \Z_l \otimes_{\Z} \textrm{Hom}_k(X,Y) \rightarrow \textrm{Hom}_{\Gamma}(T_l X, T_l Y)
\end{equation*}
is an isomorphism.
\end{theorem}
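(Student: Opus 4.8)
The plan is to follow Tate's original argument, deriving both assertions from a single finiteness theorem for abelian varieties over finite fields. Write $V = V_l X$, let $\pi \in \textrm{End}_k(X)$ denote the $q$-power Frobenius, and recall that $\Gamma$ is topologically generated by $\pi$, so that $\textrm{End}_{\Gamma}(V)$ is precisely the commutant of $F := \rho_l(\pi)$ in $\textrm{End}_{\Q_l}(V)$. I would first dispose of the elementary half of (b). Since $\textrm{Hom}_k(X,Y)$ is a finitely generated free $\Z$-module, injectivity of the map in (b) together with torsion-freeness of its cokernel both reduce to the statement that an $f \in \textrm{Hom}_k(X,Y)$ whose induced map on Tate modules is divisible by $l$ is itself divisible by $l$ in $\textrm{Hom}_k(X,Y)$; this follows from the identification of $T_l X / l T_l X$ with $X[l](\overline{k})$ and inspection of kernels. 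Tensoring with $\Z_l$ then leaves only surjectivity to prove. Replacing $(X,Y)$ by $X \times Y$ and using the resulting direct-sum decomposition of Tate modules, I may assume $Y = X$; passing to $\Q_l$-coefficients, the goal becomes surjectivity of
\[ A := \textrm{End}_k^0(X) \otimes_{\Q} \Q_l \longrightarrow \textrm{End}_{\Gamma}(V), \]
where $A$ lands in $\textrm{End}_{\Gamma}(V)$ because endomorphisms defined over $k$ commute with $\Gamma$.

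The engine of the proof is the finiteness theorem: over the finite field $k$, the isogeny class of $X$ contains only finitely many $k$-isomorphism classes. I would prove this by matching $k$-rational finite subgroup schemes of $X[l^n]$ with $\Gamma$-stable lattices $M$ satisfying $l^n T_l X \subseteq M \subseteq T_l X$, and then bounding the number of isomorphism types among the quotients $X/(\text{subgroup})$ by a reduction-theory argument controlling the associated polarization forms. Granting this, the key construction attaches to any $\Gamma$-stable subspace $W \subseteq V$ the family of full $\Gamma$-stable lattices $M_n := (T_l X \cap W) + l^n T_l X$, each of finite index in $T_l X$, hence defining an isogeny $X \to X_n$. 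Since all the $X_n$ lie in one isogeny class, finiteness forces $X_n \cong_k X_m$ for some $n \ne m$; composing the two isogenies with this isomorphism yields an endomorphism of $X$ whose action on $V$ is, up to scaling, close to the projection onto $W$ along a complement. Iterating and passing to the $l$-adic limit produces, in the closure of $A$, an idempotent with image $W$; as $A$ is finite-dimensional over $\Q_l$ it is closed, so this idempotent lies in $A$ itself.

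This one construction settles both parts. For (a): the idempotent $e \in A \subseteq \textrm{End}_{\Gamma}(V)$ with $eV = W$ is $\Gamma$-equivariant, so $(1-e)V$ is a $\Gamma$-stable complement to $W$; as $W$ was arbitrary, $V$ is a semisimple $\Gamma$-module, which is exactly the semisimplicity of $\rho_l$. For (b): having an idempotent of $A$ onto every $\Gamma$-stable subspace means $A$ contains projectors onto all $\Q_l[F]$-submodules of $V$, and by (a) these projectors span the full commutant $\textrm{End}_{\Gamma}(V)$; hence $A = \textrm{End}_{\Gamma}(V)$, giving surjectivity and completing (b).

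The step I expect to be the genuine obstacle is the finiteness theorem together with the extraction of an honest idempotent from the lattice family $M_n$: controlling the polarizations so that only finitely many isomorphism types of $X_n$ appear, and then verifying that the endomorphisms manufactured from a coincidence $X_n \cong_k X_m$ actually converge $l$-adically to a projector with the prescribed image, is where the finiteness of the base field is used in an essential way and where all the real work lies. By contrast, the reductions of the first paragraph and the commutant bookkeeping of the third are formal once this input is available.
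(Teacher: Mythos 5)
The paper itself gives no proof of this theorem (it simply cites Tate's paper \cite{13}), so the benchmark is Tate's original argument, and that is exactly what you are reconstructing: the elementary injectivity/torsion-freeness reduction, the finiteness theorem for an isogeny class, the lattice family $M_n = (T_l X \cap W) + l^n T_l X$, and the production of elements of $E_l := \textrm{End}_k^0(X)\otimes_{\Q}\Q_l$ with prescribed image are all as in Tate, and your deduction of (a) from the existence of $\Gamma$-equivariant idempotents is correct. Two remarks on that part: Tate does not obtain the idempotent as an $l$-adic limit; the limit only yields an element $u \in E_l$ with $uV = W$, and an idempotent with the same image is then extracted algebraically (write $uE_l = eE_l$ in the semisimple algebra $E_l$; a generator $e$ of this right ideal is an idempotent with $\textrm{im}\,e = \textrm{im}\,u = W$). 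Demanding that the limit itself be a projector, as you do, is an unnecessary extra difficulty. Your honest flagging of the polarization-degree control in the finiteness step is fair; that is indeed where the hard work sits.

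The genuine gap is in your final deduction of (b). You claim that since $E_l$ contains a projector onto every $\Gamma$-stable subspace of $V$, and ``these projectors span the full commutant $\textrm{End}_{\Gamma}(V)$,'' surjectivity follows. The spanning claim is false in general: take $X$ an ordinary elliptic curve with $\textrm{End}_k^0(X)=K$ an imaginary quadratic field and $l$ inert in $K$; then $\Q_l[F] = K\otimes_{\Q}\Q_l =: L$ is a field and $V$ is one-dimensional over $L$, so the only $\Gamma$-stable subspaces are $0$ and $V$, the only available projectors are $0$ and $\textrm{id}_V$, and their span is $\Q_l\cdot \textrm{id}_V$, whereas $\textrm{End}_{\Gamma}(V) = L$ has $\Q_l$-dimension $2$. (Surjectivity still holds there, but not by your argument.) This is precisely why Tate's proof of (b) does not stay inside $V$: given $\varphi \in \textrm{End}_{\Gamma}(V)$, one applies the key lemma to $X \times X$, taking $W$ to be the graph of $\varphi$ inside $V_l(X\times X)=V\oplus V$. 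This produces $u=(u_{ij})\in M_2(E_l)$ with $u(V\oplus V)$ equal to the graph, whence $u_{21}x+u_{22}y = \varphi(u_{11}x+u_{12}y)$ for all $x,y$ and the map $s:(x,y)\mapsto u_{11}x+u_{12}y$ is surjective onto $V$. Since $s$ is equivariant for the (semisimple) centralizer $B$ of $E_l$ in $\textrm{End}_{\Q_l}(V)$ acting diagonally, it splits $B$-linearly by some $\sigma=(\sigma_1,\sigma_2)$ with $\sigma_i \in \textrm{End}_B(V)=E_l$ (double centralizer theorem, using semisimplicity of $E_l$), and then $\varphi = u_{21}\sigma_1+u_{22}\sigma_2 \in E_l$. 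So the passage to products in your first paragraph is not merely a reduction to the case $Y=X$: applying the idempotent construction on $X\times X$ to graph subspaces, together with the bicommutant argument, is where surjectivity actually comes from, and without it your argument only shows that $E_l$ contains the span of certain idempotents, which can be a proper subalgebra of the commutant.
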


Now, we recall that an abelian variety $X$ over a finite field $k$ is called \emph{elementary} if $X$ is $k$-isogenous to a power of a simple abelian variety over $k.$ Then, as an interesting consequence of Theorem \ref{thm Tate}, we have the following
\begin{corollary}\label{cor TateEnd0}
  Let $X$ be an abelian variety of dimension $g$ over a finite field $k.$ Then we have:\\
  (a) The center $Z$ of $\textrm{End}_k^0(X)$ is the subalgebra $\Q[\pi_X].$ In particular, $X$ is elementary if and only if $\Q[\pi_X]=\Q(\pi_X)$ is a field, and this occurs if and only if $f_X$ is a power of an irreducible polynomial in $\Q[t]$ where $f_X$ denotes the characteristic polynomial of $\pi_X.$ \\
 (b) Suppose that $X$ is elementary. Let $h=f_{\Q}^{\pi_X}$ be the minimal polynomial of $\pi_X$ over $\Q$. Further, let $d=[\textrm{End}_k^0(X):\Q(\pi_X)]^{\frac{1}{2}}$ and $e=[\Q(\pi_X):\Q].$ Then $de =2g$ and $f_X = h^d.$ \\
(c) We have $2g \leq \textrm{dim}_{\Q} \textrm{End}^0_k (X) \leq (2g)^2$ and $X$ is of CM-type. \\
(d) The following conditions are equivalent: \\
  \indent (d-1) $\textrm{dim}_{\Q} \textrm{End}_k^0(X)=2g$; \\
  \indent (d-2) $\textrm{End}_k^0(X)=\Q[\pi_X]$; \\
  \indent (d-3) $\textrm{End}_k^0(X)$ is commutative; \\
  \indent (d-4) $f_X$ has no multiple root. \\
(e) The following conditions are equivalent: \\
  \indent (e-1) $\textrm{dim}_{\Q} \textrm{End}_k^0(X)=(2g)^2$; \\
  \indent (e-2) $\Q[\pi_X]=\Q$; \\
  \indent (e-3) $f_X$ is a power of a linear polynomial; \\
  \indent (e-4) $\textrm{End}^0_k(X) \cong M_g(D_{p,\infty})$ where $D_{p,\infty}$ is the unique quaternion algebra over $\Q$ that is ramified at $p$ and $\infty$, and split at all other primes; \\
  \indent (e-5) $X$ is supersingular with $\textrm{End}_k(X) = \textrm{End}_{\overline{k}}(X_{\overline{k}})$; \\
  \indent (e-6) $X$ is isogenous to $E^g$ for a supersingular elliptic curve $E$ over $k$ all of whose endomorphisms are defined over $k.$
\end{corollary}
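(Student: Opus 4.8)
The entire corollary rests on a single reduction. By Tate's Theorem~\ref{thm Tate}(b), tensoring the isomorphism $\Z_l \otimes_\Z \mathrm{End}_k(X) \xrightarrow{\sim} \mathrm{Hom}_\Gamma(T_l X, T_l X)$ with $\Q_l$ and using that $\Gamma$ is topologically generated by the Frobenius yields a $\Q_l$-algebra isomorphism
\[
\mathrm{End}_k^0(X) \otimes_\Q \Q_l \;\cong\; \mathrm{End}_{\Q_l[\pi_X]}(V_l X),
\]
that is, $\mathrm{End}_k^0(X)\otimes\Q_l$ is the commutant of $\Q_l[\pi_X]$ in $\mathrm{End}_{\Q_l}(V_l X)$, and by Theorem~\ref{thm Tate}(a) the operator $\pi_X$ is \emph{semisimple}, so $\Q_l[\pi_X]$ is commutative semisimple. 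First I would deduce (a): since $\pi_X$ is a $k$-endomorphism it is central, so $\Q[\pi_X]\subseteq Z:=Z(\mathrm{End}_k^0(X))$; applying the double centralizer theorem to the semisimple subalgebra $\Q_l[\pi_X]$ shows its bicommutant is itself, whence $Z\otimes\Q_l = \Q_l[\pi_X]=\Q[\pi_X]\otimes\Q_l$ and therefore $Z=\Q[\pi_X]$. The ``in particular'' follows because, writing $X\sim \prod Y_i^{m_i}$ and $\mathrm{End}_k^0(X)\cong\prod M_{m_i}(D_i)$, the center $\prod Z(D_i)$ is a field exactly when there is one factor, i.e. when $X$ is elementary; and $\Q[\pi_X]$ is a field iff the minimal polynomial $h$ of $\pi_X$ is irreducible, which by semisimplicity of $\pi_X$ is equivalent to $f_X$ being a power of a single irreducible polynomial.

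For (b)--(c) I would decompose $V_l X$ as a module over the commutative semisimple algebra $\Q_l[\pi_X]$. Write $\Q[\pi_X]=\prod_i K_i$ with $K_i=\Q(\pi_i)$, and over $\Q_l$ let $K_i\otimes\Q_l=\prod_\lambda K_\lambda$, $V_l X=\bigoplus_\lambda V_\lambda$, $n_\lambda=\dim_{K_\lambda}V_\lambda$, $c_\lambda=[K_\lambda:\Q_l]$. Matching central idempotents on both sides of the displayed isomorphism shows each local factor $D_i\otimes_{K_i}K_\lambda$ is split, equal to $\mathrm{End}_{K_\lambda}(V_\lambda)=M_{n_\lambda}(K_\lambda)$, forcing $n_\lambda=m_i d_i$ to be constant over $\lambda\mid i$. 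In the elementary case this gives $2g=\sum_\lambda n_\lambda c_\lambda = d\,[K:\Q]=de$ and, computing the characteristic polynomial of $\pi_X$ factor by factor, $f_X=\prod_\lambda h_\lambda^{\,d}=h^d$, proving (b). The bounds in (c) come from $\dim_\Q \mathrm{End}_k^0(X)=\sum_\lambda n_\lambda^2 c_\lambda$ together with $2g=\sum_\lambda n_\lambda c_\lambda$: the elementary inequalities $\sum n_\lambda c_\lambda\le\sum n_\lambda^2 c_\lambda$ and $\sum n_\lambda^2 c_\lambda\le\sum n_\lambda^2 c_\lambda^2\le(\sum n_\lambda c_\lambda)^2$ give $2g\le\dim_\Q\mathrm{End}_k^0(X)\le(2g)^2$. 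For the CM assertion I would exhibit in each factor $M_{m_i}(D_i)$ a maximal subfield, of degree $m_i d_i e_i=2\dim(Y_i^{m_i})$ over $\Q$; the product of these is a commutative étale subalgebra of $\mathrm{End}_k^0(X)$ of $\Q$-dimension $2g$, i.e. a CM structure.

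Part (d) is the equality case of the lower bound: $\sum n_\lambda^2 c_\lambda=\sum n_\lambda c_\lambda$ holds iff every $n_\lambda=1$, which is precisely the condition that the commutant collapse to $\Q_l[\pi_X]$, i.e. $\mathrm{End}_k^0(X)=\Q[\pi_X]$; this makes (d-1)$\Leftrightarrow$(d-2) immediate, (d-2)$\Rightarrow$(d-3) is trivial, and (d-3)$\Rightarrow$(d-2) holds because a commutative algebra equals its own center $\Q[\pi_X]$, while all $n_\lambda=1$ says $f_X=\prod_\lambda h_\lambda$ is separable, giving (d-4). Part (e) is the equality case of the upper bound: $\sum n_\lambda^2 c_\lambda=(\sum n_\lambda c_\lambda)^2$ forces a single place $\lambda$ with $c_\lambda=1$ and $n_\lambda=2g$, i.e. $\Q[\pi_X]=\Q$, giving (e-1)$\Leftrightarrow$(e-2); and $\Q[\pi_X]=\Q$ means $h$ is linear and, using semisimplicity so that $\pi_X$ is scalar, $f_X=(t-\pi_X)^{2g}$ is a power of a linear polynomial, which is (e-3). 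Since $\pi_X$ is a Weil $q$-number lying in $\Q$, necessarily $\pi_X=\pm p^{a/2}$ with $a$ even, so all Frobenius slopes equal $1/2$ and $X$ is supersingular.

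The genuinely harder input is the geometric half of (e), namely the identifications (e-4), (e-5), (e-6), for which Tate's theorem no longer suffices. Here I would invoke Honda--Tate theory (\S\ref{thm Honda}) to see that the simple isogeny factor attached to the Weil number $\pm p^{a/2}$ is a supersingular elliptic curve $E$ with $\mathrm{End}^0(E)\cong D_{p,\infty}$, so that $X\sim E^g$ and $\mathrm{End}_k^0(X)\cong M_g(D_{p,\infty})$; the precise local invariants (ramification exactly at $p$ and $\infty$ with invariant $\tfrac12$, and splitting at all $l\ne p$, the last already visible above) come from Tate's computation of $\mathrm{inv}_v$ of the endomorphism algebra. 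The equivalence with (e-5)/(e-6) rests on the criterion that $\pi_X$ is a rational integer (a scalar) if and only if $\pi_X$ is central in $\mathrm{End}_{\overline k}^0(X_{\overline k})$, i.e. if and only if every geometric endomorphism is already defined over $k$. I expect this ``descent of endomorphisms'' step, together with correctly pinning down the quaternion invariants, to be the main obstacle, and I would lean on the result of Waterhouse (\S\ref{thm waterhouse}) to close it.
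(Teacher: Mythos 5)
The paper itself offers no proof of this corollary beyond the citation of Tate's Theorem 2 in [13], so the real comparison is with Tate's own argument, and your proposal is essentially that argument: identify $\mathrm{End}_k^0(X)\otimes_{\Q}\Q_l$ with the commutant of the commutative semisimple algebra $\Q_l[\pi_X]$ in $\mathrm{End}_{\Q_l}(V_lX)$, compute the center via the bicommutant theorem, decompose $V_lX$ over the factors $K_\lambda$ of $\Q[\pi_X]\otimes_{\Q}\Q_l$ to obtain $2g=\sum_\lambda n_\lambda c_\lambda$ and $\dim_{\Q}\mathrm{End}_k^0(X)=\sum_\lambda n_\lambda^2c_\lambda$, and read off (b), (c), (d) and the two equality cases in (e). Parts (a)--(d) and the equivalences (e-1)$\Leftrightarrow$(e-2)$\Leftrightarrow$(e-3) are correct as you present them (the existence of strictly maximal subfields in each $M_{m_i}(D_i)$, which you use for the CM statement, is standard for central simple algebras over number fields).

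The one step I would not accept as stated is the ``criterion'' in your final paragraph: the biconditional ``$\pi_X$ is a rational integer if and only if $\pi_X$ is central in $\mathrm{End}^0_{\overline{k}}(X_{\overline{k}})$'' is false in general. An ordinary elliptic curve over $\F_p$ is a counterexample: its geometric endomorphism algebra is the commutative field $\Q(\pi)$, so $\pi$ is central there and every geometric endomorphism is already defined over the base field, yet $\pi\notin\Q$. Your second ``i.e.''\ is fine, since Galois acts on geometric endomorphisms by conjugation by $\pi_X$, so centrality of $\pi_X$ is exactly the statement $\mathrm{End}_k(X)=\mathrm{End}_{\overline{k}}(X_{\overline{k}})$; it is the first equivalence that needs supersingularity, which is precisely why (e-5) and (e-6) carry that hypothesis. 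The repair is cheap: for (e-2)$\Rightarrow$(e-5) you only need the true direction ($\pi_X\in\Q$ forces $\pi_X$ central, hence all geometric endomorphisms descend) together with supersingularity of the Honda--Tate factor; for (e-5)$\Rightarrow$(e-1) argue instead that supersingularity gives $\dim_{\Q}\mathrm{End}^0_{\overline{k}}(X_{\overline{k}})=(2g)^2$ and the assumed equality of endomorphism rings transfers this dimension down to $\mathrm{End}^0_k(X)$. With that substitution, your use of Honda--Tate for the elliptic factor and of the local invariants to pin down $M_g(D_{p,\infty})$ closes part (e) exactly as you planned.
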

\begin{proof}
  For a proof, see \cite[Theorem 2]{13}. 
\end{proof}

For a precise description of the structure of the endomorphism algebra of an elementary abelian variety $X$, viewed as a simple algebra over its center $\Q[\pi_X]$, we record the following two results:
\begin{proposition}\label{local inv}
  Let $X$ be an elementary abelian variety over a finite field $k=\F_q.$ Let $K=\Q[\pi_X].$ If $\nu$ is a place of $K$, then the local invariant of $\textrm{End}_k^0(X)$ in the Brauer group $\textrm{Br}(K_{\nu})$ is given by
  \begin{equation*}
    \textrm{inv}_{\nu}(\textrm{End}_k^0(X))=\begin{cases} 0 & \mbox{if $\nu$ is a finite place not above $p$}; \\ \frac{\textrm{ord}_{\nu}(\pi_X)}{\textrm{ord}_{\nu}(q)} \cdot [K_{\nu}:\Q_p] & \mbox{if $\nu$ is a place above $p$}; \\ \frac{1}{2} & \mbox{if $\nu$ is a real place of $K$}; \\ 0 & \mbox{if $\nu$ is a complex place of $K$}. \end{cases}
  \end{equation*}
\end{proposition}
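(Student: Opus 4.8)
The plan is to compute $\textrm{inv}_\nu(D)$ for $D=\textrm{End}_k^0(X)$ one class of places of $K=\Q[\pi_X]$ at a time, since the available tool differs in each range. Write $\pi=\pi_X$. By Corollary \ref{cor TateEnd0}, $K$ is a field and $D$ is a central division algebra over $K$ with $[D:K]=d^2$ and $de=2g$. The numbers $\textrm{inv}_\nu(D)\in\textrm{Br}(K_\nu)$ are the local invariants of a single global central simple algebra, so they satisfy the reciprocity relation $\sum_\nu \textrm{inv}_\nu(D)=0$ in $\Q/\Z$; I will keep this as a running consistency check.

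For a finite place $\nu$ lying over a rational prime $\ell\neq p$, I would apply Tate's theorem directly. By Theorem \ref{thm Tate}(b) one has $D\otimes_\Q\Q_\ell\cong\textrm{End}_{\Q_\ell[\Gamma]}(V_\ell X)$, and since $\Gamma$ is topologically generated by the Frobenius, which acts on $V_\ell X$ through the central element $\pi$, the space $V_\ell X$ is a module over $K\otimes_\Q\Q_\ell=\prod_{\nu\mid\ell}K_\nu$. Comparing $\dim_{\Q_\ell}V_\ell X=2g=de$ with the semisimplicity in Theorem \ref{thm Tate}(a), this module is free of rank $d$, so its commutant is $\prod_{\nu\mid\ell}M_d(K_\nu)$. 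Reading off the factor at $\nu$ gives $D\otimes_K K_\nu\cong M_d(K_\nu)$ and hence $\textrm{inv}_\nu(D)=0$, settling all finite places away from $p$.

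The places $\nu$ above $p$ are the crux, and here I expect the main difficulty to lie. The $\ell$-adic Tate module is useless at $p$, so instead I would pass to the $p$-divisible group $X[p^\infty]$ and its covariant Dieudonn\'e module over $W=W(\overline{\F}_q)$. The associated isocrystal is a module over $K\otimes_\Q\Q_p=\prod_{\nu\mid p}K_\nu$ carrying a Frobenius on which $\pi$ acts centrally, and the $\nu$-component has normalized slope $\textrm{ord}_\nu(\pi)/\textrm{ord}_\nu(q)$. By the Dieudonn\'e--Manin classification the endomorphism algebra of this $\nu$-component is the division algebra over $K_\nu$ whose invariant is that slope scaled by the local degree, and matching it with $D\otimes_K K_\nu$ through the $p$-adic analogue of Tate's theorem yields $\textrm{inv}_\nu(D)=\frac{\textrm{ord}_\nu(\pi)}{\textrm{ord}_\nu(q)}\,[K_\nu:\Q_p]$. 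The delicate points are the correct slope normalization and the bookkeeping of the factor $[K_\nu:\Q_p]$ coming from restriction of Brauer classes along $\Q_p\hookrightarrow K_\nu$; this is the step I would treat most carefully.

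Finally the archimedean places. A complex place has $K_\nu=\C$, and $\textrm{Br}(\C)=0$ forces $\textrm{inv}_\nu(D)=0$. For a real place the Weil estimate $|\sigma(\pi)|=\sqrt q$ gives $\sigma(\pi)=\pm\sqrt q\in\R$, so $\pi$ is a real Weil number, the Rosati involution fixes $K$, and $(D,{}^\vee)$ is of the first kind with $K=K_0$ totally real. Invoking Albert's classification, the positivity of the Rosati involution together with $D$ being a division algebra should place us in Type III rather than Type II, whence $D\otimes_{K,\sigma}\R\cong\mathbb{H}$ and $\textrm{inv}_\nu(D)=\tfrac12$ at every real place. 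The genuine subtlety is the exclusion of Type II, which would instead give $M_2(\R)$ and invariant $0$, and which reciprocity alone does not rule out when $K$ has two real places; I would settle this by analysing the \emph{type} of the Rosati involution, showing it is symplectic rather than orthogonal (equivalently, that its space of symmetric elements has the dimension forcing $\mathbb{H}$), or by appealing to the fact that Type II does not arise for abelian varieties over finite fields. As a final check, the four contributions sum to $0$ in $\Q/\Z$, in agreement with reciprocity.
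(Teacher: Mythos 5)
The paper offers no proof of this proposition at all---it simply cites \cite[Corollary 16.30]{3}---so your proposal must stand on its own; in outline it follows the standard argument from that reference (Tate's $\ell$-adic theorem away from $p$, Dieudonn\'e theory at $p$, a separate archimedean analysis). Your treatment of the finite places prime to $p$ is essentially correct, with one small caveat: to get that $V_\ell X$ is \emph{free} of rank $d$ over $\prod_{\nu \mid \ell} K_\nu$ you need $f_X = h^d$ from Corollary \ref{cor TateEnd0}(b), not just the dimension count $2g = de$, since a priori the multiplicities of the different factors $K_\nu$ could be unequal. The complex places are trivially correct, and the $p$-adic paragraph, while deferring the slope bookkeeping, is the right mechanism and is honestly flagged as such.

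The genuine gap is at the real places. Your central claim there---that positivity of the Rosati involution, together with $D$ being a division algebra, forces Type III rather than Type II---is false: Albert's classification \emph{is} the list of division algebras admitting positive involutions, and Type II pairs do admit them (they occur as Rosati involutions of complex abelian varieties with multiplication by an indefinite quaternion algebra). So no analysis of positivity, or of the symplectic versus orthogonal type of the involution, can succeed without invoking the finite-field hypothesis; and your fallback of ``appealing to the fact that Type II does not arise for abelian varieties over finite fields'' is circular here, because that fact is standardly \emph{deduced} from the invariant formula you are proving. You also never exclude Type I ($D = K$ totally real), which would give real invariant $0$ and which ``$D$ is a division algebra'' does not rule out, a field being a division algebra. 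Both exclusions can be carried out with tools already in the paper: by Corollary \ref{cor TateEnd0}(b) one has $de = 2g$; Type I has $d=1$, hence $e = 2g$, contradicting the restriction $e \mid g$ of Table 1, while Type II has $d = 2$, hence $e = g$, contradicting $2e \mid g$. Only Type III survives, giving $D \otimes_{K,\nu} \R \cong \mathbb{H}$ and $\textrm{inv}_\nu(D) = \frac{1}{2}$. Alternatively, note that a real place forces $\pi_X = \pm\sqrt{q}$: if $q$ is a square then $K = \Q$, your $p$-adic formula gives $\textrm{inv}_p(D) = \frac{1}{2}$, and reciprocity finishes; if $q$ is not a square then $K = \Q(\sqrt{p})$, the $p$-adic invariant is $\frac{1}{2}\cdot 2 \equiv 0$, reciprocity makes the two real invariants equal, and if both vanished $D$ would be Brauer-trivial, forcing the simple isogeny factor to be an elliptic curve with characteristic polynomial $t^2 - q$, which is impossible since then $\deg(\pi) = f(0) = -q < 0$.
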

\begin{proof}
  For a proof, see \cite[Corollary 16.30]{3}.
\end{proof}

\begin{proposition}\label{index end alg}
  Let $X$ be a simple abelian variety over a finite field $k.$ Let $d$ be the degree of the division algebra $D:=\textrm{End}_k^0(X)$ over its center $\Q(\pi_X)$ (so that $d=[D:\Q(\pi_X)]^{\frac{1}{2}}$ and $f_X = (f_{\Q}^{\pi_X})^d$). Then $d$ is the least common denominator of the local invariants $\textrm{inv}_{\nu}(D).$
\end{proposition}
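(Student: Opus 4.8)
The plan is to deduce this from the standard local--global theory of central simple algebras over a number field, applied to $D$ over its center $K=\Q(\pi_X)$. First I would observe that $K$ is genuinely a number field: $\pi_X$ is a Weil $q$-number, hence an algebraic integer, so $\Q(\pi_X)$ is a finite extension of $\Q$. Since $X$ is simple, $D=\textrm{End}_k^0(X)$ is a division algebra, and by Corollary~\ref{cor TateEnd0}(a) its center is $K$, so $D$ is a central division $K$-algebra of degree $d=[D:K]^{\frac12}$. Because $D$ is already a division algebra, $d$ is not merely its degree but also its \emph{Schur index}: in the Wedderburn decomposition $D\cong M_n(D')$ with $D'$ a division algebra we have $n=1$ and $D'=D$, so $\textrm{ind}(D)=[D:K]^{\frac12}=d$.

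Next I would pass to the Brauer group and argue formally. The reciprocity sequence of class field theory
\[ 0 \to \textrm{Br}(K) \to \bigoplus_{\nu} \textrm{Br}(K_{\nu}) \xrightarrow{\ \sum_{\nu} \textrm{inv}_{\nu}\ } \Q/\Z \to 0 \]
shows that the invariant map $[D]\mapsto (\textrm{inv}_{\nu}(D))_{\nu}$ embeds $\textrm{Br}(K)$ into $\bigoplus_{\nu}\Q/\Z$. The order of an element of a direct sum is the least common multiple of the orders of its components, and the order of a class $a/m\in\Q/\Z$ in lowest terms is exactly its denominator $m$. Hence the order of $[D]$ in $\textrm{Br}(K)$ --- that is, the \emph{exponent} of $D$ --- equals the least common multiple of the denominators of the $\textrm{inv}_{\nu}(D)$, i.e.\ their least common denominator.

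It then remains to identify this exponent with the index $d$, and this is the single nontrivial input and the genuine obstacle: over an arbitrary field the exponent and index of a central simple algebra need only share their prime divisors, but over a number field they coincide. I would invoke the Brauer--Hasse--Noether theorem for this equality (equivalently, the statement in Reiner's \emph{Maximal Orders} that over a global field the Schur index equals the least common multiple of the local Schur indices, together with the local fact that over a non-archimedean completion the Schur index of a division algebra equals the denominator of its invariant, while $\mathbb{H}$ contributes index $2$ at a real place). Chaining the equalities gives
\[ d = \textrm{ind}(D) = \textrm{exp}(D) = \textrm{lcm}_{\nu}\,(\text{order of }\textrm{inv}_{\nu}(D)), \]
which is precisely the least common denominator of the local invariants.

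Everything apart from the index--exponent equality is formal: the injectivity of the global-to-local invariant map and elementary arithmetic in $\Q/\Z$. Thus the crux is to quote the local--global principle for the Schur index correctly rather than to carry out any computation; in particular the explicit values of the $\textrm{inv}_{\nu}(D)$ furnished by Proposition~\ref{local inv} are not even needed for this purely structural statement.
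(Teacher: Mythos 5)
Your proposal is correct. A point of comparison worth noting: the paper does not actually prove this proposition at all --- its ``proof'' is the single line ``For a proof, see \cite[Corollary 16.32]{3},'' so there is no in-paper argument to measure yours against. What you have written is the standard argument that such a reference encapsulates, and you have assembled it correctly: since $X$ is simple, $D$ is a division algebra with center $K=\Q(\pi_X)$ (Corollary \ref{cor TateEnd0}), so its degree $d$ coincides with its Schur index; the injectivity of the invariant map in the reciprocity sequence $0 \to \textrm{Br}(K) \to \bigoplus_{\nu}\textrm{Br}(K_{\nu}) \to \Q/\Z \to 0$ identifies the exponent of $[D]$ with the least common multiple of the orders of the $\textrm{inv}_{\nu}(D)$, i.e.\ with their least common denominator; and the Albert--Brauer--Hasse--Noether equality of index and exponent over a global field closes the chain $d = \textrm{ind}(D) = \textrm{exp}(D) = \textrm{lcm}_{\nu}(\textrm{order of } \textrm{inv}_{\nu}(D))$. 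You also correctly isolated the index--exponent equality as the one genuinely deep input (it fails over general fields, where index and exponent need only share prime factors), and your observation that the explicit values computed in Proposition \ref{local inv} are irrelevant to this structural statement is accurate --- those values are what one feeds \emph{into} this proposition (as in Remark \ref{Qpi Real}), not ingredients of its proof. The only stylistic remark is that your parenthetical appeal to the local facts (non-archimedean index equals the denominator of the invariant, real places contribute index $2$) is redundant given the exponent route you chose; either chain suffices on its own.
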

\begin{proof}
  For a proof, see \cite[Corollary 16.32]{3}.
\end{proof}

\begin{remark}\label{Qpi Real}
  Let $X$ be a simple abelian variety over $k=\F_q$. Since $X$ is of CM-type by Corollary \ref{cor TateEnd0}, we have that $D:=\textrm{End}_k^0(X)$ is either of Type III or of Type IV in the Albert's classification. We can see in which case we are by looking at the center $K:=\Q[\pi_X].$ In fact, it can be shown that either $K$ is totally real ($D$ is of Type III) or $K$ is a CM-field ($D$ is of Type IV). \\

  The first case is very exceptional. In fact, let $h=f_{\Q}^{\pi_X},$ the minimal polynomial of $\pi_X$ over $\Q.$ As in Proposition \ref{index end alg}, we have $f_X = h^d$ where $d$ is the degree of $D$ over $K.$ If $K$ is totally real, then all complex zeros of $f_X$ are real numbers of absolute value $\sqrt{q}.$ We distinguish two cases: \\

  (1) If $q$ is a square, then $h=t \pm \sqrt{q}$ so that $K=\Q$ and $d=\frac{2g}{e}=2g$ by Corollary \ref{cor TateEnd0}-(b). By Proposition \ref{local inv}, we have $\textrm{inv}_p (D)=\frac{1}{2}=\textrm{inv}_{\infty}(D)$ and $\textrm{inv}_l (D)=0$ at all other places $l$ of $K.$ Then it follows from Proposition \ref{index end alg} that $d=2.$ Hence, we get $g=1$, and $X$ is an elliptic curve over $k$ and $D$ is the unique quaternion algebra over $\Q$ that is ramified at $p$ and $\infty$, and split at all other primes. This quaternion algebra over $\Q$ will be denoted by $D_{p,\infty}$ in the sequel. \\

  (2) If $q$ is not a square, then $h=t^2 -q$ so that $K=\Q[\sqrt{q}]=\Q[\sqrt{p}]$ and $d=\frac{2g}{e}=g$ by Corollary \ref{cor TateEnd0}-(b). In this case, there is a unique prime $\mathfrak{p}$ of $K$ above $p$ and we have $\textrm{inv}_{\mathfrak{p}}(D)=0.$ Hence, $D$ is the unique quaternion algebra over $K$ that is ramified at the two infinite places of $K$ and split at all finite primes. By Proposition \ref{index end alg}, it follows that $d=2$. Hence, we get $g=2$, and $X$ is a simple abelian surface over $k.$ If we extend scalars from $k$ to its quadratic extension $k^{\prime}:=\F_{q^2}$, then $X^{\prime}:=X \times_k k^{\prime}$ has characteristic polynomial $f_{X^{\prime}}(t)=(t-q)^4.$ In this case, we have that $\textrm{End}_{k^{\prime}}^0(X^{\prime})\cong M_2(D_{p,\infty})$ and $X^{\prime}$ is isogenous to $E^2$ where $E$ is a supersingular elliptic curve over $k^{\prime}$ with characteristic polynomial $f_E(t)=(t-q)^2.$ \\

  Except in these particular cases, the center $K=\Q[\pi_X]$ is always a CM-field. Note also that in case (2) above, the structure of the endomorphism algebra of $X$ changes when we extend scalars from $\F_q$ to $\F_{q^2}.$
\end{remark}

\subsection{Abelian varieties up to isogeny and Weil numbers: Honda-Tate theory}\label{thm Honda}
\qquad In this section, we recall an important theorem of Honda and Tate. To achieve our goal, we first give the following
\begin{definition}\label{qWeil Def}
  (a) A \emph{$q$-Weil number} is an algebraic integer $\pi$ such that $| \iota(\pi) | = \sqrt{q}$ for all embeddings $\iota : \Q[\pi] \hookrightarrow \C.$ \\
  (b) Two $q$-Weil numbers $\pi$ and $\pi^{\prime}$ are said to be \emph{conjugate} if they have the same minimal polynomial over $\Q,$ or equivalently, there is an isomorphism $\Q[\pi] \rightarrow \Q[\pi^{\prime}]$ sending $\pi$ to $\pi^{\prime}.$
\end{definition}

Regarding $q$-Weil numbers, the following facts are well-known:
\begin{remark}\label{qWeil rem}
Let $X$ and $Y$ be abelian varieties over a finite field $k=\F_q.$ Then we have: \\
(1) The Frobenius endomorphism $\pi_X$ is a $q$-Weil number. \\
(2) $X$ and $Y$ are $k$-isogenous if and only if $\pi_X$ and $\pi_Y$ are conjugate.
\end{remark}

Now, we introduce our main result of this section:

\begin{theorem}\label{thm HondaTata}
 For every $q$-Weil number $\pi$, there exists a simple abelian variety $X$ over $\F_q$ such that $\pi_X$ is conjugate to $\pi$, where $\pi_X$ denotes the Frobenius endomorphism of $X.$ Moreover, we have a bijection between the set of isogeny classes of simple abelian varieties over $\F_q$ and the set of conjugacy classes of $q$-Weil numbers given by $X \mapsto \pi_X$.
\end{theorem}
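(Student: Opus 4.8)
The plan is to treat the two assertions separately: the map $X \mapsto \pi_X$ is already essentially controlled by Tate's theorem, so the whole burden lies in the surjectivity (the existence half), which is Honda's contribution.

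First I would settle the well-definedness and injectivity of $X \mapsto \pi_X$ on isogeny classes. By Remark \ref{qWeil rem}(1) the Frobenius $\pi_X$ of a simple abelian variety is always a $q$-Weil number, and conjugate Weil numbers share the same minimal polynomial, so the assignment descends to a well-defined map on isogeny classes into conjugacy classes of $q$-Weil numbers. Injectivity is exactly Remark \ref{qWeil rem}(2): if $\pi_X$ and $\pi_Y$ are conjugate, then $X$ and $Y$ are $k$-isogenous. This in turn is read off from Theorem \ref{thm Tate}(b), since an isomorphism $\Q[\pi_X] \to \Q[\pi_Y]$ carrying Frobenius to Frobenius identifies the $\Gamma$-actions on the Tate modules, and Tate's full faithfulness then produces the isogeny. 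Thus the only remaining content is that every conjugacy class of $q$-Weil numbers actually occurs.

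For surjectivity, fix a $q$-Weil number $\pi$ and put $K = \Q[\pi]$, which (being generated by a single algebraic number) is a field. The strategy is to realize $\pi$, up to a power, by reduction of a complex-multiplication abelian variety, using the Shimura--Taniyama formula to match Frobenius valuations. Concretely, I would choose a CM field $E$ (containing a field isomorphic to $K$) together with a CM type $\Phi$ so that the associated complex abelian variety descends to an abelian variety $A$ over a number field $L$ having good reduction at a prime $\mathfrak{P}$ above $p$; the reduction $\overline{A}$ is then an abelian variety over a finite field $\F_{q^n}$. The Shimura--Taniyama formula computes the valuations of the Frobenius $\pi_{\overline A}$ from the CM type, namely $\mathrm{ord}_v(\pi_{\overline A})/\mathrm{ord}_v(q^n) = \#(\Phi \cap H_v)/\#H_v$ for each place $v \mid p$; choosing $E$ and $\Phi$ appropriately arranges these valuations, hence by Proposition \ref{local inv} the local invariants of $\mathrm{End}_{\F_{q^n}}^0(\overline A)$, to be exactly those prescribed by the power $\pi^n$. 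Since $\Q[\pi_{\overline A}]$ is a field, $\overline A$ is elementary by Corollary \ref{cor TateEnd0}(a), and isolating its simple factor realizes $\pi^n$ as the Frobenius of a simple abelian variety over $\F_{q^n}$.

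It remains to descend from $\F_{q^n}$ back to $\F_q$, i.e. to invoke the fact that if a power $\pi^n$ of a $q$-Weil number is realized over $\F_{q^n}$, then $\pi$ itself is realized over $\F_q$. Here I would pass to the Weil restriction $\mathrm{Res}_{\F_{q^n}/\F_q} B$ of the abelian variety $B/\F_{q^n}$ realizing $\pi^n$ and isolate a simple factor $X$ over $\F_q$, whose $q$-Frobenius satisfies $\pi_X^n = \pi^n$; the ambiguity by an $n$-th root of unity is then pinned down by the norm relation between Weil numbers for $\F_{q^n}/\F_q$ together with the injectivity established above, yielding $\pi_X$ conjugate to $\pi$. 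I expect the CM lifting combined with the Shimura--Taniyama computation to be the main obstacle: producing a CM abelian variety in characteristic zero with good reduction realizing the prescribed $p$-adic valuations is precisely where the analytic theory of complex multiplication is indispensable, whereas both the bijectivity on the realized locus and the final descent are comparatively formal consequences of Theorem \ref{thm Tate}.
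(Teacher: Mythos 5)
The paper does not actually prove Theorem \ref{thm HondaTata}: its ``proof'' is a citation to Honda \cite{5} and to \cite[\S 16.5]{3}. Your sketch reconstructs precisely the argument contained in those references, with the same decomposition: well-definedness and injectivity of $X \mapsto \pi_X$ from Tate's theorem (Theorem \ref{thm Tate} via Remark \ref{qWeil rem}), existence by Honda's CM-lifting method controlled by the Shimura--Taniyama formula, and descent of effectivity along $\F_{q^n}/\F_q$ by Weil restriction. So you have not found a different route; you have filled in the route the paper outsources, and you correctly identify the CM construction as the genuinely hard step.

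Two local repairs are needed in the existence half. First, you place the root-of-unity ambiguity at the wrong step. The Shimura--Taniyama formula only matches \emph{valuations}, and two elements of a number field $E$ with the same valuations at every finite place and absolute value $\sqrt{q^n}$ at every archimedean place agree only up to a root of unity; so the CM reduction gives $\pi_{\overline{A}}$ conjugate to $\zeta\pi^n$ for some root of unity $\zeta$, not to $\pi^n$ itself. The standard fix is to pass to a further power $m$ with $\zeta^m = 1$, proving only that \emph{some} power of $\pi$ is effective --- which is all your descent step requires. Second, the descent itself has no ambiguity to pin down: if $B/\F_{q^n}$ is simple with $f_B$ a power of the minimal polynomial of $\pi^n$, then the characteristic polynomial of the $q$-Frobenius on $A := \textrm{Res}_{\F_{q^n}/\F_q}B$ is $f_A(t) = f_B(t^n)$, which vanishes at $t = \pi$ exactly; hence $\pi$ is a root of $f_X$ for some simple factor $X$ of $A$ and is therefore conjugate to $\pi_X$, with no norm relation or appeal to injectivity needed. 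Relatedly, the invocation of Proposition \ref{local inv} is tangential: what must be matched is the Weil number itself (equivalently, its valuations), and matching the local invariants of the endomorphism algebra is strictly weaker, since those invariants do not determine the conjugacy class of the Frobenius.
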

The inverse of the map $X \mapsto \pi_X$ associates to a $q$-Weil number $\pi$ a simple abelian variety $X$ such that $f_X$ is a power of the minimal polynomial $f_{\Q}^{\pi}$ of $\pi$ over $\Q.$
\begin{proof}
For a proof, see \cite[Main Theorem]{5} or \cite[\S16.5]{3}.
\end{proof}

\subsection{Isomorphism classes contained in an isogeny class}\label{thm waterhouse}
\qquad In this section, we will give a useful result of Waterhouse~\cite{15}. Throughout this section, let $k=\F_q.$ \\

Let $X$ be an abelian variety over $k.$ Then $\textrm{End}_k(X)$ is a $\Z$-order in $\textrm{End}_k^0(X)$ containing $\pi_X$ and $q/\pi_X.$ If a ring is the endomorphism ring of an abelian variety, then we may consider a left ideal of the ring, and give the following
\begin{definition}\label{Roccur}
  Let $X$ be an abelian variety over $k$ with $R:=\textrm{End}_k(X),$ and let $I$ be a left ideal of $R$. \\
  (a) We define $H(X,I)$ to be the intersection of the kernels of all elements of $I$. This is a finite subgroup scheme of $X.$ \\
  (b) We define $X_I$ to be the quotient of $X$ by $H(X,I)$ i.e.\ $X_I=X/H(X,I).$ This is an abelian variety over $k$ that is $k$-isogenous to $X.$
\end{definition}

Now, to introduce our main result of this section, we need the following
\begin{lemma}\label{endchange}
  Let $X$ be an abelian variety over $k$ with $R:=\textrm{End}_k(X)$, and let $I$ be a left ideal of $R.$ Then $\textrm{End}_k(X_I)$ contains $O_r (I):=\{x \in \textrm{End}_k^0(X)~|~I x \subseteq I\}$, the right order of $I,$ and equals it if $I$ is a kernel ideal.
\end{lemma}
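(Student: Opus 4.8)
The plan is to work throughout with the quotient isogeny $\pi \colon X \to X_I = X/H(X,I)$ from Definition~\ref{Roccur}, and to use it to identify $\textrm{End}_k^0(X_I)$ with $\textrm{End}_k^0(X)$ via $\psi \mapsto \pi^{-1}\psi\pi$, so that both $O_r(I)$ and $\textrm{End}_k(X_I)$ become orders inside the single $\Q$-algebra $\textrm{End}_k^0(X)$. The two assertions then read as the inclusion $O_r(I) \subseteq \textrm{End}_k(X_I)$ (always) together with the reverse inclusion under the kernel-ideal hypothesis. As setup, I would fix generators $\alpha_1, \dots, \alpha_n$ of $I$ as a left $R$-ideal, so that $H := H(X,I) = \bigcap_i \ker \alpha_i$ is the scheme-theoretic kernel of $\Phi := (\alpha_1, \dots, \alpha_n) \colon X \to X^n$, and $\Phi$ factors as $\Phi = \Psi \circ \pi$ with $\Psi \colon X_I \to X^n$ a closed immersion (a homomorphism of abelian varieties with trivial scheme-theoretic kernel) onto an abelian subvariety $X_I' := \textrm{im}\,\Psi$.

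For $O_r(I) \subseteq \textrm{End}_k(X_I)$, I would take $x \in O_r(I)$. The crucial observation is that although $x$ is a priori only a quasi-endomorphism, each product $\alpha_j x$ lies in $Ix \subseteq I \subseteq R$ and is therefore a \emph{genuine} endomorphism. Writing $\alpha_j x = \sum_i c_{ji}\alpha_i$ with $c_{ji} \in R$ assembles into a genuine endomorphism $C = (c_{ji}) \in M_n(R)$ of $X^n$ satisfying $C\Phi = \Phi x$; substituting $\Phi = \Psi\pi$ and using $\pi x = \bar x\pi$ for $\bar x := \pi x\pi^{-1} \in \textrm{End}_k^0(X_I)$, cancellation of the isogeny $\pi$ yields $C\Psi = \Psi\bar x$. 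The relation then shows that $C$ carries $X_I'$ into itself, so $C$ restricts to a genuine endomorphism of $X_I'$, which transports through the isomorphism $\Psi \colon X_I \xrightarrow{\sim} X_I'$ to a genuine endomorphism of $X_I$ equal to $\bar x$; hence $x \in \textrm{End}_k(X_I)$.

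For the reverse inclusion when $I$ is a kernel ideal, I would take a genuine endomorphism $\bar y$ of $X_I$, corresponding to $y = \pi^{-1}\bar y\pi$, and fix any $\alpha \in I$. Since $\alpha$ kills $H = \ker\pi$, the universal property of the quotient by the finite subgroup scheme $H$ gives a genuine factorization $\alpha = \tilde\alpha\pi$ with $\tilde\alpha \colon X_I \to X$, whence $\alpha y = \tilde\alpha\bar y\pi$ is again genuine and visibly factors through $\pi$, so it kills $H$. Thus $\alpha y$ lies in the left ideal $\{\beta \in R : H \subseteq \ker\beta\}$ of all endomorphisms annihilating $H$, which equals $I$ precisely because $I$ is a kernel ideal; therefore $Iy \subseteq I$, i.e.\ $y \in O_r(I)$. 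Combined with the previous paragraph this gives equality.

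I expect the first inclusion to be the main obstacle: the delicate point is promoting the quasi-endomorphism $\bar x$ to a genuine one, which forces the introduction of the embedding $\Psi$ and the verification that $C$ stabilizes $X_I'$, so that the restriction is an honest endomorphism rather than merely a quasi-isogeny. One must also handle finite group-scheme quotients in characteristic $p$, where $H$ may be non-reduced, by invoking the standard theory of quotients of abelian varieties by finite subgroup schemes and the fact that a homomorphism of abelian varieties with trivial scheme-theoretic kernel is a closed immersion. The kernel-ideal inclusion, by contrast, is essentially formal once the defining property $I = \{\beta \in R : H \subseteq \ker\beta\}$ is unwound.
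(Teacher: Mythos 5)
Your proof is correct, but it is not the route the paper takes: the paper gives no argument at all for Lemma \ref{endchange}, deferring to \cite[Proposition 3.9]{15} (equivalently \cite[Lemma 16.56]{3}), and the proof there is local rather than global. In Waterhouse's approach one first identifies the integral structures of the quotient: for $\ell \ne p$ one has $T_\ell(X_I)=\{v \in V_\ell(X)~|~I v \subseteq T_\ell(X)\}$ inside $V_\ell(X) \cong V_\ell(X_I)$, with the analogous description of the Dieudonné module at $p$; then $x \in O_r(I)$ preserves each such lattice because $I(xv)=(Ix)v \subseteq Iv \subseteq T_\ell(X)$, and the standard criterion that a quasi-endomorphism preserving all integral structures is a genuine endomorphism gives the containment, while the kernel-ideal equality is read off the same formula. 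Your argument replaces this $\ell$-adic and Dieudonné machinery entirely by global geometry: generators $\alpha_1,\dots,\alpha_n$ of $I$ (legitimate, since $R$ is a finitely generated $\Z$-module, hence noetherian, and $\bigcap_{\alpha \in I}\ker\alpha = \bigcap_i \ker\alpha_i$), the closed immersion $\Psi : X_I \hookrightarrow X^n$, and the matrix $C \in M_n(R)$ realizing the relation $Ix \subseteq I$. What your approach buys is a proof that lives wholly in the category of abelian varieties, uniform in all characteristics, with no appeal to Tate modules or $p$-divisible groups; what Waterhouse's buys is brevity and a symmetric treatment of both inclusions from a single description of $T(X_I)$. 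Your treatment of the kernel-ideal direction coincides with the standard one and is, as you say, formal.

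One step you should make explicit: from $C\Psi = \Psi\bar x$ you cannot conclude $C(X_I') \subseteq X_I'$ by chasing points, because $\bar x$ is a priori only a quasi-endomorphism and has no pointwise meaning. The fix is one line: choose $m \geq 1$ with $m\bar x \in \textrm{End}_k(X_I)$; then $m(C\Psi)=\Psi(m\bar x)$ is an equality of genuine homomorphisms, so $\textrm{im}(m(C\Psi)) \subseteq \textrm{im}\,\Psi = X_I'$, and since $\textrm{im}(C\Psi)$ is an abelian subvariety, hence divisible, $\textrm{im}(C\Psi)=\textrm{im}(m(C\Psi)) \subseteq X_I'$. With that inserted, both halves of your argument are complete.
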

\begin{proof}
For a proof, see \cite[Lemma 16.56]{3} or \cite[Proposition 3.9]{15}.
\end{proof}

Using this lemma, we can prove the following result, which plays an important role:
\begin{proposition}\label{endposs}
  Let $X$ be an abelian variety over $k$. Then every maximal order in $D:=\textrm{End}_k^0(X)$ occurs as the endomorphism ring of an abelian variety in the isogeny class of $X.$
\end{proposition}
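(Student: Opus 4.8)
The plan is to realize the prescribed maximal order as $\textrm{End}_k$ of a suitable quotient $X_I = X/H(X,I)$ and to read off its endomorphism ring from Lemma \ref{endchange}. Write $R := \textrm{End}_k(X)$, which is an order in $D$, and fix a maximal order $\mathcal{O} \subseteq D$. The real content is that we must be able to move within the isogeny class from $R$ to the prescribed $\mathcal{O}$, even though $R$ need not be contained in $\mathcal{O}$, so the construction of $I$ has to be insensitive to the relative position of $R$ and $\mathcal{O}$.

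First I would construct a left $R$-ideal whose right order is exactly $\mathcal{O}$. The natural candidate is the product lattice $\mathfrak{a} := R\mathcal{O}$, consisting of finite sums $\sum_i r_i o_i$ with $r_i \in R$ and $o_i \in \mathcal{O}$. It is visibly a left $R$-module and a right $\mathcal{O}$-module, and it contains $R$, so it has full rank in $D$. The one point needing care is boundedness: choosing a positive integer $m$ with $mR \subseteq \mathcal{O}$ (possible since $R$ and $\mathcal{O}$ are both full $\Z$-lattices in $D$) gives $m\mathfrak{a} \subseteq \mathcal{O}\cdot\mathcal{O} = \mathcal{O}$, so $\mathfrak{a} \subseteq \tfrac{1}{m}\mathcal{O}$ is a fractional ideal. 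Since $\mathfrak{a}\mathcal{O} = \mathfrak{a}$, the right order $O_r(\mathfrak{a})$ contains $\mathcal{O}$; as $O_r(\mathfrak{a})$ is itself an order and $\mathcal{O}$ is maximal, we get $O_r(\mathfrak{a}) = \mathcal{O}$. Rescaling by a positive integer $N$ with $N\mathfrak{a} \subseteq R$, I set $I := N\mathfrak{a}$, an integral left ideal of $R$ with $O_r(I) = \mathcal{O}$ and with $N \in I$ (because $1 \in \mathfrak{a}$).

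I would then pass to $X_I = X/H(X,I)$. The membership $N \in I$ forces $H(X,I) \subseteq X[N]$, so $H(X,I)$ is finite and $X_I$ is genuinely $k$-isogenous to $X$; in particular $\textrm{End}_k^0(X_I) = D$, and hence $\textrm{End}_k(X_I)$ is an order in $D$. Lemma \ref{endchange} now gives $\textrm{End}_k(X_I) \supseteq O_r(I) = \mathcal{O}$, and maximality of $\mathcal{O}$ upgrades this to equality, exhibiting $\mathcal{O}$ as the endomorphism ring of an abelian variety in the isogeny class of $X$.

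The step I expect to be the main subtlety is precisely this last inference, because Lemma \ref{endchange} only promises the equality $\textrm{End}_k(X_I) = O_r(I)$ when $I$ is a kernel ideal, and checking that our $I$ is a kernel ideal would be an unwelcome detour. The key observation I want to exploit is that one never needs equality in the lemma: the inclusion $\textrm{End}_k(X_I) \supseteq O_r(I) = \mathcal{O}$, combined with the facts that $\textrm{End}_k(X_I)$ is an order and $\mathcal{O}$ is maximal, already forces $\textrm{End}_k(X_I) = \mathcal{O}$. Thus the entire argument is reduced to producing a left ideal whose right order is exactly the given maximal order, and the bounded product lattice $R\mathcal{O}$ accomplishes this uniformly, with no hypothesis on how $R$ and $\mathcal{O}$ sit relative to one another.
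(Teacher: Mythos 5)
Your proof is correct, and it is essentially the argument the paper relies on: the paper proves Proposition \ref{endposs} only by citing \cite[Theorem 3.13]{15}, and Waterhouse's proof there is precisely your construction --- an integral left ideal $I = N\,R\mathcal{O}$ of $R=\textrm{End}_k(X)$ whose right order is $\mathcal{O}$, followed by the containment half of Lemma \ref{endchange} and maximality of $\mathcal{O}$ to force $\textrm{End}_k(X_I)=\mathcal{O}$. In particular, your observation that the kernel-ideal hypothesis in Lemma \ref{endchange} can be bypassed (containment plus maximality of $\mathcal{O}$ suffices) is exactly the point of the cited proof, so your write-up correctly supplies the details that the paper delegates to the reference.
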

\begin{proof}
For a proof, see \cite[Theorem 3.13]{15}.
\end{proof}

\subsection{Maximal orders over a Dedekind domain}\label{max ord dede} 
\qquad In this section, we recall the general theory of maximal orders over a Dedekind domain that will be used later in this paper. \\

Throughout this section, let $R$ be a noetherian integral domain with the quotient field $K,$ and let $A$ be a finite dimensional $K$-algebra. Recall that a maximal $R$-order in $A$ is an $R$-order which is not properly contained in any other $R$-order in $A.$ For our later use, we introduce several results about maximal orders:
\begin{theorem}\label{mat max}
  Let $A$ be a finite dimensional $K$-algebra. If $\Lambda$ is a maximal $R$-order in $A,$ then for each $n \geq 1,$ $M_n(\Lambda)$ is a maximal $R$-order in $M_n(A).$ If $R$ is integrally closed, then $M_n(R)$ is a maximal $R$-order in $M_n(K).$
\end{theorem}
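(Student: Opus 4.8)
The plan is to reduce the maximality assertion to the maximality of $\Lambda$ itself by exploiting the matrix units $e_{ij} \in M_n(R) \subseteq M_n(\Lambda)$. First I would record the easy part: since $\Lambda$ is a subring of $A$ that is finitely generated as an $R$-module and satisfies $K\Lambda = A$, the ring $M_n(\Lambda)$ is a subring of $M_n(A)$, is finitely generated over $R$ (being a direct sum of $n^2$ copies of $\Lambda$), and satisfies $K \cdot M_n(\Lambda) = M_n(K\Lambda) = M_n(A)$. Hence $M_n(\Lambda)$ is an $R$-order in $M_n(A)$, and only its maximality requires work.

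For maximality, let $\Gamma$ be any $R$-order in $M_n(A)$ with $M_n(\Lambda) \subseteq \Gamma$; the goal is to prove $\Gamma = M_n(\Lambda)$. Writing $e := e_{11}$, the crucial observation is that the corner $e\Gamma e$ is an $R$-order in the algebra $eM_n(A)e \cong A$. Indeed, it is a subring with identity element $e$ (using $e^2 = e$ and the fact that $e, \gamma \in \Gamma$ force $e\gamma e \in \Gamma$); it is finitely generated over $R$ because it is an $R$-submodule of the finitely generated module $\Gamma$ and $R$ is noetherian; and it spans $A$ over $K$ since $e(K\Gamma)e = eM_n(A)e = A$. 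Because $eM_n(\Lambda)e = \Lambda$ and $M_n(\Lambda) \subseteq \Gamma$, this order contains $\Lambda$, so maximality of $\Lambda$ forces $e\Gamma e = \Lambda$.

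The final step is to reconstruct an arbitrary $\gamma \in \Gamma$ from its corners. For each pair $(i,j)$ one has $e_{1i}\gamma e_{j1} = e(e_{1i}\gamma e_{j1})e \in e\Gamma e = \Lambda$, and the identity $e_{ii}\gamma e_{jj} = e_{i1}(e_{1i}\gamma e_{j1})e_{1j}$ exhibits $e_{ii}\gamma e_{jj}$ as the matrix whose $(i,j)$-entry is the corresponding element of $\Lambda$ and whose remaining entries vanish. Summing $\gamma = \sum_{i,j} e_{ii}\gamma e_{jj}$ then shows every entry of $\gamma$ lies in $\Lambda$, that is, $\gamma \in M_n(\Lambda)$. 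Therefore $\Gamma \subseteq M_n(\Lambda)$ and so $\Gamma = M_n(\Lambda)$, which proves that $M_n(\Lambda)$ is a maximal $R$-order.

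For the second assertion I would first show that when $R$ is integrally closed in $K$, the ring $R$ is itself a maximal $R$-order in $K$. Given any $R$-order $\Lambda'$ in $K$, each $x \in \Lambda'$ stabilizes the finitely generated faithful $R$-module $\Lambda'$, so by the determinant (Cayley--Hamilton) trick $x$ is integral over $R$ and hence lies in $R$; combined with $R \subseteq \Lambda'$ this gives $\Lambda' = R$. Applying the first part with $\Lambda = R$ and $A = K$ then yields that $M_n(R)$ is a maximal $R$-order in $M_n(K)$. The only delicate points are bookkeeping: verifying that $e\Gamma e$ genuinely satisfies the order axioms (where noetherianness of $R$ enters, for finite generation) and keeping the matrix-unit manipulations straight. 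The conceptual content lies entirely in the corner argument.
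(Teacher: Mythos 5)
Your proof is correct. The paper itself gives no argument for this statement, deferring entirely to \cite[Theorem 8.7]{11}, and your corner argument via the matrix units $e_{ij}$ --- showing $e_{11}\Gamma e_{11}$ is an $R$-order in $e_{11}M_n(A)e_{11}\cong A$ containing $\Lambda$, hence equal to $\Lambda$ by maximality, and then recovering every entry of $\gamma\in\Gamma$ from the identities $e_{1i}\gamma e_{j1}\in e_{11}\Gamma e_{11}$ --- is essentially the standard proof given in that reference, including the correct use of the noetherian hypothesis on $R$ to get finite generation of $e_{11}\Gamma e_{11}$ and the integral-closure (Cayley--Hamilton) argument for the case $\Lambda=R$, $A=K$.
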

\begin{proof}
 For a proof, see \cite[Theorem 8.7]{11}.
\end{proof}

If we impose conditions that $R$ is integrally closed and $A$ is a separable $K$-algebra, then we can have the following result saying that a decomposition of the $K$-algebra $A$ into simple components yields a corresponding decomposition of maximal orders in $A$:
\begin{theorem}\label{max gen}
  Let $A$ be a separable $K$-algebra with simple components $\{A_i\}_{1 \leq i \leq t}$ and let $R_i$ be the integral closure of $R$ in the center $K_i$ of $A_i$ for each $i.$ Then we have: \\
  (a) For each maximal $R$-order $\Lambda$ in $A,$ we have $\Lambda = \sum^{\bullet} \Lambda e_i$ where $\{e_i\}_{1 \leq i \leq t}$ are the central idempotents of $A$ such that $A_i = A e_i$ for each $i.$ Moreover, each $\Lambda e_i$ is a maximal $R$-order in $A_i=A e_i.$ \\
  (b) If $\Lambda_i $ is a maximal $R$-order in $A_i$ for each $i,$ then $\sum^{\bullet} \Lambda_i$ is a maximal $R$-order in $A.$ \\
  (c) An $R$-order $\Lambda_i $ in $A_i$ is a maximal $R$-order if and only if $\Lambda_i$ is a maximal $R_i$-order in $A_i.$
\end{theorem}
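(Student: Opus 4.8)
The plan is to reduce all three parts to a single \emph{absorption principle}, which I would isolate at the outset: if $\Lambda$ is any $R$-order in $A$ and $c$ is an element of the center $Z(A)$ that is integral over $R$, then the subring $\Lambda[c]$ generated by $\Lambda$ and $c$ is again an $R$-order. Indeed, it is a subring because $c$ is central, and it is a finitely generated $R$-module because $c$ is integral over $R$ while $\Lambda$ is already finitely generated over $R$. Consequently, whenever $\Lambda$ is \emph{maximal}, we must have $c \in \Lambda$: a maximal $R$-order absorbs every central element that is integral over $R$. In particular, each central idempotent $e_i$ (which satisfies $e_i^2 - e_i = 0$, so is integral over $R$) and the entire integral closure $R_i$ of $R$ in $K_i$ must lie in any maximal $R$-order of $A$.

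For part (a), since $e_i \in \Lambda$ and $1 = e_1 + \cdots + e_t$ with the $e_i$ orthogonal, every $\lambda \in \Lambda$ decomposes as $\lambda = \sum_i \lambda e_i$ with $\lambda e_i \in \Lambda e_i \subseteq A_i$, giving $\Lambda = \sum^{\bullet}_{i} \Lambda e_i$ and $\Lambda e_i = \Lambda \cap A_i$. Each $\Lambda e_i$ is visibly a subring of $A_i$ with identity $e_i$, is finitely generated over $R$, and spans $A_i = A e_i$ over $K$; hence it is an $R$-order in $A_i$. For its maximality I would use a reassembly argument: if $\Gamma_i \supseteq \Lambda e_i$ were a strictly larger $R$-order in $A_i$, then $\Gamma_i \oplus \bigoplus_{j \neq i} \Lambda e_j$ would be an $R$-order in $A$ strictly containing $\Lambda$, contradicting the maximality of $\Lambda$. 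Part (b) is the converse bookkeeping: the direct sum $\sum^{\bullet}_{i} \Lambda_i$ is an $R$-order in $A$ (subring, full lattice, finitely generated), and it contains each $e_i$ as the identity of $\Lambda_i$. Given any $R$-order $\Gamma \supseteq \sum^{\bullet}_{i} \Lambda_i$, the same idempotent decomposition yields $\Gamma = \sum^{\bullet}_{i} \Gamma e_i$ with each $\Gamma e_i$ an $R$-order in $A_i$ containing $\Lambda_i$; maximality of $\Lambda_i$ forces $\Gamma e_i = \Lambda_i$, so $\Gamma = \sum^{\bullet}_{i} \Lambda_i$.

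For part (c) the crucial—and only genuinely non-formal—input is that separability of $A$ makes each $K_i/K$ a finite separable field extension, so that for the integrally closed noetherian domain $R$ the integral closure $R_i$ is a \emph{finitely generated} $R$-module. Granting this, finite generation over $R$ and over $R_i$ agree for full lattices in $A_i$, so that any $R_i$-order is an $R$-order (finite over $R$ because $R_i$ is), while any maximal $R$-order $\Lambda_i$ absorbs $R_i$ by the principle above and is therefore stable under $R_i$, i.e.\ is itself an $R_i$-order. The equivalence of the two maximality notions then follows by comparing enlargements in each category: an $R$-order containing $\Lambda_i \supseteq R_i$ is automatically an $R_i$-order, and conversely, so the poset of orders above $\Lambda_i$ is the same whether computed over $R$ or over $R_i$. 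I expect this module-finiteness of $R_i$ over $R$ to be the real obstacle: it is precisely the point where the hypothesis that $A$ is separable (equivalently, that each $K_i/K$ is separable) cannot be dropped, and everything else in the argument is formal manipulation with orthogonal central idempotents and the absorption principle.
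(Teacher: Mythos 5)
Your proposal is correct: the absorption principle, the reassembly along orthogonal central idempotents in (a) and (b), and the reduction of (c) to module-finiteness of $R_i$ over $R$ (valid since $R$ is noetherian and integrally closed and $K_i/K$ is separable) all hold up, including your closing observation that separability is needed precisely for that finiteness and only in the direction ``maximal $R$-order $\Rightarrow$ maximal $R_i$-order.'' Note that the paper gives no proof of its own here---it simply cites Reiner, \emph{Maximal Orders}, Theorem 10.5---and your argument is essentially a reconstruction of that standard proof, so there is no genuine divergence to report.
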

\begin{proof}
  For a proof, see \cite[Theorem 10.5]{11}.
\end{proof}

Finally, we further assume that $R$ is a Dedekind domain with its quotient field $K \ne R$. Let $A$ be a separable $K$-algebra (which is simple). In this last theorem, we determine all maximal $R$-orders in $A.$
\begin{theorem}\label{mat max 2}
  Let $A=\textrm{Hom}_{D}(V,V) \cong M_r(D)$ be a simple algebra, where $V$ is a right vector space of dimension $r$ over a division algebra $D$ with center $K$. Let $\Delta$ be a fixed maximal $R$-order in $D$, and let $M$ be a full right $\Delta$-lattce in $V.$ Then $\textrm{Hom}_{\Delta}(M,M)$ is a maximal $R$-order in $A.$ If $\Lambda^{\prime}$ is a maximal $R$-order in $A,$ then there is a full right $\Delta$-lattice $N$ in $V$ such that $\Lambda^{\prime}=\textrm{Hom}_{\Delta}(N,N).$
\end{theorem}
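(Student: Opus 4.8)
The plan is to prove both assertions by reducing the global statement to the case of a discrete valuation ring via localization, where maximal orders in a division algebra are transparent, and then to exploit the maximality hypothesis to pin down the classification. Theorem \ref{mat max} will be the decisive local input.

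First I would check that $\Lambda := \textrm{Hom}_\Delta(M,M)$ is genuinely an $R$-order in $A$. Since $M$ is a full right $\Delta$-lattice, every $\Delta$-endomorphism of $M$ extends uniquely to a $D$-endomorphism of $V=KM$, so $\Lambda$ is canonically identified with $O_l(M):=\{a \in A : aM \subseteq M\}$. Because $\Delta$ is module-finite over $R$ and $M$ is $\Delta$-finitely generated, $M$ is finite over $R$; as $R$ is Noetherian, $\textrm{Hom}_R(M,M)$ is finite over $R$ and $\Lambda$ is an $R$-submodule of it, hence finite over $R$. Fullness gives $K\Lambda = A$: for any $a \in A$ the finitely generated right $\Delta$-module $a(M)$ lies in $V=KM$, so $c\,a(M) \subseteq M$ for some nonzero $c \in R$, whence $ca \in \Lambda$ and $a = c^{-1}(ca) \in K\Lambda$. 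Thus $\Lambda$ is an $R$-order.

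For maximality I would invoke the local-global principle: $\Lambda$ is a maximal $R$-order if and only if $\Lambda_P$ is a maximal $R_P$-order for every maximal ideal $P$ of $R$. Since $M$ is $R$-finite and $R$ is Noetherian, localization commutes with the Hom-functor, so $\Lambda_P = \textrm{Hom}_{\Delta_P}(M_P,M_P)$, where $\Delta_P$ is again a maximal $R_P$-order in the division algebra $D$ (whose fraction field $K$ is unchanged). Over the discrete valuation ring $R_P$, a maximal order in $D$ is a local ring over which every full lattice is free; hence $M_P \cong \Delta_P^{\,r}$ and $\Lambda_P \cong M_r(\Delta_P)$, which is a maximal $R_P$-order by Theorem \ref{mat max}. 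As this holds for all $P$, the order $\Lambda$ is maximal. For the classification, let $\Lambda'$ be any maximal $R$-order in $A$ and fix an arbitrary full right $\Delta$-lattice $N_0$ in $V$. I would set $N := \Lambda' N_0$, the right $\Delta$-submodule of $V$ generated by the $\Lambda'$-translates of $N_0$; this is well defined because the left $A$-action and the right $D$-action on $V$ commute, so $N$ is $\Delta$-stable on the right. Finite generation of $\Lambda'$ and $N_0$ over $R$ makes $N$ an $R$-lattice, and $KN \supseteq KN_0 = V$ makes it full, so $N$ is a full right $\Delta$-lattice. By construction $\Lambda' N = N$, whence $\Lambda' \subseteq O_l(N) = \textrm{Hom}_\Delta(N,N)$; but the right-hand side is itself a maximal $R$-order by the first part, and since $\Lambda'$ is already maximal the inclusion must be an equality $\Lambda' = \textrm{Hom}_\Delta(N,N)$.

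The main obstacle is the local structure theory packaged into the third paragraph: proving that over the discrete valuation ring $R_P$ a maximal order in the division algebra $D$ is local and that every full lattice over it is free, so that $\Lambda_P$ reduces to the matrix order $M_r(\Delta_P)$ to which Theorem \ref{mat max} applies. Everything else is bookkeeping with localization, commensurability of lattices, and the maximality hypothesis, but this freeness over $\Delta_P$ is the genuine analytic content on which the reduction rests.
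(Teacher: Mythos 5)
Your overall architecture is sound, and it is in fact the skeleton of the proof in Reiner's book, which is all the paper itself offers (the paper's ``proof'' is just the citation to \cite[Theorem 21.6]{11}): show $\textrm{Hom}_{\Delta}(M,M)=O_l(M)$ is an $R$-order, reduce maximality to the local case via the local--global principle, identify the localization with $M_r(\Delta_P)$ and invoke Theorem \ref{mat max}, and then get the converse by the $N:=\Lambda' N_0$ trick together with maximality of $\Lambda'$. The first and last paragraphs of your argument are correct as written.

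The gap is in the decisive local step, and it is exactly the step you yourself single out as the one carrying the real content. Your claim that ``over the discrete valuation ring $R_P$, a maximal order in $D$ is a local ring'' is false in general: $\Delta_P$ is local if and only if $D\otimes_K \widehat{K}_P$ is still a division algebra. Take $D=D_{2,\infty}$ (the rational Hamilton quaternions), $R=\Z$, $\Delta$ the Hurwitz order, and $P=(5)$: then $D\otimes_{\Q}\Q_5\cong M_2(\Q_5)$, so $\Delta_{(5)}/\textrm{rad}\,\Delta_{(5)}\cong M_2(\F_5)$, which is not a division ring; moreover its idempotents cannot lift to $\Delta_{(5)}\subset D$, so $\Delta_{(5)}$ is not even semiperfect. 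Note this is not a fringe case: for the algebras $D_{p,\infty}$ used throughout this paper, locality fails at \emph{every} prime $\ell\neq p$. Consequently ``projective over a local ring is free'' is unavailable, and the freeness of $M_P$ over $\Delta_P$ --- which is true --- needs a different argument: pass to the $P$-adic completion, where $\widehat{\Delta}_P\cong M_m(\mathcal{O}_E)$ for the division part $E$ of $D\otimes_K\widehat{K}_P$ and freeness of full lattices follows from Morita theory and the structure of maximal orders over complete discrete valuation rings, and then descend using the fact that over a (not necessarily complete) discrete valuation ring two lattices over an order with isomorphic completions are isomorphic (this is the content of Reiner's \S 18). With that substitution your proof closes up; without it, the step on which, by your own account, the whole reduction rests is supported only by a false statement.
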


\begin{proof}
 For a proof, see \cite[Theorem 21.6]{11}.
\end{proof}

\section{Automorphism groups of elliptic curves}\label{aut gps ell}
\qquad To consider the case of an abelian surface that is isogenous to a product of two elliptic curves, it is natural to discuss the automorphism groups of elliptic curves. First of all, we recall a classical result of Deuring, following \cite{15}. 

\begin{definition}\label{supsing ell}
  An elliptic curve $E$ over a finite field $k$ is called \emph{supersingular} if $\textrm{End}_{\overline{k}}(E)$ is non-commutative.
\end{definition}

To introduce the desired result, we need:
\begin{lemma}\label{prime dec}
  Let $\beta$ be an integer such that $|\beta|< 2 \sqrt{q}.$ In $K:= \Q(\sqrt{\beta^2 - 4q})$, we have: \\
  (a) $p$ ramifies if \\
  \indent (i) $\beta=0$ and $a$ is odd; \\
  \indent (ii) $\beta=0$, $a$ is even, and $p=2$; \\
  \indent (iii) $\beta=\pm \sqrt{q}$, $a$ is even, and $p=3$; \\
  \indent (iv) $\beta= \pm p^{\frac{a+1}{2}},$ $a$ is odd, and $p=2$ or $3.$ \\
  (b) $p$ stays prime if \\
  \indent (i) $\beta=0,$ $a$ is even, and $p \equiv 3~(\textrm{mod}~4)$; \\
  \indent (ii) $\beta=\pm \sqrt{q},$ $a$ is even, and $p \equiv 2 ~(\textrm{mod}~3).$ \\
  (c) $p$ splits in all other cases.
\end{lemma}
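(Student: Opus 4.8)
The plan is to reduce everything to the classical law describing how a rational prime $\ell$ decomposes in a quadratic field $\Q(\sqrt{m})$ with $m$ squarefree. Since $|\beta| < 2\sqrt{q}$, the radicand $D := \beta^2 - 4q$ is negative, so $K = \Q(\sqrt{D})$ is imaginary quadratic; writing $D = m f^2$ with $m$ squarefree and letting $d_K$ denote the discriminant of $K$ (so $d_K = m$ when $m \equiv 1 \pmod 4$ and $d_K = 4m$ otherwise), I would invoke: for odd $\ell$, the prime $\ell$ ramifies iff $\ell \mid d_K$, splits iff $\left(\frac{d_K}{\ell}\right) = 1$, and is inert iff $\left(\frac{d_K}{\ell}\right) = -1$; and for $\ell = 2$, it ramifies iff $m \equiv 2,3 \pmod 4$, splits iff $m \equiv 1 \pmod 8$, and is inert iff $m \equiv 5 \pmod 8$. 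The proof then amounts to computing the squarefree part $m$ of $\beta^2 - 4q$ in each listed situation and reading off the relevant symbol or congruence.

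For the cases appearing in (a) and (b), I would simply evaluate $D$ directly. When $\beta = 0$ we get $D = -4p^a$, whose squarefree part is $-p$ for $a$ odd and $-1$ for $a$ even; this gives $K = \Q(\sqrt{-p})$ in the first case (so $p \mid d_K$ and $p$ ramifies, which is (a)(i)) and $K = \Q(\sqrt{-1})$ in the second (where $p$ ramifies exactly for $p = 2$, is inert for $p \equiv 3 \pmod 4$, and splits for $p \equiv 1 \pmod 4$, giving (a)(ii), (b)(i), and part of (c)). When $\beta = \pm\sqrt{q}$, forcing $a$ even, we get $D = -3q$ with squarefree part $-3$, so $K = \Q(\sqrt{-3})$, and $p$ ramifies iff $p = 3$, is inert iff $p \equiv 2 \pmod 3$, and splits iff $p \equiv 1 \pmod 3$, giving (a)(iii), (b)(ii), and the rest of (c); here the value $p = 2$ falls under inertia via $-3 \equiv 5 \pmod 8$, consistently with $2 \equiv 2 \pmod 3$. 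Finally, when $\beta = \pm p^{(a+1)/2}$ with $a$ odd, the inequality $|\beta| < 2\sqrt{q}$ forces $p \in \{2,3\}$, and $D = p^a(p-4)$ has squarefree part $-1$ for $p = 2$ and $-3$ for $p = 3$, so $p$ ramifies in both, yielding (a)(iv).

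The delicate point is part (c): I must verify that the values of $\beta$ listed in (a) and (b) are the only ones at which $p$ fails to split. To this end I would write $\beta = p^c \beta'$ with $p \nmid \beta'$ (the case $\beta = 0$ being already settled) and track the $p$-adic valuation of $D$. In the ordinary case $c = 0$ one has $D \equiv \beta^2 \pmod p$ with $\beta^2$ a nonzero square, so $\left(\frac{D}{p}\right) = 1$ and $p$ splits for odd $p$, while for $p = 2$ one instead checks $D \equiv 1 \pmod 8$. When $c \ge 1$ and $2c < a$, the factorization $D = p^{2c}\bigl((\beta')^2 - 4p^{a-2c}\bigr)$ exhibits a squarefree part prime to $p$ and congruent to a nonzero square modulo $p$, so $p$ splits again; and the constraint $(\beta')^2 < 4p^{a-2c}$ coming from $|\beta| < 2\sqrt{q}$ shows that $2c \ge a$ can occur only when $2c = a$, forcing $\beta = \pm\sqrt{q}$, or when $2c = a+1$ with $p \in \{2,3\}$, forcing $\beta = \pm p^{(a+1)/2}$. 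These are exactly the exceptional values already treated, so every remaining $\beta$ lands in the split case. I expect the main obstacle to be the uniform and careful separate handling of $p = 2$ throughout — where the discriminant and the decomposition law take their own shape — together with the bookkeeping in the valuation-and-bound argument needed to guarantee that no value of $\beta$ escapes the classification.
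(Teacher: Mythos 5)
Your proposal is correct, but note that the paper itself contains no argument for this lemma: it simply cites Waterhouse (reference [15], Lemma in \S 4.1), so your write-up is a self-contained reconstruction rather than a parallel of anything in the text. Your route --- reduce to the classical decomposition law for a rational prime in a quadratic field, compute the squarefree part of $\beta^2-4q$ in each exceptional case, and then use the $p$-adic valuation of $\beta$ together with the bound $|\beta|<2\sqrt{q}$ to show every remaining $\beta$ leads to splitting --- is essentially the same elementary computation Waterhouse performs, and it is complete in structure: the identification of $K$ as $\Q(\sqrt{-p})$, $\Q(\sqrt{-1})$, or $\Q(\sqrt{-3})$ in cases (a) and (b) is right, and your valuation argument correctly shows that $2c\ge a$ forces either $\beta=\pm\sqrt{q}$ (when $2c=a$) or $\beta=\pm p^{(a+1)/2}$ with $p\in\{2,3\}$ (when $2c=a+1$), so no value of $\beta$ escapes. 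The one step you should write out rather than gesture at is the split case for $p=2$ with $c\ge 1$ and $2c<a$: there the phrase ``congruent to a nonzero square modulo $p$'' is not the right criterion, and you must check the inner factor $N=(\beta')^2-2^{\,a-2c+2}$ modulo $8$; this works because $2c<a$ gives $a-2c+2\ge 3$, hence $N\equiv(\beta')^2\equiv 1 \pmod 8$, and since $N$ is odd its squarefree part is also $\equiv 1 \pmod 8$. With that detail filled in, your proof is a valid and arguably more useful substitute for the paper's bare citation, since it makes the lemma verifiable without consulting Waterhouse.
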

\begin{proof}
 For a proof, see \cite[Lemma in $\S$4.1]{15}.
\end{proof}

Then we have the following
\begin{theorem}\label{isogclass ell}
  The isogeny classes of elliptic curves over $k=\F_q$ are in one-to-one correspondence with the rational integers $\beta$ having $|\beta| \leq 2 \sqrt{q}$ and satisfying one of the following conditions: \\
  (1) $\gcd(\beta, p)=1$; \\
  (2) If $a$ is even, then $\beta = \pm 2 \sqrt{q}$; \\
  (3) If $a$ is even and $p \not \equiv 1 ~(\textrm{mod}~3)$, then $\beta = \pm \sqrt{q}$; \\
  (4) If $a$ is odd and $p=2$ or $3$, then $\beta = \pm p^{\frac{a+1}{2}}$; \\
  (5) If either (i) $a$ is odd or (ii) $a$ is even and $p \not \equiv 1~(\textrm{mod}~4),$ then $\beta =0.$ \\
  The first of these are not supersingular; the second are and have all their endomorphisms defined over $k$; the rest are but do not have all their endomorphisms defined over $k.$
\end{theorem}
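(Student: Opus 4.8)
The plan is to recover the list directly from Honda--Tate theory (Theorem \ref{thm HondaTata}) together with the computation of the index of the endomorphism algebra via local invariants (Propositions \ref{local inv} and \ref{index end alg}) and the explicit prime decomposition of Lemma \ref{prime dec}. First, any elliptic curve $E$ over $k$ is one-dimensional, hence a simple abelian variety, and its Frobenius $\pi_E$ is a $q$-Weil number with minimal polynomial $t^2-\beta t+q$, where $\beta=\pi_E+\overline{\pi_E}\in\Z$; the equality $|\pi_E|=\sqrt q$ forces $|\beta|\le 2\sqrt q$. By Remark \ref{qWeil rem}, two such curves are isogenous if and only if their invariants $\beta$ agree, so $E\mapsto\beta$ injects the set of isogeny classes into $\{\beta\in\Z : |\beta|\le 2\sqrt q\}$. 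Conversely, for each such $\beta$ the roots of $t^2-\beta t+q$ are $q$-Weil numbers, so by Theorem \ref{thm HondaTata} they give a simple abelian variety $X$; by Corollary \ref{cor TateEnd0}-(b) one has $\dim X=ed/2$, where $e=[\Q(\pi):\Q]$ and $d$ is the index of $D=\textrm{End}_k^0(X)$ over its center. The whole problem thus reduces to deciding for which $\beta$ one has $\dim X=1$ and to reading off the endomorphism structure.

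I would dispose of the boundary case $|\beta|=2\sqrt q$ first. This forces $q$ to be a square, $\pi=\pm\sqrt q\in\Q$ and $e=1$, so that $\Q(\pi)=\Q$; this is precisely the exceptional case (1) of Remark \ref{Qpi Real}. There Proposition \ref{local inv} gives $\textrm{inv}_p(D)=\textrm{inv}_\infty(D)=\frac{1}{2}$ and $\textrm{inv}_\ell(D)=0$ elsewhere, so Proposition \ref{index end alg} yields $d=2$ and $\dim X=1$. Here $D\cong D_{p,\infty}$, and by Corollary \ref{cor TateEnd0}-(e) the variety $X$ is a supersingular elliptic curve with $\textrm{End}_k(X)=\textrm{End}_{\overline{k}}(X_{\overline{k}})$, i.e.\ all of its endomorphisms are defined over $k$. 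This produces item (2).

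The heart of the argument is the case $|\beta|<2\sqrt q$, where $\beta^2-4q<0$, so $K:=\Q(\pi)=\Q(\sqrt{\beta^2-4q})$ is imaginary quadratic and $e=2$; an elliptic curve arises exactly when $d=1$, that is, when every local invariant of $D$ is integral. By Proposition \ref{local inv} the only possibly nonzero invariants lie above $p$, and they are computed from $\textrm{ord}_\nu(\pi)$, which I read off from $\pi\overline{\pi}=q$ together with the splitting type of $p$ given by Lemma \ref{prime dec}. If $p$ splits, the two invariants are $m/a$ and $(a-m)/a$ with $m=\textrm{ord}_\nu(\pi)$; these are integral if and only if $m\in\{0,a\}$, and inspecting $\pi+\overline{\pi}=\beta$ shows this happens exactly when $\gcd(\beta,p)=1$, giving the ordinary curves of item (1). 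When instead $p\mid\beta$ and $p$ splits one finds $1\le m\le a-1$, hence $d\ge 2$ and $\dim X\ge 2$, so no elliptic curve occurs. If $p$ ramifies or stays inert there is a single place $\nu=\overline{\nu}$ above $p$ with $[K_\nu:\Q_p]=2$, and $\nu=\overline{\nu}$ forces $\textrm{ord}_\nu(\pi)=\frac{1}{2}\textrm{ord}_\nu(q)$, so the invariant equals $\frac{1}{2}\cdot[K_\nu:\Q_p]=1$, i.e.\ $0$ in $\Q/\Z$; thus $d=1$ and $\dim X=1$ in every ramified or inert case. Translating the ramified and inert branches of Lemma \ref{prime dec} into constraints on $\beta$, $a$ and $p$ reproduces items (3), (4) and (5); the inequality $|\beta|<2\sqrt q$ must be checked in each branch, which is exactly why item (4) is restricted to $p\in\{2,3\}$.

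Finally I would settle the supersingular dichotomy and the field of definition of endomorphisms. In item (1) one has $p\nmid\beta$, so $\pi$ is a unit at some place above $p$ and $E$ is ordinary; in items (2)--(5) one has $p\mid\beta$ and $E$ is supersingular. For item (2) the algebra $D\cong D_{p,\infty}$ is already quaternionic over $k$, so all endomorphisms are defined over $k$; for items (3)--(5) we found $\textrm{End}_k^0(X)=K$ commutative, whereas after extending scalars to a field over which $\pi$ becomes rational one lands in the item-(2) situation with quaternionic endomorphism algebra, so these curves are supersingular but do not have all their endomorphisms defined over $k$. The main obstacle is the bookkeeping in the third paragraph: matching the valuation computation of $\textrm{ord}_\nu(\pi)$ against each ramified and inert branch of Lemma \ref{prime dec}, and verifying that the resulting list is complete and irredundant --- in particular that every $\beta$ with $p\mid\beta$ and $|\beta|<2\sqrt q$ absent from (3)--(5) corresponds to a split prime and hence to a variety of dimension at least $2$.
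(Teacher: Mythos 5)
Your proposal is correct and is, in substance, the very proof the paper points to: the paper's own ``proof'' is just the citation to \cite[Theorem 4.1]{15}, and Waterhouse's argument there is exactly your route --- Honda--Tate (Theorem \ref{thm HondaTata}) to realize each admissible $\beta$ by a simple abelian variety, the local-invariant computation (Propositions \ref{local inv} and \ref{index end alg}) to decide when that variety has dimension one, and Lemma \ref{prime dec} to translate the inert/ramified cases into items (3)--(5). The steps you flag as bookkeeping (the valuation count $m\in\{0,a\}$ versus $0<m<a$ in the split case, and the base-change argument settling the supersingular dichotomy and the field of definition of the endomorphisms) all check out, so nothing essential is missing.
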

\begin{proof}
For a proof, see \cite[Theorem 4.1]{15}.
\end{proof}

Using a result from Section \ref{findiv} below, together with Theorem \ref{isogclass ell}, we can derive the following result on the automorphism groups of elliptic curves over finite fields:
\begin{corollary}\label{autoell}
  A finite group $G$ is the automorphism group of an elliptic curve $E$ over a finite field $k=\F_q$ if and only if $G$ is one of the following groups:
  \begin{center}
\begin{tabular}{|c|c|c|c|c|}
\hline
  & $G$ & $p$ \\
\hline
$\sharp 1 $ & $\Z/2\Z$ & $-$\\
\hline
$ \sharp 2$ & $\Z/4\Z$ & $-$ \\
\hline
$\sharp 3 $ &$\Z/6\Z$  & $-$\\
\hline
$\sharp 4 $ & $\textrm{Dic}_{12}$ & $3$\\
\hline
$\sharp 5 $ & $\mathfrak{T}^*$ & $2$ \\
\hline
\end{tabular}
\vskip 4pt
\textnormal{Table 2}
\end{center}
Here, $\textrm{Dic}_{12}$ (resp.\ $\mathfrak{T}^*$) denotes the dicyclic group of order $12$ (resp.\ the binary tetrahedral group).
\end{corollary}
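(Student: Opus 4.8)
The plan is to identify the automorphism group with the unit group of the endomorphism ring, $\textrm{Aut}_k(E) = \textrm{End}_k(E)^{\times}$, and then to run through the possibilities for $\textrm{End}_k^0(E)$ supplied by Corollary \ref{cor TateEnd0} and Remark \ref{Qpi Real}. Since $E$ is elementary, $\textrm{End}_k^0(E)$ is either an imaginary quadratic field $K=\Q(\pi_E)$ (when $E$ is ordinary, or supersingular but not all of its endomorphisms are defined over $k$) or the quaternion algebra $D_{p,\infty}$ (precisely when $q$ is a square, $\pi_E=\pm\sqrt{q}$, and all endomorphisms are defined over $k$, by Corollary \ref{cor TateEnd0}(e)). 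Thus $\textrm{Aut}_k(E)$ is the unit group of a $\Z$-order in one of these two algebras, and the problem reduces to computing such unit groups and deciding which are actually realized.

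In the imaginary quadratic case I would invoke the classical description of the unit group of an order $R$ in an imaginary quadratic field: one has $R^{\times}\cong \Z/2\Z$ unless $R$ is the maximal order of $\Q(i)$, where $R^{\times}=\langle i\rangle\cong\Z/4\Z$, or the maximal order of $\Q(\sqrt{-3})$, where $R^{\times}=\langle \zeta_6\rangle\cong\Z/6\Z$; this yields exactly the groups $\sharp 1$--$\sharp 3$. For the converse I must show each occurs for every $p$. Using Theorem \ref{isogclass ell} I would exhibit, for each prime $p$, a $q$-Weil number generating $\Q(i)$ (resp.\ $\Q(\sqrt{-3})$): an ordinary one when $p$ splits in the field, and a supersingular $\pi=p^{a/2}\,i$ or $\pi=p^{a/2}\zeta_3$ with $a$ even (falling under cases (3)--(5) of Theorem \ref{isogclass ell}) when $p$ is inert or ramified. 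Then Proposition \ref{endposs} realizes the maximal order $\Z[i]$ or $\Z[\zeta_3]$ as an endomorphism ring in that isogeny class, forcing the full unit group; a curve with generic $j$-invariant supplies $\Z/2\Z$.

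In the quaternion case $E$ is supersingular with all endomorphisms defined over $k$, so by a classical theorem of Deuring $\textrm{End}_k(E)$ is a maximal order $\mathcal{O}$ in $D_{p,\infty}$ and $\textrm{Aut}_k(E)=\mathcal{O}^{\times}$ is a finite subgroup of $D_{p,\infty}^{\times}$. Here I would feed $\mathcal{O}^{\times}$ into the classification of finite subgroups of division algebras from Section \ref{findiv} (Amitsur): as $-1\in\mathcal{O}^{\times}$ the order is even, and the requirement that $\mathcal{O}^{\times}$ embed in the specific algebra $D_{p,\infty}$, whose local invariants are nonzero exactly at $p$ and $\infty$, cuts the Amitsur list down to $\{\pm1\}$, the dicyclic group $\textrm{Dic}_{12}$, and the binary tetrahedral group $\mathfrak{T}^{*}$, which embed in $D_{p,\infty}$ only for $p=3$ and $p=2$, respectively. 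Computing the unit groups of the maximal orders then gives $\mathcal{O}^{\times}\cong\mathfrak{T}^{*}$ for $p=2$ (the Hurwitz order), $\mathcal{O}^{\times}\cong\textrm{Dic}_{12}$ for $p=3$, and $\mathcal{O}^{\times}=\{\pm1\}\cong\Z/2\Z$ for $p\geq 5$. Existence for $p=2,3$ is immediate: case (2) of Theorem \ref{isogclass ell} ($a$ even, $\beta=\pm 2\sqrt{q}$) furnishes such a curve, and Proposition \ref{endposs} realizes the maximal order as its endomorphism ring.

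The main obstacle is the quaternionic computation: pinning down the unit groups $\mathcal{O}^{\times}$ of the maximal orders in $D_{p,\infty}$ for each $p$ and checking that no spurious finite group slips in---for example $Q_8$, which is the unit group of the non-maximal Lipschitz order in $D_{2,\infty}$. The point that excludes such groups is exactly that the endomorphism ring of a supersingular elliptic curve must be maximal, so only unit groups of maximal orders can occur; I expect the delicate step to be combining the Amitsur restrictions with the local ramification data of $D_{p,\infty}$ to prove that $p\in\{2,3\}$ is the only way to exceed $\{\pm 1\}$, and then to match the resulting groups precisely with $\mathfrak{T}^{*}$ and $\textrm{Dic}_{12}$.
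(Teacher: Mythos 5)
Your overall route --- identifying $\textrm{Aut}_k(E)$ with $\textrm{End}_k(E)^{\times}$, splitting according to whether $\textrm{End}_k^0(E)$ is an imaginary quadratic field or $D_{p,\infty}$, and feeding the quaternionic case into the Amitsur classification plus Waterhouse's realization results --- is essentially the paper's, and the commutative half of your argument is fine. But the quaternionic half contains a false step. It is not true that the finite groups from Theorem \ref{thm 18} that embed into $D_{p,\infty}$ are only $\{\pm 1\}$, $\textrm{Dic}_{12}$ and $\mathfrak{T}^*$, and it is not true that every maximal order $\mathcal{O}\subset D_{p,\infty}$ has $\mathcal{O}^{\times}=\{\pm 1\}$ when $p\geq 5$. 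Concretely, $7$ is inert in $\Q(\sqrt{-1})$, so $\Q(\sqrt{-1})$ embeds into $D_{7,\infty}$; writing $D_{7,\infty}=\Q(\sqrt{-1})\oplus\Q(\sqrt{-1})j$ with $j^2=-7$ and $jx=\bar{x}j$, the ring $\Z[\sqrt{-1}]\oplus\Z[\sqrt{-1}]j$ is a $\Z$-order, hence is contained in a maximal order $\mathcal{O}$, and then $\sqrt{-1}\in\mathcal{O}^{\times}$, so $\Z/4\Z\leq\mathcal{O}^{\times}$. Likewise $5$ is inert in $\Q(\sqrt{-3})$, so a maximal order of $D_{5,\infty}$ contains $\zeta_6$ and its unit group contains $\Z/6\Z$; the Eichler mass formula shows these unit groups are exactly $\Z/4\Z$ and $\Z/6\Z$. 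Since by Theorem \ref{ordoccurell}, case (2), every maximal order occurs as the endomorphism ring of a supersingular curve with all endomorphisms defined over $k$, there exist supersingular elliptic curves over $\F_{49}$ and $\F_{25}$ whose automorphism groups are $\Z/4\Z$ and $\Z/6\Z$ --- directly contradicting your claim for $p\geq 5$. What is true is only that $p\in\{2,3\}$ is the only way to obtain a group other than $\Z/2\Z$, $\Z/4\Z$, $\Z/6\Z$.

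The corollary itself survives your error, because the groups your enumeration misses already appear in rows $\sharp 2$ and $\sharp 3$ of Table 2 (and $\Z/3\Z$ cannot occur as a full unit group, since $-1\in\mathcal{O}^{\times}$); the restrictions $p=3$ for $\textrm{Dic}_{12}$ and $p=2$ for $\mathfrak{T}^*$ are likewise unaffected, their enveloping $\Q$-algebras being $D_{3,\infty}$ and $D_{2,\infty}$. So the repair is simply to state the correct cut-down list --- $\Z/2\Z$, $\Z/4\Z$, $\Z/6\Z$, $Q_8$ (only $p=2$), $\textrm{Dic}_{12}$ (only $p=3$), $\mathfrak{T}^*$ (only $p=2$) --- and then discard $Q_8$ exactly as you do, via the maximality of the endomorphism ring of a supersingular curve. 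With that fix your proof is sound and parallels the paper's; the genuine differences are that you exclude $Q_8$ (and implicitly $\textrm{Dic}_{24}$) by maximality of the endomorphism ring rather than by \cite[Theorem III.10.1]{12} and dimension counting as the paper does, and that you aim to realize each cyclic group in every characteristic, whereas the paper only exhibits one curve per group, over $\F_3$, $\F_5$, $\F_7$, $\F_9$ and $\F_4$.
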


\begin{proof}
 Suppose first that $G$ is the automorphism group of an elliptic curve $E$ over $k=\F_q$. Then we know that $\textrm{End}_k^0(E)$ is either a field with $[\textrm{End}_k^0(E):\Q] \leq 2$ or a definite quaternion algebra over $\Q.$ If $\textrm{End}_k^0(E)$ is a field, then $G$ is cyclic. By \cite[Theorem III.10.1]{12} together with dimension counting, we obtain $\sharp 1, \sharp 2$, and $\sharp 3$. If $\textrm{End}_k^0(E)$ is a definite quaternion algebra over $\Q$, then it is a division algebra, and hence, by Theorem \ref{thm 18} below, together with \cite[Theorem III.10.1]{12} and dimension counting, we see that $G$ can be one of the 5 groups in the above table or $\textrm{Dic}_{24}.$ Then since $G$ can be $\textrm{Dic}_{24}$ only if $\textrm{dim}_{\Q} \textrm{End}_k^0(E) =8$ which is absurd in our situation, we obtain $\sharp 1, \sharp 2, \sharp 3, \sharp 4,$ and $\sharp 5.$ Conversely, we prove that each of the 5 groups occurs as the automorphism group of an elliptic curve $E$ over some finite field $k=\F_q$. \\ 

 (1) Let $k=\F_3.$ By Theorem \ref{isogclass ell}, there is an ordinary elliptic curve over $k$ such that $\textrm{End}_k^0(E)=\Q(\sqrt{-2}).$ Then since $\Z[\sqrt{-2}]$ is a maximal $\Z$-order in $\textrm{End}_k^0(E),$ there is an elliptic curve $E^{\prime}$ over $k$ that is isogenous to $E$ with $\textrm{End}_k(E^{\prime}) = \Z[\sqrt{-2}]$ by Proposition \ref{endposs}. Then since $\textrm{Aut}_k(E^{\prime})=\Z[\sqrt{-2}]^{\times}=\{\pm 1\}$ (with a canonical polarization), we can conclude that $\textrm{Aut}_k(E^{\prime})\cong \Z/2\Z.$  \\

 (2) Let $k=\F_5.$ By Theorem \ref{isogclass ell}, there is an ordinary elliptic curve over $k$ such that $\textrm{End}_k^0(E)=\Q(\sqrt{-1}).$ Then since $\Z[\sqrt{-1}]$ is a maximal $\Z$-order in $\textrm{End}_k^0(E),$ there is an elliptic curve $E^{\prime}$ over $k$ that is isogenous to $E$ with $\textrm{End}_k(E^{\prime}) = \Z[\sqrt{-1}]$ by Proposition \ref{endposs}. Then since $\textrm{Aut}_k(E^{\prime})=\Z[\sqrt{-1}]^{\times}=\{\pm 1, \pm \sqrt{-1}\}$ (with a canonical polarization), we can conclude that $\textrm{Aut}_k(E^{\prime})\cong \Z/4\Z.$ \\

 (3) Let $k=\F_7.$ By Theorem \ref{isogclass ell}, there is an ordinary elliptic curve over $k$ such that $\textrm{End}_k^0(E)=\Q(\sqrt{-3}).$ Then since $\Z \left[\frac{1+\sqrt{-3}}{2}\right]$ is a maximal $\Z$-order in $\textrm{End}_k^0(E),$ there is an elliptic curve $E^{\prime}$ over $k$ that is isogenous to $E$ with $\textrm{End}_k(E^{\prime}) = \Z \left[\frac{1+\sqrt{-3}}{2}\right]$ by Proposition \ref{endposs}. Then since $\textrm{Aut}_k(E^{\prime})=\Z \left[\frac{1+\sqrt{-3}}{2}\right]^{\times}=\{\pm 1, \pm \lambda, \pm \lambda^2 \}$ (with a canonical polarization) where $\lambda = \frac{1+\sqrt{-3}}{2}$, we can conclude that $\textrm{Aut}_k(E^{\prime})\cong \Z/6\Z.$ \\

(4) Let $k=\F_9.$ By Theorem \ref{isogclass ell}, there is a supersingular elliptic curve over $k$ such that $\textrm{End}_k^0(E)=D_{3,\infty}.$ Let $\mathcal{O}$ be a (unique) maximal $\Z$-order in $D_{3,\infty}$ that is generated by $1, i, \frac{1+j}{2}, \frac{i+ij}{2}$. Then there is an elliptic curve $E^{\prime}$ over $k$ that is isogenous to $E$ with $\textrm{End}_k(E^{\prime}) = \mathcal{O}$ by Proposition \ref{endposs}. Then since $\textrm{Aut}_k(E^{\prime})=\mathcal{O}^{\times}$ (with a canonical polarization), we can conclude that $\textrm{Aut}_k(E^{\prime})\cong \textrm{Dic}_{12}.$  \\

(5) Let $k=\F_4.$ By Theorem \ref{isogclass ell}, there is a supersingular elliptic curve over $k$ such that $\textrm{End}_k^0(E)=D_{2,\infty}.$ Let $\mathcal{O}$ be a (unique) maximal $\Z$-order in $D_{2,\infty}$ that is generated by $i, j, ij, \frac{1+i+j+ij}{2}$. Then there is an elliptic curve $E^{\prime}$ over $k$ that is isogenous to $E$ with $\textrm{End}_k(E^{\prime}) = \mathcal{O}$ by Proposition \ref{endposs}. Then since $\textrm{Aut}_k(E^{\prime})=\mathcal{O}^{\times}$ (with a canonical polarization), we can conclude that $\textrm{Aut}_k(E^{\prime})\cong \mathfrak{T}^*.$ \\ 

 This completes the proof.
\end{proof}

For later use, we now determine the possible endomorphism rings of elliptic curves over finite fields.

\begin{theorem}\label{ordoccurell}
  Let $E$ be an elliptic curve over a finite field $k=\F_q$, let $D=\textrm{End}_k^0(E)$, and let $\pi$ be a $q$-Weil number associated to $E$. If $K:=\Q(\pi),$ then the following orders occur as the endomorphism ring of an elliptic curve over $k$ in the isogeny class of $E$: \\
  (1) $E$ is ordinary so that $D=K$: every order containing $\pi$;\\
  (2) $E$ is supersingular with $D=D_{p,\infty}$: every maximal order; \\
  (3) $E$ is supersingular with $D=K$: every order containing $\pi$ whose conductor is relatively prime to $p$.
\end{theorem}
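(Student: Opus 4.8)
The plan is to treat the three cases separately, reducing each to the two construction principles at our disposal: that every maximal order is realized as an endomorphism ring in the isogeny class (Proposition \ref{endposs}), and that a kernel ideal $I$ produces a curve whose endomorphism ring is the right order $O_r(I)$ (Lemma \ref{endchange}). Case (2) then requires nothing further: since $D=D_{p,\infty}$ is itself the endomorphism algebra and any abelian variety isogenous to an elliptic curve is again an elliptic curve, the assertion that every maximal order occurs is exactly Proposition \ref{endposs} applied to $E$.

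For cases (1) and (3) the algebra $D=K$ is an imaginary quadratic field (by Corollary \ref{cor TateEnd0} and Remark \ref{Qpi Real}), so the orders of $K$ form a chain $R_f=\Z+f\mathcal{O}_K$ indexed by the conductor $f\geq 1$, with $R_f\subseteq R_{f'}$ exactly when $f'\mid f$. I would first dispose of necessity. Because the Frobenius $\pi$ lies in $\textrm{End}_k(E')$ for every $E'$ defined over $k$, every order that occurs contains $\Z[\pi]$; since $\overline{\pi}=\beta-\pi\in\Z[\pi]$ (with $\beta$ the trace), this is the same as containing $\pi$. In case (3) I would additionally prove $p\nmid f$: here $E$ is supersingular with commutative endomorphism algebra, so $p$ is non-split in $K$ (it ramifies or stays prime, in accordance with Lemma \ref{prime dec}), and the $p$-divisible group of a supersingular curve forces the local endomorphism ring at $p$ to be the full maximal order of $K\otimes\Q_p$, which is precisely the statement that the conductor is prime to $p$.

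For sufficiency I would realize each admissible order by the kernel-ideal method. The localization of $\textrm{End}_k(E')$ away from $p$ is controlled by Tate's theorem (Theorem \ref{thm Tate}(b)): for $\ell\neq p$ one has $\textrm{End}_k(E')\otimes\Z_\ell\cong\textrm{End}_\Gamma(T_\ell E')$, and since $\Gamma$ acts through $\pi$, this commutant is the multiplier ring $\{x\in K\otimes\Q_\ell\mid x\Lambda\subseteq\Lambda\}$ of the lattice $\Lambda=T_\ell E'$. As $\Lambda$ runs over the $\pi$-stable $\Z_\ell$-lattices, this multiplier ring runs over all local orders containing $\Z_\ell[\pi]$, giving complete freedom to prescribe the local order at each $\ell\neq p$. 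Assembling these local choices into a single kernel ideal $I$ whose right order $O_r(I)$ equals the target $R_f$, and applying Lemma \ref{endchange}, then yields a curve $E'$ in the isogeny class with $\textrm{End}_k(E')=R_f$. In case (1) the $p$-part needs no attention: a short valuation computation shows $\Z_p[\pi]=\mathcal{O}_K\otimes\Z_p$, since $\pi$ is a $p$-adic unit at one prime above the (split) $p$ and $q/\pi$ is a unit at the other, so every order containing $\pi$ is automatically maximal at $p$. In case (3) the hypothesis $p\nmid f$ means the prescription is made only away from $p$, where the construction just described applies verbatim.

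The hard part will be the local analysis at $p$ in the supersingular case (3): both the necessity of $p\nmid f$ and the claim that the construction realizes exactly the orders maximal at $p$ lie beyond what the $\ell$-adic Tate module can detect, and require the finer theory of the $p$-divisible group (Dieudonn\'e modules), essentially the local computations of Waterhouse \cite{15}. By contrast, once that local input is granted, cases (1) and (2) together with the global bookkeeping with lattices and kernel ideals are comparatively formal.
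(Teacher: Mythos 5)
The first thing to note is that the paper has no argument of its own here: its proof of Theorem \ref{ordoccurell} is the single citation to \cite[Theorem 4.2]{15}, of which the statement is a verbatim restatement. So the real question is whether your sketch stands on its own, and it does not. Your case (2) and the necessity claims are fine, but the sufficiency argument in cases (1) and (3) contains a step that fails: the kernel-ideal assembly. For a left ideal $I$ of a \emph{commutative} order $R=\textrm{End}_k(E')$ one always has $R\subseteq O_r(I)$, and when $R$ is the maximal order $\mathcal{O}_K$ every nonzero ideal is invertible, so in fact $O_r(I)=\mathcal{O}_K$ exactly. Hence Lemma \ref{endchange} can only move endomorphism rings \emph{up}, never down. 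But the only curves whose existence your proposal establishes are those furnished by Proposition \ref{endposs}, whose rings are maximal; starting from such a curve, no choice of kernel ideal can have $O_r(I)=R_f$ for a non-maximal $R_f$, and the non-maximal orders (down to $\Z[\pi]$) are precisely the new content of (1) and (3). The repair is to drop the right-order formalism at this point: for the finitely many $\ell\neq p$ where $R_f\otimes\Z_\ell$ differs from $\textrm{End}_k(E)\otimes\Z_\ell$, realize a homothety-rescaled copy of the $\pi$-stable lattice $R_f\otimes\Z_\ell\subseteq V_\ell E$ as $T_\ell(E/H_\ell)$, where $H_\ell$ is the finite \'etale subgroup scheme the lattice determines, and then read off $\textrm{End}_k(E/H)\otimes\Z_\ell$ from Theorem \ref{thm Tate}(b). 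That lattice-quotient mechanism (which is what Waterhouse actually uses) moves both up and down; your own sentence about multiplier rings of $\pi$-stable lattices shows you have the correct local picture, but the bridge you build from it back to kernel ideals is exactly where the argument breaks.

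Separately, everything $p$-adic in the supersingular case (3) --- both the necessity of $p\nmid f$ and the fact that the ring stays maximal at $p$ under the prime-to-$p$ isogenies used --- is explicitly outsourced in your last paragraph to ``the local computations of Waterhouse,'' i.e.\ to the very reference whose Theorem 4.2 is the statement being proved. Combined with the misstep above, what you have is a partial reconstruction of Waterhouse's proof in which one essential step is wrong and the other is assumed rather than proved.
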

\begin{proof}
 For a proof, see \cite[Theorem 4.2]{15}.
\end{proof}

\section{Quaternionic matrix representations}\label{quat mat rep}

\qquad In this section, we introduce some facts about finite quaternionic matrix groups that will be used later, following a paper of Nebe~\cite{10}. \\

Throughout this section, let $\mathcal{D}$ be a definite quaternion algebra over a totally real number field $K$ and let $V=\mathcal{D}^{1 \times n}$ be a right module over $M_n(\mathcal{D}).$ Endomorphisms of $V$ are given by left multiplication by elements of $\mathcal{D}.$ For computations, it is also convenient to let the endomorphisms act from the right. Then we have $\textrm{End}_{M_n(\mathcal{D})}(V) \cong \mathcal{D}^{\textrm{op}}$, which we identify with $\mathcal{D}$ since $\mathcal{D}$ is a quaternion algebra. \\

Now, we start with the following
\begin{definition}\label{Nebe def}
Let $G$ be a finite group and $\Delta : G \rightarrow GL_n (\mathcal{D})$ be a representation of $G$. \\
(a) Let $L$ be a subring of $K$. The \emph{enveloping $L$-algebra} $\overline{L \Delta(G)}$ is defined as
\begin{equation*}
\overline{L \Delta(G)} = \left\{\sum_{g \in \Delta(G)} l_g \cdot g~|~l_g \in L \right\} \subseteq M_n (\mathcal{D}).
\end{equation*}
(b) $\Delta$ is called \emph{absolutely irreducible} if the enveloping $\Q$-algebra $\overline{\Delta(G)}:=\overline{\Q \Delta(G)}$ of $\Delta(G)$ equals $M_n(\mathcal{D}).$ \\
(c) $\Delta$ is \emph{centrally irreducible} if the enveloping $K$-algebra $\overline{K \Delta(G)}$ equals $M_n(\mathcal{D}).$ \\
(d) $\Delta$ is \emph{irreducible} if the commuting algebra $C_{M_n(\mathcal{D})}(\Delta(G))$ is a division algebra. \\
(e) A subgroup $G \leq GL_n(\mathcal{D})$ is called \emph{irreducible} (resp.\ \emph{centrally irreducible}, \emph{absolutely irreducible}) if the natural representation $\textrm{id} : G \rightarrow GL_n(\mathcal{D})$ is irreducible (resp.\ centrally irreducible, absolutely irreducible).
\end{definition}

\begin{remark}\label{irred max}
(a) The irreducible maximal finite subgroups $G$ of $GL_n(\mathcal{D})$ are absolutely irreducible in their enveloping $\Q$-algebras $\overline{G}$ where $\overline{G} \cong M_m(\mathcal{D}^{\prime})$ for some integer $m \geq 1$ and division algebra $\mathcal{D}^{\prime}$ with $m^2 \cdot \textrm{dim}_{\Q} \mathcal{D}^{\prime} $ dividing $n^2 \cdot \textrm{dim}_{\Q} \mathcal{D}.$ \\
(b) The reducible maximal finite subgroups of $GL_n(\mathcal{D})$ can be built up from the irreducible maximal finite subgroups of $GL_l (\mathcal{D})$ for $l < m.$
\end{remark}

For Lemmas \ref{irred max lem}, \ref{imag lem}, \ref{irre max real mat}, \ref{dihe lem}, and \ref{div alg lem} below, we assume further that $\mathcal{D}=D_{p,\infty}$ for some prime number $p.$ Then in view of Remark \ref{irred max} and the double centralizer theorem, we have
\begin{lemma}\label{irred max lem}
  Let $G$ be an irreducible (but not absolutely irreducible) maximal finite subgroup of $GL_2(\mathcal{D})$. Then $\overline{G}$ is one of the followings:
  \begin{center}
  \begin{tabular}{|c|c|c|c|}
\hline
$$ & $\overline{G}$ & $\textrm{dim}_{\Q} \overline{G}$ & $C_{M_2(\mathcal{D})}(\overline{G})$ \\
\hline
$\sharp 1$ & $M_2(K^{\prime})$ ~$\textrm{where~$K^{\prime}$~is a quadratic field}$ & $8$ & $K^{\prime}$ \\
\hline
$\sharp 2$ & $M_2(\Q)$ & $4$ & $\mathcal{D}$ \\
\hline
$\sharp 3$ & $M_1(\mathcal{D}^{\prime})$~$\textrm{where $\mathcal{D}^{\prime}$ is a division algebra over $\Q$}$ & $2,4, 8$ &   \\
\hline
\end{tabular}
\vskip 4pt
\textnormal{Table 3}
\end{center}
\end{lemma}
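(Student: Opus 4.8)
The plan is to combine Remark \ref{irred max} with the double centralizer theorem, reducing the classification to a short arithmetic of dimensions. Write $A := M_2(\mathcal{D})$; since $\mathcal{D} = D_{p,\infty}$ has $\dim_\Q \mathcal{D} = 4$, the algebra $A$ is central simple over $\Q$ with $\dim_\Q A = n^2 \dim_\Q \mathcal{D} = 16$ (here $n=2$). By Remark \ref{irred max}, because $G$ is an irreducible maximal finite subgroup of $GL_2(\mathcal{D})$, its enveloping $\Q$-algebra is a simple subalgebra $\overline{G} \cong M_m(\mathcal{D}')$ of $A$, with $m^2 \cdot \dim_\Q \mathcal{D}'$ dividing $16$; in particular $m^2 \mid 16$, so $m \in \{1,2,4\}$.

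I would then apply the double centralizer theorem to the simple subalgebra $\overline{G} \subseteq A$. Writing $C := C_A(\overline{G})$ for the commuting algebra, this yields: $C$ is simple, $\dim_\Q \overline{G} \cdot \dim_\Q C = 16$, the centers satisfy $Z(C) = Z(\overline{G}) =: K'$, and $C_A(C) = \overline{G}$. The hypothesis that $G$ is irreducible means precisely that $C$ is a division algebra (Definition \ref{Nebe def}(d)), while the hypothesis that $G$ is \emph{not} absolutely irreducible means $\overline{G} \ne A$; since $C = \Q$ would force $\overline{G} = C_A(\Q) = A$, we conclude $C \ne \Q$.

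Next I would introduce the numerical invariants $f := [K':\Q]$, $d' := \mathrm{ind}(\mathcal{D}')$ (the index of $\mathcal{D}'$ over its center $K'$), and $c' := \mathrm{ind}(C)$ (the index of the division algebra $C$ over $K'$). Then $\dim_\Q \overline{G} = m^2 f d'^2$ and $\dim_\Q C = f c'^2$, so the double-centralizer identity becomes
\begin{equation*}
16 = \dim_\Q \overline{G} \cdot \dim_\Q C = (m f d' c')^2,
\end{equation*}
that is, $m f d' c' = 4$ with $m,f,d',c'$ all positive integers. The classification is now the enumeration of the solutions of this single equation, subject to $C \ne \Q$ (equivalently $(f,c') \ne (1,1)$) and to the constraint that $\overline{G}$ genuinely embeds in $A$.

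The final, and most delicate, step is the case analysis. The case $m=4$ forces $f=d'=c'=1$, hence $\overline{G} = M_4(\Q) = A$, which is impossible because $A = M_2(D_{p,\infty})$ has index $2$, not $1$ (so $A \not\cong M_4(\Q)$). For $m = 2$ one has $f d' c' = 2$, giving either $\overline{G} = M_2(\Q)$ with $C = \mathcal{D}$ (row $\sharp 2$, using $A = M_2(\Q) \otimes_\Q \mathcal{D}$), or $\overline{G} = M_2(K')$ with $K'$ quadratic and $C = K'$ (row $\sharp 1$), the remaining possibility $\overline{G} = M_2(\mathcal{D}) = A$ being excluded as absolutely irreducible. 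For $m = 1$ the algebra $\overline{G} = \mathcal{D}'$ is a division algebra over $\Q$ of dimension $f d'^2$; the value $\dim_\Q \mathcal{D}' = 1$ is excluded (it gives $\overline{G} = \Q$, whose centralizer is all of $A$, contradicting irreducibility) and $\dim_\Q \mathcal{D}' = 16$ is excluded (it gives $\overline{G} = A$, not a division algebra), leaving $\dim_\Q \mathcal{D}' \in \{2,4,8\}$, which is row $\sharp 3$. The main obstacle I anticipate is precisely this bookkeeping: one must use the index (Brauer class) of $M_2(D_{p,\infty})$ to discard the dimension-filling cases $M_4(\Q)$ and the index-$4$ division algebra over $\Q$, and then read off the centralizer column in each surviving row from $Z(C) = K'$ together with $\dim_\Q C = 16/\dim_\Q \overline{G}$.
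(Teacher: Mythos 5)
Your proposal is correct and follows exactly the route the paper itself indicates for Lemma \ref{irred max lem}: the paper derives the table "in view of Remark \ref{irred max} and the double centralizer theorem," and your argument is precisely that derivation with the dimension bookkeeping $(mfd'c')^2 = \dim_{\Q} M_2(\mathcal{D}) = 16$ made explicit. The case analysis, including the exclusions of $M_4(\Q)$, $\overline{G}=A$, and $\overline{G}=\Q$ via the Brauer class of $A$ and the irreducibility hypothesis, is sound and fills in the details the paper leaves to the reader.
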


We examine each case of Lemma \ref{irred max lem} in the subsequent lemmas:

\begin{example}\label{GL(2,3)}
  Let $G=GL_2(\F_3).$ Then since $13$ is inert in $\Q(\sqrt{-2})$ so that $\Q(\sqrt{-2}) \subset D_{13,\infty},$ we have $G \leq GL_2(D_{13,\infty}).$ Also, it is known in \cite{2} that $G$ is an irreducible maximal finite subgroup of $GL_2(\Q(\sqrt{-2})).$ We claim that $G$ is a maximal finite subgroup of $GL_2(D_{13,\infty})$ (up to isomorphism). Indeed, suppose on the contrary that there is a finite subgroup $H$ of $GL_{2}(D_{13,\infty})$ such that $G$ is (isomorphic to) a proper subgroup of $H$. Since $G$ is an absolutely irreducible maximal finite subgroup of $GL_2(\Q(\sqrt{-2})),$ it follows that $\overline{H}=M_2(D_{13,\infty})$ by dimension counting. This means that $H$ is an absolutely irreducible subgroup of $GL_2(D_{13,\infty}),$ and this contradicts Theorem \ref{prim aimf of GL2} below. Hence, $G$ is a maximal finite subgroup of $GL_2(D_{13,\infty}).$ Also, by definition, $G$ is irreducible since we have $C_{M_2(D_{13,\infty})}(\overline{G})=\Q(\sqrt{-2}),$ which is a division algebra.
\end{example}

By a similar argument as in Example \ref{GL(2,3)}, using the argument of the proof of Theorem \ref{powordelli} below, we have the following:
\begin{lemma}\label{imag lem}
  Let $G$ be an irreducible maximal (up to isomorphism) finite subgroup of $GL_2(\mathcal{D})$ whose enveloping $\Q$-algebra is $M_2(K^{\prime})$ for some imaginary quadratic field $K^{\prime}$. Then $G$ is one of the following groups:
 \begin{center}
    \begin{tabular}{|c|c|}
\hline
$$ & $G$  \\
\hline
$\sharp 1$ & $GL_2(\F_3)$ \\
\hline
$\sharp 2$ & $\Z/12 \rtimes \Z/2\Z \cong \Z/4\Z \times \textrm{Sym}_3$  \\
\hline
$\sharp 3$ & $(Q_8 \rtimes \Z/3\Z) \times \Z/3\Z \cong \mathfrak{T}^* \times \Z/3\Z$  \\
\hline
$\sharp 4$ & $ (\Z/6\Z \times \Z/6\Z) \rtimes \Z/2\Z \cong \textrm{Dic}_{12} \rtimes \Z/6\Z$    \\
\hline
$\sharp 5$ & $(Q_8 \rtimes \Z/3\Z) \cdot \Z/4\Z \cong \mathfrak{T}^* \rtimes \Z/4\Z$ \\
\hline
\end{tabular}
\vskip 4pt
\textnormal{Table 4}
\end{center}
\end{lemma}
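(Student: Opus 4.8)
The plan is to pull the problem back from the quaternion algebra $\mathcal{D}=D_{p,\infty}$ to the imaginary quadratic field $K^{\prime}$, where the classification is already available, and then to transfer maximality forward again exactly as in Example~\ref{GL(2,3)}. The first step is to observe that the hypothesis $\overline{G}=\overline{\Q\Delta(G)}=M_2(K^{\prime})$ means the $\Q$-span of $\Delta(G)$ is the subalgebra $M_2(K^{\prime})\subseteq M_2(\mathcal{D})$. Hence, after a suitable conjugation, I may regard $\Delta$ as a representation $G\to GL_2(K^{\prime})$ whose enveloping $\Q$-algebra, and therefore whose enveloping $K^{\prime}$-algebra, is all of $M_2(K^{\prime})$; in other words $G$ is an absolutely irreducible finite subgroup of $GL_2(K^{\prime})$. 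Conversely, the commuting algebra is $C_{M_2(\mathcal{D})}(\overline{G})=K^{\prime}$, a field and hence a division algebra, so $G$ is irreducible but not absolutely irreducible in $GL_2(\mathcal{D})$, placing us in case $\sharp 1$ of Lemma~\ref{irred max lem}.

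Second, I would invoke the classification of Birkenhake and Lange~\cite{2}. In the setting of abelian surfaces whose endomorphism algebra is $M_2(K^{\prime})$ for an imaginary quadratic field $K^{\prime}$ — which is precisely the power-of-an-ordinary-elliptic-curve situation treated in Theorem~\ref{powordelli} — the finite subgroups of $GL_2(K^{\prime})$ that are maximal among such are exactly the five groups listed in Table~4. This is the substantive input, and I would not re-derive it; instead I would reuse the bookkeeping from the proof of Theorem~\ref{powordelli} to match each abstract group with an explicit realization inside $GL_2(K^{\prime})$ for an appropriate imaginary quadratic $K^{\prime}$ (the one containing the requisite roots of unity).

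Third, I would upgrade maximality over $K^{\prime}$ to maximality over $\mathcal{D}$, following Example~\ref{GL(2,3)} almost verbatim. For each of the five groups $G$ I choose a prime $p$ that is inert or ramified in $K^{\prime}$, so that $K^{\prime}\hookrightarrow D_{p,\infty}=\mathcal{D}$ and hence $G\hookrightarrow GL_2(\mathcal{D})$ with enveloping algebra $M_2(K^{\prime})$. Suppose $G$ were a proper subgroup of a finite subgroup $H\leq GL_2(\mathcal{D})$. Then $\overline{H}\supseteq\overline{G}=M_2(K^{\prime})$. Since $K^{\prime}$ is a maximal subfield of the quaternion division algebra $\mathcal{D}$ (so that $C_{\mathcal{D}}(K^{\prime})=K^{\prime}$), the only $\Q$-subalgebras of $M_2(\mathcal{D})$ containing $M_2(K^{\prime})$ are $M_2(K^{\prime})$ and $M_2(\mathcal{D})$ itself; thus $\overline{H}$ is one of these two. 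If $\overline{H}=M_2(K^{\prime})$, then $H\leq GL_2(K^{\prime})$ is a finite absolutely irreducible group properly containing $G$, contradicting the maximality established in the second step. If $\overline{H}=M_2(\mathcal{D})$, then $H$ is an absolutely irreducible finite subgroup of $GL_2(\mathcal{D})$, contradicting Theorem~\ref{prim aimf of GL2}. Either way we reach a contradiction, so $G$ is maximal in $GL_2(\mathcal{D})$, as required.

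The main obstacle is the second step: the genuine content is the enumeration of the five groups, a finite but delicate representation-theoretic classification over imaginary quadratic fields that must be imported from~\cite{2} together with the proof of Theorem~\ref{powordelli}. The remaining points are routine arithmetic checks — verifying that each of the five groups does embed in $GL_2(K^{\prime})$ for a suitable imaginary quadratic $K^{\prime}$, and that a prime $p$ inert or ramified in $K^{\prime}$ (equivalently, a non-split prime, which exists since $K^{\prime}$ is imaginary and infinitely many primes are inert) yields the embedding $K^{\prime}\hookrightarrow D_{p,\infty}$ needed for the maximality-transfer argument to apply.
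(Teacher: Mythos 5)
Your overall route is the paper's own: the paper proves this lemma precisely by combining the transfer argument of Example~\ref{GL(2,3)} with the classification over imaginary quadratic fields imported from \cite{2}/\cite{4} in the proof of Theorem~\ref{powordelli}, and your Steps 1--3 flesh that out correctly. (Your subalgebra argument --- that $M_2(K^{\prime})$ and $M_2(\mathcal{D})$ are the only $\Q$-subalgebras of $M_2(\mathcal{D})$ containing $M_2(K^{\prime})$ --- is in fact a cleaner replacement for the paper's ``dimension counting.'')

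There is, however, one concrete gap in Step 3: your only constraint on $p$ is that it be non-split in $K^{\prime}$, and under that constraint alone the final deduction can fail. The step ``$\overline{H}=M_2(\mathcal{D})$ contradicts Theorem~\ref{prim aimf of GL2}'' is only a contradiction when $GL_2(D_{p,\infty})$ has \emph{no} absolutely irreducible finite subgroups at all, which requires $p\notin\{2,3,5\}$; for completeness one must also cite Theorem~\ref{imprim aimf GL2} alongside Theorem~\ref{prim aimf of GL2}, since an absolutely irreducible finite $H$ lies in a maximal one that may be imprimitive. If you happened to choose $p=2$ for $K^{\prime}=\Q(\sqrt{-1})$ ($2$ ramifies there) or $p=5$ for $K^{\prime}=\Q(\sqrt{-2})$ ($5$ is inert there), then $GL_2(D_{p,\infty})$ genuinely contains absolutely irreducible maximal finite subgroups (Tables 8 and 9), so no contradiction arises, and you would instead have to check by hand that none of those groups contains your $G$ --- a different and harder argument. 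The fix costs one sentence: choose $p$ non-split in $K^{\prime}$ with $p\notin\{2,3,5\}$, which exists since infinitely many primes are inert in $K^{\prime}$; this is what the paper implicitly does ($p=13$ in Example~\ref{GL(2,3)}, and $p=13,7,7,11,11$ in items (9)--(13) of the proof of Theorem~\ref{pow of supell}).
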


If $G \leq GL_2(K^{\prime})$ for some totally real quadratic field $K^{\prime}$, then $G$ is either cyclic or a dihedral group by a well-known theorem of Leonardo da Vinci. By considering the possible orders of $G$ via dimension counting, we also have $|G|\leq 24.$\footnote{In fact, if $G$ is cyclic, then $|G| \leq 12.$} This observation gives the following result:
\begin{lemma}\label{irre max real mat}
  There is no irreducible maximal (up to isomorphism) finite subgroup $G$ of $GL_2(\mathcal{D})$ whose enveloping $\Q$-algebra is $M_2(K^{\prime})$ for some totally real quadratic field $K^{\prime}$.
\end{lemma}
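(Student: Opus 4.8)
The plan is to assume, for contradiction, that such an irreducible maximal finite subgroup $G \le GL_2(\mathcal{D})$ with $\overline{G} = M_2(K')$ and $K'$ totally real quadratic does exist, and then to exhibit a finite subgroup of $GL_2(\mathcal{D})$ strictly containing it. By the observation preceding the statement, $G \le \overline{G}^{\times} = GL_2(K')$ is a finite subgroup and is therefore cyclic or dihedral. A cyclic group has commutative enveloping $\Q$-algebra, which cannot equal the noncommutative algebra $M_2(K')$; hence $G \cong D_n$ is dihedral of order $2n$, acting faithfully through a $2$-dimensional representation that generates the simple noncommutative algebra $M_2(K')$.

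First I would determine the admissible values of $n$. The reflection representation of $D_n$ has Schur index $1$ and character field $\Q(\cos(2\pi/n))$, so its enveloping $\Q$-algebra is $M_2(\Q(\cos(2\pi/n)))$. Matching with $M_2(K')$ and imposing $[K':\Q]=2$ gives $[\Q(\cos(2\pi/n)):\Q] = \varphi(n)/2 = 2$, i.e.\ $\varphi(n)=4$, so $n \in \{5,8,10,12\}$, with $K' = \Q(\sqrt5), \Q(\sqrt2), \Q(\sqrt5), \Q(\sqrt3)$ and $|G| \in \{10,16,20,24\}$ respectively; this is consistent with the bound $|G|\le 24$ recorded above.

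Next I would prove that none of these is maximal. The case $n=5$ is immediate: $D_5 \subset D_{10}$ and both have enveloping algebra $M_2(\Q(\sqrt5))$, and since any two faithful representations of $D_5$ over $K'=\Q(\sqrt5)$ generating $M_2(K')$ are conjugate by Skolem--Noether, our given copy of $D_5$ is conjugate inside $\overline{G}^{\times}=GL_2(\Q(\sqrt5))$ to one lying in such a $D_{10}$; hence $D_5$ is not maximal. For $n \in \{8,10,12\}$ no larger dihedral group of order $\le 24$ has a quadratic character field, so a proper finite overgroup must increase the envelope. Here the decisive point is a centralizer computation in the central simple $\Q$-algebra $A := M_2(\mathcal{D})$: if $M_2(K') \subsetneq B'$ with $B'$ a simple subalgebra of $A$, then $C_A(B') \subsetneq C_A(M_2(K')) = K'$ forces $\dim_{\Q} C_A(B') = 1$, whence $B' = A$ by the double centralizer theorem. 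Thus any finite $H$ with $D_n \subsetneq H \le GL_2(\mathcal{D})$ and simple enveloping algebra is absolutely irreducible, $\overline{\Q H} = M_2(\mathcal{D})$. I would complete the proof by exhibiting such an $H$: using the quaternionic structure of $\mathcal{D}$, extend the real reflection action of $D_n$ by a finite-order element realizing a complex structure (concretely, via Skolem--Noether choose $w \in GL_2(\mathcal{D})$ inducing the nontrivial Galois automorphism of $K'$, so that $w$ normalizes $\overline{G}=C_A(K')$ but $w \notin \overline{G}$, and arrange $w$ to normalize $D_n$); then $\langle D_n, w\rangle$ is finite with enveloping algebra strictly larger than $M_2(K')$, hence equal to $M_2(\mathcal{D})$, contradicting maximality. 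Alternatively one may cite the classification of absolutely irreducible finite subgroups of $GL_2(\mathcal{D})$ underlying Theorem \ref{prim aimf of GL2} and check that each $D_n$ embeds into one of them.

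The hard part is exactly this last enlargement for $n \in \{8,10,12\}$: one must produce the finite-order $w$ so that the resulting group is simultaneously finite, contained in $GL_2(\mathcal{D})$ for the given rational quaternion algebra $\mathcal{D}=D_{p,\infty}$, and normalizes the dihedral group itself rather than merely its envelope. It is worth noting why the naive routes fail and force an absolutely irreducible overgroup: an inclusion $D_n \hookrightarrow H$ with $\overline{\Q H} = M_2(K'')$ for $K''$ imaginary quadratic would give $M_2(K') = M_2(K'')$ and hence $K'=K''$, impossible since one field is real and the other imaginary; and $5 \nmid |H|$ for every group $H$ in Table 4, so $D_5$ and $D_{10}$ cannot be absorbed into the imaginary-quadratic maximal groups on order grounds alone. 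Establishing the existence of the absolutely irreducible overgroup (or verifying the embedding into the classified list) is therefore the crux of the argument.
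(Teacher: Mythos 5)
There is a genuine gap, and it sits exactly where you yourself locate the crux: for $n \in \{8,10,12\}$ the finite overgroup is never actually produced. Skolem--Noether does give an element $w \in GL_2(\mathcal{D})$ inducing the nontrivial Galois automorphism of $K'$ on $\overline{G}=M_2(K')$, but $w$ is only determined up to multiplication by $C_A(\overline{G})^{\times}=K'^{\times}$, and nothing in your sketch makes $w$ of finite order: one computes $w^2 \in \Q^{\times}$, and replacing $w$ by $\lambda w$ changes $w^2$ by $N_{K'/\Q}(\lambda)$, so finiteness requires $\pm w^{-2}$ to be a norm from $K'^{\times}$ --- a nontrivial arithmetic condition you cannot verify; and even then $w$ must normalize the finite group $D_n$ itself, not merely its enveloping algebra. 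The fallback route (``check that each $D_n$ embeds into one of the classified absolutely irreducible maximal groups'') is also not carried out, and it cannot close the argument even in principle: an abstract embedding $D_n \hookrightarrow H$ with $H \leq GL_2(\mathcal{D})$ produces some copy $G'$ of $D_n$, but to conclude that \emph{your} $G$ is non-maximal you would need $G$ conjugate to $G'$ in $GL_2(\mathcal{D})$, which would force their enveloping algebras to be isomorphic --- and the envelope of $G'$ can never be $M_2(K')$ with $K'$ real (see below). So the proposal is incomplete at the step it identifies as decisive.

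The paper's proof is entirely different and much shorter: it shows the hypothesis is self-contradictory, so the lemma is vacuously true and no overgroup is needed. If $\overline{G}=M_2(K') \subset A:=M_2(D_{p,\infty})$, the double centralizer theorem gives $C_A(\overline{G})=K'$ and hence $C_A(K')=\overline{G}=M_2(K')$; on the other hand the centralizer theorem also gives $C_A(K') \sim A \otimes_{\Q} K' = M_2\bigl(D_{p,\infty}\otimes_{\Q}K'\bigr)$ in the Brauer group of $K'$, so $D_{p,\infty}$ splits over $K'$, equivalently $K' \hookrightarrow D_{p,\infty}$. This is impossible: the real place of $\Q$ splits in the totally real field $K'$, while $D_{p,\infty}$ is ramified at $\infty$. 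Thus no irreducible finite subgroup of $GL_2(\mathcal{D})$ --- maximal or not --- has enveloping algebra $M_2(K')$ with $K'$ totally real quadratic, which is why your planned construction could never be exhibited concretely: there is no $G$ to enlarge. Note that you had already computed $C_A(M_2(K'))=K'$ and even observed the real/imaginary dichotomy; one further application of the centralizer theorem in its Brauer-class form would have collapsed your entire setup into this two-line contradiction.
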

\begin{proof}
In the sequel, let $D_{n}$ denote the dihedral group of order $2n.$ By the assumption on the enveloping $\Q$-algebra, all the possible such groups are $G=D_5, D_8, D_{10},$ or $D_{12}.$ We prove that those groups cannot occur by considering them one by one. \\

(1) Let $G=D_{5}$. Suppose that $G$ is an irreducible maximal finite subgroup of $GL_2(D_{p,\infty})$ for some prime number $p.$ Then by definition, $\overline{G}=M_2(\Q(\sqrt{5})) \subset M_2(D_{p,\infty}).$ By the double centralizer theorem, we have $C_{M_2({D_{p,\infty}})}(\overline{G})=\Q(\sqrt{5})$ and $M_2(\Q(\sqrt{5})) \sim M_2(D_{p,\infty} \otimes_{\Q} \Q(\sqrt{5})).$ It follows that $D_{p,\infty}$ splits over $\Q(\sqrt{5})$, which is equivalent to $\Q(\sqrt{5}) \hookrightarrow D_{p,\infty}.$ Since the real place of $\Q$ splits in $\Q(\sqrt{5}),$ this is a contradiction. Hence, $G$ cannot occur. \\

(2) Let $G=D_8$. Suppose that $G$ is an irreducible maximal finite subgroup of $GL_2(D_{p,\infty})$ for some prime number $p.$ Then by definition, $\overline{G}=M_2(\Q(\sqrt{2})) \subset M_2(D_{p,\infty}),$ and a similar argument as in the proof of (1) can be used to prove that $G$ cannot occur as such a group. \\ 

(3) Let $G=D_{10}$. Then a similar argument as in the proof of (1) can be used to show that $G$ cannot occur as such a group. \\

(4) Let $G=D_{12}$. Suppose that $G$ is an irreducible maximal finite subgroup of $GL_2(D_{p,\infty})$ for some prime number $p.$ Then by definition, $\overline{G}=M_2(\Q(\sqrt{3})) \subset M_2(D_{p,\infty}),$ and a similar argument as in the proof of (1) can be used to prove that $G$ cannot occur as such a group. \\ 

This completes the proof.
\end{proof}

We also have

\begin{lemma}\label{dihe lem}
  Let $G$ be an irreducible maximal (up to isomorphism) finite subgroup of $GL_2(\mathcal{D})$ whose enveloping $\Q$-algebra is $M_2(\Q)$. Then $G$ is one of the following groups:
  \begin{center}
    \begin{tabular}{|c|c|}
\hline
$$ & $G$  \\
\hline
$\sharp 1$ & $D_4$ \\
\hline
$\sharp 2$ & $D_6$  \\
\hline
\end{tabular}
\vskip 4pt
\textnormal{Table 5}
\end{center}
\end{lemma}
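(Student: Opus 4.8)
The plan is to use the hypothesis on the enveloping algebra to move $G$ inside $GL_2(\Q)$, invoke the classical classification of finite subgroups of $GL_2(\Q)$, and then eliminate the single non-maximal candidate.

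First I would locate $G$ in $GL_2(\Q)$. Every matrix $\Delta(g)$ lies in the $\Q$-span $\overline{\Q\Delta(G)} = M_2(\Q)$, so $\Delta(G) \subseteq M_2(\Q) \cap GL_2(\mathcal{D}) = GL_2(\Q)$; thus $G$ is realized as a finite subgroup of $GL_2(\Q)$, and since its enveloping $\Q$-algebra is the whole split matrix algebra $M_2(\Q)$, the underlying $2$-dimensional rational representation is irreducible with commuting field $\Q$. As a consistency check with the irreducibility hypothesis, under $M_2(\mathcal{D}) \cong M_2(\Q)\otimes_\Q \mathcal{D}$ one has $C_{M_2(\mathcal{D})}(M_2(\Q)) = \mathcal{D}$, which is a division algebra.

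Next I would classify the finite subgroups of $GL_2(\Q)$ whose enveloping algebra is $M_2(\Q)$. By the theorem of Leonardo da Vinci already invoked in Lemma \ref{irre max real mat}, such a group is cyclic or dihedral, and the crystallographic restriction (every finite subgroup of $GL_2(\Q)$ is conjugate into $GL_2(\Z)$) bounds the order of any rotation to lie in $\{1,2,3,4,6\}$. A cyclic group has commutative enveloping algebra and is therefore excluded; a dihedral group $D_n$ carries an irreducible rational $2$-dimensional representation with enveloping algebra $M_2(\Q)$ exactly when $2\cos(2\pi/n)\in\Q$ and the representation stays irreducible, i.e.\ for $n \in \{3,4,6\}$. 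Hence the only candidates are $D_3, D_4,$ and $D_6$. (The groups $D_5, D_8, D_{10}, D_{12}$ instead have enveloping algebra $M_2(K')$ over a real quadratic field and were disposed of in Lemma \ref{irre max real mat}.)

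Finally I would discard $D_3$ using maximality. The scalar $-I \in GL_2(\Q)$ is central and does not lie in a copy of $D_3 \cong \textrm{Sym}_3$, which has trivial center; hence $\langle \Delta(D_3), -I\rangle$ is a finite subgroup of $GL_2(\Q) \subseteq GL_2(\mathcal{D})$ isomorphic to $D_3 \times \Z/2\Z \cong D_6$, of order $12$, still with enveloping algebra $M_2(\Q)$. Thus any $D_3$ is properly contained in a strictly larger finite subgroup and cannot be maximal, contradicting the hypothesis and leaving $G \cong D_4$ or $G \cong D_6$. I expect the main obstacle to be the classification step: one must correctly partition the finite subgroups of $GL_2(\Q)$ by their enveloping $\Q$-algebra into the cyclic groups (commutative enveloping algebra), the dihedral groups whose $2$-dimensional representation only becomes irreducible over a quadratic field (the $M_2(K')$ cases), and the three dihedral groups $D_3, D_4, D_6$ that split as $M_2(\Q)$. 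Once this bookkeeping is settled, the reduction to $GL_2(\Q)$ and the maximality argument are immediate.
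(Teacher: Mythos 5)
Your proof is correct and takes essentially the same route as the paper: use the enveloping-algebra hypothesis to place $G$ inside $GL_2(\Q) \subseteq GL_2(\mathcal{D})$ and then appeal to the classification of finite subgroups of $GL_2(\Q)$. The paper's proof merely cites as well-known that $D_4$ and $D_6$ are the maximal finite subgroups of $GL_2(\Q)$, whereas you supply the details behind that fact (the da Vinci theorem plus the crystallographic restriction, exclusion of cyclic groups since their enveloping algebra is commutative, and non-maximality of $D_3$ by adjoining $-I$ to get a $D_6$); this is a welcome elaboration rather than a different argument.
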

\begin{proof}
It is well-known that $D_4$ and $D_6$ are maximal finite subgroups of $GL_2(\Q).$ For later use, we note that $D_4$ and $D_6$ are irreducible maximal (up to isomorphism) finite subgroups of $GL_2(D_{241,\infty}).$
\end{proof}

Finally, if $\overline{G}=M_1(\mathcal{D}^{\prime})$ for some division algebra $\mathcal{D}^{\prime}$ over $\Q$ with $\textrm{dim}_{\Q} \mathcal{D}^{\prime} \in \{2,4,8\},$ then, in view of Theorem \ref{thm 18} below, we have

\begin{lemma}\label{div alg lem}
  Let $G$ be an irreducible maximal (up to isomorphism) finite subgroup of $GL_2(\mathcal{D})$ whose enveloping $\Q$-algebra is $M_1(\mathcal{D}^{\prime})$ for some division algebra $\mathcal{D}^{\prime}$ over $\Q$ with $\textrm{dim}_{\Q} \mathcal{D}^{\prime} \in \{2,4,8\}$. Then $G$ is one of the following groups:
  \begin{center}
    \begin{tabular}{|c|c|}
\hline
$$ & $G$  \\
\hline
$\sharp 1$ & $\mathfrak{I}^* \cong SL_2 (\F_5)$ \\
\hline
$\sharp 2$ & $\textrm{Dic}_{24}$  \\
\hline
$\sharp 3$ & $\mathfrak{O}^*$  \\
\hline
$\sharp 4$ & $ \mathfrak{T}^* \cong SL_2 (\F_3)$    \\
\hline
$\sharp 5$ & $\textrm{Dic}_{12}$ \\
\hline
\end{tabular}
\vskip 4pt
\textnormal{Table 6}
\end{center}
\end{lemma}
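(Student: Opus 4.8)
The plan is to translate the hypothesis on the enveloping algebra into a statement about finite subgroups of a quaternion division algebra, and then read off the answer from Theorem \ref{thm 18}. First I would note that since $G$ is irreducible with $\overline{G} = M_1(\mathcal{D}') = \mathcal{D}'$, each element of $G$ lies in $\mathcal{D}'$ and, being a unit of $M_2(\mathcal{D})$, is invertible in the division algebra $\mathcal{D}'$; thus $G$ is a finite subgroup of $(\mathcal{D}')^{\times}$. Because any finite group $H$ with $G \leq H \leq (\mathcal{D}')^{\times}$ is again a finite subgroup of $GL_2(\mathcal{D})$, the maximality of $G$ in $GL_2(\mathcal{D})$ forces $G$ to be a maximal finite subgroup of $(\mathcal{D}')^{\times}$ as well.

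Next I would eliminate the commutative possibilities. If $\mathcal{D}'$ is commutative (in particular whenever $\dim_{\Q}\mathcal{D}' = 2$), then $\mathcal{D}'$ is a number field and $G$ is a finite subgroup of its multiplicative group, hence cyclic; but a cyclic group is always properly contained in a larger finite subgroup of $GL_2(\mathcal{D})$ (for instance by adjoining an element inverting a generator, or inside one of the groups produced in Lemmas \ref{imag lem} and \ref{dihe lem}), contradicting maximality. So $\mathcal{D}'$ is noncommutative. Since a central simple algebra over a field has square dimension over that field, a noncommutative division algebra of $\Q$-dimension $8$ cannot have center $\Q$. Hence only two options survive: $\dim_{\Q}\mathcal{D}' = 4$, where $\mathcal{D}'$ is a quaternion division algebra with center $\Q$, and $\dim_{\Q}\mathcal{D}' = 8$, where $\mathcal{D}'$ is a quaternion division algebra over a quadratic field $F$.

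Then I would invoke Theorem \ref{thm 18}: among the finite groups that embed in a division algebra over $\Q$, I extract those whose enveloping algebra is a quaternion division algebra over $\Q$ or over a quadratic field. The nonabelian such groups are the dicyclic groups $Q_8$, $\textrm{Dic}_{12}$, $\textrm{Dic}_{24}$ and the binary polyhedral groups $\mathfrak{T}^*$, $\mathfrak{O}^*$, $\mathfrak{I}^*$, each tagged with the smallest division algebra containing it: a quaternion algebra over $\Q$ for $Q_8$, $\textrm{Dic}_{12}$, $\mathfrak{T}^*$, and one over $\Q(\sqrt{3})$, $\Q(\sqrt{2})$, $\Q(\sqrt{5})$ for $\textrm{Dic}_{24}$, $\mathfrak{O}^*$, $\mathfrak{I}^*$ respectively. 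Matching $\Q$-dimensions confines $G$ to $\{Q_8, \textrm{Dic}_{12}, \mathfrak{T}^*\}$ when $\dim_{\Q}\mathcal{D}' = 4$ and to $\{\textrm{Dic}_{24}, \mathfrak{O}^*, \mathfrak{I}^*\}$ when $\dim_{\Q}\mathcal{D}' = 8$, and the maximality of $G$ discards every group that embeds properly in another one on the list over the same center, in particular $Q_8 \subsetneq \mathfrak{T}^*$. This leaves exactly $\mathfrak{T}^*$ and $\textrm{Dic}_{12}$ in dimension $4$ and $\mathfrak{I}^*$, $\mathfrak{O}^*$, $\textrm{Dic}_{24}$ in dimension $8$, which is the asserted list.

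The step I expect to be the main obstacle is the maximality bookkeeping. I must verify that each of the five groups is maximal as a finite subgroup of $GL_2(\mathcal{D})$ itself, not merely of $(\mathcal{D}')^{\times}$: using the double centralizer theorem I would check that no finite overgroup can enlarge the enveloping algebra from $\mathcal{D}'$ to $M_2(\mathcal{D})$ or to one of the algebras appearing in Lemma \ref{irred max lem}, and, conversely, that $\mathfrak{T}^*$ and $\textrm{Dic}_{12}$ (respectively $\mathfrak{I}^*$, $\mathfrak{O}^*$, $\textrm{Dic}_{24}$) really are the full unit groups of maximal orders in the definite quaternion algebras over $\Q$ (respectively over the real quadratic fields $\Q(\sqrt{2})$, $\Q(\sqrt{3})$, $\Q(\sqrt{5})$), so that none of them can be enlarged inside $\mathcal{D}'$. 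This last identification is precisely where the classification furnished by Theorem \ref{thm 18} does the decisive work.
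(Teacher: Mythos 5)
Your classification half does run along the same lines as the paper's: both arguments funnel everything through Theorem \ref{thm 18} once the enveloping algebra is known to be a division algebra of the admissible shape, and your maximality filter (discarding $Q_8$ because $Q_8 \subsetneq \mathfrak{T}^* \subseteq \overline{Q_8}^{\times} \subseteq GL_2(\mathcal{D})$) is sound; omitting $\textrm{Dic}_{16}$ and $\textrm{Dic}_{20}$ from your recitation of Theorem \ref{thm 18} is harmless, since the same filter removes them ($\textrm{Dic}_{16}\subsetneq\mathfrak{O}^*$, $\textrm{Dic}_{20}\subsetneq\mathfrak{I}^*$, with the same enveloping algebras). The first genuine gap is your disposal of the commutative case. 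It is not true that a cyclic subgroup of $GL_2(\mathcal{D})$ with field enveloping algebra can always be enlarged, and neither of your proposed mechanisms works in general. Concretely, take $\mathcal{D}=D_{19,\infty}$. Since $19\equiv -1 \pmod 5$, the prime $19$ has residue degree $2$ in $\Q(\zeta_5)$, so $\Q(\zeta_5)=\Q(\zeta_{10})$ splits $D_{19,\infty}$ and embeds as a maximal subfield of $M_2(D_{19,\infty})$; this gives an irreducible subgroup $\Z/10\Z$ whose enveloping algebra is the commutative division algebra $\Q(\zeta_5)$ of dimension $4$. But $19$ splits in $\Q(\sqrt{5})$, so the centralizer of $\Q(\sqrt{5})$ in $M_2(D_{19,\infty})$ is the division algebra $D_{19,\infty}\otimes_{\Q}\Q(\sqrt{5})$, ramified above $19$. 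If a Skolem--Noether element $u$ inverting $\zeta_{10}$ had finite order, the algebra $\Q(\zeta_5)\oplus\Q(\zeta_5)u$ would be $M_2(\Q(\sqrt{5}))$ or $D_{\sqrt{5},\infty}$, both unramified above $19$ --- a contradiction; and none of $\textrm{Dic}_{20}$ or $\mathfrak{I}^*$ (enveloping algebra $D_{\sqrt{5},\infty}$), none of the groups of Lemmas \ref{imag lem} and \ref{dihe lem} (they contain no element of order $10$ realizable here, as $\Q(\zeta_5)$ has no imaginary quadratic subfield), and no absolutely irreducible group (Theorems \ref{prim aimf of GL2} and \ref{imprim aimf GL2} force $p\in\{2,3,5\}$) embeds into $GL_2(D_{19,\infty})$ at all. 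So $\Z/10\Z$ is in fact maximal there, and the step you wave at collapses. (The paper is equally silent on this point --- it records that a field yields a cyclic group and then drops the case --- so you have reproduced, rather than repaired, a soft spot of the lemma itself; but as a proof your justification is the part that fails.)

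Second, you leave unexecuted the half on which the paper spends essentially all of its effort and which is what later results cite (Theorem \ref{pow of supell}, items $\sharp 16$--$\sharp 20$): that each of the five listed groups actually occurs as an irreducible maximal finite subgroup of $GL_2(\mathcal{D})$ for a suitable $\mathcal{D}$. Your sketch assigns the decisive role to Theorem \ref{thm 18}, but that theorem only certifies embeddability into small division algebras; it can never certify maximality. The paper's proof instead (i) gets maximality of each group inside $(\mathcal{D}')^{\times}$ from Nebe's Theorem \ref{aimf of GL1}, (ii) excludes finite overgroups with strictly larger enveloping algebra because absolutely irreducible finite subgroups of $GL_2(\mathcal{D})$ exist only for $\mathcal{D}\in\{D_{2,\infty},D_{3,\infty},D_{5,\infty}\}$, and (iii) chooses the ambient algebra arithmetically ($7$ inert in $\Q(\sqrt{5})$ resp.\ $\Q(\sqrt{3})$, $11$ inert in $\Q(\sqrt{2})$, and $p=241$, which splits in all the relevant small quadratic fields, for $\mathfrak{T}^*$ and $\textrm{Dic}_{12}$) so that $\mathcal{D}'$ embeds while every intermediate algebra of Lemma \ref{irred max lem} that could carry a larger finite group does not. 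Note also that your intended certificate --- that the five groups are ``the full unit groups of maximal orders'' --- is false over the real quadratic centers: the unit group of a maximal order in $D_{\sqrt{2},\infty}$, $D_{\sqrt{3},\infty}$, or $D_{\sqrt{5},\infty}$ is infinite, since it contains the fundamental unit of the center; what you actually need is precisely the maximal \emph{finite} subgroup statement of Theorem \ref{aimf of GL1}, together with the arithmetic choices in (iii).
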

\begin{proof}
Note that since $\textrm{dim}_{\Q} M_2(D_{p,\infty}) = 16,$ if $\mathcal{D}^{\prime}$ is a field, then $\textrm{dim}_{\Q} \mathcal{D}^{\prime} \leq 4.$ Also, we know that every finite subgroup of the multiplicative subgroup of a field is cyclic. If $\mathcal{D}^{\prime}$ is not a field, then, in light of Theorem \ref{thm 18} below, all the possible maximal finite subgroups are listed in the above table. Now, we prove that those groups occur by considering them one by one. \\

(1) Since $7$ is inert in $\Q(\sqrt{5}),$ we can see that $\left( \begin{array}{cc}-1,-1 \\ \Q(\sqrt{5}) \end{array} \right) \subseteq M_2(D_{7,\infty}).$ We also have that $\mathfrak{I}^*$ is an absolutely irreducible maximal finite subgroup of $\left( \begin{array}{cc}-1,-1 \\ \Q(\sqrt{5}) \end{array} \right)$ (see Theorem \ref{aimf of GL1} below). Hence, $\mathfrak{I}^*$ is an irreducible maximal finite subgroup of $GL_2(D_{7,\infty}).$ \\

(2) Since $7$ is inert in $\Q(\sqrt{3}),$ we can see that $\left( \begin{array}{cc}-1,-1 \\ \Q(\sqrt{3}) \end{array} \right) \subseteq M_2(D_{7,\infty}).$ We also have that $\textrm{Dic}_{24}$ is an absolutely irreducible maximal finite subgroup of $\left( \begin{array}{cc}-1,-1 \\ \Q(\sqrt{3}) \end{array} \right)$ (see Theorem \ref{aimf of GL1} below). Hence, $\textrm{Dic}_{24}$ is an irreducible maximal finite subgroup of $GL_2(D_{7,\infty}).$ \\

(3) Since $11$ is inert in $\Q(\sqrt{2}),$ we can see that $\left( \begin{array}{cc}-1,-1 \\ \Q(\sqrt{2}) \end{array} \right) \subseteq M_2(D_{11,\infty}).$ We also have that $\mathfrak{O}^*$ is an absolutely irreducible maximal finite subgroup of $\left( \begin{array}{cc}-1,-1 \\ \Q(\sqrt{2}) \end{array} \right)$ (see Theorem \ref{aimf of GL1} below). Hence, $\mathfrak{O}^*$ is an irreducible maximal finite subgroup of $GL_2(D_{11,\infty}).$ \\

(4) Note that $D_{2,\infty} \subseteq M_2(D_{241,\infty})$. We also have that $\mathfrak{T}^*$ is an absolutely irreducible maximal finite subgroup of $D_{2,\infty}$ (see Theorem \ref{aimf of GL1} below). Hence, $\mathfrak{T}^*$ is an irreducible maximal finite subgroup of $GL_2(D_{241,\infty}).$ \\

(5) Note that $D_{3,\infty} \subseteq M_2(D_{241,\infty})$. We also have that $\textrm{Dic}_{12}$ is an absolutely irreducible maximal finite subgroup of $D_{3,\infty}$ (see Theorem \ref{aimf of GL1} below). Hence, $\textrm{Dic}_{12}$ is an irreducible maximal finite subgroup of $GL_2(D_{241,\infty}).$ \\

This completes the proof.
\end{proof}

Now, as in the case of $\mathcal{D}=\Q,$ the notion of primitivity gives an important reduction in the determination of the maximal finite subgroups of $GL_n(\mathcal{D}).$

\begin{definition}\label{prim def}
  Let $G $ be an irreducible finite subgroup of $GL_n(\mathcal{D})$. Consider $V:=\mathcal{D}^{1 \times n}$ as a $\mathcal{D}$-$G$-bimodule. Then $G$ is called \emph{imprimitive} if there is a decomposition $V=V_1 \oplus \cdots \oplus V_s$ of $V$ as a nontrivial direct sum of $\mathcal{D}$-left modules such that $G$ permutes the $V_i$ (i.e.\ for all $g \in G$ and for all $1 \leq i \leq s,$ there is a $j \in \{1,\cdots,s\}$ such that $V_i g \subseteq V_j$). If $G$ is not imprimitive, then $G$ is called \emph{primitive}.
\end{definition}

The following remark gives a way to produce the imprimitive absolutely irreducible maximal finite subgroups of $GL_n(\mathcal{D})$ from the primitive absolutely irreducible maximal finite subgroups of smaller degree.

\begin{remark}\label{impri and prim}
  Let $G $ be an imprimitive absolutely irreducible maximal finite subgroup of $GL_n(\mathcal{D})$. Since $G$ is maximal and finite, it follows that $G$ is a wreath product of primitive absolutely irreducible maximal finite subgroups of $GL_d(\mathcal{D})$ with the symmetric group $\textrm{Sym}_{\frac{n}{d}}$ of degree $\frac{n}{d}$ for divisors $d$ of $n.$ In particular, every imprimitive absolutely irreducible maximal finite subgroup of $GL_2(\mathcal{D})$ is a wreath product of primitive absolutely irreducible maximal finite subgroups of $GL_1(\mathcal{D})$ with the symmetric group $\textrm{Sym}_2.$
\end{remark}

To introduce our next result, let $^* : \mathcal{D} \rightarrow \mathcal{D}$ be the canonical involution of $\mathcal{D}$ such that $x x^* \in K$ for all $x \in \mathcal{D}$ and such that $^*$ induces the identity map on the center $K.$ We extend the involution $^*$ to $M_n (\mathcal{D})$ by applying $^*$ to the entries of the matrices. Then the map $g \mapsto (g^*)^t$ is an involution on $M_n(\mathcal{D})$ where $g^t$ denotes the transpose of the matrix $g$. In this situation, we can describe the maximal finite subgroups of $GL_n (\mathcal{D})$ as full automorphism groups of totally positive definite Hermitian lattices as in the following lemma:

\begin{definition and lemma}\label{main lem hard}
Let $G$ be a finite subgroup of $GL_n (\mathcal{D})$, let $V=\mathcal{D}^{1 \times n}$ be the natural $G$-right module, and let $\mathfrak{m}$ be an order in $\mathcal{D}=\textrm{End}_{M_n(\mathcal{D})}(V).$ Then we have: \\
(a) An \emph{$\mathfrak{m}$-lattice} $L \leq V$ is a finitely generated projective $\mathfrak{m}$-left module with $\mathbb{Q}L=V.$ \\
(b) The set
\begin{equation*}
Z_{\mathfrak{m}}(G):=\{L \leq V~|~L~\textrm{is an}~\mathfrak{m}\textrm{-lattice and}~LG \subseteq L\}
\end{equation*}
of $G$-invariant $\mathfrak{m}$-lattices in $V$ is nonempty. \\
(c) The $K$-vector space
\begin{equation*}
\mathcal{F}(G):=\{F \in M_n(\mathcal{D})~|~F^t =F^*~\textrm{and}~gF(g^*)^t = F~\textrm{for all}~g \in G\}
\end{equation*}
of $G$-invariant Hermitian forms contains a totally positive definite form, i.e.\ the set
\begin{equation*}
\mathcal{F}_{>0}(G):=\{F \in \mathcal{F}(G)~|~\epsilon(F)~\textrm{is positive definite for all embeddings}~\epsilon : K \hookrightarrow \R\}
\end{equation*}
is nonempty. \\
(d) Let $L$ be an $\mathfrak{m}$-lattice in $V$ and $F \in M_n(\mathcal{D})$ a totally positive definite Hermitian form. The automorphism group
\begin{equation*}
\textrm{Aut}(L,F):=\{g \in GL_n(\mathcal{D})~|~Lg \subseteq L~\textrm{and}~g F(g^*)^t =F \}
\end{equation*}
of $L$ with respect to $F$ is a finite group. \\
(e) The absolutely irreducible maximal finite supergroups of $G$ are of the form $\textrm{Aut}(L,F)$ for some $L \in Z_{\mathfrak{m}}(G)$ and $F \in \mathcal{F}_{>0}(G).$
\end{definition and lemma}
\begin{proof}
For a proof, see \cite[Definition and Lemma 2.6]{10}.
\end{proof}
In light of Definition and Lemma \ref{main lem hard}-(e), we may compute all absolutely irreducible maximal finite supergroups of a finite subgroup $G$ of $GL_n(\mathcal{D})$ as automorphism groups of $G$-invariant lattices. \\

Now, we introduce two results about absolutely irreducible maximal finite subgroups of $GL_n(\mathcal{D})$ with $n \leq 2.$ \\

Suppose first that $n=1.$ By using a well-known classification of finite subgroups of $PGL_2(\C)$, we have the following
\begin{theorem}\label{aimf of GL1}
Let $G $ be an absolutely irreducible maximal finite subgroup of $GL_1(\mathcal{D})$. Then we have: \\
(a) $K$ is the maximal totally real subfield of a cyclotomic field. \\
(b) If $[K : \mathbb{Q}] \leq 2,$ then $G$ is one of the following groups (up to isomorphism):
\begin{center}
  \begin{tabular}{|c|c|c|}
\hline
$$ & $G$ & $\mathcal{D}$ \\
\hline
$\sharp 1$ & $\mathfrak{T}^* \cong SL_2(\F_3)$ & $D_{2,\infty}$ \\
\hline
$\sharp 2$ & $\widetilde{\textrm{Sym}_3} =\textrm{Dic}_{12}$ & $D_{3,\infty}$  \\
\hline
$\sharp 3$ & $\widetilde{\textrm{Sym}_4} = \mathfrak{O}^*$ & $D_{\sqrt{2},\infty}$  \\
\hline
$\sharp 4$ & $Q_{24}=\textrm{Dic}_{24} $ & $D_{\sqrt{3},\infty}$    \\
\hline
$\sharp 5$ & $\mathfrak{I}^* \cong SL_2(\F_5) $ & $D_{\sqrt{5},\infty}$ \\
\hline
\end{tabular}
\vskip 4pt
\textnormal{Table 7}
\end{center}
where $D_{\alpha,\infty}$ denotes the definite quaternion algebra over $K$ such that $K=\mathbb{Q}(\alpha)$ and $D_{\alpha, \infty}$ is ramified at the primes $\alpha$ and $\infty.$
\end{theorem}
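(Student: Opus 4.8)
The plan is to realize $G$ inside the unit quaternions and then invoke the classification of finite subgroups of $SU(2)$ (equivalently of $PGL_2(\C)$), cutting the list down using absolute irreducibility and maximality. Since $\mathcal{D}$ is a definite quaternion algebra over the totally real field $K$, for each embedding $\sigma : K \hookrightarrow \R$ the base change $\mathcal{D} \otimes_{K,\sigma} \R$ is the Hamilton quaternion algebra $\mathbb{H}$, and the resulting $\Q$-algebra map $\mathcal{D} \to \mathbb{H}$ is injective because $\mathcal{D}$ is simple. Composing with $G \hookrightarrow \mathcal{D}^{\times}$ gives $G \hookrightarrow \mathbb{H}^{\times}$, and using the polar decomposition $\mathbb{H}^{\times} \cong \R_{>0} \times SU(2)$ together with the fact that $\R_{>0}$ is torsion-free, I would conclude that $G$ is isomorphic to a finite subgroup of $SU(2)$, i.e.\ to one of the binary polyhedral groups: a cyclic group, a dicyclic (binary dihedral) group $\textrm{Dic}_m$, or one of $\mathfrak{T}^*, \mathfrak{O}^*, \mathfrak{I}^*$. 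Correspondingly the image of $G$ in $PGL_2(\C)$ is one of $C_n, D_n, A_4, S_4, A_5$.

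Absolute irreducibility, which for $n=1$ means $\overline{\Q \Delta(G)} = \mathcal{D}$, forces $G$ to span the noncommutative algebra $\mathcal{D}$ over $\Q$; in particular $G$ is nonabelian, so the cyclic case is excluded. For part (a) I would identify $K$ with the character field of the faithful representation $\rho : G \to \mathcal{D}^{\times} \hookrightarrow GL_2(\overline{K})$. Since $\overline{\Q\Delta(G)} = \mathcal{D}$ forces $G$ to span $\mathcal{D}$ over $K$ as well, and $\mathcal{D} \otimes_K \overline{K} \cong M_2(\overline{K})$, the representation $\rho$ is absolutely irreducible in the usual sense, with character $g \mapsto \textrm{Trd}(g) \in K$. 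The algebra $\mathcal{D}$ is then the Wedderburn component of $\Q G$ attached to $\rho$, so its center satisfies $K = \Q(\textrm{Trd}(g) : g \in G)$. Each value $\textrm{Trd}(g)$ equals $\zeta + \zeta^{-1}$ for a root of unity $\zeta$ of order $o(g)$, hence lies in the maximal totally real subfield of a cyclotomic field; running over the binary polyhedral groups above one checks that the compositum of these fields is exactly the maximal totally real subfield $\Q(\zeta_N)^{+}$ of $\Q(\zeta_N)$ for a suitable $N$, which gives (a).

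For part (b) I would impose $[K : \Q] \leq 2$ and compute the character field for each nonabelian binary polyhedral group: $\textrm{Dic}_m$ gives $K = \Q(\zeta_{2m})^{+}$ (so $m$ is bounded once $[K:\Q] \leq 2$), $\mathfrak{T}^*$ gives $K = \Q$, $\mathfrak{O}^*$ gives $K = \Q(\sqrt{2})$ (from an element of order $8$, whose reduced trace is $\sqrt{2}$), and $\mathfrak{I}^*$ gives $K = \Q(\sqrt{5})$ (from an element of order $5$ or $10$). This leaves precisely the five groups of Table 7 once maximality is imposed. To verify maximality I would argue that a proper finite overgroup $H \supsetneq G$ inside $\mathcal{D}^{\times}$ is again binary polyhedral with $\Q(\textrm{Trd}(h) : h \in H) \subseteq K$; a short inspection of the subgroup lattice of the binary polyhedral groups, combined with this character-field constraint, rules out every candidate (for instance $\mathfrak{T}^* \subset \mathfrak{O}^*$ is blocked over $K = \Q$ since $\mathfrak{O}^*$ has character field $\Q(\sqrt{2}) \neq \Q$, and $\mathfrak{T}^* \cong SL_2(\F_3)$ has no subgroup of index $2$, so it sits in no dicyclic overgroup either).

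Finally I would pin down the algebra $\mathcal{D} = \overline{\Q G}$ itself. Definiteness (part of the hypothesis) forces ramification at every infinite place of $K$, and the finite ramification is determined by computing the local Hasse invariants (equivalently the local Schur indices) of the simple algebra generated by $G$; this shows $\mathcal{D}$ is ramified at exactly one finite prime, giving $D_{2,\infty}, D_{3,\infty}, D_{\sqrt{2},\infty}, D_{\sqrt{3},\infty}, D_{\sqrt{5},\infty}$ respectively. I expect the two genuine obstacles to be exactly these points: first, verifying maximality uniformly, namely that the character-field obstruction really blocks every potential binary-polyhedral overgroup for the fixed algebra $\mathcal{D}$; and second, the explicit identification of the finite ramification of $\mathcal{D}$, which requires a local computation at the prime dividing the relevant element orders rather than a purely group-theoretic argument.
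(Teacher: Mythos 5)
Your overall route is the one the paper itself relies on: the paper gives no argument of its own for this theorem (its ``proof'' is the citation to \cite[Theorem 6.1]{10}), and the only hint it offers --- that the result rests on the classification of finite subgroups of $PGL_2(\C)$ --- is exactly what you carry out. The group-theoretic core of your proposal is sound: passing through a real place to $\mathbb{H}^{\times}\cong \R_{>0}\times SU(2)$ shows $G$ is binary polyhedral; absolute irreducibility rules out the cyclic groups; the center $K$ is the reduced-trace field, which for the dicyclic group of order $4n$ is $\Q(\zeta_{2n})^{+}$ and for $\mathfrak{T}^*,\mathfrak{O}^*,\mathfrak{I}^*$ is $\Q$, $\Q(\sqrt{2})$, $\Q(\sqrt{5})$, giving (a); and for (b) the constraint $[K:\Q]\leq 2$ plus maximality (any finite overgroup $H\subset \mathcal{D}^{\times}$ has trace field inside $K$, since $\textrm{Trd}$ maps $\mathcal{D}$ into $K$) correctly eliminates $Q_8\subset\mathfrak{T}^*$, $\textrm{Dic}_{16}\subset \mathfrak{O}^*$, and $\textrm{Dic}_{20}\subset \mathfrak{I}^*$, each of which generates the same algebra as its overgroup, leaving the five groups of Table 7.

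The genuine error is in your last step, the identification of the algebras. You assert that the Hasse-invariant computation ``shows $\mathcal{D}$ is ramified at exactly one finite prime'' in all five cases. That holds only over $\Q$ (for $D_{2,\infty}$ and $D_{3,\infty}$); over the three real quadratic fields it is impossible: a quaternion algebra over a number field is ramified at an even number of places, and a totally definite algebra over a real quadratic field is already ramified at both infinite places, so the number of ramified \emph{finite} primes must be even --- never one. In fact the enveloping algebras are $\left( \begin{array}{cc}-1,-1 \\ \Q(\sqrt{2}) \end{array} \right)$, $\left( \begin{array}{cc}-1,-1 \\ \Q(\sqrt{3}) \end{array} \right)$, and $\left( \begin{array}{cc}-1,-1 \\ \Q(\sqrt{5}) \end{array} \right)$, and each is split at \emph{every} finite place: at odd primes because $\left( \begin{array}{cc}-1,-1 \\ \Q \end{array} \right)$ is, and at the unique prime above $2$ because the completion of $K$ there is a quadratic extension of $\Q_2$, and every quadratic extension of $\Q_2$ splits the $2$-adic quaternion division algebra. (The paper itself says this elsewhere: see the proof of Lemma \ref{lem 23} and cases (8)--(11) of the proof of Theorem \ref{thm old 24}, where these algebras are described as ramified at the two infinite places and split at all finite primes.) You were misled by taking the table's gloss ``ramified at the primes $\alpha$ and $\infty$'' literally; for irrational $\alpha$ that description is itself inconsistent with parity, and the correct characterization --- the one your proposed local computation would actually produce --- is that $\mathcal{D}$ is the unique totally definite quaternion algebra over $K$ that is unramified at all finite places.
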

\begin{proof}
For a proof, see \cite[Theorem 6.1]{10}.
\end{proof}

Next, we suppose that $n=2.$ For our own purpose, we only consider the case when $K=\mathbb{Q}.$ Then the following important result is obtained:

\begin{theorem}\label{prim aimf of GL2}
Suppose that $K=\mathbb{Q}$ and let $G $ be a primitive absolutely irreducible maximal finite subgroup of $ GL_2(\mathcal{D})$. Then $\mathcal{D}$ is one of $D_{2,\infty}, D_{3,\infty},$ or $D_{5,\infty}$ and $G$ is conjugate to one of the following groups:
\begin{center}
 \begin{tabular}{|c|c|c|}
\hline
$$ & $G$ & $\mathcal{D}$ \\
\hline
$\sharp 1$ & $2_{-}^{1+4}.\textrm{Alt}_5$ & $D_{2,\infty}$ \\
\hline
$\sharp 2$ & $SL_2(\F_3)\times \textrm{Sym}_3$ & $D_{2,\infty}$  \\
\hline
$\sharp 3$ & $SL_2(\F_9)$ & $D_{3,\infty}$  \\
\hline
$\sharp 4$ & $ \Z/3\Z : (SL_2(\F_3).2)$ & $D_{3,\infty}$    \\
\hline
$\sharp 5$ & $SL_2(\F_5).2 $ & $D_{5,\infty}$ \\
\hline
$\sharp 6$ & $SL_2(\F_5):2 $ & $D_{5,\infty}$ \\
\hline
\end{tabular}
\vskip 4pt
\textnormal{Table 8}
\end{center}
\end{theorem}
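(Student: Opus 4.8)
The plan is to translate the problem into the language of totally positive definite Hermitian lattices and then carry out a structural classification of the underlying complex representation. Since $K=\mathbb{Q}$ and $G$ is absolutely irreducible, we have $\overline{G}=M_2(\mathcal{D})$, so $\dim_{\mathbb{Q}}\overline{G}=16$; tensoring with $\mathbb{C}$ gives $M_2(\mathcal{D})\otimes_{\mathbb{Q}}\mathbb{C}\cong M_4(\mathbb{C})$. Thus the natural representation corresponds to a faithful irreducible complex representation $\rho$ of $G$ of degree $4$ whose endomorphism algebra is the definite quaternion algebra $\mathcal{D}$; equivalently, $\rho$ is quaternionic (Frobenius--Schur indicator $-1$, Schur index $2$) and, because the center of $M_2(\mathcal{D})$ is $\mathbb{Q}$, has rational-valued character. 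By Definition and Lemma \ref{main lem hard}, a maximal such $G$ is realized as $\textrm{Aut}(L,F)$ for some $G$-invariant $\mathfrak{m}$-lattice $L$ and some totally positive definite Hermitian form $F$; since $\mathcal{D}\otimes_{\mathbb{Q}}\mathbb{R}\cong\mathbb{H}$, this exhibits $G$ as a finite subgroup of the compact quaternionic unitary group $\{g\in GL_2(\mathbb{H}) : gF(g^*)^t=F\}$. Primitivity of $G$ in the sense of Definition \ref{prim def} is equivalent to $\rho$ being a primitive complex representation, i.e.\ not induced from any proper subgroup.

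The heart of the argument is the classification of such primitive, rational, quaternionic degree-$4$ representations. I would analyze $G$ through its generalized Fitting subgroup $F^*(G)$, using Clifford theory: primitivity forces every abelian normal subgroup to act by scalars, so $F^*(G)$ is either a $2$-group or contains a quasi-simple component. In the first case the $2$-group must be an extraspecial (or almost extraspecial) normal subgroup whose normalizer in the symplectic group $Sp_4$ governs $G$; this produces the extraspecial-type candidate $2_{-}^{1+4}.\textrm{Alt}_5$ together with the related groups $SL_2(\mathbb{F}_3)\times\textrm{Sym}_3$ and $\mathbb{Z}/3\mathbb{Z} : (SL_2(\mathbb{F}_3).2)$. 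In the second case the quasi-simple component must carry a faithful primitive $4$-dimensional quaternionic representation, which among small quasi-simple groups pins down $2.\textrm{Alt}_5\cong SL_2(\mathbb{F}_5)$ and $2.\textrm{Alt}_6\cong SL_2(\mathbb{F}_9)$, giving the remaining candidates $SL_2(\mathbb{F}_9)$ and the two extensions $SL_2(\mathbb{F}_5).2$ and $SL_2(\mathbb{F}_5):2$. Throughout, the cyclotomic constraint of Theorem \ref{aimf of GL1}(a) bounds the character fields that can arise and keeps the list finite.

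For each surviving candidate I would then determine the quaternion algebra $\mathcal{D}$ by computing the local Schur indices of $\rho$. The endomorphism division algebra is a quaternion algebra over $\mathbb{Q}$, ramified at $\infty$ by definiteness; evaluating its local index at each finite prime shows that exactly one finite prime $p$ ramifies, so the algebra is $D_{p,\infty}$, and one finds $p=2$ for the extraspecial-normalizer groups, $p=3$ for $SL_2(\mathbb{F}_9)$, and $p=5$ for the $SL_2(\mathbb{F}_5)$-type groups. This both restricts $\mathcal{D}$ to $\{D_{2,\infty}, D_{3,\infty}, D_{5,\infty}\}$ and matches each group with its algebra as in Table 8.

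The main obstacle is two-fold and lies at the end. First, establishing that the displayed list is \emph{complete} requires pushing the generalized-Fitting-subgroup analysis to the point of excluding every other configuration, which is the genuinely technical core of the classification. Second, each listed group must be shown to be \emph{maximal}: by Definition and Lemma \ref{main lem hard}(e) it suffices to prove that $G$ coincides with $\textrm{Aut}(L,F)$ for every invariant pair $(L,F)$, i.e.\ that no $G$-invariant lattice admits a strictly larger finite automorphism group. I expect this to be the hardest step, as it typically demands an explicit enumeration of the $G$-invariant lattices together with a computation (or a mass-formula comparison) of their automorphism groups; imprimitive overgroups are ruled out separately by Remark \ref{impri and prim}, leaving only the primitive overgroups, which must be eliminated against the candidate list itself.
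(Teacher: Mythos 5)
First, a framing point: the paper does not contain a proof of this theorem at all --- it is imported verbatim from Nebe's classification of finite quaternionic matrix groups, and the ``proof'' is the citation \cite[Theorem 12.1]{10}. So the honest comparison is between your outline and Nebe's actual argument, of which your sketch is a fair compression (reduction to automorphism groups of totally positive definite Hermitian lattices via Definition and Lemma \ref{main lem hard}, a Fitting-subgroup dichotomy between the extraspecial normal subgroup case and the quasi-simple case, and local Schur index computations to identify $\mathcal{D}$). But as a proof your proposal has genuine gaps. The two steps you yourself flag as ``obstacles'' --- completeness of the enumeration and maximality of each candidate --- are not end-game technicalities; they \emph{are} the theorem. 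In Nebe's paper these occupy the bulk of a long case-by-case analysis (enumeration of invariant lattices, computation of their automorphism groups, and elimination of all other configurations), and deferring them means what you have is a plan for a proof, not a proof.

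Second, there is a concrete error in your reduction that would corrupt the completeness argument if executed literally: primitivity in the sense of Definition \ref{prim def} is \emph{not} equivalent to primitivity of the associated degree-$4$ complex representation. Nebe-imprimitivity requires the summands of the decomposition to be $\mathcal{D}$-left submodules, i.e.\ compatible with the quaternionic structure, which is strictly stronger than a complex system of imprimitivity; hence Nebe-primitive groups may well have induced complex representations. Indeed entry $\sharp 2$ of the table itself is a counterexample: the faithful degree-$4$ representation of $SL_2(\F_3)\times \textrm{Sym}_3$ is $\rho_2 \boxtimes \mathrm{Ind}_{\Z/3\Z}^{\textrm{Sym}_3}\chi$, which by the projection formula is induced from the index-$2$ subgroup $SL_2(\F_3)\times \Z/3\Z$, yet the group is primitive in Nebe's sense. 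If you classify only complex-primitive representations, this group (and plausibly $\Z/3\Z : (SL_2(\F_3).2)$ and the two extensions of $SL_2(\F_5)$) is excluded at the outset; and since these groups are not wreath products, Remark \ref{impri and prim} cannot recover them afterwards, so your table would come out strictly too small. The dichotomy must be run with $\mathcal{D}$-module decompositions throughout, not with complex imprimitivity.
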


\begin{proof}
For a proof, see \cite[Theorem 12.1]{10}.
\end{proof}

In view of Remark \ref{impri and prim} and Theorem \ref{aimf of GL1}, we also have the following
\begin{theorem}\label{imprim aimf GL2}
Suppose that $K=\mathbb{Q}$ and let $G $ be an imprimitive absolutely irreducible maximal finite subgroup of $ GL_2(\mathcal{D})$. Then $\mathcal{D}$ is one of $D_{2,\infty}$ or $D_{3,\infty}$ and $G$ is one of the following groups:
\begin{center}
 \begin{tabular}{|c|c|c|}
\hline
$$ & $G$ & $\mathcal{D}$ \\
\hline
$\sharp 1$ & $(SL_2(\F_3))^2 \rtimes \textrm{Sym}_2 $ & $D_{2,\infty}$ \\
\hline
$\sharp 2$ & $(\textrm{Dic}_{12})^2 \rtimes \textrm{Sym}_2$ & $D_{3,\infty}$  \\
\hline
\end{tabular}
\vskip 4pt
\textnormal{Table 9}
\end{center}
\end{theorem}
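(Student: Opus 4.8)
The plan is to reduce the classification entirely to the $n=1$ case via the wreath product structure recorded in Remark \ref{impri and prim}. First I would invoke that remark: since $G$ is an imprimitive absolutely irreducible maximal finite subgroup of $GL_2(\mathcal{D})$, and the only proper divisor $d$ of $n=2$ available is $d=1$, the module $V=\mathcal{D}^{1\times 2}$ must decompose as $V=V_1\oplus V_2$ with each $V_i$ of $\mathcal{D}$-dimension $1$. Hence $G$ is a wreath product $H \wr \textrm{Sym}_2 \cong H^2 \rtimes \textrm{Sym}_2$, where $H$ is a primitive absolutely irreducible maximal finite subgroup of $GL_1(\mathcal{D})$.

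Next I would observe that for $n=1$ primitivity is automatic, so that $H$ is in fact just an absolutely irreducible maximal finite subgroup of $GL_1(\mathcal{D})$: the module $V=\mathcal{D}^{1\times 1}=\mathcal{D}$ is simple as a left $\mathcal{D}$-module because $\mathcal{D}$ is a division algebra, so it admits no nontrivial direct-sum decomposition into $\mathcal{D}$-left submodules, and therefore every irreducible finite subgroup of $GL_1(\mathcal{D})$ is primitive in the sense of Definition \ref{prim def}. Consequently Theorem \ref{aimf of GL1} applies directly to $H$.

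I would then specialize Theorem \ref{aimf of GL1}(b) to the standing hypothesis $K=\Q$, i.e. $[K:\Q]=1\le 2$, and read off Table 7. The only entries with $K=\Q$ are $\sharp 1$, giving $H\cong \mathfrak{T}^*\cong SL_2(\F_3)$ with $\mathcal{D}=D_{2,\infty}$, and $\sharp 2$, giving $H\cong \textrm{Dic}_{12}$ with $\mathcal{D}=D_{3,\infty}$; the entries $\sharp 3,\sharp 4,\sharp 5$ all have $[K:\Q]=2$ and are excluded. Since the ambient quaternion algebra $\mathcal{D}$ is the same for $GL_1(\mathcal{D})$ and $GL_2(\mathcal{D})$, this forces $\mathcal{D}\in\{D_{2,\infty},D_{3,\infty}\}$, and for each such $\mathcal{D}$ there is a unique admissible $H$. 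Substituting back into the wreath product yields $G\cong (SL_2(\F_3))^2 \rtimes \textrm{Sym}_2$ when $\mathcal{D}=D_{2,\infty}$ and $G\cong (\textrm{Dic}_{12})^2 \rtimes \textrm{Sym}_2$ when $\mathcal{D}=D_{3,\infty}$, exactly the two rows of Table 9.

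Once the two cited results are in hand the argument is essentially bookkeeping, so the only step demanding genuine care is the converse (realizability) direction: one must confirm that each of these two wreath products is indeed absolutely irreducible, finite, and \emph{maximal} in $GL_2(\mathcal{D})$, rather than merely having the abstract shape dictated by Remark \ref{impri and prim}. I expect this to be the main obstacle. It amounts to the standard fact that forming the wreath product of an absolutely irreducible maximal finite subgroup of $GL_1(\mathcal{D})$ with $\textrm{Sym}_2$, acting on $V_1\oplus V_2$ by permuting the two summands, preserves both absolute irreducibility and maximality; this can be verified directly from the block structure of the enveloping algebra or simply cited from \cite{10}. Granting it, the list in Table 9 is both complete and exact.
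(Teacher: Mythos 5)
Your proposal is correct and takes essentially the same route as the paper, which states Theorem \ref{imprim aimf GL2} precisely ``in view of Remark \ref{impri and prim} and Theorem \ref{aimf of GL1}'': reduce via the wreath-product structure to the $n=1$ case and read off the $K=\Q$ rows of Table 7. Your extra attention to the converse point---that the two wreath products really are absolutely irreducible \emph{maximal} finite subgroups, to be checked from the enveloping-algebra block structure or cited from \cite{10}---is a care the paper leaves implicit, not a different argument.
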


\section{Finite Subgroups of Division Algebras}\label{findiv} 
\qquad Recall that if $X$ is a simple abelian surface over a finite field $k$, then $\textrm{End}_k^0(X)$ is a division algebra over $\Q$ with either $\textrm{dim}_{\Q}\textrm{End}_k^0(X)=4$ or $\textrm{dim}_{\Q}\textrm{End}_k^0(X)=8$. Hence, in this section, we give a classification of all possible finite groups that can be embedded in the multiplicative group of a division algebra over $\Q$ with certain properties that are related to our situation. Our main reference is a paper of Amitsur~\cite{1}. We start with the following

\begin{definition}\label{def 9}
  Let $m, r$ be two relatively prime positive integers, and we put $s:=\gcd(r-1, m)$ and $t:=\frac{m}{s}.$ Also, let $n$ be the smallest integer such that $r^n \equiv 1~(\textrm{mod}~m).$ We denote by $G_{m,r}$ the group generated by two elements $a,b$ satisfying the relations
  \begin{equation*}
    a^m=1,~b^n=a^t,~ bab^{-1}=a^r.
  \end{equation*}
  This group is called the \emph{dicyclic group} of order $mn.$ Alternatively, we often write $\textrm{Dic}_{mn}$ for $G_{m,r}$ in this case. As a convention, if $r=1,$ then we put $n=s=1,$ and hence, $G_{m,1}$ is a cyclic group of order $m.$
\end{definition}

Given $m,r,s,t,n,$ as above, we will consider the following two conditions in the sequel: \\
  (C1) $\gcd(n,t)=\gcd(s,t)=1.$ \\
  (C2) $n=2n^{\prime}, m=2^{\alpha} m^{\prime}, s=2 s^{\prime}$ where $\alpha \geq 2,$ and $n^{\prime}, m^{\prime}, s^{\prime}$ are all odd integers. Moreover, $\gcd(n,t)=\gcd(s,t)=2$ and $r \equiv -1~(\textrm{mod}~2^{\alpha}).$ \\

Now, let $p$ be a prime number that divides $m.$ We define: \\
(i) $\alpha_p$ is the largest integer such that $p^{\alpha_p}~|~m.$ \\
(ii) $n_p$ is the smallest integer satisfying $r^{n_p} \equiv 1~(\textrm{mod}~mp^{-\alpha_p}).$ \\
(iii) $\delta_p$ is the smallest integer satisfying $p^{\delta_p} \equiv 1~(\textrm{mod}~mp^{-\alpha_p}).$ \\

Then we have the following

\begin{lemma}\label{lem 10}
  Let $q$ be a prime number dividing $n.$ Then there exists at most one prime number $p~|~m$ for which $q \nmid n_p.$ If such a $p$ exists and if \\
  (1) $p \ne 2,$ then $p \nmid s$ and $q~|~p-1$; \\
  (2) $p=2,$ then $q=p=2$ and (C2) holds. \\
  Moreover, if such a $p$ exists for $q=2$, then (C1) implies that (1) holds, and (C2) implies that $p=2$ (i.e.\ (2) is valid).
\end{lemma}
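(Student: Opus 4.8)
\emph{Setup and the uniqueness statement.} The plan is to reduce everything to the Chinese Remainder Theorem and to local computations of the multiplicative order of $r$ modulo prime powers. Write $m = \prod_i p_i^{\alpha_i}$ and, since $\gcd(m,r)=1$, set $n^{(i)} := \textrm{ord}_{p_i^{\alpha_i}}(r)$, the order of $r$ in $(\Z/p_i^{\alpha_i}\Z)^{\times}$. By the Chinese Remainder Theorem one has $n = \textrm{lcm}_i\, n^{(i)}$ and, for $p = p_j$, $n_p = \textrm{ord}_{m/p_j^{\alpha_j}}(r) = \textrm{lcm}_{i \ne j}\, n^{(i)}$. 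Put $S := \{\, i : q \mid n^{(i)} \,\}$; since $q \mid n$, the set $S$ is nonempty, and $q \nmid n_{p_j}$ holds precisely when $q$ divides no $n^{(i)}$ with $i \ne j$, i.e.\ when $S = \{j\}$. Hence a prime $p_j$ with $q \nmid n_{p_j}$ exists if and only if $|S| = 1$, in which case it is unique; this proves the first assertion and shows that the special prime $p = p_j$ is characterized by $q \mid n^{(j)}$ and $q \nmid n^{(i)}$ for all $i \ne j$.

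\emph{The odd case.} Throughout the remaining steps I use that (C1) or (C2) holds, so that $\gcd(s,t), \gcd(n,t) \in \{1,2\}$; in particular no odd prime divides either gcd. Suppose $p = p_j$ is odd, and put $c := v_p(r-1)$, so $v_p(s) = \min(c,\alpha_p)$ and $v_p(t) = \alpha_p - \min(c,\alpha_p)$. Since $p$ is odd it cannot divide both $s$ and $t$, which rules out $1 \le c < \alpha_p$; and $c \ge \alpha_p$ would give $r \equiv 1 \pmod{p^{\alpha_p}}$, hence $n^{(j)} = 1$, contradicting $q \mid n^{(j)}$. Therefore $c = 0$, giving $p \nmid s$. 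As $(\Z/p^{\alpha_p}\Z)^{\times}$ is cyclic of order $p^{\alpha_p-1}(p-1)$, the prime-to-$p$ part of $n^{(j)}$ equals $\textrm{ord}_p(r)$, a divisor of $p-1$. Finally $q = p$ is impossible: then $p \mid n$, while $c=0$ forces $v_p(t)=\alpha_p \ge 1$, so $p \mid \gcd(n,t)$, against $\gcd(n,t) \in \{1,2\}$. Thus $q \ne p$ and $q \mid \textrm{ord}_p(r) \mid p-1$, which is (1).

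\emph{The case $p=2$ and the moreover clause.} If $p=2$, then $n^{(j)} = \textrm{ord}_{2^{\alpha_2}}(r)$ lies in the $2$-group $(\Z/2^{\alpha_2}\Z)^{\times}$ and is therefore a power of $2$; since $q \mid n^{(j)}$ and $n^{(j)} > 1$, this forces $q = 2$. If (C1) held we would have $\gcd(n,t)=1$, and $2 = q \mid n$ would give $2 \nmid t$, i.e.\ $r \equiv 1 \pmod{2^{\alpha_2}}$ and $n^{(j)}=1$, a contradiction; hence (C2) holds, which is (2). For the moreover clause take $q=2$ with a special prime $p$. Under (C1) the computation just made excludes $p=2$, so $p$ is odd and (1) holds by the previous step. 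Under (C2) we have $r \equiv -1 \pmod{2^{\alpha}}$ with $\alpha \ge 2$, so $\textrm{ord}_{2^{\alpha}}(r)=2$ and the prime $2$ lies in $S$; by uniqueness $S=\{2\}$, whence $p=2$.

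\emph{Main obstacle.} I expect the delicate part to be the case $p=2$ and the correct interplay between (C1) and (C2): one first exploits the $2$-group structure to pin down $q=2$, then eliminates (C1) by an order computation to force (C2), and separately uses $r \equiv -1 \pmod{2^{\alpha}}$ under (C2) to locate the special prime. The odd case is routine once the two $p$-adic order formulas (according as $v_p(r-1)$ is $0$ or positive) are in hand, and the uniqueness statement is a clean consequence of the CRT decomposition of orders.
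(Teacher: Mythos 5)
Your proof is correct, but note that the paper does not actually prove this lemma: its ``proof'' is only the citation to Amitsur \cite{1}, Lemma 6, so there is no in-paper argument to compare against. What you have produced is a complete, self-contained reconstruction, and it follows essentially the same route as Amitsur's original: the CRT decomposition of multiplicative orders ($n=\textrm{lcm}_i\,n^{(i)}$, $n_{p_j}=\textrm{lcm}_{i\ne j}\,n^{(i)}$), the characterization of the exceptional prime $p_j$ by $q\mid n^{(j)}$ and $q\nmid n^{(i)}$ for $i\ne j$ (which gives uniqueness for free), the local analysis at odd $p$ via $v_p(r-1)$ together with cyclicity of $(\Z/p^{\alpha_p}\Z)^{\times}$, and the $2$-group structure of $(\Z/2^{\alpha_2}\Z)^{\times}$ to force $q=2$ when $p=2$. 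One point deserves emphasis, and you handled it correctly: the lemma must be read with the standing hypothesis that (C1) or (C2) holds, which the statement in the paper leaves implicit (it is inherited from the context of Amitsur's paper, where these are necessary conditions for embeddability of $G_{m,r}$ in a division ring). This hypothesis is genuinely needed, not cosmetic: for $m=8$, $r=3$ one has $n=2$, $s=2$, $t=4$, $n_2=1$, so $q=p=2$ is the exceptional prime, yet $3\not\equiv -1~(\textrm{mod}~8)$, so (C2) fails (and (C1) fails as well since $\gcd(n,t)=2$); hence conclusion (2) is false without that assumption. Your explicit flagging of this, and your observation that only $\gcd(s,t),\gcd(n,t)\in\{1,2\}$ is used in the odd case, make the logical structure cleaner than the bare citation in the paper.
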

\begin{proof}
  For a proof, see \cite[Lemma 6]{1}.
\end{proof}

Our next theorem provides us with a useful criterion for a dicyclic group to be embedded in a division ring:
\begin{theorem}\label{thm 11}
  A group $G_{m,r}$ can be embedded in a division ring if and only if either (C1) or (C2) holds, and one of the following conditions holds: \\
  (1) $n=s=2$ and $r \equiv -1~(\textrm{mod}~m)$. \\
  (2) For every prime number $q~|~n,$ there exists a prime number $p~|~m$ such that $q \nmid n_p$ and that either \\
  (a) $p \ne 2$, and $\gcd(q,(p^{\delta_p}-1)/s)=1$, or \\
  (b) $p=q=2,$ (C2) holds, and $m/4 \equiv \delta_p \equiv 1~(\textrm{mod}~2).$
\end{theorem}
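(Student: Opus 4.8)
The plan is to convert the embedding question into the question of when one specific cyclic algebra is a division algebra, and then to decide that by a prime-by-prime computation of local invariants. First I would observe that if $G_{m,r}$ embeds in a division ring $D$, then the $\Q$-subalgebra of $D$ generated by the image of $G_{m,r}$ is a finite-dimensional domain, hence itself a division algebra; so without loss of generality I may take $D$ to be a simple division-algebra component of $\Q[G_{m,r}]$ on which $G_{m,r}$ acts faithfully and irreducibly. Faithfulness on the cyclic normal subgroup $\langle a\rangle$ forces the image of $a$ to generate a subfield $L\cong\Q(\zeta_m)$, while the relation $bab^{-1}=a^r$ shows that conjugation by $b$ induces the automorphism $\sigma:\zeta_m\mapsto\zeta_m^r$ of order $n$. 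Writing $F=L^{\langle\sigma\rangle}$ and setting $\gamma:=\zeta_m^t$ (which lies in $F^\times$ because $s=\gcd(r-1,m)$ divides $r-1$, so that $t(r-1)\equiv 0\pmod m$), the relation $b^n=a^t$ identifies $D$ with the cyclic algebra $A=(L/F,\sigma,\gamma)$.

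Next I would isolate the role of (C1) and (C2): these are precisely the group-theoretic conditions, read off from Lemma \ref{lem 10} and the structure of $G_{m,r}$, under which such a faithful irreducible representation of the metacyclic type above exists, so that the construction of $A$ is available and the embedding $G_{m,r}\hookrightarrow A^\times$ is forced. Granting (C1) or (C2), the problem reduces to the question: when is the cyclic algebra $A=(L/F,\sigma,\gamma)$ a division algebra? By the standard criterion for cyclic algebras this holds if and only if $\gamma$ has order exactly $n=[L:F]$ in $F^\times/N_{L/F}(L^\times)$.

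The heart of the proof is the arithmetic evaluation of this order, which I would carry out prime-by-prime using the class field theory description of the global norm-residue group in terms of its local analogues. Fixing a prime $q\mid n$, the $q$-part of the global order is the maximum over places $v$ of the $q$-part of the local order of $\gamma$ in $F_v^\times/N(L_w^\times)$. Using the decomposition of a prime $p\mid m$ in the cyclotomic tower---with the residue field of size $p^{\delta_p}$ and the local degree of $L/F$ governed by $n_p$, via the definitions of $\alpha_p,n_p,\delta_p$---I would show that the only places capable of contributing a full factor $q^{v_q(n)}$ lie over a prime $p\mid m$ with $q\nmid n_p$, and that such a place actually contributes it exactly under the coprimality condition $\gcd(q,(p^{\delta_p}-1)/s)=1$ when $p\ne 2$, and under $m/4\equiv\delta_p\equiv 1\pmod 2$ together with (C2) when $p=q=2$. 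Since Lemma \ref{lem 10} guarantees at most one such $p$ for each $q$, ``order exactly $n$'' becomes the requirement that every $q\mid n$ have one good prime $p$, which is condition (2). The degenerate possibility, in which $b$ inverts a $2$-group $\langle a\rangle$, produces the generalized-quaternion situation $n=s=2$, $r\equiv-1\pmod m$, whose cyclic algebra is a definite-quaternion division algebra; this is treated separately and gives condition (1).

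I expect the main obstacle to be the local computation of the third paragraph: correctly tracking, at each prime $p\mid m$ and each $q\mid n$, how the ramification and residue data ($\alpha_p$, the residue degree $\delta_p$, and the local index $n_p$) assemble into the $q$-adic valuation of the order of $\gamma$ in the norm-residue group, and verifying that no place outside those over $m$ contributes. Checking that the single good prime supplied by Lemma \ref{lem 10} genuinely realizes the full $q$-power, and aligning the delicate $p=2$ analysis with the dichotomy (C1)/(C2), is where the argument is most technical.
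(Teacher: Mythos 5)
The paper does not actually prove Theorem \ref{thm 11}: its ``proof'' is the citation to \cite[Theorems 3, 4, and Lemma 10]{1}, so your proposal has to be judged against Amitsur's argument. Your overall architecture is indeed his: the enveloping algebra of a faithful embedding is the cyclic algebra $A=(\Q(\zeta_m)/F,\sigma,\zeta_m^t)$, where $F$ is the fixed field of $\sigma\colon\zeta_m\mapsto\zeta_m^r$, so embeddability of $G_{m,r}$ is equivalent to $A$ being a division algebra; since index equals exponent over number fields, this is in turn equivalent to $\zeta_m^t$ having order exactly $n$ in $F^\times/N_{L/F}(L^\times)$, which is evaluated place by place (finite places over $p\mid m$ giving condition (2), the real places giving the definite quaternion case (1)).

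The genuine gap is your treatment of (C1)/(C2). They are not, as you assert, the conditions under which ``such a faithful irreducible representation \ldots exists'' or under which ``the embedding $G_{m,r}\hookrightarrow A^\times$ is forced'': the cyclic algebra $A$ exists for every admissible pair $(m,r)$, and the assignment $a\mapsto\zeta_m$, $b\mapsto u_\sigma$ (with $u_\sigma^n=\zeta_m^t$) is injective unconditionally, because $1,u_\sigma,\dots,u_\sigma^{n-1}$ are linearly independent over $\Q(\zeta_m)$ in $A$. The actual role of (C1)/(C2) is as \emph{necessary} conditions for $A$ to be a division algebra, and establishing that necessity is a substantive half of the theorem --- in Amitsur it rests on the structural restrictions on finite subgroups of division rings (abelian subgroups are cyclic, Sylow subgroups are cyclic or generalized quaternion) combined with the arithmetic behind Lemma \ref{lem 10}, which is a statement about the primes $q\mid n$ and $p\mid m$, not about existence of representations. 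Nothing in your plan proves this direction, so the ``only if'' part of the (C1)/(C2) clause is simply missing. Moreover, in the direction you do set up, the exact criteria (2)(a) and (2)(b) --- that the unique prime $p$ allowed by Lemma \ref{lem 10} contributes the full $q$-part of $n$ precisely when $\gcd(q,(p^{\delta_p}-1)/s)=1$ (resp.\ the parity conditions when $p=q=2$), and that no other finite or infinite place can do so --- are exactly what you defer as ``the main obstacle,'' so the computational heart of the theorem is also not carried out. (A smaller slip: in case (1) the group $\langle a\rangle$ need not be a $2$-group, e.g.\ $\textrm{Dic}_{12}$ has $m=6$; what makes that case work is that $t=m/2$, so $\gamma=-1$ and $A$ is a totally definite quaternion algebra over the totally real field $F=\Q(\zeta_m+\zeta_m^{-1})$.)
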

\begin{proof}
  For a proof, see \cite[Theorem 3, Theorem 4, and Lemma 10]{1}.
\end{proof}

Now, let $G$ be a finite group. One of our main tools in this section is the following result:
\begin{theorem}\label{thm 12}
  $G$ can be embedded in a division ring if and only if $G$ is of one of the following types: \\
  (1) Cyclic groups. \\
  (2) $G_{m,r}$ where the integers $m,r,$ etc, satisfy Theorem \ref{thm 11} (which is not cyclic). \\
  (3) $\mathfrak{T}^* \times G_{m,r}$ where $\mathfrak{T}^*$ is the binary tetrahedral group of order $24$, and $G_{m,r}$ is either cyclic of order $m$ with $\gcd(m,6)=1$, or of type (2) with $\gcd(|G_{m,r}|, 6)=1.$ In both cases, for all primes $p~|~m,$ the smallest integer $\gamma_p$ satisfying $2^{\gamma_p} \equiv 1~(\textrm{mod}~p)$ is odd. \\
  (4) $\mathfrak{O}^*$, the binary octahedral group of order $48$. \\
  (5) $\mathfrak{I}^*,$ the binary icosahedral group of order $120.$
\end{theorem}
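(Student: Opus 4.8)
The statement in question is Theorem \ref{thm 12}, which classifies all finite groups embeddable in a division ring. This is Amitsur's theorem, so my proof plan would be to reconstruct Amitsur's original argument.

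\textbf{Overall strategy.} The plan is to reduce the classification to the cases already handled by Theorems \ref{thm 11} and \ref{thm 12}'s building blocks, exploiting the structure of finite subgroups of division rings. First I would recall the standard reduction: if $G$ embeds in a division ring $D$, we may assume $D$ is the division subring generated by $G$ over $\mathbb{Q}$, so $G$ spans a finite-dimensional division algebra over $\mathbb{Q}$. The key structural fact is that every finite subgroup of $D^{\times}$ has the property that each of its Sylow subgroups is either cyclic or generalized quaternion, because a $p$-group embeddable in a division ring must have a unique subgroup of order $p$ (any element of order $p$ generates a subfield $\mathbb{Q}(\zeta_p)$, and the commutation structure is rigid). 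This is the classical theorem that the Sylow subgroups of a finite subgroup of a division ring are cyclic or generalized quaternion.

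\textbf{Key steps.} With the Sylow structure in hand, I would invoke the classification of finite groups all of whose Sylow subgroups are cyclic or quaternion, which are precisely the \emph{Z-groups} (metacyclic) together with certain soluble and the special linear exceptions. The metacyclic groups with cyclic Sylow subgroups give rise to the groups $G_{m,r}$ of Definition \ref{def 9}, and the criterion for embeddability into a division ring among these is exactly Theorem \ref{thm 11}; this yields types (1) and (2). The groups with a generalized quaternion Sylow $2$-subgroup are more delicate: here one uses that $\mathfrak{T}^*, \mathfrak{O}^*, \mathfrak{I}^*$ (the binary tetrahedral, octahedral, and icosahedral groups) are the finite subgroups of the Hamiltonian quaternions $\mathbb{H}^{\times}$ up to the cyclic and dicyclic families. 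The groups $\mathfrak{O}^*$ and $\mathfrak{I}^*$ are perfect-or-nearly-perfect and admit no nontrivial direct factors compatible with division-ring embedding, giving the isolated types (4) and (5). The remaining type (3) arises from $\mathfrak{T}^*$ together with a coprime metacyclic or cyclic factor $G_{m,r}$; the coprimality condition $\gcd(|G_{m,r}|,6)=1$ and the parity condition on $\gamma_p$ come from analyzing when the tensor/commuting structure of $\mathfrak{T}^* \times G_{m,r}$ still embeds in a single division algebra over $\mathbb{Q}$, which requires controlling the local Schur indices at the relevant primes.

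\textbf{Main obstacle.} The hard part will be the precise arithmetic in type (3): verifying exactly when $\mathfrak{T}^* \times G_{m,r}$ embeds, rather than merely when each factor does separately. The product of two division-algebra-embeddable groups need not embed in a division ring, so one must check that the enveloping $\mathbb{Q}$-algebra $\mathbb{Q}[\mathfrak{T}^* \times G_{m,r}]$ has a simple component that is a division algebra. This reduces to a Schur-index computation involving the smallest integer $\gamma_p$ with $2^{\gamma_p} \equiv 1 \pmod{p}$; the oddness of $\gamma_p$ is exactly the condition that the $2$-primary part coming from $\mathfrak{T}^*$ does not split the relevant cyclotomic components contributed by $G_{m,r}$. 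I would lean on Theorem \ref{thm 11} and Lemma \ref{lem 10} for the metacyclic bookkeeping, and on the explicit quaternionic realizations in Theorem \ref{aimf of GL1} to certify that the exceptional types genuinely occur. Since the result is quoted from Amitsur \cite{1}, I would ultimately cite \cite[Theorem 3, Theorem 4, and Lemma 10]{1} for the full verification of these index conditions rather than redoing the global class field theory from scratch.
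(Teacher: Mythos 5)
Your proposal is correct and in substance coincides with the paper's treatment: the paper offers no independent proof of Theorem \ref{thm 12}, simply citing \cite[Theorem 7]{1}, and your outline (Sylow subgroups cyclic or generalized quaternion, the reduction of groups with all Sylow subgroups cyclic to the metacyclic groups $G_{m,r}$, the binary polyhedral exceptions, and the Schur-index/local-invariant analysis behind the oddness of $\gamma_p$ in type (3)) is a faithful sketch of Amitsur's own argument, with the hard arithmetic deferred to the same source. One correction: the results you cite at the end, \cite[Theorem 3, Theorem 4, and Lemma 10]{1}, establish only the embeddability criterion for the groups $G_{m,r}$ (this is exactly what the paper cites for Theorem \ref{thm 11}); the structure-theoretic reduction and the analysis of types (3)--(5), which is the genuinely hard part you identify, are not contained in those statements, so the full classification should be attributed to \cite[Theorem 7]{1}, as the paper does.
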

\begin{proof}
  For a proof, see \cite[Theorem 7]{1}.
\end{proof}

Two other useful facts are listed below:
\begin{lemma}\label{lem 13}
  If a finite group $G$ can be embedded in a division ring and if $|G|$ is square-free, then $G$ is cyclic.
\end{lemma}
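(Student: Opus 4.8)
The plan is to deduce everything from the classification in Theorem \ref{thm 12}, which lists the five possible types of finite groups embeddable in a division ring, and to eliminate all the non-cyclic possibilities by showing that each has an order that is not square-free. First I would dispose of types (3), (4), and (5): the binary octahedral group $\mathfrak{O}^*$ has order $48 = 2^4 \cdot 3$, the binary icosahedral group $\mathfrak{I}^*$ has order $120 = 2^3 \cdot 3 \cdot 5$, and every group of type (3) contains $\mathfrak{T}^*$ of order $24 = 2^3 \cdot 3$ as a direct factor; in each case $8 \mid |G|$, so $|G|$ cannot be square-free. Type (1) is exactly the cyclic case we want, so the entire content of the lemma is to rule out type (2), i.e.\ a \emph{non-cyclic} dicyclic group $G_{m,r}$ of square-free order $mn$.

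For type (2) I would argue by contradiction, assuming $mn$ is square-free (so $m$ and $n$ are square-free and $\gcd(m,n)=1$) and $n>1$. By Theorem \ref{thm 11}, condition (C1) or (C2) holds together with alternative (1) or (2). Condition (C2) forces $2^{\alpha}\mid m$ with $\alpha\ge 2$, hence $4\mid mn$, which is impossible; so (C1) holds. Likewise, alternative (1) gives $n=s=2$, and then $\gcd(s,t)=1$ forces $t$ to be odd and $m=st=2t$, so $mn=4t$ and again $4\mid mn$. This reduces everything to alternative (2) of Theorem \ref{thm 11}, which I expect to be the main obstacle.

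To handle alternative (2), I would fix a prime $q\mid n$ and let $p\mid m$ be the prime furnished by the condition, with $q\nmid n_p$. Here Lemma \ref{lem 10} does the decisive work: since we are in case (C1) rather than (C2), the possibility $p=2$ (which would force (C2)) is excluded, so $p$ is odd, $p\nmid s$, and $q\mid p-1$. The embedding then forces sub-condition (a), namely $\gcd(q,(p^{\delta_p}-1)/s)=1$ (sub-condition (b) again requires (C2) and is excluded). The key computation is a $q$-adic valuation count: square-freeness gives $\gcd(m,n)=1$, so from $q\mid n$ we get $q\nmid m$ and hence $q\nmid s$; on the other hand $q\mid p-1$ divides $p^{\delta_p}-1$, while $s$ divides $p^{\delta_p}-1$ (as $p\nmid s$). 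Cancelling the factor $s$ keeps the factor of $q$, so $q\mid (p^{\delta_p}-1)/s$, contradicting (a). This contradiction eliminates alternative (2), and hence type (2), leaving only the cyclic case. The only genuinely delicate point is the interplay of Lemma \ref{lem 10} with the hypotheses, used to guarantee that $p$ is odd and that $q\mid p-1$; once that is in hand, the valuation argument against sub-condition (a) is immediate.
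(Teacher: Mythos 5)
Your argument is correct, but it does not follow the paper's route: the paper offers no proof of this lemma at all, simply citing \cite[Corollary 5]{1}, whereas you derive it from the other quoted Amitsur results — the classification (Theorem \ref{thm 12}), the embedding criterion (Theorem \ref{thm 11}), and Lemma \ref{lem 10}. Your steps all check out: types (3)--(5) of Theorem \ref{thm 12} are eliminated because $8$ divides their orders; condition (C2) forces $4 \mid m$ and alternative (1) of Theorem \ref{thm 11} forces $mn = stn = 4t$, so both are incompatible with $|G| = mn$ square-free; and in alternative (2), since (C2) is excluded, Lemma \ref{lem 10} gives $p$ odd, $p \nmid s$ and $q \mid p-1$, after which the divisibility count is right: $s \mid mp^{-\alpha_p} \mid p^{\delta_p}-1$ (the first divisibility because $p \nmid s$, the second by the definition of $\delta_p$), while $q \nmid s$ (since square-freeness of $mn$ gives $\gcd(m,n)=1$) and $q \mid p-1 \mid p^{\delta_p}-1$, so the prime $q$ survives the cancellation and divides $(p^{\delta_p}-1)/s$, contradicting sub-condition (a). As for what each approach buys: yours makes the lemma self-contained modulo results already displayed in the paper, which is a genuine expository gain; the cost is logical economy, since you invoke the full classification Theorem \ref{thm 12} — a far deeper result than the lemma itself — and in Amitsur's original development the square-free corollary is a primitive fact established before the classification, so your derivation inverts that order. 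Within this paper, where Theorems \ref{thm 11}, \ref{thm 12} and Lemma \ref{lem 10} are all imported as black boxes, the inversion is harmless and your proof stands.
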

\begin{proof}
  For a proof, see \cite[Corollary 5]{1}.
\end{proof}

\begin{theorem}\label{thm 14}
  Let $G=G_{m,n,r}=\langle a,b~|~a^m=b^n =1, bab^{-1}=a^r \rangle$ where $n>1.$ If the order of $\overline{r}$ in the unit group $U(\Z/m\Z)$ is exactly $n,$ then $G$ cannot be embedded in a division ring.
\end{theorem}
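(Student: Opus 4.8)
The plan is to pass from the group to its rational group algebra and to exploit the standard principle that a finite group $G$ embeds in a division ring if and only if $\Q[G]$ possesses a Wedderburn component that is itself a division ring and affords a \emph{faithful} representation of $G$. Indeed, if $G \hookrightarrow D^{\times}$ for a division ring $D$, then the image of $\Q[G]$ in $D$ is a finite-dimensional $\Q$-domain, hence a division ring, and it is a simple quotient of the semisimple algebra $\Q[G]$ in which $G$ sits faithfully; thus it is one simple component $A \cong \Delta$ of $\Q[G]$ together with a faithful irreducible character afforded by $A$. So it suffices to show that \emph{every} faithful simple component of $\Q[G]$ fails to be a division ring.

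First I would determine the faithful irreducible representations of $G = G_{m,n,r}$. Writing $N = \langle a \rangle \cong \Z/m\Z$, a normal cyclic subgroup with $G/N \cong \langle b \rangle \cong \Z/n\Z$ and with the extension split since $b^n = 1$, I apply Clifford theory relative to $N$. For a linear character $\chi$ of $N$ with $\chi(a) = \zeta$ a primitive $m$-th root of unity, one has $\chi^{b^j}(a) = \zeta^{r^j}$, so the inertia group of $\chi$ equals $N$ \emph{exactly because} the order of $\overline{r}$ in $U(\Z/m\Z)$ is $n$; hence $\rho_{\chi} := \mathrm{Ind}_N^G \chi$ is irreducible of dimension $n$. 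Since the characters $\chi, \chi^r, \dots, \chi^{r^{n-1}}$ all have trivial kernel on $N$ and $b$ permutes them by a cyclic shift of order $n$, the representation $\rho_{\chi}$ is faithful on $G$; conversely, any faithful irreducible character of $G$ restricts to $N$ with a faithful constituent and is therefore one of the $\rho_{\chi}$. Thus the faithful components of $\Q[G]$ are precisely those afforded by the $\rho_{\chi}$ with $\chi$ faithful.

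The heart of the argument is to identify the simple component afforded by $\rho_{\chi}$. In the natural basis, $\rho_{\chi}(a)$ is the diagonal matrix $\mathrm{diag}(\zeta, \zeta^r, \dots, \zeta^{r^{n-1}})$ and $\rho_{\chi}(b)$ is the cyclic-shift matrix $P$ with $P^n = \rho_{\chi}(b^n) = \rho_{\chi}(1) = I$. Hence the $\Q$-algebra generated is the cyclic crossed-product algebra $\big(\Q(\zeta)/F, \sigma, \beta\big)$, where $F$ is the fixed field of the order-$n$ automorphism $\sigma : \zeta \mapsto \zeta^r$ of $\Q(\zeta)$ and the structure constant is $\beta = \chi(b^n)$. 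Because the extension is split, $b^n = 1$ forces $\beta = 1$; and a cyclic algebra whose factor lies in the norm group $N_{\Q(\zeta)/F}(\Q(\zeta)^{\times})$, in particular $\beta = 1$, is split, i.e.\ isomorphic to $M_n(F)$. Since $n$ equals the order of $\overline{r}$ and $n > 1$, this is a matrix ring of size at least $2$, hence not a division ring. Therefore no faithful component of $\Q[G]$ is a division ring, and $G$ cannot be embedded in a division ring.

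The step I expect to be the main obstacle is the clean identification of the $\rho_{\chi}$-component with the split cyclic algebra and the verification that $\beta = 1$ genuinely forces $M_n(F)$; this is precisely where the hypothesis $b^n = 1$ is decisive, in contrast with the dicyclic groups $G_{m,r}$ of Definition \ref{def 9}, for which $b^n = a^t$ makes $\beta$ a nontrivial root of unity and the cyclic algebra can be a genuine division algebra. As a cross-check, one can instead invoke Amitsur's classification (Theorem \ref{thm 12}): $G$ is nonabelian because $\overline{r} \neq 1$, so it is not of type (1), and a split metacyclic group with faithful conjugation action matches none of the dicyclic type-(2) groups nor the exceptional groups $\mathfrak{T}^* \times G_{m,r}$, $\mathfrak{O}^*$, $\mathfrak{I}^*$; when $|G|$ is even a quicker contradiction is available, since $G$ then contains more than one involution whereas a finite subgroup of a division ring has a unique involution, necessarily central.
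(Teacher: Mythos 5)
Your main argument is correct, and its engine is the same as that of the proof the paper actually relies on: the paper does not prove Theorem \ref{thm 14} itself but cites Lam \cite[Theorem 3.1]{6}, and Lam's proof is the ``internal'' version of your computation. Lam assumes $G \subseteq D^{\times}$, lets $K$ be the subfield generated by $a$ over the prime field, observes that conjugation by $b$ induces an automorphism $\sigma$ of $K$ of order exactly $n$ (this is where the hypothesis on the order of $\overline{r}$ enters), shows that $1, b, \dots, b^{n-1}$ are linearly independent over $K$, and concludes that $\sum_{j} K b^{j}$ is the cyclic algebra $(K/K^{\sigma}, \sigma, b^{n}) = (K/K^{\sigma}, \sigma, 1) \cong M_{n}(K^{\sigma})$, which has zero divisors, contradicting $\sum_j K b^j \subseteq D$. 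You instead decompose $\Q[G]$, use Clifford theory to show that the faithful simple components are exactly those afforded by $\mathrm{Ind}_{N}^{G}\chi$ with $\chi$ faithful on $N = \langle a \rangle$, and identify each such component as the same split cyclic algebra. Both routes hinge on the identical key fact (a cyclic algebra with trivial factor set is split); yours costs more machinery but yields more, namely the complete list of faithful Wedderburn components of $\Q[G]$.

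Two caveats. First, a genuine though easily repaired gap: by passing through $\Q[G]$ you have only excluded embeddings into division rings of characteristic $0$, while the theorem concerns arbitrary division rings. If $D$ has characteristic $p$, the $\F_{p}$-span of $G$ inside $D$ is a finite domain, hence a field by Wedderburn's little theorem, so $G$ would be cyclic; this is impossible since the hypothesis that $\overline{r}$ has order $n > 1$ makes $G$ nonabelian. One such sentence must be added. Second, your closing ``cross-check'' via Theorem \ref{thm 12} is not sound as stated: in Definition \ref{def 9}, taking $s = \gcd(r-1,m) = 1$ gives $t = m$ and hence $b^{n} = a^{t} = 1$, so split metacyclic groups with faithful conjugation action \emph{do} occur among the type-(2) presentations $G_{m,r}$; they are excluded only because they fail the arithmetic conditions of Theorem \ref{thm 11}, not by the shape of the presentation. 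Likewise, the claim that $|G|$ even forces more than one involution fails when $m$ is even and $n$ is odd: for example $\Z/14\Z \rtimes \Z/3\Z$ with $r = 9$ has the unique (central) involution $a^{7}$. Neither remark affects your main proof, but both should be corrected or deleted.
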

\begin{proof}
  For a proof, see \cite[Theorem 3.1]{6}.
\end{proof}

Having stated all necessary results, we can proceed to achieve our goal of this section. First, we give two important lemmas:

\begin{lemma}\label{lem 15}
  Suppose that $n=2,$ $m$ is an integer with $2 \leq m \leq 12,$ and $\gcd(n,t)=\gcd(s,t)=1.$ Then the group $G:=G_{m,r}$ can be embedded in a division ring if and only if $G$ is one of the following groups: \\
  (1) A cyclic group $\Z/4\Z$\footnote{If we want $G_{m,r}$ to be non-cyclic as in Theorem \ref{thm 12}, then we may exclude this case.}; \\
  (2) $\textrm{Dic}_{12}$, a dicyclic group of order $12$; \\
  (3) $\textrm{Dic}_{20}$, a dicyclic group of order $20$.
\end{lemma}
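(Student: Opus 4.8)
The strategy is to apply the embedding criterion of Theorem \ref{thm 11} directly, using the hypotheses $n=2$ and $\gcd(n,t)=\gcd(s,t)=1$ to force condition (C1), and then to enumerate the finitely many cases $2 \leq m \leq 12$. First I would observe that since $n=2$, the defining relation $b^n=a^t$ forces $b^2=a^t$, and the hypothesis $\gcd(n,t)=\gcd(2,t)=1$ means $t$ is odd. Because $s=\gcd(r-1,m)$ and $t=m/s$, the condition $\gcd(s,t)=1$ says $s$ and $t$ are coprime, so $m=st$ with $s,t$ coprime and $t$ odd. Moreover, $n=2$ being the multiplicative order of $r$ modulo $m$ means $r^2 \equiv 1$ but $r \not\equiv 1 \pmod{m}$. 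Since condition (C2) requires $n=2n'$ with $n'$ odd and $\alpha \geq 2$ (i.e. $4 \mid m$) together with $\gcd(s,t)=2$, which contradicts our $\gcd(s,t)=1$, I would note that (C2) is impossible here; hence (C1) must be the relevant condition, and it holds by hypothesis.

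**Carrying out the enumeration.**
With (C1) secured, the embeddability of $G_{m,r}$ reduces to verifying condition (1) or (2) of Theorem \ref{thm 11}. I would first dispose of condition (1): it requires $n=s=2$ and $r\equiv -1\pmod m$. Combined with $s=2$, $\gcd(s,t)=1$ forces $t$ odd, so $m=2t$ with $t$ odd; as $m$ ranges over $2 \le m \le 12$, the admissible $m$ are those with $m/2$ odd, namely $m=2,6,10$, and with $r\equiv-1$ these give respectively $\Z/4\Z$ (when $m=2$, where $r\equiv -1\equiv 1$ collapses to the cyclic case $n=s=1$), $\mathrm{Dic}_{12}$ (from $m=6$), and $\mathrm{Dic}_{20}$ (from $m=10$). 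For each remaining value of $m$ in the range, I would compute $s=\gcd(r-1,m)$ and $t=m/s$ for every $r$ with $r^2\equiv 1\pmod m$ and $\gcd(r,m)=1$, discard those violating $\gcd(s,t)=1$, and test condition (2) of Theorem \ref{thm 11} prime-by-prime using the quantities $n_p,\delta_p$ from the definitions preceding Lemma \ref{lem 10}. The orders $|G_{m,r}|=mn=2m$ range over $\{4,\dots,24\}$, a short finite list.

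**Closing the classification.**
To show nothing beyond the three listed groups survives, I would invoke Lemma \ref{lem 13}: whenever $|G_{m,r}|=2m$ is square-free the group must be cyclic, eliminating noncyclic candidates of square-free order and pinning down which $m$ can yield a genuine dicyclic group. For the non-square-free orders I would check condition (2) explicitly; the key point is that for a prime $q \mid n$ (here only $q=2$ is possible since $n=2$) one needs a prime $p \mid m$ with $2 \nmid n_p$ and $\gcd(2,(p^{\delta_p}-1)/s)=1$, and this numerical test fails for the intermediate values of $m$ that would give spurious groups, while succeeding exactly for $m=6$ and $m=10$. I would also use Theorem \ref{thm 14} as a quick disqualifier: if the order of $\bar r$ in $U(\Z/m\Z)$ equals $n=2$, embeddability fails unless the finer conditions are met, which trims the casework.

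**Main obstacle.**
The principal difficulty is purely bookkeeping: systematically ruling out all the intermediate $m$ (that is, $m \in \{3,4,5,7,8,9,11,12\}$ after accounting for the parity and coprimality constraints) by correctly computing $n_p$ and $\delta_p$ and checking the congruence conditions of Theorem \ref{thm 11}(2), without overlooking a value of $r$ that might accidentally satisfy the criterion. The conceptual steps are immediate; the risk lies in an arithmetic slip in the finite but somewhat tedious verification, so I would organize it as a single table indexed by $(m,r)$ to make the exclusion transparent.
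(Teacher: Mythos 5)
Your proposal is correct and follows essentially the same route as the paper's proof: secure condition (C1) (ruling out (C2) via $\gcd(s,t)=1$), read off the positive cases $\Z/4\Z$, $\textrm{Dic}_{12}$, $\textrm{Dic}_{20}$ from condition (1) of Theorem \ref{thm 11} at $m=2,6,10$, eliminate the square-free orders $m\in\{3,5,7,11\}$ by Lemma \ref{lem 13}, and dispose of $m\in\{4,8,9,12\}$ through the $\gcd(s,t)=1$ constraint, Theorem \ref{thm 14}, and the $n_p,\delta_p$ computations of Theorem \ref{thm 11}(2), exactly as the paper does. One side remark should be corrected, though it is not load-bearing: condition (2) of Theorem \ref{thm 11} does \emph{not} hold for $m=10$ (there $\delta_5=1$ and $\gcd\left(2,(5-1)/s\right)=2$, while $p=2$ fails since $n_2=2$), so $\textrm{Dic}_{20}$ embeds only via condition (1) --- which is harmless, since your derivation of all three positive cases already rests on condition (1).
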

\begin{proof}
  Note that $t$ is an odd integer since $n=2$ and $\gcd(n,t)=1.$ Hence, we have the following three cases to consider: \\

  (1) If $m \in \{3,5,7,11\},$ then $|G|=2m$ is square-free. Now, since $n=2$ and $m$ is an odd prime number, it is easy to see that $r \equiv -1~(\textrm{mod}~m)$ (by the definition of $n$), and then by looking at the presentation of $G,$ we can see that $G$ is not cyclic. By Lemma \ref{lem 13}, $G$ cannot be embedded in a division ring. \\

  (2) Suppose that $m=9.$ Then since $\gcd(s,t)=1,$ we cannot have $s=t=3.$ Also, since $n=2$, we cannot have $s=9, t=1$ by the definition of $n.$ Hence, the only possible case is that $s=1, t=9.$ Since $n=2$ and $\gcd(r+1,r-1) \leq 2,$ we have $r \equiv -1~(\textrm{mod}~9)$ so that the order of $\overline{r}$ in the unit group $U(\Z/9\Z)$ is exactly $2.$ By Theorem \ref{thm 14}, $G$ cannot be embedded in a division ring. \\

  (3) If $m \in \{2,4,6,8,10,12\},$ then we have the following six subcases to consider: \\
  (i) If $m=2,$ then since $\gcd(n,t)=1,$ we get $s=2, t=1.$ In particular, we have $n=s=2$ and $r \equiv -1~(\textrm{mod}~2).$ By Theorem \ref{thm 11}, $G$ can be $G_{2,r}=\Z/4\Z.$ \\
  (ii) If $m=4,$ then since $\gcd(n,t)=1,$ we get $s=4, t=1.$ By the definition of $s$, we have $m=4~|~r-1$, and hence, this case cannot occur because of our assumption that $n=2.$ \\
  (iii) If $m=6,$ then since $\gcd(n,t)=1,$ either $s=6, t=1$ or $s=2, t=3.$ If $s=6, t=1,$ then by a similar argument as in (ii), this case cannot occur. Now, if $s=2, t=3,$ then since $n=2,$ we have $r \equiv -1~(\textrm{mod}~6).$ By Theorem \ref{thm 11}, $G$ can be $G_{6,r}=\textrm{Dic}_{12}.$ \\
  (iv) If $m=8,$ then since $\gcd(n,t)=1,$ we get $s=8, t=1.$ This case cannot occur by a similar argument as in (ii). \\
  (v) Similarly, if $m=10,$ then we only need to consider the case when $s=2, t=5.$ Since $n=2,$ we have $r \equiv -1~(\textrm{mod}~10).$ By Theorem \ref{thm 11}, $G$ can be $G_{10,r}=\textrm{Dic}_{20}.$ \\
  (vi) If $m=12,$ then, as before, we only need to consider the case when $s=4, t=3.$ If there exists a prime number $p~|~12$ such that $q=2~\nmid~n_p,$ then either $p=2$ or $p=3.$ Since (C2) does not hold, $p \ne 2.$ If $p=3,$ then $\alpha_3 = 1, \delta_3 = 2, $ and hence, we have $\gcd(2, (3^2 -1)/4)=2.$ By Theorem \ref{thm 11}, this case cannot occur. \\

  This completes the proof.
\end{proof}

\begin{lemma}\label{lem 16}
  Suppose that $n=2, m=2^{\alpha} \cdot m^{\prime} \leq 12, s = 2 s^{\prime}, \gcd(s,t)=2,$ and $r \equiv -1~(\textrm{mod}~2^{\alpha})$ where $\alpha \geq 2$ is an integer and $m^{\prime}, s^{\prime}$ are odd integers. Then the group $G:=G_{m,r}$ can be embedded in a division ring if and only if $G$ is one of the following groups:\\
  (1) $Q_8,$ a quaternion group; \\
  (2) $\textrm{Dic}_{16},$ a dicyclic group of order $16$; \\
  (3) $\textrm{Dic}_{24},$ a dicyclic group of order $24.$
\end{lemma}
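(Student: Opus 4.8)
The plan is to mirror the proof of Lemma \ref{lem 15}: first reduce to a short list of admissible values of $m$, then for each determine the forced data $(s,t,r)$, identify the resulting group $G_{m,r}$, and finally test embeddability with the criterion of Theorem \ref{thm 11}.

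First I would observe that the constraints $m=2^{\alpha} m^{\prime} \leq 12$ with $\alpha \geq 2$ and $m^{\prime}$ odd leave only $m \in \{4,8,12\}$: for $\alpha=2$ one gets $m=4$ (from $m^{\prime}=1$) or $m=12$ (from $m^{\prime}=3$); for $\alpha=3$ one gets $m=8$ (from $m^{\prime}=1$); and $\alpha \geq 4$ already forces $m \geq 16$. In each case $s=\gcd(r-1,m)$ must be an even divisor of $m$ whose half $s^{\prime}$ is odd, which pins down the possibilities for $s$, hence $t=m/s$, while the congruence $r \equiv -1~(\textrm{mod}~2^{\alpha})$ together with $n=2$ (so that $r^2 \equiv 1~(\textrm{mod}~m)$) pins down $r$ modulo $m$.

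Next I would dispose of the affirmative cases. For $m=4$ the only choice is $s=2$, $t=2$, $r \equiv 3~(\textrm{mod}~4)$, giving $\langle a,b \mid a^4=1, b^2=a^2, bab^{-1}=a^{-1} \rangle = Q_8$; for $m=8$, $s=2$, $t=4$, $r=7$, giving $\textrm{Dic}_{16}$; and for $m=12$ with $s=2$, $t=6$, $r \equiv -1~(\textrm{mod}~12)$, giving $\textrm{Dic}_{24}$. In all three, $n=s=2$ and $r \equiv -1~(\textrm{mod}~m)$, so condition (1) of Theorem \ref{thm 11} holds and the group embeds in a division ring.

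The remaining, and only delicate, case is $m=12$ with $s=6$, $t=2$, forcing $r \equiv 7~(\textrm{mod}~12)$, i.e.\ $G_{12,7}$. Here $b^2=a^2 \neq 1$, so unlike the $m=9$ case of Lemma \ref{lem 15} I cannot appeal to Theorem \ref{thm 14}; instead I must apply Theorem \ref{thm 11} directly. Since $n=2$, the only prime $q \mid n$ is $q=2$, and I would compute the local data at the primes $p \mid 12$: for $p=2$ one has $\alpha_2=2$, $m p^{-\alpha_2}=3$, $n_2=1$ (as $7 \equiv 1~(\textrm{mod}~3)$) and $\delta_2=2$; for $p=3$ one has $m p^{-\alpha_3}=4$ and $n_3=2$ (as $7 \equiv -1~(\textrm{mod}~4)$). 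Thus $p=3$ fails the requirement $q \nmid n_p$, while $p=2$ satisfies $q \nmid n_2$ but, being $p=2$, can only use branch (b) of Theorem \ref{thm 11}(2), which demands $\delta_2$ odd; since $\delta_2=2$ is even, branch (b) fails, and branch (a) is inapplicable as it requires $p \neq 2$. As condition (1) also fails here (since $s \neq 2$), Theorem \ref{thm 11} shows that $G_{12,7}$ does not embed in a division ring. I expect this verification of the local invariants $n_p$ and $\delta_p$ to be the crux of the argument: it is the one place where the surviving dicyclic groups are separated from the excluded group, and it is precisely where the full Amitsur criterion, rather than the shortcut used in Lemma \ref{lem 15}, is needed.
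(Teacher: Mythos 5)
Your proposal is correct and follows essentially the same route as the paper's proof: reduce to $m \in \{4,8,12\}$, settle $Q_8$, $\textrm{Dic}_{16}$, and $\textrm{Dic}_{24}$ via condition (1) of Theorem \ref{thm 11}, and exclude $G_{12,7}$ by computing the invariants $n_p$ and $\delta_p$ (with the decisive failure being $\delta_2 = 2$ even in branch (b)). The only cosmetic differences are that you compute $n_3=2$ directly where the paper invokes Lemma \ref{lem 10} to rule out $p=3$, and that your observation that $s=\gcd(r-1,m)$ is determined by $r$ lets you skip the subcase $s=2,\ t=6,\ r\equiv -5~(\textrm{mod}~12)$, which the paper treats but which is in fact vacuous.
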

\begin{proof}
  We have the following three cases to consider: \\

  (1) If $\alpha =2, m^{\prime}=1,$ then $m=4$ so that $s=t=2.$ In particular, $n=s=2$ and $r \equiv -1~(\textrm{mod}~4),$ and hence, $G$ can be $G_{4,r}=Q_8$ by Theorem \ref{thm 11}. \\

  (2) If $\alpha =2, m^{\prime}=3,$ then $m=12$ so that either $s=2, t=6$ or $s=6, t=2.$ Also, $r \equiv -1~(\textrm{mod}~4).$ We have the following three subcases to consider: \\
  (i) If $s=2, t=6,$ and $r \equiv -1~(\textrm{mod}~12),$ then $G$ can be $G_{12,r}=\textrm{Dic}_{24}$ by Theorem \ref{thm 11}.\\
  (ii) Suppose that $s=2, t=6,$ and $r \equiv -5~(\textrm{mod}~12).$ If there exists a prime number $p~|~12$ such that $q=2 \nmid n_p,$ then either $p=2$ or $p=3.$ If $p=2,$ then $\alpha_2 =\delta_2 = 2$, and this case cannot occur by Theorem \ref{thm 11}. If $p=3,$ then $\alpha_3 =1, \delta_3 =2$ so that $\gcd(2,(3^2-1)/2)=2,$ and again, by Theorem \ref{thm 11}, this case cannot occur. \\
  (iii) Suppose that $s=6, t=2$. If there exists a prime number $p~|~12$ such that $q=2 \nmid n_p,$ then either $p=2$ or $p=3.$ By Lemma \ref{lem 10}, we have $p \ne 3.$ If $p=2,$ then we have $\alpha_2 = \delta_2 = 2,$ and hence, this case cannot occur by Theorem \ref{thm 11}. \\

  (3) If $\alpha=3, m^{\prime}=1,$ then $m=8$ so that $s=2, t=4.$ In particular, $n=s=2$ and $r \equiv -1~(\textrm{mod}~8),$ and hence, $G$ can be $G_{8,r}=\textrm{Dic}_{16}$ by Theorem \ref{thm 11}. \\

  This completes the proof.
\end{proof}

Now, let $D$ be a division algebra of degree 2 over its center $K$ (i.e.\ $[D:K]=4$) where $K$ is an algebraic number field. Suppose further that $\textrm{dim}_{\Q}D \leq 8$ so that the order of every element of finite order of $D^{\times}$ is at most $12.$ If the group $G:=G_{m,r}$ is contained in $D^{\times},$ then $n~|~2$ (see \cite[\S7]{1}), and hence, we have either $n=1$ or $n=2.$ Also, since $G$ contains an element of order $m,$ we have $m \leq 12.$ The last preliminary result that we need is the following
\begin{theorem}\label{thm 17}
  Let $D$ be as in the above. \\
 (a) If $D$ contains an $\mathfrak{O}^*,$ then $\sqrt{2}\in K.$ \\
 (b) If $D$ contains an $\mathfrak{I}^*,$ then $\sqrt{5}\in K.$
\end{theorem}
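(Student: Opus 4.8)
The plan is to study the $\Q$-subalgebra $A := \Q[G] \subseteq D$ generated by $G$, where $G = \mathfrak{O}^*$ in part (a) and $G = \mathfrak{I}^*$ in part (b). Since $A$ is a finite-dimensional $\Q$-algebra sitting inside the division algebra $D$, it has no zero divisors and is therefore itself a division algebra. Viewing the inclusion $G \hookrightarrow D^\times$ as a $\Q$-algebra homomorphism out of the semisimple group algebra $\Q[G]$, the image $A$ is a homomorphic image of $\Q[G]$ that happens to be a division algebra; hence $A$ is exactly one Wedderburn simple component $A_\chi$ of $\Q[G]$, the one attached to the Galois orbit of an irreducible complex character $\chi$ of $G$. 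Because $G$ maps injectively into $A^\times$, this $\chi$ must be faithful.

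First I would pin down the degree of $\chi$. Writing $m_\chi$ for the Schur index and $\Q(\chi)$ for the character field, one has $A_\chi \cong M_{\chi(1)/m_\chi}(D_\chi)$ with $D_\chi$ a division algebra of center $\Q(\chi)$ and $\textrm{dim}_{\Q(\chi)} D_\chi = m_\chi^2$; since $A = A_\chi$ is a division algebra, the matrix size must be $1$, i.e. $\chi(1) = m_\chi$. As the character field will turn out to be real, the Brauer--Speiser theorem forces $m_\chi \le 2$, so $\chi(1) \le 2$. On the other hand, the degree-$1$ characters of $\mathfrak{O}^*$ (resp. $\mathfrak{I}^*$) are non-faithful, as they are trivial on the central involution $-1$; among the degree-$2$ irreducibles, the one factoring through $S_4$ is likewise non-faithful, while the two spin characters are faithful. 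Combining $\chi(1) \le 2$ with faithfulness, we conclude $\chi(1) = 2 = m_\chi$ and that $\chi$ is one of the spin characters in both cases.

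Next I would identify $\Q(\chi)$ by evaluating the faithful $2$-dimensional character on a suitable torsion element. Realizing $G$ inside $SU(2)$, the value of $\chi$ on an element of order $8$ in $\mathfrak{O}^*$ is $2\cos(\pi/4) = \sqrt{2}$, so $\Q(\chi) = \Q(\sqrt{2})$; the value of $\chi$ on an element of order $10$ in $\mathfrak{I}^*$ is $2\cos(\pi/5) = \tfrac{1+\sqrt{5}}{2}$, so $\Q(\chi) = \Q(\sqrt{5})$. This is consistent with Table 7 of Theorem \ref{aimf of GL1}, where $\mathfrak{O}^*$ (resp. $\mathfrak{I}^*$) appears as an absolutely irreducible maximal finite subgroup of $D_{\sqrt{2},\infty}$ (resp. $D_{\sqrt{5},\infty}$), so that in fact $A \cong D_{\sqrt{2},\infty}$ (resp. $A \cong D_{\sqrt{5},\infty}$). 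Finally I would compare dimensions: from $\chi(1) = m_\chi = 2$ and $[\Q(\chi):\Q] = 2$ we get $\textrm{dim}_{\Q} A = \chi(1)^2 \cdot [\Q(\chi):\Q] = 8$ and $Z(A) = \Q(\chi)$. Since $A \subseteq D$ and $\textrm{dim}_{\Q} D \le 8$ by hypothesis, the inclusion is forced to be an equality $A = D$, whence $K = Z(D) = Z(A) = \Q(\sqrt{2})$ in part (a) and $K = \Q(\sqrt{5})$ in part (b); in particular $\sqrt{2} \in K$ (resp. $\sqrt{5} \in K$), as claimed.

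The step requiring the most care is the middle one: verifying through the character tables of $\mathfrak{O}^*$ and $\mathfrak{I}^*$ that the \emph{only} faithful simple component of $\Q[G]$ that can be a division algebra of $\Q$-dimension at most $8$ is the quaternionic $2$-dimensional one. Concretely, I must rule out that the higher-degree faithful spin characters (degree $4$ for $\mathfrak{O}^*$, degrees $4$ and $6$ for $\mathfrak{I}^*$) give the relevant component; this follows because for those $\chi(1) \ge 4 > m_\chi$, so the associated component is a proper matrix ring $M_{\chi(1)/m_\chi}(D_\chi)$ and hence not a division algebra. Once this bookkeeping is in place, the dimension count closes the argument.
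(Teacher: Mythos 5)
Your proof is correct, but it is genuinely different from what the paper does: the paper gives no argument at all for Theorem \ref{thm 17}, delegating it entirely to Amitsur \cite[Theorem 10]{1}. Your route --- identifying the enveloping algebra $A=\Q[G]\subseteq D$ with a single Wedderburn component of the rational group algebra, forcing $\chi(1)=m_\chi$ because that component is a division algebra, capping $\chi(1)\le 2$ via Brauer--Speiser, eliminating the non-faithful and higher-degree faithful characters, and reading off $\Q(\chi)$ from the spin character values $2\cos(\pi/4)=\sqrt{2}$ and $2\cos(\pi/5)=\tfrac{1+\sqrt{5}}{2}$ --- is self-contained and actually proves more than the statement: you get $K=\Q(\sqrt{2})$ (resp.\ $K=\Q(\sqrt{5})$) on the nose, together with an identification of $D$ itself as the quaternionic spin component, consistent with Table 7 of Theorem \ref{aimf of GL1}. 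It is also instructive to compare with the paper's own Lemmas \ref{lem 22} and \ref{lem 23}, which handle $\textrm{Dic}_{16}$ and $\textrm{Dic}_{20}$ by the more elementary maximal-subfield argument (an element of order $8$ or $10$ generates $\Q(\zeta_8)$ or $\Q(\zeta_{10})$, which must contain the center): that argument only pins down the center because those lemmas have already shown, using abelian-surface input from \cite{7}, that the center is totally real (a subfield of $\Q(\zeta_8)$ could a priori be $\Q(i)$ or $\Q(\sqrt{-2})$). Your representation-theoretic argument needs no such side condition, which is a genuine advantage in the abstract setting of Theorem \ref{thm 17}.

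One presentational wrinkle to repair: you invoke Brauer--Speiser on the grounds that the character field ``will turn out to be real,'' which as written is circular, since you use the bound $m_\chi\le 2$ to identify $\chi$ in the first place. The fix is cheap and should come first: in both $\mathfrak{O}^*$ and $\mathfrak{I}^*$ every element is conjugate to its inverse, so \emph{every} irreducible character of either group is real-valued. With that stated up front, Brauer--Speiser gives $m_\chi\le 2$ for all irreducible characters simultaneously, which both yields $\chi(1)\le 2$ for the division-algebra component and disposes of the degree-$4$ and degree-$6$ faithful characters ($\chi(1)\ge 4>m_\chi$ forces a proper matrix ring) exactly as in your closing paragraph.
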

\begin{proof}
  For a proof, see \cite[Theorem 10]{1}.
\end{proof}

Summarizing, we have the following
\begin{theorem}\label{thm 18}
  Let $D$ be as in the above. A finite group $G$ (of even order\footnote{This assumption can be made based on the goal of this paper.}) can be embedded in $D^{\times}$ if and only if $G$ is one of the following groups: \\
  (1) $\Z/2\Z, \Z/4\Z, \Z/6\Z, \Z/8\Z, \Z/10\Z, \Z/12\Z$; \\
  (2) $Q_8, \textrm{Dic}_{12}, \textrm{Dic}_{16}, \textrm{Dic}_{20}, \textrm{Dic}_{24}$; \\
  (3) $\mathfrak{T}^*$; \\
  (4) $\mathfrak{O}^*$ only if $\sqrt{2} \in K$; \\
  (5) $\mathfrak{I}^*$ only if $\sqrt{5} \in K$.
\end{theorem}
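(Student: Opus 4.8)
The plan is to combine Amitsur's classification (Theorem \ref{thm 12}) of the finite groups embeddable in a division ring with the two numerical constraints forced by our particular $D$: the order bound and Theorem \ref{thm 17}. First I would make the order bound precise. Given $x \in D^\times$ of finite order $N$, the subalgebra $\Q[x]$ is a finite-dimensional commutative domain inside the division algebra $D$, hence a field isomorphic to $\Q(\zeta_N)$; since the center $K$ commutes with $x$, the compositum $K\cdot\Q[x]$ is a subfield of $D$ containing $K$, so $[K\cdot\Q[x]:K]$ divides the degree $[D:K]^{1/2}=2$. This gives $\phi(N)=[\Q(\zeta_N):\Q]\le 2[K:\Q]\le 4$, and solving $\phi(N)\le 4$ shows $N\in\{1,2,3,4,5,6,8,10,12\}$, recovering the bound ``order at most $12$'' quoted before the statement. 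With this in hand, Theorem \ref{thm 12} confines $G$ to one of five types, and I would treat each in turn.

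For necessity, the cyclic type is immediate: the even-order hypothesis together with $\phi(N)\le 4$ yields list (1). For the dicyclic type $G=G_{m,r}$ I would use that $n\mid 2$ (as recalled before the statement), so the noncyclic case is $n=2$, while the existence of the order-$m$ element $a$ forces $m\le 12$; Theorem \ref{thm 11} then gives (C1) or (C2), and these are exactly the running hypotheses of Lemmas \ref{lem 15} and \ref{lem 16}, whose combined output is $\{\Z/4\Z, Q_8, \textrm{Dic}_{12}, \textrm{Dic}_{16}, \textrm{Dic}_{20}, \textrm{Dic}_{24}\}$; discarding the cyclic $\Z/4\Z$ leaves list (2). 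For the type $\mathfrak{T}^*\times G_{m,r}$ I would argue that a nontrivial factor $G_{m,r}$ (necessarily of order prime to $6$) contains an element of prime order $\ell\ge 5$, which combined with an order-$4$ element of $\mathfrak{T}^*$ produces an element of order $\mathrm{lcm}(4,\ell)\ge 20>12$, contradicting the order bound; hence only $\mathfrak{T}^*$ survives, giving (3). Finally, for $\mathfrak{O}^*$ and $\mathfrak{I}^*$ the order bound is harmless (their element orders are at most $8$ and $10$), but Theorem \ref{thm 17} forces $\sqrt 2\in K$ and $\sqrt 5\in K$ respectively, giving (4) and (5).

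For sufficiency I would exhibit, for each listed group, a division algebra of the prescribed shape containing it. The exceptional groups come directly from Theorem \ref{aimf of GL1}: $\mathfrak{T}^*\hookrightarrow D_{2,\infty}^\times$ over $K=\Q$, $\mathfrak{O}^*\hookrightarrow D_{\sqrt2,\infty}^\times$ over $K=\Q(\sqrt2)$, and $\mathfrak{I}^*\hookrightarrow D_{\sqrt5,\infty}^\times$ over $K=\Q(\sqrt5)$, the last two matching their side conditions. For the cyclic and dicyclic groups I would realize a group requiring an element of order $N$ inside a definite quaternion algebra whose center $K$ is chosen so that $\Q(\zeta_N)$ is a (maximal) subfield; for instance $\textrm{Dic}_{20}=G_{10,r}$ sits in a quaternion algebra over $K=\Q(\zeta_{10}+\zeta_{10}^{-1})=\Q(\sqrt5)$ via $a\mapsto\zeta_{10}$, and analogously for the remaining cases, each such $D$ having degree $2$ over a center of degree at most $2$ over $\Q$.

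I expect the main obstacle to be the dicyclic enumeration: one must verify that the constraints $n=2$ and $m\le 12$ are precisely the hypotheses fed into Lemmas \ref{lem 15} and \ref{lem 16}, and that the (C1)/(C2) dichotomy of Theorem \ref{thm 11} genuinely exhausts all embeddable $G_{m,r}$ in range. A secondary difficulty is the sufficiency step, where for each listed group I must produce an honest division algebra of the correct form, rather than merely the abstract division ring guaranteed by Amitsur.
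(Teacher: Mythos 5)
Your proposal is correct and its skeleton is the paper's own: Amitsur's Theorem~\ref{thm 12} confines $G$ to five types, the element-order bound together with $n \mid 2$ and $m \le 12$ feeds the dicyclic type into Lemmas~\ref{lem 15} and~\ref{lem 16}, and Theorem~\ref{thm 17} supplies the side conditions for $\mathfrak{O}^*$ and $\mathfrak{I}^*$. Still, you diverge in three places, all defensibly. First, you actually prove the bound $\varphi(N)\le 4$ (hence order $\le 12$) by the subfield-degree argument, whereas the paper merely asserts this bound in the discussion preceding Theorem~\ref{thm 17}. Second, for the type $\mathfrak{T}^* \times G_{m,r}$ the paper first reduces to $\mathfrak{T}^*$ and $\mathfrak{T}^* \times \Z/5\Z$ and kills the latter by the parity condition of Theorem~\ref{thm 12} (here $\gamma_5 = 4$ is even, so this group embeds in no division ring whatsoever), while you kill it by exhibiting an element of order $\mathrm{lcm}(4,5)=20>12$; both arguments are valid, yours being more elementary, the paper's proving a stronger non-embeddability statement. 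Third, and most substantially, the paper's proof only carries out the necessity half of the equivalence: Theorem~\ref{thm 12} and Lemmas~\ref{lem 15},~\ref{lem 16} guarantee embeddings into some abstract division ring, not into a division algebra of the restricted shape (degree $2$ over a center $K$ with $[K:\Q]\le 2$), and the paper leaves the realizations implicit in the later constructions of Section~\ref{main}. Your third paragraph, which places each listed group inside an explicit definite quaternion algebra via Theorem~\ref{aimf of GL1} and cyclotomic maximal subfields, addresses exactly this omission; to make it complete you would need to write out the cases you call ``analogous'' ($\Z/8\Z$, $\Z/12\Z$, $Q_8$, $\textrm{Dic}_{16}$, etc.) with the same care as $\textrm{Dic}_{20}$, checking each time that the relevant cyclotomic field embeds into a definite quaternion algebra over its totally real quadratic subfield.
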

\begin{proof}
  We refer the list of possible such groups to Theorem \ref{thm 12}. Suppose that $G$ is cyclic. Then we can write $G=\langle g\rangle$ for some element $g$ of order $d$. Then according to the argument given before Theorem \ref{thm 17}, we have $d \in \{2,4,6,8,10,12\}$. Hence, we obtain (1). If $G=G_{m,r}$ with $n=2$ and $m \leq 12,$ then (2) follows from Lemmas \ref{lem 15} and \ref{lem 16}. (If $n=1,$ then $s=m$ so that $t=1.$ Now, the presentation of the group tells us that the group is cyclic of order $m$ in this case.) Now, if $G=\mathfrak{T}^* \times G_{m,r}$ is a general $T$-group, then the only possible such groups are $\mathfrak{T}^*$ and $\mathfrak{T}^* \times \Z/5\Z.$ (If $n=2$ so that $G_{m,r}$ is not cyclic, then $|G_{m,r}|=2m,$ and hence, we have $\gcd(|G_{m,r}|,6) \ne 1.$) In the latter case, we have $\gamma_5 =4$ which contradicts the condition of Theorem \ref{thm 12}. Also, clearly, $\mathfrak{T}^*$ satisfies the condition of Theorem \ref{thm 12}, and hence, we get (3). Finally, the other possibilities for $G$ are $\mathfrak{O}^*$ and $\mathfrak{I}^*.$ If $G=\mathfrak{O}^*,$ then $|G|=48$, and in this case, $\sqrt{2} \in K$ by Theorem \ref{thm 17}. If $G=\mathfrak{I}^*,$ then $|G|=120,$ and in this case, $\sqrt{5} \in K$ by Theorem \ref{thm 17}. \\

  This completes the proof.
\end{proof}

Regarding our goal of this paper, the above theorem has a nice consequence:
\begin{corollary}\label{cor 19}
  Let $X$ be a simple abelian surface over a finite field $k.$ Let $G$ be a finite subgroup of the multiplicative subgroup of $\textrm{End}_k^0(X).$ Then $G$ is one of the following groups: \\
  (1) $\Z/2\Z, \Z/4\Z, \Z/6\Z, \Z/8\Z, \Z/10\Z, \Z/12\Z$; \\
  (2) $Q_8, \textrm{Dic}_{12}, \textrm{Dic}_{16}, \textrm{Dic}_{20}, \textrm{Dic}_{24}$; \\
  (3) $\mathfrak{T}^*$; \\
  (4) $\mathfrak{O}^*$ only if $\textrm{dim}_{\Q}\textrm{End}_k^0(X)=8$ and $\sqrt{2} \in \Q(\pi_X)$; \\
  (5) $\mathfrak{I}^*$ only if $\textrm{dim}_{\Q}\textrm{End}_k^0(X)=8$ and $\sqrt{5} \in \Q(\pi_X)$.
\end{corollary}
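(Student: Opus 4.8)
The plan is to reduce the statement to a direct application of Theorem \ref{thm 18}, together with the structure theory of $\textrm{End}_k^0(X)$ for a simple abelian surface. Write $D=\textrm{End}_k^0(X)$, $K=\Q(\pi_X)$, $e=[K:\Q]$, and $d=[D:K]^{\frac{1}{2}}$. Since $X$ is simple, $D$ is a division algebra, and by Corollary \ref{cor TateEnd0}-(b) we have $de=2g=4$. If $e=1$, then $K=\Q$ and $f_X$ is a power of a linear polynomial, which by Corollary \ref{cor TateEnd0}-(e) forces $X$ to be isogenous to $E^2$ for a supersingular $E$, contradicting simplicity; hence $e\geq 2$. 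Combined with $de=4$ and $d,e\in\Z_{>0}$, the only possibilities are $(d,e)=(2,2)$ and $(d,e)=(1,4)$, giving $\dim_{\Q}D=8$ and $\dim_{\Q}D=4$ respectively. In the first case $D$ is a quaternion division algebra over the quadratic field $K$; in the second $D=K$ is a quartic field which, since the Type I restriction $e\mid g$ rules out a totally real quartic center, is a CM field by Remark \ref{Qpi Real}.

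First I would treat the case $\dim_{\Q}D=8$. Here $[D:K]=4$ and $\dim_{\Q}D=8\leq 8$, so $D$ satisfies exactly the hypotheses of Theorem \ref{thm 18}, with $K$ a number field of degree $2$. Applying that theorem, any finite subgroup $G\leq D^{\times}$ of even order is one of the groups listed in (1)--(3), or $\mathfrak{O}^*$ provided $\sqrt{2}\in K=\Q(\pi_X)$, or $\mathfrak{I}^*$ provided $\sqrt{5}\in K=\Q(\pi_X)$. Since $\dim_{\Q}D=8$ in this case, these are precisely the conditions (4) and (5) of the statement, while (1), (2), (3) are reproduced verbatim.

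Next I would treat the case $\dim_{\Q}D=4$, where $D=K$ is a field. Every finite subgroup of the multiplicative group of a field is cyclic, so $G$ equals the group $\mu_w$ of $w$-th roots of unity contained in $K$ for some $w$. Because $[K:\Q]=4$, the containment $\Q(\zeta_w)\subseteq K$ forces $\phi(w)\mid 4$, so $w\in\{1,2,3,4,5,6,8,10,12\}$; restricting to even order as in Theorem \ref{thm 18}, this leaves exactly $\Z/2\Z,\Z/4\Z,\Z/6\Z,\Z/8\Z,\Z/10\Z,\Z/12\Z$, all of which appear in (1). The non-abelian groups $\mathfrak{O}^*$ and $\mathfrak{I}^*$ cannot embed into the commutative algebra $D$, in agreement with conditions (4) and (5) demanding $\dim_{\Q}D=8$.

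Combining the two cases yields the asserted list. The substantive content is entirely carried by Theorem \ref{thm 18}; the only genuine work here is the preliminary structural dichotomy $(d,e)\in\{(2,2),(1,4)\}$ and the verification that the hypotheses of Theorem \ref{thm 18} hold on the nose in the $\dim_{\Q}D=8$ case. The step I would be most careful about is the bookkeeping matching Theorem \ref{thm 18}'s conditions ``$\sqrt{2}\in K$'' and ``$\sqrt{5}\in K$'' to conditions (4) and (5) here, including confirming that $\dim_{\Q}D=8$ is forced whenever $\mathfrak{O}^*$ or $\mathfrak{I}^*$ occurs.
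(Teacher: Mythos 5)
Your proposal is correct and follows essentially the same route as the paper's proof: both split on $\textrm{dim}_{\Q}\textrm{End}_k^0(X)\in\{4,8\}$, handle the field case via cyclicity and the bound on orders of roots of unity in a quartic field, and reduce the quaternion case to Theorem \ref{thm 18}. The only difference is that you spell out details the paper leaves implicit, such as ruling out $\textrm{dim}_{\Q}\textrm{End}_k^0(X)=16$ via Corollary \ref{cor TateEnd0}-(e) and the explicit matching of the $\sqrt{2},\sqrt{5}$ conditions.
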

\begin{proof}
  Since $X$ is simple, we have that $\textrm{End}_k^0(X)$ is a division algebra over $\Q$ with either $\textrm{dim}_{\Q}\textrm{End}_k^0(X)=4$ or $\textrm{dim}_{\Q}\textrm{End}_k^0(X)=8$. \\

If $\textrm{dim}_{\Q}\textrm{End}_k^0(X)=4$, then $\textrm{End}_k^0(X)=\Q(\pi_X)$ is a field by Corollary \ref{cor TateEnd0}, and hence, $G$ is cyclic. If $G=\langle g \rangle $ for some element $g$ of order $d,$ then $d \leq 12,$ and hence, this gives the possibility of (1). If $\textrm{dim}_{\Q}\textrm{End}_k^0(X)=8$, then $\textrm{End}_k^0(X)$ is a division algebra of degree 2 over the center $\Q(\pi_X)$ (which is an algebraic number field), and hence, $G$ is of one of the given types (1), (2), (3), (4), and (5) by Theorem \ref{thm 18}. \\

  This completes the proof.
\end{proof}

\section{Main Result}\label{main}
\qquad In this section, we will give a classification of finite groups that can be realized as the automorphism group of a polarized abelian surface over a finite field which is maximal in the following sense:

\begin{definition}\label{def 20}
  Let $X$ be an abelian surface over a field $k$, and let $G$ be a finite group. Suppose that the following two conditions hold: \\
\indent (i) there exists an abelian surface $X^{\prime}$ over $k$ that is $k$-isogenous to $X$ with a polarization $\mathcal{L}$ such that $G=\textrm{Aut}_k(X^{\prime},\mathcal{L}),$ and \\
\indent (ii) there is no finite group $H$ such that $G$ is isomorphic to a proper subgroup of $H$ and $H=\textrm{Aut}_k (Y,\mathcal{M})$ for some abelian surface $Y$ over $k$ that is $k$-isogenous to $X$ with a polarization $\mathcal{M}.$ \\

  In this case, $G$ is said to be \emph{realizable maximally (or maximal, in short) in the isogeny class of $X$} as the full automorphism group of a polarized abelian surface over $k$.
\end{definition}

To obtain one of our main result (see Theorem \ref{thm old 24}), we need the following lemmas:
\begin{lemma}\label{lem 21}
  Let $G=Q_8$. If there exists a finite field $k$ and a simple abelian surface $X$ over $k$ such that $G$ is a subgroup of the multiplicative group of $\textrm{End}_k^0(X),$ then $\textrm{End}_k^0(X)=\left( \begin{array}{cc}-1,-1 \\ K \end{array} \right) $ for some totally real quadratic field $K.$
\end{lemma}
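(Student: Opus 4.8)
The plan is to first determine the coarse structure of $D := \textrm{End}_k^0(X)$, then to realize $D$ as a base change of the rational Hamilton quaternions, and finally to exclude an imaginary quadratic centre by a local-invariant computation. Throughout, $g = \dim X = 2$.

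First I would fix the shape of $D$. Since $X$ is simple, $D$ is a division algebra with $\dim_{\Q} D \in \{4, 8\}$. Because $Q_8$ is non-abelian and embeds in $D^{\times}$, the algebra $D$ is non-commutative, so Corollary \ref{cor TateEnd0}-(d) excludes $\dim_{\Q} D = 4$ (which would force $D = \Q[\pi_X]$, a field). Hence $\dim_{\Q} D = 8$; writing $K = \Q(\pi_X)$, $e = [K:\Q]$ and $d = [D:K]^{1/2}$, the relation $de = 2g = 4$ together with $\dim_{\Q} D = d^2 e = 8$ gives $d = 2$ and $e = 2$, so $K$ is quadratic and $D$ is a quaternion division algebra over $K$.

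Next I would identify $D$ explicitly. Writing $Q_8 = \langle i, j\rangle$ with $i^2 = j^2 = -1$ and $ij = -ji$, the images of $i$ and $j$ span a $\Q$-subalgebra of $D$ isomorphic to $\left( \begin{array}{cc}-1,-1 \\ \Q \end{array} \right) = D_{2,\infty}$, the embedding being injective since $D_{2,\infty}$ is simple. As $K$ lies in the centre of $D$, the $K$-algebra generated by this copy of $D_{2,\infty}$ is a nonzero quotient of the central simple $K$-algebra $D_{2,\infty} \otimes_{\Q} K \cong \left( \begin{array}{cc}-1,-1 \\ K \end{array} \right)$; comparing $K$-dimensions ($4 = \dim_K D$) shows the resulting map is an isomorphism, so $D \cong \left( \begin{array}{cc}-1,-1 \\ K \end{array} \right)$. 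It then remains only to prove that $K$ is totally real.

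The hard part will be this last step, since by Remark \ref{Qpi Real} the quadratic field $K$ is a priori either totally real or imaginary quadratic, and both options survive the numerical restrictions of Table 1. To rule out the imaginary quadratic case I would argue by contradiction through the local invariants. If $K$ were imaginary quadratic it would have a single complex place, at which $\left( \begin{array}{cc}-1,-1 \\ K \end{array} \right)$ splits, and it splits at every finite place away from $2$ because $D_{2,\infty}$ does; since $D$ is a division algebra it must ramify, necessarily at a place above $2$, and the base-change behaviour of invariants forces $2$ to split, say $(2) = \mathfrak{p}\overline{\mathfrak{p}}$ with $D$ ramified at each (local invariant $\tfrac12$). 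By Proposition \ref{local inv} the only ramification of $D$ is above $p$, so $p = 2$, and the invariant formula then yields $\textrm{ord}_{\mathfrak{p}}(\pi_X) = \textrm{ord}_{\overline{\mathfrak{p}}}(\pi_X) = \tfrac{a}{2}$. Consequently $(\pi_X) = (2)^{a/2}$, so $\pi_X / 2^{a/2}$ is a unit of $\mathcal{O}_K$ all of whose absolute values equal $1$, hence a root of unity; as $2$ splits in $K$ we have $K \neq \Q(\sqrt{-1}), \Q(\sqrt{-3})$ and thus $\mathcal{O}_K^{\times} = \{\pm 1\}$, forcing $\pi_X = \pm 2^{a/2} \in \Q$ and contradicting $[\Q(\pi_X):\Q] = 2$. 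Therefore $K$ is totally real, and the description of $D$ from the previous step completes the proof. The one point demanding care is exactly this conversion of the invariant data at the primes above $2$ into the divisibility $\textrm{ord}_{\mathfrak{p}}(\pi_X) = a/2$, and the subsequent use of the scarcity of units in an imaginary quadratic field to make $\pi_X$ rational.
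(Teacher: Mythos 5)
Your proposal is correct, but it takes a genuinely different route from the paper at both of the nontrivial steps. For the identification $\textrm{End}_k^0(X)\cong \left( \begin{array}{cc}-1,-1 \\ K \end{array} \right)$ with $K=\Q(\pi_X)$, the paper simply invokes Amitsur (\cite[Theorem 9]{1}: a finite group with quaternion $2$-Sylow inside a division ring forces this shape), whereas you re-derive it by hand: the central involution of $Q_8$ must map to the ring element $-1$, the images of $i,j$ generate a copy of $D_{2,\infty}$ by simplicity, and the multiplication map $D_{2,\infty}\otimes_{\Q}K \rightarrow D$ is injective (central simplicity of the source) hence bijective by comparing $K$-dimensions. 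For the key step of ruling out an imaginary quadratic center, the paper quotes Maisner--Nart (\cite[Corollary 2.8]{7}), which pins $K$ down to $\Q(\sqrt{-1})$ or $\Q(\sqrt{-3})$, and then observes that both of these fields split $D_{2,\infty}$, so $D$ would be $M_2(K)$ rather than a division algebra. You instead run a self-contained local computation: base change of invariants shows $D_{2,\infty}\otimes_{\Q}K$ can be a division algebra only if $2$ splits in $K$; Proposition \ref{local inv} then forces $p=2$ and $\textrm{ord}_{\mathfrak{p}}(\pi_X)=\textrm{ord}_{\overline{\mathfrak{p}}}(\pi_X)=a/2$ (the passage from the congruence in $\Q/\Z$ to an exact equality is justified because $\pi_X$ and $\overline{\pi_X}=q/\pi_X$ are algebraic integers, so $0\leq \textrm{ord}_{\mathfrak{p}}(\pi_X)\leq a$ --- this is the point you flagged, and it does hold); then $(\pi_X)=(2)^{a/2}$, Kronecker's theorem makes $\pi_X/2^{a/2}$ a root of unity, and since $2$ splits, $K$ is neither $\Q(\sqrt{-1})$ nor $\Q(\sqrt{-3})$, so the only roots of unity are $\pm 1$, giving $\pi_X=\pm 2^{a/2}\in\Q$ and contradicting $[\Q(\pi_X):\Q]=2$. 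Your version is longer but stays entirely within the toolkit the paper has already quoted (the Tate--Waterhouse invariant formula plus Kronecker's theorem), and in effect re-proves exactly the fragment of Maisner--Nart that the paper imports; the paper's version is shorter but rests on two external citations.
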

\begin{proof}
  Let $k=\F_q$. If $\textrm{dim}_{\Q} \textrm{End}_k^0(X) =4,$ then by Corollary \ref{cor TateEnd0}, $\textrm{End}_k^0(X)$ is a field. Since $G$ is not cyclic, it follows that $\textrm{dim}_{\Q}\textrm{End}_k^0(X)=8$, and hence, $\textrm{End}_k^0(X)$ is a quaternion division algebra over a quadratic number field $K=\Q(\pi_X).$ Since the 2-Sylow subgroup of $G$ is a quaternion group, it follows from \cite[Theorem 9]{1} that $\textrm{End}_k^0(X)=\left( \begin{array}{cc}-1,-1 \\ \Q \end{array} \right) \otimes_{\Q} K.$ \\

  Suppose now that $K$ is an imaginary quadratic number field. Then by \cite[Corollary 2.8]{7}, we know that $a$ is even, and either $\pi_X + \overline{\pi_X} = \pm \sqrt{q}, p \equiv 1~(\textrm{mod}~3)$ or $\pi_X + \overline{\pi_X} =0, p \equiv 1~(\textrm{mod}~4).$ If we are in the first case, then $\Q(\pi_X)=\Q(\sqrt{-3}),$ and if we are in the second case, then $\Q(\pi_X)=\Q(\sqrt{-1}).$ Since $\Q(\sqrt{-1})$ and $\Q(\sqrt{-3})$ split the quaternion algebra $\left( \begin{array}{cc}-1,-1 \\ \Q \end{array} \right) $, we have
  \begin{equation*}
    \textrm{End}_k^0(X)=\left( \begin{array}{cc}-1,-1 \\ \Q \end{array} \right)  \otimes_{\Q} K =M_2(K)
  \end{equation*}
  in both cases, and then since $M_2(K)$ is not a division algebra, this is a contradiction. Hence, we can conclude that $K$ is a totally real quadratic number field. \\

  This completes the proof.
\end{proof}

\begin{lemma}\label{lem 22}
  Let $G=\textrm{Dic}_{16}.$ If there exists a finite field $k$ and a simple abelian surface $X$ over $k$ such that $G$ is a subgroup of the multiplicative group of $\textrm{End}_k^0(X),$ then $\textrm{End}_k^0(X)=\left( \begin{array}{cc}-1,-1 \\ \Q(\sqrt{2}) \end{array} \right).$
\end{lemma}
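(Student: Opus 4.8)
The plan is to mirror the structural part of the proof of Lemma \ref{lem 21} and then pin down the center by exploiting the order-$8$ element of $\textrm{Dic}_{16}$. First, since $\textrm{Dic}_{16}$ is non-cyclic, Corollary \ref{cor TateEnd0} forces $\textrm{dim}_{\Q}\textrm{End}_k^0(X)=8$: if the dimension were $4$, then $\textrm{End}_k^0(X)=\Q(\pi_X)$ would be a field, whose unit group is cyclic, contradicting the non-cyclicity of $G$. Hence $D:=\textrm{End}_k^0(X)$ is a quaternion division algebra over the quadratic number field $K=\Q(\pi_X)$. Next, note that $\textrm{Dic}_{16}=\langle a,b\mid a^8=1,\ b^2=a^4,\ bab^{-1}=a^{-1}\rangle$ contains $Q_8\cong\langle a^2,b\rangle$ as a subgroup. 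A quaternion pair $i,j\in Q_8\subseteq D^{\times}$ satisfies $i^2=j^2=-1$ and $ij=-ji$, so it generates a copy of $\left( \begin{array}{cc}-1,-1 \\ \Q \end{array} \right)$ inside $D$; since $\left( \begin{array}{cc}-1,-1 \\ \Q \end{array} \right)\otimes_{\Q}K$ is a central simple $K$-algebra of $K$-dimension $4=[D:K]$, comparing dimensions gives $D\cong\left( \begin{array}{cc}-1,-1 \\ \Q \end{array} \right)\otimes_{\Q}K$ (this is the use of \cite[Theorem 9]{1} in Lemma \ref{lem 21}). It remains to identify $K$.

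The key new step is to use the element $a\in\textrm{Dic}_{16}$ of order $8$. Because $a$ lies in the division ring $D$, the commutative ring $\Q[a]$ is a field in which $a$ is a primitive $8$th root of unity, so $\Q[a]\cong\Q(\zeta_8)$ has degree $4$ over $\Q$. Since $K$ is central, $K[a]$ is again a commutative subfield of $D$ containing both $K$ and $\Q(\zeta_8)$; as a subfield of the quaternion algebra $D$ over $K$ it satisfies $[K[a]:K]\le 2$, and hence $[K[a]:\Q]\le 4$. Comparing this with $[\Q(\zeta_8):\Q]=4$ forces $K[a]=\Q(\zeta_8)$, and therefore $K\subseteq\Q(\zeta_8)$. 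Thus $K$ is one of the three quadratic subfields of $\Q(\zeta_8)$, namely $\Q(i)$, $\Q(\sqrt{-2})$, or $\Q(\sqrt{2})$.

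Finally I would rule out the two imaginary options, exactly as the contradiction was reached in Lemma \ref{lem 21}. By Remark \ref{Qpi Real}, $K=\Q(\pi_X)$ is either totally real or a CM field, and the CM possibilities among the three are precisely $\Q(i)$ and $\Q(\sqrt{-2})$. Since $\left( \begin{array}{cc}-1,-1 \\ \Q \end{array} \right)$ is ramified exactly at $2$ and $\infty$, a short local computation shows that each of these splits it: both are imaginary, so the archimedean invariant is killed by the degree-$2$ extension of $\R$, and both are ramified at $2$, so the invariant at $2$ is multiplied by the local degree $2$ and vanishes. In either case $D\cong M_2(K)$, contradicting that $D$ is a division algebra. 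Hence $K$ is totally real, forcing $K=\Q(\sqrt{2})$, so $D\cong\left( \begin{array}{cc}-1,-1 \\ \Q(\sqrt{2}) \end{array} \right)$; this is consistent, since $\Q(\sqrt{2})$ ramifies $\left( \begin{array}{cc}-1,-1 \\ \Q \end{array} \right)$ at its two real places and splits it at $2$, yielding a definite quaternion division algebra. The main obstacle is the middle paragraph: recognizing that the order-$8$ element confines the center to $\Q(\zeta_8)$; the rest is the dimension-counting and ramification bookkeeping already carried out for $Q_8$.
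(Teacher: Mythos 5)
Your proof is correct, and its skeleton matches the paper's: identify $D:=\textrm{End}_k^0(X)$ as $\left( \begin{array}{cc}-1,-1 \\ K \end{array} \right)$ with $K=\Q(\pi_X)$ quadratic, then use the order-$8$ element to force $K$ inside $\Q(\zeta_8)$ and conclude $K=\Q(\sqrt{2})$. The differences lie in how the two supporting steps are justified, and both of your versions are more self-contained. For the first step, the paper cites Amitsur's Theorem 9 (through the proof of Lemma \ref{lem 21}, using that the $2$-Sylow subgroup of $\textrm{Dic}_{16}$ is generalized quaternion), whereas you produce the isomorphism directly from the copy of $Q_8=\langle a^2,b\rangle$ inside $\textrm{Dic}_{16}$: the induced $K$-algebra map $\left( \begin{array}{cc}-1,-1 \\ \Q \end{array} \right)\otimes_{\Q}K\to D$ is injective by simplicity of the source and surjective by dimension count. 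For the second step, the paper first imports the fact that $K$ is totally real from Lemma \ref{lem 21}, whose proof rests on the Maisner--Nart result \cite[Corollary 2.8]{7}, and only then uses $[\Q(\zeta_8):K]=2$ to get $K=\Q(\sqrt{2})$; you reverse the order and avoid \cite{7} altogether: since $K\subseteq\Q(\zeta_8)$ leaves only $\Q(\sqrt{-1})$, $\Q(\sqrt{-2})$, and $\Q(\sqrt{2})$ as candidates, the two imaginary ones are eliminated by your local-invariant computation (each is imaginary and ramified at $2$, so $D_{2,\infty}$ splits over it, making $D\cong M_2(K)$, contradicting that $D$ is a division algebra). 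This trade --- an elementary Brauer-group calculation in place of an external citation --- is the genuine divergence; you also make explicit the field-theoretic point ($K[a]$ is a commutative subfield of $D$ of degree at most $4$ over $\Q$ containing $\Q[a]\cong\Q(\zeta_8)$, hence equal to it) that the paper leaves implicit when it asserts that $\Q(\zeta_8)$ is a quadratic extension of $K$.
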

\begin{proof}
  Let $k=\F_q$. Since the 2-Sylow subgroup of $G$ is a generalized quaternion group of order $16$, as in the proof of Lemma \ref{lem 21}, we can see that $\textrm{End}_k^0(X)=\left( \begin{array}{cc}-1,-1 \\ \Q \end{array} \right) \otimes_{\Q} K$ where $K=\Q(\pi_X)$ is a totally real quadratic number field. Then, since $\textrm{Dic}_{16}$ has an element of order $8$, we can see that $\Q(\zeta_8)$ is a maximal subfield of $\textrm{End}_k^0(X)=\left( \begin{array}{cc}-1,-1 \\ \Q \end{array} \right) \otimes_{\Q} K = \left( \begin{array}{cc}-1,-1 \\ K \end{array} \right).$ In particular, $\Q(\zeta_8)$ is a quadratic extension field of $K,$ and hence, it follows that $K=\Q(\sqrt{2}).$ \\ 

  This completes the proof.
\end{proof}

\begin{lemma}\label{lem 23}
Let $G=\textrm{Dic}_{20}.$ If there exists a finite field $k$ and a simple abelian surface $X$ over $k$ such that $G$ is a subgroup of the multiplicative group of $\textrm{End}_k^0(X),$ then $\textrm{End}_k^0(X)=\left( \begin{array}{cc}-1,-1 \\ \Q(\sqrt{5}) \end{array} \right).$
\end{lemma}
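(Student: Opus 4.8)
The plan is to follow the same scheme as in the proofs of Lemmas \ref{lem 21} and \ref{lem 22}, but with one essential difference: because the $2$-Sylow subgroup of $\textrm{Dic}_{20}$ is the cyclic group $\Z/4\Z$ rather than a (generalized) quaternion group, the argument via \cite[Theorem 9]{1} that forced the standard symbol $\left( \begin{array}{cc}-1,-1 \\ \Q \end{array} \right)$ as a tensor factor is no longer available, so the quaternion algebra must be pinned down by other means. First I would fix $k=\F_q$ and argue, exactly as at the start of Lemma \ref{lem 21}, that $\dim_{\Q}\textrm{End}_k^0(X)=8$: if it were $4$, then $\textrm{End}_k^0(X)$ would be a field by Corollary \ref{cor TateEnd0}, whose finite multiplicative subgroups are all cyclic, contradicting the fact that $\textrm{Dic}_{20}$ is non-abelian. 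Hence $D:=\textrm{End}_k^0(X)$ is a quaternion division algebra over the quadratic field $K=\Q(\pi_X)$.

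Next I would locate $K$ inside a cyclotomic field. The generator $a$ of order $10$ in $\textrm{Dic}_{20}$ maps to a primitive $10$th root of unity in $D^{\times}$, so $\Q(a)\cong \Q(\zeta_{10})=\Q(\zeta_5)$ is a subfield of $D$ of degree $4$ over $\Q$. Since $K$ is the center of $D$, the compositum $K(a)$ is a commutative subfield of $D$, hence a maximal subfield with $[K(a):K]\leq 2$, so $[K(a):\Q]\leq 4$; combined with $K(a)\supseteq \Q(a)$, this forces $K(a)=\Q(a)$ and therefore $K\subseteq \Q(\zeta_5)$. As $\textrm{Gal}(\Q(\zeta_5)/\Q)\cong \Z/4\Z$ is cyclic, $\Q(\zeta_5)$ has a unique quadratic subfield, namely $\Q(\sqrt{5})$, and thus $K=\Q(\sqrt{5})$.

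The hard part is to upgrade the identity $K=\Q(\sqrt{5})$ to the precise statement that $D=\left( \begin{array}{cc}-1,-1 \\ \Q(\sqrt{5}) \end{array} \right)$. Here I would invoke the abelian-variety constraint rather than pure group theory: since $K=\Q(\sqrt{5})$ is totally real and $X$ is a simple abelian surface, the exceptional Type III analysis of Remark \ref{Qpi Real} applies (forcing in particular $p=5$), and it shows that $D$ is the unique quaternion algebra over $\Q(\sqrt{5})$ that is ramified exactly at the two infinite places and split at every finite place. It then remains to check that $\left( \begin{array}{cc}-1,-1 \\ \Q(\sqrt{5}) \end{array} \right)=\left( \begin{array}{cc}-1,-1 \\ \Q \end{array} \right)\otimes_{\Q}\Q(\sqrt{5})$ has precisely this ramification.

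For that last verification I would note that at each of the two real places $-1$ is negative, so the local algebra is the Hamilton quaternions and the algebra is ramified there; and at every finite prime it splits, since $\left( \begin{array}{cc}-1,-1 \\ \Q \end{array} \right)=D_{2,\infty}$ is ramified only at $2$, whereas $2$ is inert in $\Q(\sqrt{5})$ (because $5\equiv 5 \pmod{8}$), so the even local degree above $2$ sends the invariant $\frac{1}{2}$ to $0$ in $\Q/\Z$. Since a quaternion algebra over a number field is determined by its set of ramified places (Proposition \ref{index end alg}), this matching of ramification yields $D=\left( \begin{array}{cc}-1,-1 \\ \Q(\sqrt{5}) \end{array} \right)$, completing the proof. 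The only real subtlety, and the place I expect to spend care, is the passage from $K$ to $D$: it is tempting to argue purely group-theoretically by exhibiting $D$ as the cyclic algebra $(\Q(\zeta_5)/\Q(\sqrt{5}),\sigma,-1)$ built from the relations $b^2=a^5=-1$ and $bab^{-1}=a^{-1}$, but identifying that cyclic algebra with the symbol $\left( \begin{array}{cc}-1,-1 \\ \Q(\sqrt{5}) \end{array} \right)$ is more laborious than the ramification computation, so I would rely on Remark \ref{Qpi Real} as above.
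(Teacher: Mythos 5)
Your proposal is correct and takes essentially the same route as the paper's proof: the dimension count from Lemma \ref{lem 21} forces $\textrm{End}_k^0(X)$ to be a quaternion division algebra over the quadratic center $K=\Q(\pi_X)$, the element of order $10$ forces $K=\Q(\sqrt{5})$, and the totally-real (Type III) analysis---which you take from Remark \ref{Qpi Real} and the paper cites directly from Waterhouse---shows $\textrm{End}_k^0(X)$ is the unique quaternion algebra over $\Q(\sqrt{5})$ ramified exactly at the two infinite places, hence equals $\left( \begin{array}{cc}-1,-1 \\ \Q(\sqrt{5}) \end{array} \right)$, a ramification matching you verify explicitly where the paper merely asserts it. The one cosmetic slip: the fact that a quaternion algebra over a number field is determined by its set of ramified places is the Albert--Brauer--Hasse--Noether theorem, not the content of Proposition \ref{index end alg}.
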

\begin{proof}
  Let $k=\F_q$. As in the proof of Lemma \ref{lem 21}, we can see that $\textrm{dim}_{\Q} \textrm{End}_k^0(X)=8$ and $\textrm{End}_k^0(X)$ is a quaternion division algebra over a quadratic number field $K=\Q(\pi_X).$ Since $\textrm{Dic}_{20}$ has an element of order $10$, we can see that $\Q(\zeta_{10})=\Q(\zeta_5)$ is a maximal subfield of $\textrm{End}_k^0(X).$ In particular, $\Q(\zeta_5)$ is a quadratic extension of $K$, and hence, it follows that $K=\Q(\sqrt{5}).$ Thus, we can see that $\textrm{End}_k^0(X)$ is a quaternion algebra over $\Q(\pi_X)=\Q(\sqrt{5}).$ Then since $\textrm{End}_k^0(X)$ is the unique quaternion algebra over $\Q(\sqrt{5})$ that is ramified at the two infinite places of $\Q(\sqrt{5})$ and split at all finite primes (see \cite[page 528]{15}), it follows that $\textrm{End}_k^0(X)=\left( \begin{array}{cc}-1,-1 \\ \Q(\sqrt{5}) \end{array} \right).$ \\

  This completes the proof.
\end{proof}

Now, we are ready to introduce one of our main results of this section: let $G$ be a finite group.

\begin{theorem}\label{thm old 24}
  There exists a finite field $k$ and a simple abelian surface $X$ over $k$ such that $G$ is the automorphism group of a polarized abelian surface over $k,$ which is maximal in the isogeny class of $X$ if and only if $G$ is one of the following groups (up to isomorphism):
  \begin{center}
  \begin{tabular}{|c|c|}
\hline
$$ & $G$  \\
\hline
$\sharp 1$ & $\Z/2\Z$  \\
\hline
$\sharp 2$ & $\Z/4\Z$  \\
\hline
$\sharp 3$ & $\Z/6\Z$  \\
\hline
$\sharp 4$ & $\Z/8\Z$  \\
\hline
$\sharp 5$ & $\Z/10\Z$  \\
\hline
$\sharp 6$ & $\Z/12\Z$  \\
\hline
$\sharp 7$ & $\textrm{Dic}_{12}$  \\
\hline
$\sharp 8$ & $\mathfrak{T}^*$  \\
\hline
$\sharp 9$ & $\textrm{Dic}_{24}$  \\
\hline
$\sharp 10$ & $\mathfrak{O}^*$  \\
\hline
$\sharp 11$ & $\mathfrak{I}^*$  \\
\hline
\end{tabular}
\vskip 4pt
\textnormal{Table 10}
\end{center}
\end{theorem}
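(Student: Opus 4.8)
The plan is to prove both implications by reducing everything to the study of finite subgroups of $D:=\textrm{End}_k^0(X)$, which for a simple surface is a division algebra with $\textrm{dim}_{\Q} D \in \{4,8\}$. The key observation is twofold. On one hand, for any $X'$ that is $k$-isogenous to $X$ and any polarization $\mathcal{L}$, the group $\textrm{Aut}_k(X',\mathcal{L})$ is a finite subgroup of $\textrm{End}_k(X')^{\times}\subseteq D^{\times}$. On the other hand, given a finite subgroup $G\leq D^{\times}$, I would realize it as follows: by Proposition \ref{endposs} every maximal order of $D$ occurs as $\textrm{End}_k(X')$ for some $X'$ in the isogeny class, so I pick a maximal order $\mathcal{O}$ containing $G$ and the corresponding $X'$; averaging an arbitrary polarization over the finite group $G$ produces a $G$-invariant polarization $\mathcal{L}$, whence $G\subseteq\textrm{Aut}_k(X',\mathcal{L})$. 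Since the latter is again a finite subgroup of $D^{\times}$, it equals $G$ as soon as $G$ is a \emph{maximal} finite subgroup of $D^{\times}$. Thus the theorem reduces to classifying the finite subgroups of the admissible $D$ and deciding which are maximal.

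For the first task, Corollary \ref{cor 19} already gives the complete list: the cyclic groups $\Z/2\Z,\dots,\Z/12\Z$, the groups $Q_8,\textrm{Dic}_{12},\textrm{Dic}_{16},\textrm{Dic}_{20},\textrm{Dic}_{24}$, and $\mathfrak{T}^*,\mathfrak{O}^*,\mathfrak{I}^*$. Comparing with Table 10, the content of the theorem is that exactly $Q_8,\textrm{Dic}_{16},\textrm{Dic}_{20}$ fail to be maximal. To eliminate these I would invoke Lemmas \ref{lem 21}, \ref{lem 22}, \ref{lem 23}, which pin down the ambient algebra: a simple surface carrying $Q_8$ has $D=\left( \begin{array}{cc}-1,-1 \\ K \end{array} \right)$ with $K$ totally real quadratic, one carrying $\textrm{Dic}_{16}$ has $D=\left( \begin{array}{cc}-1,-1 \\ \Q(\sqrt{2}) \end{array} \right)$, and one carrying $\textrm{Dic}_{20}$ has $D=\left( \begin{array}{cc}-1,-1 \\ \Q(\sqrt{5}) \end{array} \right)$. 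In each of these algebras there is a strictly larger finite subgroup that is itself realizable by the construction above: $Q_8\subsetneq\mathfrak{T}^*$ (the unit group of the Hurwitz order inside $\left( \begin{array}{cc}-1,-1 \\ \Q \end{array} \right)\subseteq D$), $\textrm{Dic}_{16}\subsetneq\mathfrak{O}^*$ (as its $2$-Sylow subgroup), and $\textrm{Dic}_{20}\subsetneq\mathfrak{I}^*\cong SL_2(\F_5)$ (as the preimage of a dihedral subgroup of $A_5$). Hence $Q_8,\textrm{Dic}_{16},\textrm{Dic}_{20}$ are never maximal, and the $11$ remaining groups are the only candidates.

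It then remains to realize each of the eleven groups as a maximal finite subgroup of some admissible $D$. For the cyclic groups I take $D=\Q(\pi_X)$ to be a quartic CM field with prescribed roots of unity: $\Q(\zeta_8),\Q(\zeta_{12}),\Q(\zeta_5)$ realize $\Z/8\Z,\Z/12\Z,\Z/10\Z$, and suitable quartic CM fields realize $\Z/2\Z,\Z/4\Z,\Z/6\Z$. Here the Rosati involution is complex conjugation, so $\textrm{Aut}_k(X',\mathcal{L})=\{u\in\mathcal{O}_K^{\times}:u\bar u=1\}=\mu(K)$ by Kronecker's theorem. For the five non-abelian groups I use the totally definite quaternion algebra over $K=\Q(\sqrt{p})$ of Remark \ref{Qpi Real}(2), which is split at every finite place; choosing the residue characteristic appropriately — $p=2$ for $\mathfrak{O}^*$ and $p=5$ for $\mathfrak{I}^*$ (Theorem \ref{aimf of GL1}), $p=3$ for $\textrm{Dic}_{24}$, a prime $p\equiv 3\pmod 8$ for $\mathfrak{T}^*$, and a prime $p\equiv 1\pmod 8$ for $\textrm{Dic}_{12}$ — produces an algebra in which the desired group embeds and is a maximal finite subgroup by Corollary \ref{cor 19}. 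In each case Theorem \ref{thm HondaTata} supplies a simple abelian surface over a finite field with this endomorphism algebra, and the averaging construction of the first paragraph yields a polarization realizing $G$ as the full automorphism group.

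The crux — and the step I expect to be hardest — is this last paragraph: for each target $G$ one must exhibit an admissible division algebra in which $G$ is genuinely maximal among finite subgroups, and confirm via Honda--Tate (Theorem \ref{thm HondaTata}) together with the local invariants of Proposition \ref{local inv} that this algebra actually arises from a simple abelian surface. The delicate points are the order-$24$ cases $\mathfrak{T}^*$ and $\textrm{Dic}_{24}$ versus the smaller $\textrm{Dic}_{12}$: whether a competing group embeds is governed by whether $\left( \begin{array}{cc}-1,-1 \\ \Q \end{array} \right)$ sits inside the algebra (equivalently, by the splitting behaviour of $2$ in $\Q(\sqrt{p})$, which controls whether $Q_8$ embeds) and by whether $\zeta_{12}$ embeds, so the residue characteristic must be chosen to exclude exactly the right competitors while keeping the algebra split at all finite primes as Remark \ref{Qpi Real}(2) demands. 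Once maximality of $G$ in $D^{\times}$ is secured, the worry that the $G$-invariant polarization might enlarge the automorphism group disappears automatically, since $\textrm{Aut}_k(X',\mathcal{L})$ is then a finite subgroup of $D^{\times}$ containing the maximal subgroup $G$; this is precisely why establishing maximality in $D^{\times}$ is the true heart of the argument.
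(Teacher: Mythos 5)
Your overall strategy coincides with the paper's: necessity via Corollary \ref{cor 19} together with Lemmas \ref{lem 21}, \ref{lem 22}, \ref{lem 23} (with $\mathfrak{T}^*$, $\mathfrak{O}^*$, $\mathfrak{I}^*$ superseding $Q_8$, $\textrm{Dic}_{16}$, $\textrm{Dic}_{20}$ inside the algebras those lemmas force), and sufficiency by combining Honda--Tate, Waterhouse's realization of maximal orders, polarization averaging, and maximality of $G$ among finite subgroups of $D^{\times}$. Your treatment of the three excluded groups and your realizations of the cyclic groups and of $\mathfrak{T}^*$, $\textrm{Dic}_{24}$, $\mathfrak{O}^*$, $\mathfrak{I}^*$ (choices $p\equiv 3\pmod 8$, $p=3$, $p=2$, $p=5$) match the paper's proof in substance.

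However, your realization of $\textrm{Dic}_{12}$ has a genuine gap. You prescribe ``a prime $p\equiv 1\pmod 8$,'' a condition chosen to make $2$ split in $\Q(\sqrt{p})$ and thereby keep $Q_8$ (hence $\textrm{Dic}_{24}$, $\mathfrak{T}^*$, $\mathfrak{O}^*$, $\mathfrak{I}^*$, which all contain $Q_8$) out of $D$. That handles maximality, but it does not ensure that $\textrm{Dic}_{12}$ itself embeds into $D$. Any embedding $\textrm{Dic}_{12}\hookrightarrow D^{\times}$ makes the enveloping $\Q$-algebra of the image, which is $D_{3,\infty}=\left( \begin{array}{cc}-1,-3 \\ \Q \end{array} \right)$, a $\Q$-subalgebra of $D$, and hence forces $D\cong D_{3,\infty}\otimes_{\Q}\Q(\sqrt{p})$. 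Since Remark \ref{Qpi Real}(2) says $D$ is split at every finite place, this requires $D_{3,\infty}\otimes_{\Q}\Q(\sqrt{p})$ to be split at the places above $3$, i.e.\ that $3$ does \emph{not} split in $\Q(\sqrt{p})$, i.e.\ $p\not\equiv 1\pmod 3$. This is independent of your congruence: for $p=73$ one has $p\equiv 1\pmod 8$ but $3$ splits in $\Q(\sqrt{73})$, so $\left( \begin{array}{cc}-1,-3 \\ \Q(\sqrt{73}) \end{array} \right)$ is ramified at the two primes above $3$, $\textrm{Dic}_{12}$ does not embed in $D$ at all, and the construction collapses. The fix is to control the prime $3$ rather than $2$: take $p\equiv 2\pmod 3$ with $p\notin\{2,5\}$ (the paper takes $p=11$); then $\textrm{Dic}_{12}\subseteq \left( \begin{array}{cc}-1,-3 \\ \Q(\sqrt{p}) \end{array} \right)\cong D$, and the only groups on the list of Corollary \ref{cor 19} properly containing a copy of $\textrm{Dic}_{12}$, namely $\textrm{Dic}_{24}$, $\mathfrak{O}^*$, $\mathfrak{I}^*$, are excluded because they would force $\sqrt{3},\sqrt{2},\sqrt{5}\in\Q(\sqrt{p})$ respectively. (Excluding $\mathfrak{T}^*$ is unnecessary: $SL_2(\F_3)$ has no subgroup of order $12$, so it never threatens the maximality of $\textrm{Dic}_{12}$.)
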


\begin{proof}
  Suppose first that there exists a finite field $k$ and a simple abelian surface $X$ over $k$ such that $G$ is the automorphism group of a polarized abelian surface over $k,$ which is maximal in the isogeny class of $X$. Then by Albert's classification, Corollary \ref{cor 19}, Lemma \ref{lem 21}, Lemma \ref{lem 22}, and Lemma \ref{lem 23}, $G$ is one of the 11 groups in the above table. Hence, it suffices to show the converse. We prove the converse by considering them one by one. \\

  (1) Take $G=\Z/2\Z.$ Let $k=\F_7.$ Then $\pi:=\sqrt{5}+\sqrt{-2}$ is a $7$-Weil number, and hence, by Theorem \ref{thm HondaTata}, there exists a simple abelian variety $X$ over $\F_7$ of dimension $r$ such that $\pi_X$ is conjugate to $\pi.$ Then we have $[\Q(\pi_X):\Q]=[\Q(\pi):\Q]=4$ and $\Q(\pi_X)=\Q(\sqrt{5}+\sqrt{-2})=\Q(\sqrt{5},\sqrt{-2}),$ and hence, $\Q(\pi_X)$ has no real embeddings so that all the local invariants of $\textrm{End}_k^0(X)$ are zero. Hence, it follows from Proposition \ref{index end alg} that $\textrm{End}_k^0(X)=\Q(\pi_X)$ and it is a CM-field of degree $2r$ over $\Q.$ Thus we have $r=2$, and $X$ is a simple abelian surface over $k$ with $\textrm{End}_k^0(X)=\Q(\sqrt{5},\sqrt{-2}).$ Now, let $\mathcal{O}$ be the ring of integers of $\Q(\sqrt{5},\sqrt{-2}).$ (In particular, $\sqrt{-2} \in \mathcal{O}$.) Since $\mathcal{O}$ is a maximal $\Z$-order in $\Q(\sqrt{5},\sqrt{-2}),$ there exists a simple abelian surface $X^{\prime}$ over $k$ such that $X^{\prime}$ is $k$-isogenous to $X$ and $\textrm{End}_k(X^{\prime})=\mathcal{O}$ by Proposition \ref{endposs}. Note also that $\Z/2\Z \cong \langle -1 \rangle \leq \mathcal{O}^{\times}=\textrm{Aut}_k(X^{\prime}).$  \\

  Now, let $\mathcal{L}$ be an ample line bundle on $X^{\prime}$, and put
  \begin{equation*}
    \mathcal{L}^{\prime}:=\bigotimes_{g \in \langle -1 \rangle} g^* \mathcal{L}.
  \end{equation*}
  Then $\mathcal{L}^{\prime}$ is also an ample line bundle on $X^{\prime}$ that is preserved under the action of $\langle -1 \rangle$ so that $\langle -1 \rangle \leq \textrm{Aut}_k(X^{\prime},\mathcal{L}^{\prime}).$ Now, we show that $\langle -1 \rangle$ is a maximal finite subgroup of the multiplicative subgroup of $\Q(\sqrt{5},\sqrt{-2}).$ Indeed, suppose that there is a finite subgroup $H$ of $\Q(\sqrt{5},\sqrt{-2})^{\times}$, which properly contains $\langle -1 \rangle.$ Then $H$ is cyclic so that we may write $H=\langle h \rangle$ for some $h \in H.$ If $|h|=d,$ then we know that $d>2, 2~|~d,$ and $\varphi(d) \leq 4.$ Hence, we get $d \in \{4,6,8,10,12\}.$ If $d=4,$ then $\Q(\zeta_4) \subseteq \Q(\sqrt{5},\sqrt{-2}),$ which is impossible because $\sqrt{-1} \not \in \Q(\sqrt{5},\sqrt{-2}).$ Similarly, if $d=6,$ then we have $\Q(\zeta_6) \subseteq \Q(\sqrt{5},\sqrt{-2})$, which is absurd. Now, if $d=8,$ then we have $\Q(\zeta_8) \subseteq \Q(\sqrt{5},\sqrt{-2}).$ Since both fields have degree 4 over $\Q,$ it follows that $\Q(\zeta_8)=\Q(\sqrt{5}, \sqrt{-2}),$ and this is a contradiction. Thus $d \ne 8.$ Similarly, we can see that $d \ne 10, 12.$ Therefore we can conclude that $\langle -1 \rangle$ is a maximal finite subgroup of $\Q(\sqrt{5},\sqrt{-2})^{\times}$. Then since $\textrm{Aut}_k(X^{\prime},\mathcal{L}^{\prime})$ is a finite subgroup of $\Q(\sqrt{5},\sqrt{-2})^{\times},$ it follows that
  \begin{equation*}
    G \cong \langle -1 \rangle =\textrm{Aut}_k(X^{\prime},\mathcal{L}^{\prime}).
  \end{equation*}

  Now, suppose that $Y$ is an abelian surface over $k$ which is $k$-isogenous to $X.$ In particular, we have $\textrm{End}_k^0(Y)=\Q(\sqrt{5},\sqrt{-2}).$ Suppose that there is a finite group $H$ such that $H=\textrm{Aut}_k(Y, \mathcal{M})$ for a polarization $\mathcal{M}$ on $Y,$ and $G$ is isomorphic to a proper subgroup of $H.$ Then $H$ is a finite subgroup of the multiplicative subgroup of $\Q(\sqrt{5},\sqrt{-2}),$ and hence, $H$ is a cyclic group satisfying $G \cong \langle -1 \rangle \lneq H \leq \Q(\sqrt{5},\sqrt{-2})^{\times}.$ As in the above argument, we can see that this is impossible. Therefore we can conclude that $G$ is maximal in the isogeny class of $X.$ \\

  (2) Take $G=\Z/4\Z.$ Let $k=\F_7$ and let $\pi=\sqrt{6}+\sqrt{-1}$ be a $7$-Weil number. Then a similar argument as in the proof of (1) can be used to show that there is a simple abelian surface $X$ over $k$ such that $G$ is the automorphism group of a polarized abelian surface over $k$, which is maximal in the isogeny class of $X.$ \\ 

  (3) Take $G=\Z/6\Z.$ Let $k=\F_5$ and let $\pi=\sqrt{2}+\sqrt{-3}$ be a $5$-Weil number. Then a similar argument as in the proof of (1) can be used to show that there is a simple abelian surface $X$ over $k$ such that $G$ is the automorphism group of a polarized abelian surface over $k$, which is maximal in the isogeny class of $X.$ \\ 

 (4) Take $G=\Z/8\Z.$ Let $k=\F_4.$ Then $\pi:=\sqrt{4} \cdot \zeta_8 =2 \cdot \zeta_8$ is a $4$-Weil number, and hence, by Theorem \ref{thm HondaTata}, there exists a simple abelian variety $X$ over $\F_4$ of dimension $r$ such that $\pi_X$ is conjugate to $\pi.$ Then we have $[\Q(\pi_X):\Q]=[\Q(\pi):\Q]=4$ and $\Q(\pi_X)=\Q(2 \cdot \zeta_8)=\Q(\zeta_8),$ and hence, $\Q(\pi_X)$ has no real embeddings so that all the local invariants of $\textrm{End}_k^0(X)$ are zero. Hence, it follows from Proposition \ref{index end alg} that $\textrm{End}_k^0(X)=\Q(\pi_X)$ and it is a CM-field of degree $2r$ over $\Q.$ Thus we have $r=2$, and $X$ is a simple abelian surface over $k$ with $\textrm{End}_k^0(X)=\Q(\zeta_8).$ Now, since $\Z[\zeta_8]$ is a (unique) maximal $\Z$-order in $\Q(\zeta_8),$ there exists a simple abelian surface $X^{\prime}$ over $k$ such that $X^{\prime}$ is $k$-isogenous to $X$ and $\textrm{End}_k(X^{\prime})=\Z[\zeta_8]$ by Proposition \ref{endposs}. Note also that $\Z/8\Z \cong \langle \zeta_8 \rangle \leq \Z[\zeta_8]^{\times}=\Z \times \langle \zeta_8 \rangle = \textrm{Aut}_k(X^{\prime}).$  \\

  Now, let $\mathcal{L}$ be an ample line bundle on $X^{\prime}$, and put
  \begin{equation*}
    \mathcal{L}^{\prime}:=\bigotimes_{g \in \langle \zeta_8 \rangle} g^* \mathcal{L}.
  \end{equation*}
  Then $\mathcal{L}^{\prime}$ is also an ample line bundle on $X^{\prime}$ that is preserved under the action of $\langle \zeta_8 \rangle$ so that $\langle \zeta_8 \rangle \leq \textrm{Aut}_k(X^{\prime},\mathcal{L}^{\prime}).$
Note also that the maximal finite subgroup of $\Z \times \langle \zeta_8 \rangle$ is $\langle -1 \rangle \times \langle \zeta_8 \rangle \cong \Z/2\Z \times \Z/8\Z$ by Goursat's Lemma. Since $\textrm{Aut}_k (X^{\prime},\mathcal{L}^{\prime})$ is a finite subgroup of the multiplicative group of $\Q(\zeta_8),$ we know that $\textrm{Aut}_k(X^{\prime}, \mathcal{L}^{\prime})$ is cyclic. It follows that
  \begin{equation*}
    G \cong \langle \zeta_8 \rangle =\textrm{Aut}_k(X^{\prime},\mathcal{L}^{\prime}).
  \end{equation*}

  Now, suppose that $Y$ is an abelian surface over $k$ which is $k$-isogenous to $X.$ In particular, we have $\textrm{End}_k^0(Y)=\Q(\zeta_8).$ Suppose that there is a finite group $H$ such that $H=\textrm{Aut}_k(Y, \mathcal{M})$ for a polarization $\mathcal{M}$ on $Y,$ and $G$ is isomorphic to a proper subgroup of $H.$ Then $H$ is a finite subgroup of the multiplicative subgroup of $\Q(\zeta_8),$ and hence, $H$ is a cyclic group satisfying $G \cong \langle \zeta_8 \rangle \lneq H \leq \langle -1 \rangle \times \langle \zeta_8 \rangle.$ It follows from the second condition that $H =\langle -1 \rangle \times \langle \zeta_8 \rangle$, and this contradicts the fact that $H$ is cyclic. Therefore we can conclude that $G$ is maximal in the isogeny class of $X.$ \\

(5) Take $G=\Z/10\Z.$ Let $k=\F_{25}$ and let $\pi=\sqrt{25}\cdot \zeta_{10}=5 \cdot \zeta_{10}$ be a $25$-Weil number. Then a similar argument as in the proof of (4) can be used to show that there is a simple abelian surface $X$ over $k$ such that $G$ is the automorphism group of a polarized abelian surface over $k$, which is maximal in the isogeny class of $X.$ \\ 

(6) Take $G=\Z/12\Z.$ Let $k=\F_{9}$ and let $\pi=\sqrt{9}\cdot \zeta_{12}=3 \cdot \zeta_{12}$ be a $9$-Weil number. Then a similar argument as in the proof of (4) can be used to show that there is a simple abelian surface $X$ over $k$ such that $G$ is the automorphism group of a polarized abelian surface over $k$, which is maximal in the isogeny class of $X.$ \\ 

(7) Take $G=\textrm{Dic}_{12}.$ Let $k=\F_{11}.$ Then $\pi:=\sqrt{11}$ is a $11$-Weil number, and hence, by Theorem \ref{thm HondaTata}, there exists a simple abelian variety $X$ over $\F_{11}$ of dimension $r$ such that $\pi_X$ is conjugate to $\pi.$ Hence, by Remark \ref{Qpi Real}, we have $r=2$, and $\textrm{End}_k^0(X)$ is the unique quaternion algebra over $\Q(\sqrt{11})$ that is ramified at two infinite (real) places and split at all finite primes of $\Q(\sqrt{11})$ (which equals $\mathcal{A}$ below). \\

Let $\mathcal{A}=\left( \begin{array}{cc}-1,-3 \\ \Q(\sqrt{11}) \end{array} \right), K=\Q(\sqrt{11}),$ and $R=\Z[\sqrt{11}]$ (which is the ring of integers of $K$ so that it is a maximal $\Z$-order in $K$). Let $\mathcal{O}$ be an $R$-submodule of $\mathcal{A}$ generated by $\left \{1, i, \frac{1+j}{2}, \frac{i+ij}{2} \right\},$ and let $\mathcal{O}^{\prime}$ be a maximal $\Z$-order containing $\mathcal{O}$ in $\mathcal{A}.$ For the ease of notation, let $\alpha = \frac{1+j}{2}$ and $\beta = i.$ Then the group $G:=\langle \alpha, \beta \rangle$ is isomorphic to $\textrm{Dic}_{12}$ and it is easy to check that every element of $G$ is a unit in $\mathcal{O}^{\prime}.$ \\

Now, since $\mathcal{O}^{\prime}$ is a maximal $\Z$-order in $\mathcal{A},$ there exists a simple abelian surface $X^{\prime}$ over $k$ such that $X^{\prime}$ is $k$-isogenous to $X$ and $\textrm{End}_k(X^{\prime})=\mathcal{O}^{\prime}$ by Proposition \ref{endposs}. Note also that $\textrm{Dic}_{12} \cong G \leq (\mathcal{O}^{\prime})^{\times}=\textrm{Aut}_k(X^{\prime})$ (by the last statement above).  \\

  Now, let $\mathcal{L}$ be an ample line bundle on $X^{\prime}$, and put
  \begin{equation*}
    \mathcal{L}^{\prime}:=\bigotimes_{g \in G} g^* \mathcal{L}.
  \end{equation*}
  Then $\mathcal{L}^{\prime}$ is also an ample line bundle on $X^{\prime}$ that is preserved under the action of $G$ so that $G \leq \textrm{Aut}_k(X^{\prime},\mathcal{L}^{\prime}).$ Since $\textrm{Aut}_k (X^{\prime},\mathcal{L}^{\prime})$ is a finite subgroup of the multiplicative group of $\mathcal{A}=\textrm{End}_k^0(X^{\prime}),$ it follows from Corollary \ref{cor 19} (together with the argument below) that
  \begin{equation*}
    \textrm{Dic}_{12} \cong G = \textrm{Aut}_k(X^{\prime}, \mathcal{L}^{\prime}).
  \end{equation*}

  Now, suppose that $Y$ is an abelian surface over $k$ which is $k$-isogenous to $X.$ In particular, we have $\textrm{End}_k^0(Y)=\left( \begin{array}{cc}-1,-3 \\ \Q(\sqrt{11}) \end{array} \right).$ Suppose that there is a finite group $H$ such that $H=\textrm{Aut}_k(Y, \mathcal{M})$ for a polarization $\mathcal{M}$ on $Y,$ and $G$ is isomorphic to a proper subgroup of $H.$ Then $H$ is a finite subgroup of the multiplicative subgroup of $\textrm{End}_k^0(Y)=\left( \begin{array}{cc}-1,-3 \\ \Q(\sqrt{11}) \end{array} \right),$ and hence, it follows from Corollary \ref{cor 19} that $H=\textrm{Dic}_{24}.$ Since $H$ contains an element of order $12,$ the center of $\left( \begin{array}{cc}-1,-3 \\ \Q(\sqrt{11}) \end{array} \right)$ should contain $\zeta_{12} + \zeta_{12}^{-1} = \sqrt{3}$, and this is a contradiction. Therefore we can conclude that $G$ is maximal in the isogeny class of $X.$ \\

(8) Take $G=\mathfrak{T}^*.$ Let $k=\F_{3}.$ Then $\pi:=\sqrt{3}$ is a $3$-Weil number, and hence, by Theorem \ref{thm HondaTata}, there exists a simple abelian variety $X$ over $\F_{3}$ of dimension $r$ such that $\pi_X$ is conjugate to $\pi.$ Hence, by Remark \ref{Qpi Real}, we have $r=2$, and $\textrm{End}_k^0(X)$ is the unique quaternion algebra over $\Q(\sqrt{3})$ that is ramified at two infinite (real) places and split at all finite primes of $\Q(\sqrt{3})$ (which equals $\mathcal{A}$ below). \\

Let $\mathcal{A}=\left( \begin{array}{cc}-1,-1 \\ \Q(\sqrt{3}) \end{array} \right), K=\Q(\sqrt{3}),$ and $R=\Z[\sqrt{3}]$ (which is the ring of integers of $K$ so that it is a maximal $\Z$-order in $K$). Let $\mathcal{O}$ be an $R$-submodule of $\mathcal{A}$ generated by $\left \{i, j, ij, \frac{1+i+j+ij}{2} \right\},$ and let $\mathcal{O}^{\prime}$ be a maximal $\Z$-order containing $\mathcal{O}$ in $\mathcal{A}.$ Let $G=\{\pm 1, \pm i, \pm j, \pm ij, \frac{1}{2}(\pm 1 \pm i \pm j \pm ij) \} = \mathfrak{T}^*.$ It is easy to check that every element of $G$ is a unit in $\mathcal{O}^{\prime}.$ \\

Now, since $\mathcal{O}^{\prime}$ is a maximal $\Z$-order in $\mathcal{A},$ there exists a simple abelian surface $X^{\prime}$ over $k$ such that $X^{\prime}$ is $k$-isogenous to $X$ and $\textrm{End}_k(X^{\prime})=\mathcal{O}^{\prime}$ by Proposition \ref{endposs}. Note also that $G \leq (\mathcal{O}^{\prime})^{\times}=\textrm{Aut}_k(X^{\prime})$ (by the last statement above).  \\

  Now, let $\mathcal{L}$ be an ample line bundle on $X^{\prime}$, and put
  \begin{equation*}
    \mathcal{L}^{\prime}:=\bigotimes_{g \in G} g^* \mathcal{L}.
  \end{equation*}
  Then $\mathcal{L}^{\prime}$ is also an ample line bundle on $X^{\prime}$ that is preserved under the action of $G$ so that $G \leq \textrm{Aut}_k(X^{\prime},\mathcal{L}^{\prime}).$ Since $\textrm{Aut}_k (X^{\prime},\mathcal{L}^{\prime})$ is a finite subgroup of the multiplicative group of $\mathcal{A}=\textrm{End}_k^0(X^{\prime}),$ it follows from Corollary \ref{cor 19} that
  \begin{equation*}
    \mathfrak{T}^* = G = \textrm{Aut}_k(X^{\prime}, \mathcal{L}^{\prime}).
  \end{equation*}
  
Now, suppose that $Y$ is an abelian surface over $k$ which is $k$-isogenous to $X.$ In particular, we have $\textrm{End}_k^0(Y)=\left( \begin{array}{cc}-1,-1 \\ \Q(\sqrt{3}) \end{array} \right).$ Suppose that there is a finite group $H$ such that $H=\textrm{Aut}_k(Y, \mathcal{M})$ for a polarization $\mathcal{M}$ on $Y,$ and $G$ is isomorphic to a proper subgroup of $H.$ Then $H$ is a finite subgroup of the multiplicative subgroup of $\textrm{End}_k^0(Y)=\left( \begin{array}{cc}-1,-1 \\ \Q(\sqrt{3}) \end{array} \right),$ and hence, it follows from Corollary \ref{cor 19} that $H=G$, which is a contradiction. Therefore we can conclude that $G$ is maximal in the isogeny class of $X.$ \\

(9) Take $G=\textrm{Dic}_{24}.$ Let $k=\F_{3}.$ Then $\pi:=\sqrt{3}$ is a $3$-Weil number, and hence, by Theorem \ref{thm HondaTata}, there exists a simple abelian variety $X$ over $\F_{3}$ of dimension $r$ such that $\pi_X$ is conjugate to $\pi.$ Hence, by Remark \ref{Qpi Real}, we have $r=2$, and $\textrm{End}_k^0(X)$ is the unique quaternion algebra over $\Q(\sqrt{3})$ that is ramified at two infinite (real) places and split at all finite primes of $\Q(\sqrt{3})$ (which equals $\mathcal{A}$ below). \\

Let $\mathcal{A}=\left( \begin{array}{cc}-1,-1 \\ \Q(\sqrt{3}) \end{array} \right), K=\Q(\sqrt{3}),$ and $R=\Z[\sqrt{3}]$. Let $\mathcal{O}$ be an $R$-submodule of $\mathcal{A}$ generated by $\left \{1, i, \frac{\sqrt{3}+j}{2}, \frac{\sqrt{3}+\sqrt{3}i +j+ij}{2} \right\}.$ According to an easy computation of discriminant, $\mathcal{O}$ is a maximal $\Z$-order in $\mathcal{A}$ having the discriminant $R.$ For the ease of notation, let $\alpha = \frac{\sqrt{3}+j}{2}$ and $\beta = i.$ Then the group $G:=\langle \alpha, \beta \rangle$ is isomorphic to $\textrm{Dic}_{24}$ and it is easy to check that every element of $G$ is a unit in $\mathcal{O}.$ \\

Now, since $\mathcal{O}$ is a maximal $\Z$-order in $\mathcal{A},$ there exists a simple abelian surface $X^{\prime}$ over $k$ such that $X^{\prime}$ is $k$-isogenous to $X$ and $\textrm{End}_k(X^{\prime})=\mathcal{O}$ by Proposition \ref{endposs}. Note also that $\textrm{Dic}_{24} \cong G \leq \mathcal{O}^{\times}=\textrm{Aut}_k(X^{\prime})$ (by the last statement above).  \\

  Now, let $\mathcal{L}$ be an ample line bundle on $X^{\prime}$, and put
  \begin{equation*}
    \mathcal{L}^{\prime}:=\bigotimes_{g \in G} g^* \mathcal{L}.
  \end{equation*}
  Then $\mathcal{L}^{\prime}$ is also an ample line bundle on $X^{\prime}$ that is preserved under the action of $G$ so that $G \leq \textrm{Aut}_k(X^{\prime},\mathcal{L}^{\prime}).$ Since $\textrm{Aut}_k (X^{\prime},\mathcal{L}^{\prime})$ is a finite subgroup of the multiplicative group of $\mathcal{A}=\textrm{End}_k^0(X^{\prime}),$ it follows from Corollary \ref{cor 19} that
  \begin{equation*}
    \textrm{Dic}_{24} \cong G = \textrm{Aut}_k(X^{\prime}, \mathcal{L}^{\prime}).
  \end{equation*}

  Now, suppose that $Y$ is an abelian surface over $k$ which is $k$-isogenous to $X.$ In particular, we have $\textrm{End}_k^0(Y)=\left( \begin{array}{cc}-1,-1 \\ \Q(\sqrt{3}) \end{array} \right).$ Suppose that there is a finite group $H$ such that $H=\textrm{Aut}_k(Y, \mathcal{M})$ for a polarization $\mathcal{M}$ on $Y,$ and $G$ is isomorphic to a proper subgroup of $H.$ Then $H$ is a finite subgroup of the multiplicative subgroup of $\textrm{End}_k^0(Y)=\left( \begin{array}{cc}-1,-1 \\ \Q(\sqrt{3}) \end{array} \right),$ and hence, it follows from Corollary \ref{cor 19} that $H=G$, which is a contradiction. Therefore we can conclude that $G$ is maximal in the isogeny class of $X.$ \\

(10) Take $G=\mathfrak{O}^*.$ Let $k=\F_{2}.$ Then $\pi:=\sqrt{2}$ is a $2$-Weil number, and hence, by Theorem \ref{thm HondaTata}, there exists a simple abelian variety $X$ over $\F_{2}$ of dimension $r$ such that $\pi_X$ is conjugate to $\pi.$ Hence, by Remark \ref{Qpi Real}, we have $r=2$, and $\textrm{End}_k^0(X)$ is the unique quaternion algebra over $\Q(\sqrt{2})$ that is ramified at two infinite (real) places and split at all finite primes of $\Q(\sqrt{2})$ (which equals $\mathcal{A}$ below). \\

Let $\mathcal{A}=\left( \begin{array}{cc}-1,-1 \\ \Q(\sqrt{2}) \end{array} \right), K=\Q(\sqrt{2}),$ and $R=\Z[\sqrt{2}]$ (which is the ring of integers of $K$ so that it is a maximal $\Z$-order in $K$). Let $\mathcal{O}$ be an $R$-submodule of $\mathcal{A}$ generated by $\left \{1, \frac{\sqrt{2}}{2}(1+i), \frac{\sqrt{2}}{2}(1+j), \frac{1+i+j+ij}{2} \right\}.$ According to an easy computation of discriminant, $\mathcal{O}$ is a maximal $\Z$-order in $\mathcal{A}$ having the discriminant $R.$ For the ease of notation, let $\alpha = \frac{\sqrt{2}}{2}(1+j), \beta =i, $ and $\gamma = \frac{-1+i+j+ij}{2}.$ Then the group $G:=\langle \alpha, \beta, \gamma \rangle$ is isomorphic to $\mathfrak{O}^*$ and it is easy to check that every element of $G$ is a unit in $\mathcal{O}.$  \\

Then a similar argument as in the proof of (9) can be used to show that there is a simple abelian surface over $k$ such that $G$ is the automorphism group of a polarized abelian surface over $k,$ which is maximal in the isogeny class of $X.$ \\

(11) Take $G=\mathfrak{I}^*.$ Let $k=\F_{5}.$ Then $\pi:=\sqrt{5}$ is a $5$-Weil number, and hence, by Theorem \ref{thm HondaTata}, there exists a simple abelian variety $X$ over $\F_{5}$ of dimension $r$ such that $\pi_X$ is conjugate to $\pi.$ Hence, by Remark \ref{Qpi Real}, we have $r=2$, and $\textrm{End}_k^0(X)$ is the unique quaternion algebra over $\Q(\sqrt{5})$ that is ramified at two infinite (real) places and split at all finite primes of $\Q(\sqrt{5})$ (which equals $\mathcal{A}$ below). \\

Let $\mathcal{A}=\left( \begin{array}{cc}-1,-1 \\ \Q(\sqrt{5}) \end{array} \right), K=\Q(\sqrt{5}),$ and $R=\Z \left[ \frac{1+\sqrt{5}}{2}\right]$ (which is the ring of integers of $K$ so that it is a maximal $\Z$-order in $K$). Let $\mathcal{O}$ be an $R$-submodule of $\mathcal{A}$ generated by $\left \{1, i, \frac{\lambda +\lambda^{-1} i + j}{2}, \frac{\lambda i - \lambda^{-1}+ ij}{2}\right\}$ where $\lambda = \frac{1+\sqrt{5}}{2}.$ According to an easy computation of discriminant, $\mathcal{O}$ is a maximal $\Z$-order in $\mathcal{A}$ having the discriminant $R.$ It is also easy to check that the group of units of $\mathcal{O}$ contains a subgroup $G$ which is isomorphic to $\mathfrak{I}^*.$ \\

Then a similar argument as in the proof of (9) can be used to show that there is a simple abelian surface over $k$ such that $G$ is the automorphism group of a polarized abelian surface over $k,$ which is maximal in the isogeny class of $X.$ \\
 
 This completes the proof.
\end{proof}

Our next result takes care of the first case of non-simple abelian surface:

\begin{theorem}\label{prodnonisoellip}
There exist a finite field $k$ and two non-isogenous elliptic curves $E_1$ and $E_2$ over $k$ such that $G$ is the automorphism group of a polarized abelian surface over $k,$ which is maximal in the isogeny class of $X:=E_1 \times E_2$ if and only if $G$ is one of the following groups (up to isomorphism): 
  \begin{center}
  \begin{tabular}{|c|c|}
\hline
$$ & $G$  \\
\hline
$\sharp 1$ & $\Z/2\Z \times \Z/2\Z$  \\
\hline
$\sharp 2$ & $\Z/2\Z \times \Z/4\Z$  \\
\hline
$\sharp 3$ & $\Z/2\Z \times \Z/6\Z $  \\
\hline
$\sharp 4$ & $\Z/2\Z \times \textrm{Dic}_{12} $  \\
\hline
$\sharp 5$ & $\Z/2\Z \times \mathfrak{T}^* $  \\
\hline
$\sharp 6$ & $\Z/4\Z \times \Z/4\Z$  \\
\hline
$\sharp 7$ & $\Z/4\Z \times \Z/6\Z$  \\
\hline
$\sharp 8$ & $\Z/4\Z \times \textrm{Dic}_{12}$  \\
\hline
$\sharp 9$ & $\Z/4\Z \times \mathfrak{T}^*$  \\
\hline
$\sharp 10$ & $\Z/6\Z \times \Z/6\Z$  \\
\hline
$\sharp 11$ & $\Z/6\Z \times \textrm{Dic}_{12}$  \\
\hline
$\sharp 12$ & $\Z/6\Z \times \mathfrak{T}^*$  \\
\hline
$\sharp 13$ & $\textrm{Dic}_{12} \times \textrm{Dic}_{12}$  \\
\hline
$\sharp 14$ & $\mathfrak{T}^* \times \mathfrak{T}^*$  \\
\hline
\end{tabular}
\vskip 4pt
\textnormal{Table 11}
\end{center}
\end{theorem}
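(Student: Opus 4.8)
The guiding idea is that non-isogeny splits the endomorphism algebra, so the problem factors through the elliptic-curve classification of Corollary \ref{autoell}, the only genuinely new ingredient being a compatibility of characteristics between the two factors. Suppose $X'$ is $k$-isogenous to $X = E_1 \times E_2$ and carries a polarization $\mathcal{L}$ with $G = \textrm{Aut}_k(X',\mathcal{L})$. Since $E_1$ and $E_2$ are non-isogenous, $\textrm{Hom}_k(E_1,E_2) = \textrm{Hom}_k(E_2,E_1) = 0$, whence $\textrm{End}_k^0(X') \cong D_1 \times D_2$ with $D_i := \textrm{End}_k^0(E_i)$ and no off-diagonal endomorphisms. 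Thus $G$ is a finite subgroup of $(D_1 \times D_2)^{\times} = D_1^{\times} \times D_2^{\times}$; writing $G_i$ for its image under the $i$-th projection, we have $G \leq G_1 \times G_2$ with each $G_i$ a finite subgroup of $D_i^{\times}$. As each $D_i$ is $\Q$, an imaginary quadratic field, or a quaternion algebra $D_{p,\infty}$, its finite subgroups are bounded and their maximal members are exactly the five groups of Table 2 (Corollary \ref{autoell}); hence $G$ embeds into a product $M_1 \times M_2$ with $M_i \in \{\Z/2\Z, \Z/4\Z, \Z/6\Z, \textrm{Dic}_{12}, \mathfrak{T}^*\}$.

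\emph{Reduction to full products and maximality.} By Proposition \ref{endposs} together with Theorem \ref{max gen}, every product $\Lambda_1 \times \Lambda_2$ of maximal orders $\Lambda_i \subset D_i$ occurs as $\textrm{End}_k(Y)$ for some $Y \sim_k X$; taking $Y = E_1'' \times E_2''$ with $\textrm{End}_k(E_i'') = \Lambda_i$, non-isogeny again gives $\textrm{Aut}_k(Y) = \Lambda_1^{\times} \times \Lambda_2^{\times} = M_1 \times M_2$, which is finite. Averaging an ample line bundle over this group, as in the proof of Theorem \ref{thm old 24}, yields a polarization $\mathcal{M}$ with $\textrm{Aut}_k(Y,\mathcal{M}) = M_1 \times M_2$. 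Thus every realizable $G$ sits inside a realizable full product $M_1 \times M_2$, so the maximal realizable groups are precisely such products. Finally, Table 2 records that a factor $\textrm{Dic}_{12}$ forces $p = 3$ and a factor $\mathfrak{T}^*$ forces $p = 2$ (via Theorem \ref{isogclass ell}); as $E_1$ and $E_2$ share the characteristic of $k$, the pair $\textrm{Dic}_{12} \times \mathfrak{T}^*$ cannot occur, and the remaining fourteen unordered products are exactly Table 11.

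\emph{Sufficiency.} It remains to realize each product, which reduces to producing, over one common field, two non-isogenous elliptic curves with the prescribed maximal automorphism groups; the product surface and the averaging above then finish the argument. Over $\F_9$ one finds, by Theorem \ref{isogclass ell} and Theorem \ref{ordoccurell}, classes of distinct Frobenius trace whose maximal-order endomorphism rings give automorphism groups $\Z/2\Z$ (ordinary classes, $\gcd(\beta,3)=1$), $\Z/4\Z$ (the class $\beta = 0$, with CM field $\Q(\sqrt{-36}) = \Q(\sqrt{-1})$), $\Z/6\Z$ (the classes $\beta = \pm 3$, with CM field $\Q(\sqrt{-3})$), and $\textrm{Dic}_{12}$ (the supersingular classes $\beta = \pm 6$ with $\textrm{End}^0 = D_{3,\infty}$, cf. Corollary \ref{autoell} and Remark \ref{Qpi Real}); distinct traces guarantee non-isogeny, so $\F_9$ realizes every pair drawn from $\{\Z/2\Z, \Z/4\Z, \Z/6\Z, \textrm{Dic}_{12}\}$. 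The identical construction over $\F_4$, with traces $\beta = 0$ (CM field $\Q(\sqrt{-16}) = \Q(\sqrt{-1})$), $\beta = \pm 2$ (CM field $\Q(\sqrt{-3})$), $\beta = \pm 4$ (the supersingular class $\mathfrak{T}^*$), and ordinary $\beta$, yields $\Z/2\Z, \Z/4\Z, \Z/6\Z,$ and $\mathfrak{T}^*$, covering every pair drawn from $\{\Z/2\Z, \Z/4\Z, \Z/6\Z, \mathfrak{T}^*\}$. Together these two fields realize all fourteen products.

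\emph{Main obstacle.} The delicate point is not the group theory, which is forced once the algebra splits, but the arithmetic of realizing two prescribed automorphism groups simultaneously by non-isogenous curves over a single field: this is exactly the characteristic bookkeeping that excludes $\textrm{Dic}_{12} \times \mathfrak{T}^*$ and that requires identifying, in each characteristic, which supersingular and CM classes coexist (here the convenient coincidences $\Q(\sqrt{-36}) = \Q(\sqrt{-1})$ over $\F_9$ and $\Q(\sqrt{-16}) = \Q(\sqrt{-1})$ over $\F_4$). Verifying that the chosen maximal orders have exactly the claimed unit groups reuses the explicit maximal orders already exhibited in Corollary \ref{autoell}.
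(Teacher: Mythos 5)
Your overall route coincides with the paper's: non-isogeny splits $\textrm{End}_k^0(X)$ as $D_1\oplus D_2$, so every realizable group embeds into a product $M_1\times M_2$ of maximal finite subgroups of $D_1^{\times}\times D_2^{\times}$ drawn from Table~2 (the paper phrases this via Goursat's Lemma, you via projections --- same content); the pair $(\textrm{Dic}_{12},\mathfrak{T}^*)$ is excluded because $D_{3,\infty}$ forces $p=3$ while $D_{2,\infty}$ forces $p=2$; and sufficiency is handled by Theorem~\ref{max gen}, Proposition~\ref{endposs}, and the line-bundle averaging trick, exactly as in the paper. The necessity half of your argument is sound.

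The gap is in your sufficiency claim that $\F_9$ (resp.\ $\F_4$) realizes \emph{every} pair drawn from $\{\Z/2\Z,\Z/4\Z,\Z/6\Z,\textrm{Dic}_{12}\}$ (resp.\ $\{\Z/2\Z,\Z/4\Z,\Z/6\Z,\mathfrak{T}^*\}$): this fails for the diagonal pair $\sharp 6$, $G=\Z/4\Z\times\Z/4\Z$. To get automorphism group $\Z/4\Z$ one needs $\Z[\sqrt{-1}]\subseteq \textrm{End}_k(E)$, hence $\textrm{End}_k^0(E)=\Q(\sqrt{-1})$ (the supersingular classes with $\textrm{End}_k^0(E)=D_{3,\infty}$ or $D_{2,\infty}$ are excluded, since by Theorem~\ref{ordoccurell} their endomorphism rings are maximal orders, whose unit groups are $\textrm{Dic}_{12}$, resp.\ $\mathfrak{T}^*$). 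But $\Q(\sqrt{\beta^2-36})=\Q(\sqrt{-1})$ requires $36-\beta^2$ to be a perfect square, whose only solution with $|\beta|\leq 6$ is $\beta=0$; likewise $16-\beta^2$ a square forces $\beta=0$ over $\F_4$. So over each of your two fields there is exactly \emph{one} isogeny class of elliptic curves with automorphism group $\Z/4\Z$, whereas the theorem requires two non-isogenous curves. Your "distinct traces guarantee non-isogeny" step silently assumes two distinct classes per group, which is true for $\Z/2\Z$ (distinct ordinary traces), $\Z/6\Z$ ($\beta=\pm 3$, resp.\ $\pm 2$), $\textrm{Dic}_{12}$ ($\beta=\pm 6$), and $\mathfrak{T}^*$ ($\beta=\pm 4$), but false for $\Z/4\Z$. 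The repair is to change fields for this one case, as the paper does: over $\F_5$ the ordinary classes $\beta=\pm 2$ both have CM field $\Q(\sqrt{4-20})=\Q(\sqrt{-1})$, giving two non-isogenous curves each with automorphism group $\Z/4\Z$ (the paper's case (6); $\F_{13}$ works similarly).
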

\begin{proof}
Suppose first that there exist a finite field $k$ and two non-isogenous elliptic curves $E_1$ and $E_2$ over $k$ such that $G$ is the automorphism group of a polarized abelian surface over $k$, which is maximal in the isogeny class of $X:=E_1 \times E_2.$ In particular, we have $\textrm{End}_k^0(X)=\Q(\sqrt{-d_1})\oplus \Q(\sqrt{-d_2})$ for some square-free positive integers $d_1$ and $d_2$ or $\textrm{End}_k^0(X)=\Q(\sqrt{-d})\oplus D_{p,\infty}$ for some square-free positive integer $d$ and a prime number $p,$ or $\textrm{End}_k^0(X)=D_{p,\infty} \oplus D_{p,\infty}$ for some prime number $p$ in view of Theorem \ref{isogclass ell}. Since $G$ is a maximal finite subgroup of $\textrm{End}_k^0(X)$ by assumption, it follows from Goursat's Lemma that $G$ must be one of the 14 groups in the above table. Hence, it suffices to show the converse. We prove the converse by considering them one by one. \\

(1) Take $G=\Z/2\Z \times \Z/2\Z.$ Let $k=\F_3.$ Then there are two non-isogenous ordinary elliptic curves $E_1, E_2$ over $k$ such that $\textrm{End}_k^0(E_1)=\Q(\sqrt{-2})$ and $\textrm{End}_k^0(E_2)=\Q(\sqrt{-11})$ by Theorem \ref{isogclass ell}. Let $X=E_1 \times E_2$ so that $\textrm{End}_k^0(X)=\Q(\sqrt{-2})\oplus \Q(\sqrt{-11}).$ By Theorem \ref{max gen}, $\mathcal{O}:=\Z[\sqrt{-2}]\oplus \Z \left[\frac{1+\sqrt{-11}}{2}\right]$ is a maximal $\Z$-order in $\Q(\sqrt{-2})\oplus \Q(\sqrt{-11})$, and hence, there exists an abelian surface $X^{\prime}$ over $k$ such that $X^{\prime}$ is $k$-isogenous to $X$ and $\textrm{End}_k(X^{\prime})=\mathcal{O}$ by Proposition \ref{endposs}. Then it follows that $G \cong \mathcal{O}^{\times}=\textrm{Aut}_k(X^{\prime})$ (with a canonical polarization). \\

  (2) Take $G=\Z/2\Z \times \Z/4\Z.$ Let $k=\F_5.$ Then there are two non-isogenous ordinary elliptic curves $E_1, E_2$ over $k$ such that $\textrm{End}_k^0(E_1)=\Q(\sqrt{-11})$ and $\textrm{End}_k^0(E_2)=\Q(\sqrt{-1})$ by Theorem \ref{isogclass ell}. Let $X=E_1 \times E_2$ so that $\textrm{End}_k^0(X)=\Q(\sqrt{-11})\oplus \Q(\sqrt{-1}).$ By Theorem \ref{max gen}, $\mathcal{O}:=\Z \left[\frac{1+\sqrt{-11}}{2}\right] \oplus \Z[\sqrt{-1}] $ is a maximal $\Z$-order in $\Q(\sqrt{-11})\oplus \Q(\sqrt{-1}).$ Then a similar argument as in the proof of (1) can be used to show that there exists an abelian surface $X^{\prime}$ over $k$ that is $k$-isogenous to $X$ such that $G \cong \textrm{Aut}_k(X^{\prime})$ (with a canonical polarization). \\

  (3) Take $G=\Z/2\Z \times \Z/6\Z.$ Let $k=\F_7.$ Then there are two non-isogenous ordinary elliptic curves $E_1, E_2$ over $k$ such that $\textrm{End}_k^0(E_1)=\Q(\sqrt{-6})$ and $\textrm{End}_k^0(E_2)=\Q(\sqrt{-3})$ by Theorem \ref{isogclass ell}. Let $X=E_1 \times E_2$ so that $\textrm{End}_k^0(X)=\Q(\sqrt{-6})\oplus \Q(\sqrt{-3}).$ By Theorem \ref{max gen}, $\mathcal{O}:= \Z[\sqrt{-6}] \oplus \Z \left[\frac{1+\sqrt{-3}}{2}\right] $ is a maximal $\Z$-order in $\Q(\sqrt{-6})\oplus \Q(\sqrt{-3}).$ Then a similar argument as in the proof of (1) can be used to show that there exists an abelian surface $X^{\prime}$ over $k$ that is $k$-isogenous to $X$ such that $G \cong \textrm{Aut}_k(X^{\prime})$ (with a canonical polarization). \\

(4) Take $G=\Z/2\Z \times \textrm{Dic}_{12}.$ Let $k=\F_{9}.$ Then there is an ordinary elliptic curve $E_1$ over $k$ with $\textrm{End}_k^0(E_1)=\textrm{End}_{\F_{3}}^0(E_1)=\Q(\sqrt{-2})$ by Theorem \ref{isogclass ell}. Let $E_2$ be a supersingular elliptic curve over $k$ (all of whose endomorphisms are defined over $k$) so that we have $\textrm{End}_{k}^0(E_2)=D_{3,\infty}=\left( \begin{array}{cc} -1, -3 \\ \Q  \end{array} \right).$ Let $X=E_1 \times E_2.$ Then we have $D:=\textrm{End}_k^0(X)=\Q(\sqrt{-2}) \oplus D_{3,\infty}.$ Let $\mathcal{O}$ be a (unique) maximal $\Z$-order in $D_{3, \infty}$ (that is generated by $1, i, \frac{1+j}{2}, \frac{i+ij}{2})$, and let $\mathcal{O}^{\prime}=\Z [\sqrt{-2}] \oplus \mathcal{O}.$ By Theorem \ref{max gen}, $\mathcal{O}^{\prime}$ is a maximal $\Z$-order in $\Q(\sqrt{-2})\oplus D_{3,\infty}$, and hence, there exists an abelian surface $X^{\prime}$ over $k$ such that $X^{\prime}$ is $k$-isogenous to $X$ and $\textrm{End}_k(X^{\prime})=\mathcal{O}^{\prime}$ by Proposition \ref{endposs}. Then it follows that $G \cong (\mathcal{O}^{\prime})^{\times}=\textrm{Aut}_k(X^{\prime})$ (with a canonical polarization). \\

(5) Take $G=\Z/2\Z \times \mathfrak{T}^*.$ Let $k=\F_{4}.$ Then there is an ordinary elliptic curve $E_1$ over $k$ with $\textrm{End}_k^0(E_1)=\textrm{End}_{\F_{2}}^0(E_1)=\Q(\sqrt{-7})$ by Theorem \ref{isogclass ell}. Let $E_2$ be a supersingular elliptic curve over $k$ (all of whose endomorphisms are defined over $k$) so that we have $\textrm{End}_{k}^0(E_2)=D_{2,\infty}=\left( \begin{array}{cc} -1, -1 \\ \Q  \end{array} \right).$ Let $X=E_1 \times E_2.$ Then we have $D:=\textrm{End}_k^0(X)=\Q(\sqrt{-7}) \oplus D_{2,\infty}.$ Let $\mathcal{O}$ be a (unique) maximal $\Z$-order in $D_{2, \infty}$ (that is generated by $i, j, ij, \frac{1+i+j+ij}{2})$, and let $\mathcal{O}^{\prime}=\Z \left[\frac{1+\sqrt{-7}}{2}\right] \oplus \mathcal{O}.$ By Theorem \ref{max gen}, $\mathcal{O}^{\prime}$ is a maximal $\Z$-order in $\Q(\sqrt{-7})\oplus D_{2,\infty}.$ Then a similar argument as in the proof of (4) can be used to show that there exists an abelian surface $X^{\prime}$ over $k$ that is $k$-isogenous to $X$ such that $G \cong \textrm{Aut}_k(X^{\prime})$ (with a canonical polarization). \\

(6) Take $G=\Z/4\Z \times \Z/4\Z.$ Let $k=\F_5.$ Then there are two non-isogenous ordinary elliptic curves $E_1, E_2$ over $k$ such that $\textrm{End}_k^0(E_1)=\textrm{End}_k^0(E_2)=\Q(\sqrt{-1})$ by Theorem \ref{isogclass ell}. Let $X=E_1 \times E_2$ so that $\textrm{End}_k^0(X)=\Q(\sqrt{-1})\oplus \Q(\sqrt{-1}).$ By Theorem \ref{max gen}, $\mathcal{O}:= \Z[\sqrt{-1}] \oplus \Z[\sqrt{-1}] $ is a maximal $\Z$-order in $\Q(\sqrt{-1})\oplus \Q(\sqrt{-1}).$ Then a similar argument as in the proof of (1) can be used to show that there exists an abelian surface $X^{\prime}$ over $k$ that is $k$-isogenous to $X$ such that $G \cong \textrm{Aut}_k(X^{\prime})$ (with a canonical polarization). \\

(7) Take $G=\Z/4\Z \times \Z/6\Z.$ Let $k=\F_{13}.$ Then there are two non-isogenous ordinary elliptic curves $E_1, E_2$ over $k$ such that $\textrm{End}_k^0(E_1)=\Q(\sqrt{-1})$ and $\textrm{End}_k^0(E_2)=\Q(\sqrt{-3})$ by Theorem \ref{isogclass ell}. Let $X=E_1 \times E_2$ so that $\textrm{End}_k^0(X)=\Q(\sqrt{-1})\oplus \Q(\sqrt{-3}).$ By Theorem \ref{max gen}, $\mathcal{O}:= \Z[\sqrt{-1}] \oplus \Z \left[\frac{1+\sqrt{-3}}{2}\right] $ is a maximal $\Z$-order in $\Q(\sqrt{-1})\oplus \Q(\sqrt{-3}).$ Then a similar argument as in the proof of (1) can be used to show that there exists an abelian surface $X^{\prime}$ over $k$ that is $k$-isogenous to $X$ such that $G \cong \textrm{Aut}_k(X^{\prime})$ (with a canonical polarization). \\

(8) Take $G=\Z/4\Z \times \textrm{Dic}_{12}.$ Let $k=\F_{9}.$ Then there is a supersingular elliptic curve $E_1$ over $k$ with $\textrm{End}_k^0(E_1)=\Q(\sqrt{-1})$ by Theorem \ref{isogclass ell}. Let $E_2$ be a supersingular elliptic curve over $k$ (all of whose endomorphisms are defined over $k$) so that we have $\textrm{End}_{k}^0(E_2)=D_{3,\infty}.$ Let $X=E_1 \times E_2.$ Then a similar argument as in the proof of (4) can be used to show that there exists an abelian surface $X^{\prime}$ over $k$ that is $k$-isogenous to $X$ such that $G \cong \textrm{Aut}_k(X^{\prime})$ (with a canonical polarization). \\

(9) Take $G=\Z/4\Z \times \mathfrak{T}^*.$ Let $k=\F_{4}.$ Then there is a supersingular elliptic curve $E_1$ over $k$ with $\textrm{End}_k^0(E_1)=\Q(\sqrt{-1})$ by Theorem \ref{isogclass ell}. Let $E_2$ be a supersingular elliptic curve over $k$ (all of whose endomorphisms are defined over $k$) so that we have $\textrm{End}_{k}^0(E_2)=D_{2,\infty}.$ Let $X=E_1 \times E_2.$ Then a similar argument as in the proof of (5) can be used to show that there exists an abelian surface $X^{\prime}$ over $k$ that is $k$-isogenous to $X$ such that $G \cong \textrm{Aut}_k(X^{\prime})$ (with a canonical polarization). \\

(10) Take $G=\Z/6\Z \times \Z/6\Z.$ Let $k=\F_{7}.$ Then there are two non-isogenous ordinary elliptic curves $E_1, E_2$ over $k$ such that $\textrm{End}_k^0(E_1)=\textrm{End}_k^0(E_2)=\Q(\sqrt{-3})$ by Theorem \ref{isogclass ell}. Let $X=E_1 \times E_2$ so that $\textrm{End}_k^0(X)=\Q(\sqrt{-3})\oplus \Q(\sqrt{-3}).$ By Theorem \ref{max gen}, $\mathcal{O}:= \Z \left[\frac{1+\sqrt{-3}}{2}\right] \oplus \Z \left[\frac{1+\sqrt{-3}}{2}\right] $ is a maximal $\Z$-order in $\Q(\sqrt{-3})\oplus \Q(\sqrt{-3}).$ Then a similar argument as in the proof of (1) can be used to show that there exists an abelian surface $X^{\prime}$ over $k$ that is $k$-isogenous to $X$ such that $G \cong \textrm{Aut}_k(X^{\prime})$ (with a canonical polarization). \\

(11) Take $G=\Z/6\Z \times \textrm{Dic}_{12}.$ Let $k=\F_{9}.$ Then there is a supersingular elliptic curve $E_1$ over $k$ with $\textrm{End}_k^0(E_1)=\Q(\sqrt{-3})$ by Theorem \ref{isogclass ell}. Let $E_2$ be a supersingular elliptic curve over $k$ (all of whose endomorphisms are defined over $k$) so that we have $\textrm{End}_{k}^0(E_2)=D_{3,\infty}.$ Let $X=E_1 \times E_2.$ Then a similar argument as in the proof of (4) can be used to show that there exists an abelian surface $X^{\prime}$ over $k$ that is $k$-isogenous to $X$ such that $G \cong \textrm{Aut}_k(X^{\prime})$ (with a canonical polarization). \\

(12) Take $G=\Z/6\Z \times \mathfrak{T}^*.$ Let $k=\F_{4}.$ Then there is a supersingular elliptic curve $E_1$ over $k$ with $\textrm{End}_k^0(E_1)=\Q(\sqrt{-3})$ by Theorem \ref{isogclass ell}. Let $E_2$ be a supersingular elliptic curve over $k$ (all of whose endomorphisms are defined over $k$) so that we have $\textrm{End}_{k}^0(E_2)=D_{2,\infty}.$ Let $X=E_1 \times E_2.$ Then a similar argument as in the proof of (5) can be used to show that there exists an abelian surface $X^{\prime}$ over $k$ that is $k$-isogenous to $X$ such that $G \cong \textrm{Aut}_k(X^{\prime})$ (with a canonical polarization). \\

(13) Take $G=\textrm{Dic}_{12} \times \textrm{Dic}_{12}.$ Let $k=\F_{9}.$ Then there are two non-isogenous supersingular elliptic curves $E_1, E_2$ over $k$ with $\textrm{End}_k^0(E_1)=\textrm{End}_k^0(E_2)=D_{3,\infty}$ by Theorem \ref{isogclass ell}. Let $X=E_1 \times E_2.$ Then we have $D:=\textrm{End}_k^0(X)= D_{3,\infty} \oplus D_{3,\infty}.$ Let $\mathcal{O}$ be a (unique) maximal $\Z$-order in $D_{3, \infty}$ (that is generated by $1, i, \frac{1+j}{2}, \frac{i+ij}{2})$, and let $\mathcal{O}^{\prime}=\mathcal{O} \oplus \mathcal{O}.$ By Theorem \ref{max gen}, $\mathcal{O}^{\prime}$ is a maximal $\Z$-order in $D_{3,\infty} \oplus D_{3,\infty}$, and hence, there exists an abelian surface $X^{\prime}$ over $k$ such that $X^{\prime}$ is $k$-isogenous to $X$ and $\textrm{End}_k(X^{\prime})=\mathcal{O}^{\prime}$ by Proposition \ref{endposs}. Then it follows that $G \cong (\mathcal{O}^{\prime})^{\times}=\textrm{Aut}_k(X^{\prime})$ (with a canonical polarization). \\

(14) Take $G=\mathfrak{T}^* \times  \mathfrak{T}^*.$ Let $k=\F_{4}.$ Then there are two non-isogenous supersingular elliptic curves $E_1, E_2$ over $k$ with $\textrm{End}_k^0(E_1)=\textrm{End}_k^0(E_2)=D_{2,\infty}$ by Theorem \ref{isogclass ell}. Let $X=E_1 \times E_2.$ Then we have $D:=\textrm{End}_k^0(X)= D_{2,\infty} \oplus D_{2,\infty}.$ Let $\mathcal{O}$ be a (unique) maximal $\Z$-order in $D_{2, \infty}$ (that is generated by $i, j, ij, \frac{1+i+j+ij}{2})$, and let $\mathcal{O}^{\prime}=\mathcal{O} \oplus \mathcal{O}.$ By Theorem \ref{max gen}, $\mathcal{O}^{\prime}$ is a maximal $\Z$-order in $D_{2,\infty} \oplus D_{2,\infty}.$ Then a similar argument as in the proof of (13) can be used to show that there exists an abelian surface $X^{\prime}$ over $k$ that is $k$-isogenous to $X$ such that $G \cong \textrm{Aut}_k(X^{\prime})$ (with a canonical polarization). \\

This completes the proof.
\end{proof}

In order to move to the next case, we introduce the following
\begin{lemma}\label{sim to Lange lem}
  Let $E$ be an ordinary elliptic curve over a finite field $k,$ and let $X=E^2.$ Suppose that $G$ is a finite subgroup of $\textrm{Aut}_k (X).$ Then we have: \\
  (a) If $f \in G$ is of order $e,$ then $e \in \{2,3,4,6,8, 12\}.$ \\
  (b) $G$ is a solvable group of order $2^m 3^n$ for some non-negative integers $m$ and $n.$
\end{lemma}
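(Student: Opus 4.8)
The plan is to translate the statement into one about finite subgroups of $GL_2(K)$ for an imaginary quadratic field $K$, and then bound element orders by a cyclotomic degree argument. Since $E$ is an ordinary elliptic curve over $k$, its endomorphism algebra $\textrm{End}_k^0(E)=K$ is an imaginary quadratic field (cf. Theorem \ref{ordoccurell}(1), together with the fact that $\Q(\pi_E)$ is a CM-field of degree $2$: the real Weil number condition $\pi_E\overline{\pi_E}=q$ with $|\pi_E+\overline{\pi_E}|<2\sqrt{q}$ forces a negative discriminant). Writing $\mathcal{O}=\textrm{End}_k(E)$, an order in $K$, we obtain $\textrm{End}_k(X)=\textrm{End}_k(E^2)=M_2(\mathcal{O})$ and hence $\textrm{Aut}_k(X)=M_2(\mathcal{O})^{\times}=GL_2(\mathcal{O})\subseteq GL_2(K)$. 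Thus $G$ is realized as a finite subgroup of $GL_2(K)$, and both assertions become statements about such subgroups.

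For part (a), I would take $f\in G$ of order $e$. Being of finite order in characteristic $0$, $f$ is semisimple, so over $\overline{\Q}$ its eigenvalues are roots of unity $\zeta_1,\zeta_2$ whose orders have least common multiple $e$. These are the roots of the characteristic polynomial of $f$, a monic quadratic with coefficients in $K$; hence the field $L:=K(\zeta_1,\zeta_2)$ satisfies $[L:K]\leq 2$, so $[L:\Q]\leq 4$. Since $\Q(\zeta_1,\zeta_2)=\Q(\zeta_e)$, we get $\Q(\zeta_e)\subseteq L$ and therefore $\varphi(e)=[\Q(\zeta_e):\Q]\leq 4$, which forces $e\in\{1,2,3,4,5,6,8,10,12\}$.

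The crucial remaining point — and the main obstacle — is to rule out $e=5$ and $e=10$. Here I would use that $\Q(\zeta_5)=\Q(\zeta_{10})$ has degree $4$: if $e\in\{5,10\}$, then $\Q(\zeta_e)\subseteq L$ combined with $[L:\Q]\leq 4$ forces $L=\Q(\zeta_5)$, and hence $K\subseteq\Q(\zeta_5)$. But the unique quadratic subfield of $\Q(\zeta_5)$ is the totally real field $\Q(\sqrt{5})$, contradicting the fact that $K$ is imaginary quadratic. This eliminates $5$ and $10$ and leaves $e\in\{2,3,4,6,8,12\}$ for every nontrivial $f$, proving (a). (Conversely, the remaining orders genuinely occur for suitable $K$, e.g. $8$ in $\Q(\sqrt{-2})$ and $12$ in $\Q(\sqrt{-3})$, since $\Q(\zeta_8)$ and $\Q(\zeta_{12})$ do contain imaginary quadratic subfields; but this is not needed for the lemma.)

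For part (b), the key observation is that every element order produced in (a) is of the form $2^{a}3^{b}$. If some prime $\ell\geq 5$ divided $|G|$, then Cauchy's theorem would supply an element of order $\ell$, contradicting (a); hence $|G|=2^{m}3^{n}$ for some $m,n\geq 0$. Solvability then follows at once from Burnside's $p^{a}q^{b}$-theorem. I expect part (b) to be routine once (a) is established; the only genuine work lies in the eigenvalue and cyclotomic-degree analysis of (a), and specifically in excluding the orders $5$ and $10$ through the real quadratic subfield of $\Q(\zeta_5)$.
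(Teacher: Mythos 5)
Your proposal is correct and follows essentially the same route as the paper: bound element orders via $\varphi(e)\leq 4$ from the embedding of $\Q(\zeta_e)$ into $M_2(K)$ with $K$ imaginary quadratic, exclude $e=5,10$ because the unique quadratic subfield $\Q(\sqrt{5})$ of $\Q(\zeta_5)$ is real, and deduce (b) from Cauchy's theorem and Burnside's $p^aq^b$-theorem. In fact your write-up is slightly more complete than the paper's, which asserts "$\Q(\zeta_5)\subset M_2(\Q(\sqrt{-d}))$ is a contradiction" without spelling out the eigenvalue/compositum argument and the real-subfield reason that you supply.
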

\begin{proof}
  (a) Since $f$ is of order $e$, we have $\Q(\zeta_e) \subset M_2(\Q(\sqrt{-d}))$ for some square-free positive integer $d.$ Since $\textrm{dim}_{\Q} M_2(\Q(\sqrt{-d}))=8$ and $M_2(\Q(\sqrt{-d}))$ is not a field, we have $\varphi(e) \leq 4.$ Hence, $e \in \{2,3,4,5,6,8,10,12\}.$ It remains to show that $e \ne 5, 10.$ It suffices to prove that $e \ne 5.$ Suppose on the contrary that $e=5.$ Then we have $\Q(\zeta_5) \subset M_2(\Q(\sqrt{-d}))$, and this is a contradiction. Thus, we have $e \ne 5,$ and the result follows. \\
  (b) By (a), we have $|G|=2^m 3^n$ for some non-negative integers $m$ and $n.$ Hence, by Burnside's $p^a q^b$-Theorem, we can see that $G$ is solvable. \\

  This completes the proof.
\end{proof}

In light of Theorem \ref{ordoccurell} and Lemma \ref{sim to Lange lem}, we may use a result of \cite{2} or \cite{4} in the following theorem.

\begin{theorem}\label{powordelli}
  There exists a finite field $k$ and an ordinary elliptic curve $E$ over $k$ such that $G$ is the automorphism group of a polarized abelian surface over $k,$ which is maximal in the isogeny class of $X:=E^2$ if and only if $G$ is one of the following groups (up to isomorphism):
  \begin{center}
  \begin{tabular}{|c|c|}
\hline
$$ & $G$  \\
\hline
$\sharp 1$ & $D_4$  \\
\hline
$\sharp 2$ & $D_{6}$  \\
\hline
$\sharp 3$ & $\textrm{Dic}_{12} $  \\
\hline
$\sharp 4$ & $SL_2(\F_3) $  \\
\hline
$\sharp 5$ & $\Z/12 \Z \rtimes \Z/2\Z  \cong \Z/4\Z \times \textrm{Sym}_3 $  \\
\hline
$\sharp 6$ & $(Q_8 \rtimes \Z/3\Z) \rtimes \Z/2\Z = GL_2 (\F_3)$  \\
\hline
$\sharp 7$ & $ (\Z/6\Z \times \Z/6\Z) \rtimes \Z/2\Z \cong \textrm{Dic}_{12} \rtimes \Z/6\Z$  \\
\hline
$\sharp 8$ & $(Q_8 \rtimes \Z/3\Z) \times \Z/3\Z \cong \mathfrak{T}^* \times \Z/3\Z$  \\
\hline
$\sharp 9$ & $(Q_8 \rtimes \Z/3\Z) \cdot \Z/4\Z \cong \mathfrak{T}^* \rtimes \Z/4\Z$  \\
\hline
\end{tabular}
\vskip 4pt
\textnormal{Table 12}
\end{center}
\end{theorem}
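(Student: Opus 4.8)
First, the forward direction. Suppose $G = \textrm{Aut}_k(X', \mathcal{L})$ is maximal in the isogeny class of $X = E^2$ for an ordinary elliptic curve $E$. Since $E$ is ordinary, $\textrm{End}_k^0(E) = \Q(\sqrt{-d})$ is an imaginary quadratic field, so $\textrm{End}_k^0(X) \cong M_2(\Q(\sqrt{-d}))$. Thus $G$ is a finite subgroup of $GL_2(\Q(\sqrt{-d}))$, and by Lemma \ref{sim to Lange lem}(a)--(b) every element has order in $\{2,3,4,6,8,12\}$ and $|G|$ has the form $2^m 3^n$. The key point is that a finite subgroup of $GL_2$ of an imaginary quadratic field embeds into $GL_2(\C)$, so I would invoke the classification of \cite{2} (Birkenhake--Lange) or \cite{4} (Fujiki) of finite groups of automorphisms of complex abelian surfaces isogenous to $E^2$. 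Intersecting that list with the arithmetic constraints from Lemma \ref{sim to Lange lem} (solvable, order $2^m3^n$, no element of order $5$ or $10$) leaves exactly the nine groups in Table 12. I would then argue maximality cuts the list down: any group on the Birkenhake--Lange list that is properly contained in another realizable one is discarded, leaving precisely the maximal entries $\sharp 1$ through $\sharp 9$.

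Second, the converse. For each of the nine groups $G$ in Table 12, I must exhibit a finite field $k$, an ordinary elliptic curve $E/k$, and a polarization on some $X'$ isogenous to $E^2$ realizing $G$ as the full automorphism group, maximally. The strategy mirrors the proof of Theorem \ref{thm old 24}: choose $E$ via Theorem \ref{isogclass ell} with a convenient CM field $\Q(\sqrt{-d})$ (for instance $\Q(\sqrt{-1})$ or $\Q(\sqrt{-3})$, so that the unit group of the order already supplies enough roots of unity), fix a maximal order $\Delta$ in $\textrm{End}_k^0(E)$ using Theorem \ref{ordoccurell}, and choose a suitable $\Delta$-lattice $M \leq V = (\Q(\sqrt{-d}))^{1\times 2}$ so that $\textrm{Hom}_\Delta(M,M)$ is a maximal order in $M_2(\Q(\sqrt{-d}))$ by Theorem \ref{mat max 2}. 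By Proposition \ref{endposs} this order is realized as $\textrm{End}_k(X')$ for some $X'$ isogenous to $E^2$. The desired $G$ is exhibited inside $\textrm{Aut}_k(X') = \textrm{End}_k(X')^\times$ by writing down explicit matrices, and a $G$-invariant ample line bundle $\mathcal{L}' := \bigotimes_{g \in G} g^*\mathcal{L}$ forces $G \leq \textrm{Aut}_k(X', \mathcal{L}')$. Finally I would verify equality and maximality by checking, via the classification in the forward direction, that no strictly larger realizable group sits atop $G$ in $GL_2(\Q(\sqrt{-d}))$.

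The main obstacle I anticipate is the interface between the complex classification and the arithmetic of finite fields. The lists of Fujiki and Birkenhake--Lange are stated over $\C$, where \emph{every} imaginary quadratic field (indeed, every CM structure on a complex torus) is available; over a finite field $k$, Theorem \ref{isogclass ell} restricts which $\Q(\sqrt{-d})$ can arise as $\textrm{End}_k^0(E)$ for an ordinary $E$, and for a fixed group $G$ one must find a single $\Q(\sqrt{-d})$ into which $G$ embeds as a subgroup of $GL_2$ \emph{and} which is realizable by an ordinary curve over some $\F_q$. Groups such as $GL_2(\F_3)$ (entry $\sharp 6$) and $\mathfrak{T}^* \rtimes \Z/4\Z$ (entry $\sharp 9$), whose enveloping $\Q$-algebra is $M_2(K')$ for a specific imaginary quadratic $K'$ (cf. Lemma \ref{imag lem}), will require the most care: one must pin down the exact field $K'$ forced by the presence of elements of order $8$ or $12$, confirm it is of the form $\Q(\sqrt{-d})$ achievable over a finite field, and then construct the invariant lattice explicitly. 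Verifying that the resulting automorphism group is exactly $G$ and not larger, using the maximality criterion of Definition and Lemma \ref{main lem hard}(e) applied over the relevant field, is the delicate bookkeeping step.
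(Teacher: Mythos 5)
Your proposal follows the same route as the paper: the forward direction reduces maximality to $G$ being a maximal finite subgroup of $GL_2(\Q(\sqrt{-d}))$ and quotes the Fujiki/Birkenhake--Lange classification (with Lemma \ref{sim to Lange lem} in a supporting role), and the converse realizes each of the nine groups by choosing an ordinary curve with a suitable CM field via Theorem \ref{isogclass ell}, producing a maximal order in $M_2(\Q(\sqrt{-d}))$ realized as $\textrm{End}_k(X^{\prime})$ by Proposition \ref{endposs}, averaging an ample line bundle over $G$, and invoking maximality of $G$ in $GL_2(\Q(\sqrt{-d}))$ to rule out larger groups in the isogeny class. The only cosmetic difference is that you route the maximal-order construction through lattices and Theorem \ref{mat max 2}, where the paper simply takes $M_2$ of the ring of integers via Theorem \ref{mat max}; the obstacle you flag (matching each group to an imaginary quadratic field attainable by an ordinary curve) is exactly what the paper's case-by-case choices of $k$ and $\Q(\sqrt{-d})$ resolve.
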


\begin{proof}
Suppose first that there exists a finite field $k$ and an ordinary elliptic curve $E$ over $k$ such that $G$ is the automorphism group of a polarized abelian surface over $k,$ which is maximal in the isogeny class of $X:=E^2.$ In particular, we have $\textrm{End}_k^0(X)=M_2(\Q(\sqrt{-d}))$ for some square-free positive integer $d.$ Thus, by assumption, $G$ is a maximal finite subgroup of $M_2(\Q(\sqrt{-d})),$ and then, by \cite[\S3 and Table 9]{4} or \cite[Theorem 13.4.5]{2}, we can see that $G$ must be one of the 9 groups in the above table. Hence, it suffices to show the converse. We prove the converse by considering them one by one. \\

(1) Take $G=D_4.$ Let $k=\F_{4}.$ Then there is an ordinary elliptic curve $E$ over $k$ such that $\textrm{End}_k^0(E)=\Q(\sqrt{-15})$ by Theorem \ref{isogclass ell}. Let $X=E^2.$ Then we have $D:=\textrm{End}_k^0(X)=M_2(\Q(\sqrt{-15})).$ Let $\mathcal{O}=M_2 \left(\Z \left[\frac{1+\sqrt{-15}}{2} \right] \right).$ By Theorem \ref{mat max}, $\mathcal{O}$ is a maximal $\Z$-order in $D,$ and hence, there exists an abelian surface $X^{\prime}$ over $k$ such that $X^{\prime}$ is $k$-isogenous to $X$ and $\textrm{End}_k(X^{\prime})=\mathcal{O}$ by Proposition \ref{endposs}. It is well-known that $G$ is a maximal finite subgroup of $GL_2(\Q(\sqrt{-15})).$ Hence, $G$ is a maximal finite subgroup of $\mathcal{O}^{\times},$ too. \\

  Now, let $\mathcal{L}$ be an ample line bundle on $X^{\prime}$, and put
  \begin{equation*}
    \mathcal{L}^{\prime}:=\bigotimes_{g \in G} g^* \mathcal{L}.
  \end{equation*}
  Then $\mathcal{L}^{\prime}$ is also an ample line bundle on $X^{\prime}$ that is preserved under the action of $G$ so that $G \leq \textrm{Aut}_k(X^{\prime},\mathcal{L}^{\prime}).$ Since $\textrm{Aut}_k (X^{\prime},\mathcal{L}^{\prime})$ is a finite subgroup of $\textrm{Aut}_k(X^{\prime})=\mathcal{O}^{\times},$ it follows from the maximality of $G$ that $G=\textrm{Aut}_k(X^{\prime},\mathcal{L}^{\prime}).$ \\

  Now, suppose that $Y$ is an abelian surface over $k$ which is $k$-isogenous to $X.$ In particular, we have $\textrm{End}_k^0(Y)=M_2(\Q(\sqrt{-15})).$ Suppose that there is a finite group $H$ such that $H=\textrm{Aut}_k(Y, \mathcal{M})$ for a polarization $\mathcal{M}$ on $Y,$ and $G$ is isomorphic to a proper subgroup of $H.$ Then $H$ is a finite subgroup of $GL_2(\Q(\sqrt{-15})),$ and hence, it follows from the maximality of $G$ as a finite subgroup of $GL_2(\Q(\sqrt{-15}))$ that $H=G$, which is a contradiction. Therefore we can conclude that $G$ is maximal in the isogeny class of $X.$ \\

  (2) Take $G=D_{6}.$ Let $k=\F_4.$ Then a similar argument as in the proof of (1) can be used to show that there is an ordinary elliptic curve $E$ over $k$ such that $G$ is the automorphism group of a polarized abelian surface over $k,$ which is maximal in the isogeny class of $X:=E^2.$ \\

(3) Take $G=\textrm{Dic}_{12}.$ Let $k=\F_{25}.$ Then there is an ordinary elliptic curve $E$ over $k$ such that $\textrm{End}_k^0(E)=\Q(\sqrt{-21})$ by Theorem \ref{isogclass ell}. Let $X=E^2.$ Then we have $D:=\textrm{End}_k^0(X)=M_2(\Q(\sqrt{-21})).$ Let $\mathcal{O}=M_2 (\Z [\sqrt{-21}]).$ By Theorem \ref{mat max}, $\mathcal{O}$ is a maximal $\Z$-order in $D.$ It is also known in \cite{9} that $G$ is a maximal finite subgroup of $GL_2(\Z[\sqrt{-21}])=\mathcal{O}^{\times}.$ Then a similar argument as in the proof of (1) can be used to show that there exists an abelian surface $X^{\prime}$ over $k$ with a polarization $\mathcal{L}^{\prime}$ such that $G=\textrm{Aut}_k(X^{\prime},\mathcal{L}^{\prime})$ and $G$ is maximal in the isogeny class of $X.$ \\

(4) Take $G=SL_2(\F_3).$ Let $k=\F_{25}.$ Then there is an ordinary elliptic curve $E$ over $k$ such that $\textrm{End}_k^0(E)=\Q(\sqrt{-6})$ by Theorem \ref{isogclass ell}. Let $X=E^2.$ Then we have $D:=\textrm{End}_k^0(X)=M_2(\Q(\sqrt{-6})).$ Let $\mathcal{O}=M_2 (\Z [\sqrt{-6}]).$ By Theorem \ref{mat max}, $\mathcal{O}$ is a maximal $\Z$-order in $D.$ It is also known in \cite{9} that $G$ is a maximal finite subgroup of $GL_2(\Z[\sqrt{-6}])=\mathcal{O}^{\times}.$ Then a similar argument as in the proof of (1) can be used to show that there exists an abelian surface $X^{\prime}$ over $k$ with a polarization $\mathcal{L}^{\prime}$ such that $G=\textrm{Aut}_k(X^{\prime},\mathcal{L}^{\prime})$ and $G$ is maximal in the isogeny class of $X.$ \\

  (5) Take $G=\Z/12\Z \rtimes \Z/2\Z.$ Let $k=\F_{5}.$ Then there is an ordinary elliptic curve $E$ over $k$ such that $\textrm{End}_k^0(E)=\Q(\sqrt{-1})$ by Theorem \ref{isogclass ell}. Let $X=E^2.$ Then we have $D:=\textrm{End}_k^0(X)=M_2(\Q(\sqrt{-1})).$ Let $\mathcal{O}=M_2 (\Z[\sqrt{-1}]).$ By Theorem \ref{mat max}, $\mathcal{O}$ is a maximal $\Z$-order in $D.$ Also, by \cite[Table 9]{4} or \cite[Theorem 13.4.5]{2}, $G$ is a maximal finite subgroup of $\mathcal{O}^{\times}=GL_2(\Z(\sqrt{-1})).$ Then a similar argument as in the proof of (1) can be used to show that there exists an abelian surface $X^{\prime}$ over $k$ with a polarization $\mathcal{L}^{\prime}$ such that $G=\textrm{Aut}_k(X^{\prime},\mathcal{L}^{\prime})$ and $G$ is maximal in the isogeny class of $X.$ \\

  (6) Take $G=(Q_8 \rtimes \Z/3\Z) \rtimes \Z/2\Z \cong GL_2(\F_3).$ Let $k=\F_{17}.$ Then there is an ordinary elliptic curve $E$ over $k$ such that $\textrm{End}_k^0(E)=\Q(\sqrt{-2})$ by Theorem \ref{isogclass ell}. Let $X=E^2.$ Then we have $D:=\textrm{End}_k^0(X)=M_2(\Q(\sqrt{-2})).$ Let $\mathcal{O}=M_2 (\Z[\sqrt{-2}]).$ By Theorem \ref{mat max}, $\mathcal{O}$ is a maximal $\Z$-order in $D.$ Then a similar argument as in the proof of (5) can be used to show that there exists an abelian surface $X^{\prime}$ over $k$ with a polarization $\mathcal{L}^{\prime}$ such that $G=\textrm{Aut}_k(X^{\prime},\mathcal{L}^{\prime})$ and $G$ is maximal in the isogeny class of $X.$ \\

  (7) Take $G=(\Z/6\Z \times \Z/6\Z) \rtimes \Z/2\Z.$ Let $k=\F_{7}.$ Then there is an ordinary elliptic curve $E$ over $k$ such that $\textrm{End}_k^0(E)=\Q(\sqrt{-3})$ by Theorem \ref{isogclass ell}. Let $X=E^2.$ Then we have $D:=\textrm{End}_k^0(X)=M_2(\Q(\sqrt{-3})).$ Let $\mathcal{O}=M_2 \left(\Z \left[\frac{1+\sqrt{-3}}{2}\right] \right).$ By Theorem \ref{mat max}, $\mathcal{O}$ is a maximal $\Z$-order in $D.$ Then a similar argument as in the proof of (5) can be used to show that there exists an abelian surface $X^{\prime}$ over $k$ with a polarization $\mathcal{L}^{\prime}$ such that $G=\textrm{Aut}_k(X^{\prime},\mathcal{L}^{\prime})$ and $G$ is maximal in the isogeny class of $X.$ \\

  (8) Take $G=(Q_8 \rtimes \Z/3\Z) \times \Z/3\Z.$ Let $k=\F_7$. Then a similar argument as in the proof of (7) can be used to show that there is an ordinary elliptic curve $E$ over $k$ such that $G$ is the automorphism group of a polarized abelian surface over $k,$ which is maximal in the isogeny class of $X:=E^2.$ \\

  (9) Take $G=(Q_8 \rtimes \Z/3\Z) \cdot \Z/4\Z.$ Let $k=\F_5$. Then a similar argument as in the proof of (5) can be used to show that there is an ordinary elliptic curve $E$ over $k$ such that $G$ is the automorphism group of a polarized abelian surface over $k,$ which is maximal in the isogeny class of $X:=E^2.$ \\

  This completes the proof.
\end{proof}

 Finally,
\begin{theorem}\label{pow of supell}
  There exists a finite field $k$ and a supersingular elliptic curve $E$ over $k$ (all of whose endomorphisms are defined over $k$) such that $G$ is the automorphism group of a polarized abelian surface over $k,$ which is maximal in the isogeny class of $X:=E^2$ if and only if $G$ is one of the following groups (up to isomorphism):
  \begin{center}
  \begin{tabular}{|c|c|}
\hline
$$ & $G$  \\
\hline
$\sharp 1$ & $2_{-}^{1+4}.\textrm{Alt}_5 $  \\
\hline
$\sharp 2$ & $SL_2(\F_3) \times \textrm{Sym}_3 $  \\
\hline
$\sharp 3$ & $(SL_2(\F_3))^2 \rtimes \textrm{Sym}_2 $  \\
\hline
$\sharp 4$ & $SL_2(\F_9)  $  \\
\hline
$\sharp 5$ & $\Z/3\Z : (SL_2(\F_3).2) $  \\
\hline
$\sharp 6$ & $(\textrm{Dic}_{12})^2 \rtimes \textrm{Sym}_2 $  \\
\hline
$\sharp 7$ & $SL_2(\F_5).2  $  \\
\hline
$\sharp 8$ & $ SL_2(\F_5):2 $  \\
\hline
$\sharp 9$ & $(Q_8 \rtimes \Z/3\Z) \rtimes \Z/2\Z \cong GL_2 (\F_3)$  \\
\hline
$\sharp 10$ & $\Z/12 \Z \rtimes \Z/2\Z \cong \Z/4\Z \times \textrm{Sym}_3 $  \\
\hline
$\sharp 11$ & $(Q_8 \rtimes \Z/3\Z) \cdot \Z/4\Z \cong \mathfrak{T}^* \rtimes \Z/4\Z$  \\
\hline
$\sharp 12$ & $ (\Z/6\Z \times \Z/6\Z) \rtimes \Z/2\Z \cong \textrm{Dic}_{12} \rtimes \Z/6\Z$  \\
\hline
$\sharp 13$ & $(Q_8 \rtimes \Z/3\Z) \times \Z/3\Z \cong \mathfrak{T}^* \times \Z/3\Z$  \\
\hline
$\sharp 14$ & $ D_4$  \\
\hline
$\sharp 15$ & $D_6 $  \\
\hline
$\sharp 16$ & $\mathfrak{I}^* \cong SL_2 (\F_5)$ \\
\hline
$\sharp 17$ & $\textrm{Dic}_{24}$  \\
\hline
$\sharp 18$ & $\mathfrak{O}^*$  \\
\hline
$\sharp 19$ & $ \mathfrak{T}^* \cong SL_2 (\F_3)$    \\
\hline
$\sharp 20$ & $\textrm{Dic}_{12}$ \\
\hline
\end{tabular}
\vskip 4pt
\textnormal{Table 13}
\end{center}
\end{theorem}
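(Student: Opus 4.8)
The plan is to reduce the whole problem to the classification of maximal finite subgroups of $GL_2(D_{p,\infty})$ assembled in Section \ref{quat mat rep}, mirroring the way Theorem \ref{powordelli} reduced the ordinary case to finite subgroups of $GL_2(\Q(\sqrt{-d}))$. For the forward direction I would first note that if $E$ is supersingular over $k=\F_q$ with all its endomorphisms defined over $k$, then by the equivalences in Corollary \ref{cor TateEnd0}(e) one has $\textrm{End}_k^0(E)=D_{p,\infty}$, hence $\textrm{End}_k^0(X)=M_2(D_{p,\infty})$, which is precisely Nebe's setting with $K=\Q$ and $\mathcal{D}=D_{p,\infty}$. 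The group $G=\textrm{Aut}_k(X',\mathcal{L})$ is then a finite subgroup of $GL_2(D_{p,\infty})$ preserving both the lattice $\textrm{End}_k(X')$ and the totally positive definite Hermitian form attached to $\mathcal{L}$ through the Rosati involution. Maximality in the isogeny class forces $G$ to be a \emph{maximal} finite subgroup of $GL_2(D_{p,\infty})$. Such a group must be irreducible: a reducible one stabilizing a decomposition $V_1\oplus V_2$ is, by maximality and since $V_1\cong V_2$, a product $A\times A$ that is properly contained in the wreath product $A\wr\textrm{Sym}_2$ (which is irreducible), so it is not maximal. Thus $G$ is irreducible, and I would invoke Theorems \ref{prim aimf of GL2} and \ref{imprim aimf GL2} for the absolutely irreducible case and Lemmas \ref{imag lem}, \ref{irre max real mat}, \ref{dihe lem}, \ref{div alg lem} together with Remark \ref{irred max} for the irreducible but not absolutely irreducible case; the union of the resulting lists is exactly the twenty groups of Table 13.

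For the converse I would treat the groups one at a time, following the template of Theorem \ref{thm old 24}. For each $G$ the lemmas of Section \ref{quat mat rep} already identify a prime $p\in\{2,3,5,7,11,13,241\}$ for which $G$ is a maximal finite subgroup of $GL_2(D_{p,\infty})$ (for instance $p=13$ realizes $GL_2(\F_3)$ by Example \ref{GL(2,3)}, and $p=7$ realizes $\mathfrak{I}^*$ by Lemma \ref{div alg lem}). I would then take $k=\F_{p^2}$, which by Theorem \ref{isogclass ell}(2) carries a supersingular elliptic curve $E$ with $\beta=\pm 2\sqrt{q}$ and all endomorphisms defined over $k$, so that $\textrm{End}_k^0(E)=D_{p,\infty}$ and $\textrm{End}_k^0(E^2)=M_2(D_{p,\infty})$. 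Using Definition and Lemma \ref{main lem hard} I would realize $G$ as $\textrm{Aut}(L,F)$ for a suitable $G$-invariant lattice $L\in Z_{\mathfrak{m}}(G)$ and a form $F\in\mathcal{F}_{>0}(G)$, and then apply Proposition \ref{endposs} to obtain an abelian surface $X'$ isogenous to $E^2$ whose endomorphism ring is the order determined by $L$.

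To produce the polarization I would start from any ample line bundle $\mathcal{L}$ on $X'$ and average it, setting $\mathcal{L}'=\bigotimes_{g\in G}g^*\mathcal{L}$; this $\mathcal{L}'$ is ample and $G$-invariant, so $G\leq\textrm{Aut}_k(X',\mathcal{L}')$. Since $\textrm{Aut}_k(X',\mathcal{L}')$ is a finite subgroup of $GL_2(D_{p,\infty})$ containing the maximal finite subgroup $G$, it equals $G$, giving $G=\textrm{Aut}_k(X',\mathcal{L}')$. Finally, any hypothetical strictly larger $H=\textrm{Aut}_k(Y,\mathcal{M})$ in the same isogeny class would again be a finite subgroup of $GL_2(D_{p,\infty})$ properly containing $G$, contradicting maximality; hence $G$ is maximal in the isogeny class of $X$.

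The hard part will be the bridge between the two notions of maximality—Nebe's maximal finite subgroups of $GL_2(D_{p,\infty})$ on the one hand, and groups maximal in the isogeny class as full automorphism groups of polarized surfaces on the other. The averaging construction only gives $G\leq\textrm{Aut}_k(X',\mathcal{L}')$ for one chosen polarization, so one must check that passing between $G$-invariant Hermitian forms and genuine polarizations loses nothing and that no finite subgroup sits strictly between $G$ and the unit group of its enveloping algebra. For the larger primitive groups of Table 8, such as $2_{-}^{1+4}.\textrm{Alt}_5$ and $SL_2(\F_9)$, pinning down the invariant lattice and verifying that its automorphism group is exactly $G$ rather than something larger is the most delicate point, and here I would rely directly on Nebe's explicit lattice computations rather than reproduce them.
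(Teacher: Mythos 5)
Your proposal is correct and follows essentially the same route as the paper: the forward direction reduces maximality in the isogeny class to maximality as a finite subgroup of $GL_2(D_{p,\infty})$ and cites exactly the same classification results of Section \ref{quat mat rep} (Theorems \ref{prim aimf of GL2} and \ref{imprim aimf GL2} for the absolutely irreducible case, Lemmas \ref{imag lem}, \ref{irre max real mat}, \ref{dihe lem}, \ref{div alg lem} for the rest), while the converse realizes each group over $k=\F_{p^2}$ for the same primes $p\in\{2,3,5,7,11,13,241\}$ via a $G$-invariant lattice from Definition and Lemma \ref{main lem hard}, the maximal order $\textrm{Hom}_{\mathcal{O}}(L,L)$ from Theorem \ref{mat max 2}, Proposition \ref{endposs}, and the averaged ample bundle, just as the paper does. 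Your uniform disposal of the reducible case via the containment $A\times A\subsetneq A\wr\textrm{Sym}_2$ is only a minor variant of the paper's Goursat-based enumeration of diagonal products, and it carries the same implicit burden the paper also leaves unargued, namely that a strictly larger finite group is actually realizable as a full automorphism group within the same isogeny class.
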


\begin{proof}
Suppose first that there exists a finite field $k=\F_q$ and a supersingular elliptic curve $E$ over $k$ (all of whose endomorphisms are defined over $k$) such that $G$ is the automorphism group of a polarized abelian surface over $k,$ which is maximal in the isogeny class of $X:=E^2.$ In particular, we have $\textrm{End}_k^0(X)=M_2(D_{p,\infty}).$ Thus, by assumption, $G$ is a maximal finite subgroup of $GL_2(D_{p,\infty})$ (up to isomorphism). If $G$ is absolutely irreducible, then $G$ must be one of the 8 groups ($\sharp 1\sim\sharp 8$) in the above table by Theorem \ref{prim aimf of GL2} and Theorem \ref{imprim aimf GL2}. If $G$ is irreducible (but not absolutely irreducible), then, as we have seen in Section \ref{quat mat rep}, $G$ must be one of the 12 groups ($\sharp 9\sim\sharp 20$) in the above table. If $G$ is reducible, then $G \leq D_{p,\infty}^{\times} \times D_{p,\infty}^{\times},$ and maximality of $G$, together with Goursat's Lemma gives us that $G=G_1 \times G_2$ where $G_1, G_2$ are irreducible maximal finite subgroups of $D_{p,\infty}^{\times}.$ There are 5 such groups, namely, $SL_2(\F_3) \times SL_2(\F_3), \textrm{Dic}_{12} \times \textrm{Dic}_{12}, \Z/6\Z \times \Z/6\Z, \Z/4\Z \times \Z/4\Z,$ and $\Z/2\Z \times \Z/2\Z.$ The first two are contained in $\sharp 3$ and $\sharp 6$, respectively. Also, it can be shown that all the other three groups cannot be maximal in the isogeny class. Hence, it suffices to show the converse. We prove the converse by considering them one by one. \\

  (1) Let $k=\F_{4}.$ Then there is a supersingular elliptic curve $E$ over $k$ such that $\textrm{End}_k^0(E)=D_{2,\infty}$ by Theorem \ref{isogclass ell}. Let $X=E^2.$ Then we have $D:=\textrm{End}_k^0(X)=M_2(D_{2,\infty}).$ Let $V=D_{2,\infty}^2, \mathcal{O} = \Z \left[i, j, ij, \frac{1+i+j+ij}{2} \right],$ and $G=2_{-}^{1+4}.\textrm{Alt}_5.$ By Theorem \ref{prim aimf of GL2}, $G$ is a primitive absolutely irreducible maximal finite subgroup of $GL_2 (D_{2,\infty}).$ Recall also that $\mathcal{O}$ is a maximal $\Z$-order in $D_{2,\infty}.$ By Lemma \ref{main lem hard}, there is a $G$-invariant $\mathcal{O}$-lattice $L$ in $V,$ and then by Theorem \ref{mat max 2}, it follows that $\mathcal{O}^{\prime}:=\textrm{Hom}_{\mathcal{O}}(L,L)$ is a maximal $\Z$-order in $\textrm{Hom}_{D_{2,\infty}}(V,V)=M_2(D_{2,\infty}).$ By the choice of $L,$ $G$ can be regarded as a subgroup of $(\mathcal{O}^{\prime})^{\times}.$ Also, by Proposition \ref{endposs}, there exists an abelian surface $X^{\prime}$ over $k$ such that $X^{\prime}$ is $k$-isogenous to $X$ and $\textrm{End}_k(X^{\prime})=\mathcal{O}^{\prime}$. Then it follows that $G \leq \textrm{Aut}_k(X^{\prime}).$ \\

  Now, let $\mathcal{L}$ be an ample line bundle on $X^{\prime}$, and put
  \begin{equation*}
    \mathcal{L}^{\prime}:=\bigotimes_{g \in G} g^* \mathcal{L}.
  \end{equation*}
  Then $\mathcal{L}^{\prime}$ is also an ample line bundle on $X^{\prime}$ that is preserved under the action of $G$ so that $G \leq \textrm{Aut}_k(X^{\prime},\mathcal{L}^{\prime}).$ Since $\textrm{Aut}_k (X^{\prime},\mathcal{L}^{\prime})$ is a finite subgroup of $\textrm{Aut}_k(X^{\prime})=(\mathcal{O}^{\prime})^{\times},$ it follows from the maximality of $G$ that $G=\textrm{Aut}_k(X^{\prime},\mathcal{L}^{\prime}).$ \\

  Now, suppose that $Y$ is an abelian surface over $k$ which is $k$-isogenous to $X.$ In particular, we have $\textrm{End}_k^0(Y)=M_2(D_{2,\infty}).$ Suppose that there is a finite group $H$ such that $H=\textrm{Aut}_k(Y, \mathcal{M})$ for a polarization $\mathcal{M}$ on $Y,$ and $G$ is isomorphic to a proper subgroup of $H.$ Then $H$ is a finite subgroup of $GL_2(D_{2,\infty}),$ and hence, it follows from the maximality of $G$ as a finite subgroup of $GL_2(D_{2,\infty})$ that $H=G$, which is a contradiction. Therefore we can conclude that $G$ is maximal in the isogeny class of $X.$ \\

(2) Take $G=SL_2(\F_3) \times \textrm{Sym}_3.$ Let $k=\F_{4}$. Then a similar argument as in the proof of (1) can be used to show that there is a supersingular elliptic curve $E$ over $k$ (all of whose endomorphisms are defined over $k$) such that $G$ is the automorphism group of a polarized abelian surface over $k,$ which is maximal in the isogeny class of $X:=E^2.$ \\

(3) Take $G=(SL_2(\F_3))^2 \rtimes \textrm{Sym}_2.$ Let $k=\F_{4}$. Then $G$ is an imprimitive absolutely irreducible maximal finite subgroup of $GL_2(D_{2,\infty})$ by Theorem \ref{imprim aimf GL2}, and hence, a similar argument as in the proof of (1) can be used to show that there is a supersingular elliptic curve $E$ over $k$ (all of whose endomorphisms are defined over $k$) such that $G$ is the automorphism group of a polarized abelian surface over $k,$ which is maximal in the isogeny class of $X:=E^2.$ \\

 (4) Let $k=\F_{9}.$ Then there is a supersingular elliptic curve $E$ over $k$ such that $\textrm{End}_k^0(E)=D_{3,\infty}$ by Theorem \ref{isogclass ell}. Let $X=E^2.$ Then we have $D:=\textrm{End}_k^0(X)=M_2(D_{3,\infty}).$ Let $V=D_{3,\infty}^2, \mathcal{O} = \Z \left[1, i, \frac{1+j}{2}, \frac{i+ij}{2} \right],$ and $G=SL_2(\F_9).$ By Theorem \ref{prim aimf of GL2}, $G$ is a primitive absolutely irreducible maximal finite subgroup of $GL_2 (D_{3,\infty}).$ Recall also that $\mathcal{O}$ is a maximal $\Z$-order in $D_{3,\infty}.$ By Lemma \ref{main lem hard}, there is a $G$-invariant $\mathcal{O}$-lattice $L$ in $V,$ and then by Theorem \ref{mat max 2}, it follows that $\mathcal{O}^{\prime}:=\textrm{Hom}_{\mathcal{O}}(L,L)$ is a maximal $\Z$-order in $\textrm{Hom}_{D_{3,\infty}}(V,V)=M_2(D_{3,\infty}).$ Then a similar argument as in the proof of (1) can be used to show that there exists an abelian surface $X^{\prime}$ over $k$ with a polarization $\mathcal{L}^{\prime}$ such that $G=\textrm{Aut}_k(X^{\prime},\mathcal{L}^{\prime})$ and $G$ is maximal in the isogeny class of $X.$ \\

(5) Take $G=\Z/3\Z : (SL_2(\F_3).2).$ Let $k=\F_{9}$. Then a similar argument as in the proof of (4) can be used to show that there is a supersingular elliptic curve $E$ over $k$ (all of whose endomorphisms are defined over $k$) such that $G$ is the automorphism group of a polarized abelian surface over $k,$ which is maximal in the isogeny class of $X:=E^2.$ \\

(6) Take $G=(\textrm{Dic}_{12})^2 \rtimes \textrm{Sym}_2.$ Let $k=\F_{9}$. Then $G$ is an imprimitive absolutely irreducible maximal finite subgroup of $GL_2(D_{3,\infty})$ by Theorem \ref{imprim aimf GL2}, and hence, a similar argument as in the proof of (4) can be used to show that there is a supersingular elliptic curve $E$ over $k$ (all of whose endomorphisms are defined over $k$) such that $G$ is the automorphism group of a polarized abelian surface over $k,$ which is maximal in the isogeny class of $X:=E^2.$ \\

 (7) Let $k=\F_{25}.$ Then there is a supersingular elliptic curve $E$ over $k$ such that $\textrm{End}_k^0(E)=D_{5,\infty}$ by Theorem \ref{isogclass ell}. Let $X=E^2.$ Then we have $D:=\textrm{End}_k^0(X)=M_2(D_{5,\infty}).$ Let $V=D_{5,\infty}^2, \mathcal{O} = \Z \left[1, i, \frac{1+i+j}{2}, \frac{2+i+ij}{4} \right],$ and $G=SL_2(\F_5).2.$ By Theorem \ref{prim aimf of GL2}, $G$ is a primitive absolutely irreducible maximal finite subgroup of $GL_2 (D_{5,\infty}).$ Recall also that $\mathcal{O}$ is a maximal $\Z$-order in $D_{5,\infty}.$ By Lemma \ref{main lem hard}, there is a $G$-invariant $\mathcal{O}$-lattice $L$ in $V,$ and then by Theorem \ref{mat max 2}, it follows that $\mathcal{O}^{\prime}:=\textrm{Hom}_{\mathcal{O}}(L,L)$ is a maximal $\Z$-order in $\textrm{Hom}_{D_{5,\infty}}(V,V)=M_2(D_{5,\infty}).$ Then a similar argument as in the proof of (1) can be used to show that there exists an abelian surface $X^{\prime}$ over $k$ with a polarization $\mathcal{L}^{\prime}$ such that $G=\textrm{Aut}_k(X^{\prime},\mathcal{L}^{\prime})$ and $G$ is maximal in the isogeny class of $X.$ \\

(8) Take $G=SL_2(\F_5) : 2.$ Let $k=\F_{25}$. Then a similar argument as in the proof of (7) can be used to show that there is a supersingular elliptic curve $E$ over $k$ (all of whose endomorphisms are defined over $k$) such that $G$ is the automorphism group of a polarized abelian surface over $k,$ which is maximal in the isogeny class of $X:=E^2.$ \\

(9) Let $k=\F_{169}$. Note first that $\Q(\sqrt{-2})\subseteq D_{13,\infty}.$ Also, there is a supersingular elliptic curve $E$ over $k$ such that $\textrm{End}_k^0(E)=D_{13,\infty}$ by Theorem \ref{isogclass ell}. Let $X=E^2.$ Then we have $D:=\textrm{End}_k^0(X)=M_2(D_{13,\infty}).$ Let $V=D_{13,\infty}^2, \mathcal{O}$ a maximal $\Z$-order in $D_{13,\infty}$, and $G=(Q_8 \rtimes \Z/3\Z) \rtimes \Z/2\Z = GL_2 (\F_3).$ As we have seen before in Example \ref{GL(2,3)}, $G$ is an irreducible maximal finite subgroup of $GL_2 (D_{13,\infty}).$ By Lemma \ref{main lem hard}, there is a $G$-invariant $\mathcal{O}$-lattice $L$ in $V,$ and then by Theorem \ref{mat max 2}, it follows that $\mathcal{O}^{\prime}:=\textrm{Hom}_{\mathcal{O}}(L,L)$ is a maximal $\Z$-order in $\textrm{Hom}_{D_{13,\infty}}(V,V)=M_2(D_{13,\infty}).$ Then a similar argument as in the proof of (1) can be used to show that there exists an abelian surface $X^{\prime}$ over $k$ with a polarization $\mathcal{L}^{\prime}$ such that $G=\textrm{Aut}_k(X^{\prime},\mathcal{L}^{\prime})$ and $G$ is maximal in the isogeny class of $X.$ \\

(10) Take $G=\Z/12 \rtimes \Z/2\Z \cong \Z/4\Z \times \textrm{Sym}_3.$ Let $k=\F_{49}$. Then a similar argument as in the proof of (9) can be used to show that there is a supersingular elliptic curve $E$ over $k$ (all of whose endomorphisms are defined over $k$) such that $G$ is the automorphism group of a polarized abelian surface over $k,$ which is maximal in the isogeny class of $X:=E^2.$ \\

(11) Take $G=(Q_8 \rtimes \Z/3\Z) \cdot \Z/4\Z \cong \mathfrak{T}^* \rtimes \Z/4\Z.$ Let $k=\F_{49}$. Then a similar argument as in the proof of (9) can be used to show that there is a supersingular elliptic curve $E$ over $k$ (all of whose endomorphisms are defined over $k$) such that $G$ is the automorphism group of a polarized abelian surface over $k,$ which is maximal in the isogeny class of $X:=E^2.$ \\

(12) Take $G=(\Z/6\Z \times \Z/6\Z) \rtimes \Z/2\Z \cong \textrm{Dic}_{12} \rtimes \Z/6\Z.$ Let $k=\F_{121}$. Then a similar argument as in the proof of (9) can be used to show that there is a supersingular elliptic curve $E$ over $k$ (all of whose endomorphisms are defined over $k$) such that $G$ is the automorphism group of a polarized abelian surface over $k,$ which is maximal in the isogeny class of $X:=E^2.$ \\

(13) Take $G=(Q_8 \rtimes \Z/3\Z) \times \Z/3\Z \cong \mathfrak{T}^* \times \Z/3\Z.$ Let $k=\F_{121}$. Then a similar argument as in the proof of (9) can be used to show that there is a supersingular elliptic curve $E$ over $k$ (all of whose endomorphisms are defined over $k$) such that $G$ is the automorphism group of a polarized abelian surface over $k,$ which is maximal in the isogeny class of $X:=E^2.$ \\

(14) Let $k=\F_{58081}$. Clearly, $M_2(\Q) \subset M_2(D_{241,\infty}).$ Also, there is a supersingular elliptic curve $E$ over $k$ such that $\textrm{End}_k^0(E)=D_{241,\infty}$ by Theorem \ref{isogclass ell}. Let $X=E^2.$ Then we have $D:=\textrm{End}_k^0(X)=M_2(D_{241,\infty}).$ Let $V=D_{241,\infty}^2, \mathcal{O}$ a maximal $\Z$-order in $D_{241,\infty}$, and $G=D_4.$ As we have seen before in Lemma \ref{dihe lem}, $G$ is an irreducible maximal finite subgroup of $GL_2 (D_{241,\infty}).$ By Lemma \ref{main lem hard}, there is a $G$-invariant $\mathcal{O}$-lattice $L$ in $V,$ and then by Theorem \ref{mat max 2}, it follows that $\mathcal{O}^{\prime}:=\textrm{Hom}_{\mathcal{O}}(L,L)$ is a maximal $\Z$-order in $\textrm{Hom}_{D_{241,\infty}}(V,V)=M_2(D_{241,\infty}).$ Then a similar argument as in the proof of (1) can be used to show that there exists an abelian surface $X^{\prime}$ over $k$ with a polarization $\mathcal{L}^{\prime}$ such that $G=\textrm{Aut}_k(X^{\prime},\mathcal{L}^{\prime})$ and $G$ is maximal in the isogeny class of $X.$ \\

(15) Take $G=D_6.$ Let $k=\F_{58081}$. Then a similar argument as in the proof of (14) can be used to show that there is a supersingular elliptic curve $E$ over $k$ (all of whose endomorphisms are defined over $k$) such that $G$ is the automorphism group of a polarized abelian surface over $k,$ which is maximal in the isogeny class of $X:=E^2.$ \\

(16) Let $k=\F_{49}$. Then there is a supersingular elliptic curve $E$ over $k$ such that $\textrm{End}_k^0(E)=D_{7,\infty}$ by Theorem \ref{isogclass ell}. Let $X=E^2.$ Then we have $D:=\textrm{End}_k^0(X)=M_2(D_{7,\infty}).$ Let $V=D_{7,\infty}^2, \mathcal{O}$ a maximal $\Z$-order in $D_{7,\infty}$, and $G=\mathfrak{I}^*.$ As we have seen before in Lemma \ref{div alg lem}, $G$ is an irreducible maximal finite subgroup of $GL_2 (D_{7,\infty}).$ By Lemma \ref{main lem hard}, there is a $G$-invariant $\mathcal{O}$-lattice $L$ in $V,$ and then by Theorem \ref{mat max 2}, it follows that $\mathcal{O}^{\prime}:=\textrm{Hom}_{\mathcal{O}}(L,L)$ is a maximal $\Z$-order in $\textrm{Hom}_{D_{7,\infty}}(V,V)=M_2(D_{7,\infty}).$ Then a similar argument as in the proof of (1) can be used to show that there exists an abelian surface $X^{\prime}$ over $k$ with a polarization $\mathcal{L}^{\prime}$ such that $G=\textrm{Aut}_k(X^{\prime},\mathcal{L}^{\prime})$ and $G$ is maximal in the isogeny class of $X.$ \\

(17) Take $G=\textrm{Dic}_{24}.$ Let $k=\F_{49}$. Then a similar argument as in the proof of (16) can be used to show that there is a supersingular elliptic curve $E$ over $k$ (all of whose endomorphisms are defined over $k$) such that $G$ is the automorphism group of a polarized abelian surface over $k,$ which is maximal in the isogeny class of $X:=E^2.$ \\

(18) Let $k=\F_{121}$. Then there is a supersingular elliptic curve $E$ over $k$ such that $\textrm{End}_k^0(E)=D_{11,\infty}$ by Theorem \ref{isogclass ell}. Let $X=E^2.$ Then we have $D:=\textrm{End}_k^0(X)=M_2(D_{11,\infty}).$ Let $V=D_{11,\infty}^2, \mathcal{O}$ a maximal $\Z$-order in $D_{11,\infty}$, and $G=\mathfrak{O}^*.$ As we have seen before in Lemma \ref{div alg lem}, $G$ is an irreducible maximal finite subgroup of $GL_2 (D_{11,\infty}).$ By Lemma \ref{main lem hard}, there is a $G$-invariant $\mathcal{O}$-lattice $L$ in $V,$ and then by Theorem \ref{mat max 2}, it follows that $\mathcal{O}^{\prime}:=\textrm{Hom}_{\mathcal{O}}(L,L)$ is a maximal $\Z$-order in $\textrm{Hom}_{D_{11,\infty}}(V,V)=M_2(D_{11,\infty}).$ Then a similar argument as in the proof of (1) can be used to show that there exists an abelian surface $X^{\prime}$ over $k$ with a polarization $\mathcal{L}^{\prime}$ such that $G=\textrm{Aut}_k(X^{\prime},\mathcal{L}^{\prime})$ and $G$ is maximal in the isogeny class of $X.$ \\

(19) Let $k=\F_{58081}$. Then there is a supersingular elliptic curve $E$ over $k$ such that $\textrm{End}_k^0(E)=D_{241,\infty}$ by Theorem \ref{isogclass ell}. Let $X=E^2.$ Then we have $D:=\textrm{End}_k^0(X)=M_2(D_{241,\infty}).$ Let $V=D_{241,\infty}^2, \mathcal{O}$ a maximal $\Z$-order in $D_{241,\infty}$, and $G=\mathfrak{T}^*.$ As we have seen before in Lemma \ref{div alg lem}, $G$ is an irreducible maximal finite subgroup of $GL_2 (D_{241,\infty}).$ By Lemma \ref{main lem hard}, there is a $G$-invariant $\mathcal{O}$-lattice $L$ in $V,$ and then by Theorem \ref{mat max 2}, it follows that $\mathcal{O}^{\prime}:=\textrm{Hom}_{\mathcal{O}}(L,L)$ is a maximal $\Z$-order in $\textrm{Hom}_{D_{241,\infty}}(V,V)=M_2(D_{241,\infty}).$ Then a similar argument as in the proof of (1) can be used to show that there exists an abelian surface $X^{\prime}$ over $k$ with a polarization $\mathcal{L}^{\prime}$ such that $G=\textrm{Aut}_k(X^{\prime},\mathcal{L}^{\prime})$ and $G$ is maximal in the isogeny class of $X.$ \\

(20) Take $G=\textrm{Dic}_{12}.$ Let $k=\F_{58081}$. Then a similar argument as in the proof of (19) can be used to show that there is a supersingular elliptic curve $E$ over $k$ (all of whose endomorphisms are defined over $k$) such that $G$ is the automorphism group of a polarized abelian surface over $k,$ which is maximal in the isogeny class of $X:=E^2.$ \\

This completes the proof.
\end{proof}

\bibliographystyle{amsalpha}

\end{document}